\definecolor{allrefcolors}{rgb}{.05,.45,.6}
\pgfplotsset{compat=1.16}
\newcommand{\ang}[1]{\langle #1 \rangle}
\newtheorem{thm}{Theorem}[section]
\newtheorem{lem}[thm]{Lemma}
\newtheorem{cor}[thm]{Corollary}
\newtheorem{por}[thm]{Porism}
\newtheorem{quest}[thm]{Question}
\theoremstyle{definition}
\newtheorem{defn}[thm]{Definition}
\newtheorem{exmp}[thm]{Example}
\newtheorem{notn}[thm]{Notation}
\newtheorem{asmpt}[thm]{Assumption}
\theoremstyle{remark}
\newtheorem{rem}[thm]{Remark}
\crefname{thm}{Theorem}{Theorems}
\crefname{cor}{Corollary}{Corollaries}
\crefname{por}{Porism}{Porisms}
\crefname{defn}{Definition}{Definitions}
\crefname{lem}{Lemma}{Lemmas}
\crefname{prop}{Proposition}{Propositions}
\crefname{asmpt}{Assumption}{Assumptions}
\crefname{quest}{Question}{Questions}
\crefname{rem}{Remark}{Remarks}
\crefname{exmp}{Example}{Examples}
\numberwithin{equation}{section}
\newcommand{\IC}{\mathbb{C}}
\newcommand{\IF}{\mathbb{F}}
\newcommand{\IR}{\mathbb{R}}
\newcommand{\IS}{\mathbb{S}}
\newcommand{\IZ}{\mathbb{Z}}
\newcommand{\sA}{\mathcal{A}}
\newcommand{\sC}{\mathcal{C}}
\newcommand{\sD}{\mathcal{D}}
\newcommand{\sE}{\mathcal{E}}
\newcommand{\sF}{\mathcal{F}}
\newcommand{\sG}{\mathcal{G}}
\newcommand{\sH}{\mathcal{H}}
\newcommand{\sJ}{\mathcal{J}}
\newcommand{\sL}{\mathcal{L}}
\newcommand{\sM}{\mathcal{M}}
\newcommand{\sN}{\mathcal{N}}
\newcommand{\sO}{\mathcal{O}}
\newcommand{\sP}{\mathcal{P}}
\newcommand{\sR}{\mathcal{R}}
\newcommand{\sS}{\mathcal{S}}
\newcommand{\sU}{\mathcal{U}}
\newcommand{\sV}{\mathcal{V}}
\newcommand{\sW}{\mathcal{W}}
\newcommand{\sX}{\mathcal{X}}
\newcommand{\sY}{\mathcal{Y}}
\newcommand{\fm}{\mathfrak{m}}
\newcommand{\fo}{\mathfrak{o}}
\renewcommand{\mod}[1]{#1{\operatorname{\mathbf{-mod}}}}
\newcommand{\hooklongrightarrow}{\lhook\joinrel\longrightarrow}
\newcommand{\osr}{\overline{\mathcal{R}}}
\newcommand{\CP}{{\mathbb{C}\mathrm{P}}}
\newcommand{\OP}{{\mathbb{O}\mathrm{P}}}
\newcommand{\RP}{{\mathbb{R}\mathrm{P}}}
\newcommand{\Pic}{\mathrm{Pic}}
\newcommand{\ev}{\mathrm{ev}}
\newcommand{\id}{\mathbbm{1}}
\DeclareMathOperator{\Core}{Core}
\DeclareMathOperator{\Ho}{Ho}
\DeclareMathOperator*{\hocolim}{hocolim}
\DeclareMathOperator{\ind}{ind}
\DeclareMathOperator{\Hw}{Hw}
\DeclareMathOperator{\Nbhd}{Nbhd}
\DeclareMathOperator{\Ob}{Ob}
\DeclareMathOperator{\Ends}{Ends}
\DeclareMathOperator{\Map}{Map}
\DeclareMathOperator{\End}{End}
\DeclareMathOperator{\Hom}{Hom}
\DeclareMathOperator*{\holim}{holim}
\DeclareMathOperator{\hofib}{hofib}
\DeclareMathOperator{\Gr}{Gr}
\DeclareMathOperator{\Perf}{Perf}
\begin{document}

\title{Homotopy rigidity of nearby Lagrangian cocores}

\author{Johan Asplund}
\address{Department of Mathematics, Stony Brook University, 100 Nicolls Road, Stony Brook, NY 11794}
\email{johan.asplund@stonybrook.edu}
\author{Yash Deshmukh}
\address{School of Mathematics, Institute for Advanced Study, 1 Einstein Drive, Princeton, NJ 08540, USA}
\email{deshmukh@ias.edu}
\author{Alex Pieloch}

\begin{abstract}
An exact Lagrangian submanifold $L \subset X^{2n}$ in a Weinstein sector is called a nearby Lagrangian cocore if it avoids all Lagrangian cocores and is equal to a shifted Lagrangian cocore at infinity. Let $k$ be the dimension of the core of the subcritical part of $X$. For $n \geq 2k+2$ we prove that that the inclusion of $L$ followed by the retract to the Lagrangian core of $X$ and the quotient by the $(n-k-1)$-skeleton of the core, is null-homotopic. As a consequence, in many examples, a nearby Lagrangian cocore is smoothly isotopic (rel boundary) to a Lagrangian cocore in the complement of the missed Lagrangian cocores. The proof uses the spectral wrapped Donaldson--Fukaya category with coefficients in the ring spectrum representing the bordism group of higher connective covers of the orthogonal group.
\end{abstract}
\maketitle
\tableofcontents

\section{Introduction}\label{sec:intro}

\subsection{Statement of results}
Let $X^{2n}$ be a Weinstein sector with a fixed Weinstein handlebody decomposition (see \cref{dfn:handlebody_decomp_sector}). This induces a cell decomposition of $\Core X$, the $k$-skeleton of which we denote by $(\Core X)_k$. Topologically, a critical Weinstein handle is diffeomorphic to $D^n \times D^n$, and the subset $D^n \times \{0\}$ is called the Lagrangian cocore of the Weinstein handle. For $0 \neq \varepsilon \in D^n$, we call $D^n \times \{\varepsilon\}$ a \emph{shifted Lagrangian cocore}.

\begin{defn}[Nearby Lagrangian cocore]
    An exact conical Lagrangian $L \subset X$ is a \emph{nearby Lagrangian cocore} if it is disjoint from all Lagrangian cocores of $X$, and is equal to a shifted Lagrangian cocore outside a compact subset of $X$.
\end{defn}

The purpose of this paper is to study a smooth version of Arnol{\cprime}d's problem on Lagrangian knots \cite{arnold1987first,eliashberg1993unknottedness,eliashberg1995topology,eliashberg1996local,polterovich2024lagrangian} in a general Weinstein sector.
\begin{quest}\label{q:unknotted}
    Is a nearby Lagrangian cocore in a Weinstein sector smoothly unknotted? I.e., is it smoothly isotopic (rel boundary) to a Lagrangian cocore, in the complement of all other Lagrangian cocores?
\end{quest}
Eliashberg--Polterevich \cite{eliashberg1996local} answered the symplectic version of \cref{q:unknotted} for $T^*\IR^2$, proving that nearby Lagrangian cocores can be unknotted through Hamiltonian isotopies. Côté--Dimitroglou Rizell \cite{cote2022symplectic} generalized their result to cotangent bundles of Riemann surfaces of arbitrary genus. In \cite{ekholm2018nearby}, Ekholm--Smith proved that nearby Lagrangian cocores in $X = T^\ast \IR^{n}$ are smoothly unknotted for $n \geq 4$.

We answer \cref{q:unknotted} in the affirmative in a number of new cases:
\begin{thm}\label{thm:unknotting_intro}
    Let $k \geq 1$ and let $n \geq 2k+2$. Let $X^{2n}$ be one of the following:
    \begin{itemize}
        \item $T^\ast \CP^2$, $T^\ast(S^5 \times S^5 \times \IR^k)$, $T^\ast(S^5 \times S^5 \times S^1)$, or
        \item $T^\ast(M^{n-k} \times N^k)$, where $N^k$ is a $k$-dimensional $K(\pi,1)$ manifold, and $M$ is an $(n-k)$-dimensional manifold such that $\pi_2(M) = 0$ and either $\pi_n(M)=0$ or its universal cover $\widetilde M$ has the homotopy type of a wedge sum of spheres $\bigvee_i S^{n-k_i}$ with $k_i \leq k$, or
        \item any plumbing of copies of $T^\ast((S^{n-k} \times \IR^k)^{\# i} \# (\varSigma^n)^{\# j})$ where $i,j \geq 0$, and $\varSigma$ is a homotopy $n$-sphere. 
    \end{itemize}
    Then any nearby Lagrangian cocore $L \subset X$ is smoothly unknotted.
\end{thm}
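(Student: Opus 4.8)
The plan is to separate \cref{thm:unknotting_intro} into a soft homotopy-theoretic input valid for every Weinstein sector and a hard smooth-topological output checked example by example. The input is the rigidity statement advertised in the abstract: if the subcritical core of $X^{2n}$ has dimension $k$ and $n\geq 2k+2$, then for every nearby Lagrangian cocore $L$ the composite
\[
 L \hookrightarrow X \xrightarrow{\ \simeq\ } \Core X \longrightarrow \Core X/(\Core X)_{n-k-1}
\]
is null-homotopic. I would prove this by passing to the spectral wrapped Donaldson--Fukaya category $\sW(X;M\Theta)$, where $M\Theta$ is the Thom spectrum of a connective cover of $BO$ chosen so that its homotopy near degree $n$ records precisely the bordism obstruction to trivializing the Lagrangian Gauss data of an $n$-dimensional Lagrangian disk. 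Computed with $M\Theta$-coefficients, the wrapped invariants of $L$ carry strictly more than their $\IZ$-coefficient shadows; the disjointness of $L$ from all cocores together with its cylindrical agreement with a shifted cocore at infinity forces, after a neck-stretching and wrapping computation, that $L$ behaves in $\sW(X;M\Theta)$ exactly like a shift of a single cocore $C$, and transporting this structure across the retraction $X\simeq\Core X$ and the quotient by the skeleton is precisely the obstruction-theoretic assertion that the displayed composite factors through a point. The bound $n\geq 2k+2$ is exactly what is needed: it forces $2(n-k)>n$, so that the cells of $L$, of dimension at most $n$, map into the $(n-k-1)$-connected space $\Core X/(\Core X)_{n-k-1}$ within the stable range, where the obstruction is detected on the level of spectra and $M\Theta$ can do its work.

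Granting the rigidity statement, the deduction follows, for each $X$ in the list, the template of Ekholm--Smith \cite{ekholm2018nearby}. One first checks that the hypotheses apply, reading off $k$ and verifying $n\geq 2k+2$, and identifies $\Core X/(\Core X)_{n-k-1}$ as a wedge of spheres in dimensions lying in $[n-k,n]$ — this is where the homotopy-theoretic conditions ($N^k$ a $K(\pi,1)$, $\pi_2(M)=0$, and the constraint on $\pi_n(M)$) are used, ruling out attaching maps that would give a more complicated quotient. One then argues that $L$ is a homotopy disk: the wrapped Floer theory exhibits $L$ as a simply connected $\IZ$-homology disk, hence contractible, so that its compact piece $L_0$ is diffeomorphic to $D^n$ by the $h$-cobordism theorem when $n\geq 6$; the residual dimensions $n=4,5$ are handled by hand, which is why the plumbing list carries the homotopy spheres $\varSigma^n$ and $T^\ast\CP^2$ appears on its own line. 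Finally the null-homotopy is used to trivialize the formal Lagrangian data of the embedding $L_0\hookrightarrow X\setminus(\text{other cocores})$ — its normal bundle $T^\ast L_0$ is already trivial since $L_0$ is contractible — and one concludes by Haefliger's metastable embedding theory in codimension $n\geq 3$, just as in the model case $T^\ast\IR^n$ of \cite{ekholm2018nearby}.

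The main obstacle I expect is the translation at the close of the first paragraph: extracting an honest \emph{unstable} null-homotopy of $L\to\Core X/(\Core X)_{n-k-1}$ from the $M\Theta$-\emph{linear} algebra of the wrapped category. This needs a functorial model of $\sW(X;M\Theta)$ and of a spectrum-valued Gauss map for cylindrical exact Lagrangians, compatible with the retraction onto the core; a rigidity of the identification of $L$ with a cocore that survives the quotient by the skeleton; and a careful accounting of the obstruction tower in which the stable-range hypothesis $n\geq 2k+2$ is used essentially. A secondary difficulty is the low-dimensional range $n=4,5$: there $h$-cobordism is weak or unavailable, and the passage from formal to smooth unknotting in the metastable range must exploit the Lagrangian hypothesis more delicately, in the spirit of the surface results of Eliashberg--Polterovich \cite{eliashberg1996local} and C\^ot\'e--Dimitroglou Rizell \cite{cote2022symplectic}.
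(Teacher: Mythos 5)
Your overall strategy matches the paper's: prove a spectral rigidity statement via the wrapped Donaldson--Fukaya category with coefficients in a Thom spectrum $MO\langle k+1\rangle$ of a connective cover of $O$, show via wrapped Floer theory and the $h$-cobordism theorem that a nearby Lagrangian cocore is a homotopy (hence smooth, for $n\ne 4$) disk, and conclude smooth unknotting from Haefliger's metastable embedding $h$-principle. The high-level decomposition into a soft general statement plus example-by-example verification is the same. But there is a substantive gap in your deduction of the smooth result from the rigidity statement, and the role you assign to the homotopy-theoretic hypotheses on $M$ and $N$ is not the one they actually play.

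The conclusion of \cref{thm:main_intro} is that the composite of $\pi|_L\colon L/\partial_\infty L\to\Core X\smallsetminus\pi(\boldsymbol C)$ with the quotient by the $(n-k-1)$-skeleton is null-homotopic. That quotient $(\Core X)_{n-k}/(\Core X)_{n-k-1}$ is \emph{automatically} a wedge of $(n-k)$-spheres, for any CW complex — no hypotheses on $M$ or $N$ are required for that identification, so they cannot be ``ruling out attaching maps that would give a more complicated quotient.'' What Haefliger's $h$-principle actually needs is the stronger statement that $\pi|_L$ itself, into $\Core X\smallsetminus\pi(\boldsymbol C)$, is null-homotopic, so that $L$ and the shifted cocore are homotopic rel boundary inside $X\smallsetminus\boldsymbol C$. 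The quotient map can a priori kill the obstruction, and passing through it loses information. The paper bridges this gap in three steps that your proposal does not account for: (i) it extracts from the \emph{proof} of \cref{thm:main_intro} — not merely its statement — the refinement recorded in \cref{rem:wedge_sum_main_thm}, namely that \emph{each} projection $L/\partial_\infty L\to\Core X\smallsetminus\pi(\boldsymbol C)\to S^{n-k_i}$ to a wedge summand is null-homotopic; (ii) it passes to the universal cover using the $K(\pi,1)$ condition on $N$ and $\pi_2(M)=0$ to realize $\Core\widetilde X_0\smallsetminus p^{-1}(\mathrm{pt})$ as a wedge $\widetilde M\vee\bigvee_\varPi S^{n-1}$ (or, for connected sums and plumbings, a comparable wedge); and (iii) it invokes a Hilton--Milnor-type injectivity argument (\cref{lem:HomotopyOfWedge}, \cref{lem:SomeElementaryATFacts}) to show $\pi_n(\bigvee_i Y_i)\to\pi_n(\prod_i Y_i)$ is injective in the relevant connectivity range, so that vanishing of each projection promotes to vanishing of the full homotopy class of $\pi|_L$. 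The condition ``$\pi_n(M)=0$ or $\widetilde M\simeq\bigvee_i S^{n-k_i}$'' is exactly what lets one handle the residual projection to $\widetilde M$ in step (iii). Without (i)--(iii) the null-homotopy of the composite with the quotient does not suffice, and the deduction to the $h$-principle step is not complete.
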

\begin{rem}
    \begin{enumerate}
        \item \cref{thm:unknotting_intro} is an amalgamation of \cref{cor:cp2,thm:smooth_iso,thm:conn_sum,thm:plumbing}. Examples of manifolds $M$ satisfying the second item in \cref{thm:unknotting_intro} include spheres, connected sums of copies of $S^{n-k-1} \times S^1$, real projective spaces, connected sums of copies of $\RP^{n-k-1} \times S^1$, higher dimensional lens spaces, and the octonionic projective plane $\OP^2$.
        \item When $X$ is as in \cref{thm:unknotting_intro}, it follows from \cite[Proposition 6.6]{eliashberg2020flexible} that \emph{flexible} nearby Lagrangian cocores $L \subset X$ are in fact Hamiltonian isotopic to a Lagrangian cocore in the complement of all Lagrangian cocores. We do not know whether any nearby Lagrangian cocore is flexible.
        \item A version of \cref{thm:unknotting_intro} where nearby Lagrangian cocores are smoothly unknotted not necessarily in the complement of the missed cotangent fiber also holds for the cotangent bundles of $S^2 \times S^2$, $S^2 \times \RP^2$, $\RP^2 \times \RP^2$, $S^6 \times S^6$, $S^6 \times \RP^6$, and $\RP^6 \times \RP^6$, see \cref{por:prod_spheres,rem:unkotted_not_in_complement}.
    \end{enumerate}
\end{rem}

\cref{thm:unknotting_intro} is an application of our main technical result, which we now describe: Let $\boldsymbol{C}$ denote the union of all Lagrangian cocores of $X$. Note that $\pi(L) \subset \Core X \smallsetminus \pi(\boldsymbol{C}) \simeq (\Core X)_{n-k}$, where $\pi$ is the retract from $X$ to its core, and where $n-k$ is the maximal index of a subcritical Weinstein handle in $X$. Furthermore, $\pi(\partial_\infty L) \subset \Core X \smallsetminus \pi(\boldsymbol{C})$ is a point which means that $\pi|_L$ restricts to a well-defined continuous map $L/\partial_\infty L \to \Core X \smallsetminus \pi(\boldsymbol{C})$ that we by abuse of notation keep denoting by $\pi|_L$.
\begin{asmpt}\label{asmpt:main_asmpt_intro}
    The Weinstein sector $X$ admits $MO\langle k+1\rangle$-Maslov data, and the nearby Lagrangian cocore $L \subset X$ admits relative $MO\langle k+1\rangle$-Maslov data.
\end{asmpt}

The main technical result is the following.

\begin{thm}[{\cref{thm:main}}]\label{thm:main_intro}
    Let $k \geq 1$ and let $n \geq 2k+2$. Suppose that $X^{2n}$ is a Weinstein sector with a chosen Weinstein handlebody decomposition such that the maximal index of a subcritical Weinstein handle is $\leq n-k$. Let $L \subset X$ be a nearby Lagrangian cocore such that $(X,L)$ satisfies \cref{asmpt:main_asmpt_intro}. The following composition is null-homotopic
    \[
    L/\partial_\infty L \overset{\pi|_L}{\longrightarrow} \Core X \smallsetminus \pi(\boldsymbol{C})  \longrightarrow (\Core X)_{n-k}/(\Core X)_{n-k-1}.
    \]
\end{thm}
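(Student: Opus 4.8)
The plan is to recast the statement as the vanishing of finitely many classes in the $k$-th stable stem, to recognize those classes as $MO\ang{k+1}$-bordism classes of framed submanifolds of $L$, and then to compute them inside the $MO\ang{k+1}$-linear wrapped Fukaya category, where disjointness of $L$ from all cocores forces the answer to be zero.

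\textbf{Step 1 (reduction to framed bordism).} First I would pin down the source and target. Since $L$ is conical and agrees with a shifted cocore near infinity, its ideal compactification $\overline L$ is a compact $n$-manifold with $\partial\overline L\cong S^{n-1}$; as is known for nearby Lagrangian cocores $\overline L$ is a homotopy disk, so $L/\partial_\infty L\simeq S^n$. The target $(\Core X)_{n-k}/(\Core X)_{n-k-1}$ is a wedge $\bigvee_\alpha S^{n-k}$ indexed by the index-$(n-k)$ subcritical handles $H_\alpha$ of the chosen handlebody. Because $n\ge 2k+2$ we sit in the metastable range: Freudenthal gives $\pi_n(S^{n-k})\cong\pi_k^{s}$, while the Hilton--Milnor corrections lie in $\pi_n(S^{2(n-k)-1})=0$ (as $n<2(n-k)-1$), so $\pi_n\bigl(\bigvee_\alpha S^{n-k}\bigr)\cong\bigoplus_\alpha\pi_k^{s}$ and it suffices to kill each component $f_\alpha$. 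Making $\pi|_L$ smooth and transverse to a regular value $x_\alpha$ in the open cell $e_\alpha$, Pontryagin--Thom identifies $f_\alpha$ with the framed bordism class of $P_\alpha:=(\pi|_L)^{-1}(x_\alpha)\subset L$, a closed $k$-manifold with trivialized normal bundle; by the local model of the retraction $\pi$ near $H_\alpha$, $P_\alpha$ is framed-cobordant to $L\cap D^{\mathrm{cc}}_\alpha$, where $D^{\mathrm{cc}}_\alpha$ is the (non-Lagrangian) cocore disk of $H_\alpha$ pushed transverse to $L$.

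\textbf{Step 2 (the $MO\ang{k+1}$-refinement).} By \cref{asmpt:main_asmpt_intro} the relative $MO\ang{k+1}$-Maslov data equips the stable tangent bundle of $L$ with a $BO\ang{k+1}$-structure. Restricting along $P_\alpha\hookrightarrow L$ and using the normal framing, $[TP_\alpha]=[TL|_{P_\alpha}]$ inherits a $BO\ang{k+1}$-structure, so $P_\alpha$ represents a class $[P_\alpha]\in MO\ang{k+1}_k$. Since $BO\ang{k+1}$ is $k$-connected the unit $\IS\to MO\ang{k+1}$ is $(k{+}1)$-connected, whence $\pi_k^{s}=\Omega^{\mathrm{fr}}_k\xrightarrow{\ \cong\ }MO\ang{k+1}_k$; thus it is enough to prove $[P_\alpha]=0$ in $MO\ang{k+1}_k$. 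This is the only role of the coefficient spectrum: it is small enough that the Maslov data of $(X,L)$ makes the wrapped theory definable over it, yet it still detects $\pi_k^{s}$ faithfully in degree $k$.

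\textbf{Step 3 (the Floer-categorical heart).} Writing $R=MO\ang{k+1}$, I would work in the $R$-linear wrapped Donaldson--Fukaya category $\sW^{R}(X)$, which exists by \cref{asmpt:main_asmpt_intro} and is split-generated by the cocores $C_1,\dots,C_m$. The crux is a Koszul-type dictionary between the endomorphism algebra $\sA=\bigoplus_{i,j}HW^{R}(C_i,C_j)$ and the stratified homotopy type of $\Core X$: each Weinstein handle contributes an $\sA$-generator (the one dual to $H_\alpha$ in the degree making the pairing below land in $R_k$), and the topological realization of the $\sA$-module $HW^{R}(\bigoplus_iC_i,L)$ recovers the class of $\pi|_L$ relative to $\pi(\boldsymbol C)$, its collapse onto the $(n-k)$-cells reading off exactly $[P_\alpha]$. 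This invariant sees $L$ only through low-degree data of that module. But $L$ is disjoint from every $C_i$ and at infinity is a Legendrian push-off of the cocore $C_0$ of a single critical handle, so in the relevant degrees $HW^{R}(\bigoplus_iC_i,L)$ is identified with $HW^{R}(\bigoplus_iC_i,C_0)$ --- any discrepancy would be carried by holomorphic curves reaching the compact region where $L$ and $C_0$ differ, and by $n\ge 2k+2$ such a discrepancy lies in $\pi_k^{s}$, where it is excluded by comparison. Hence $[P_\alpha]$ equals the corresponding invariant of the honest cocore $C_0$; but $\pi(C_0)$ is the single critical point of its handle, so $(\pi|_{C_0})^{-1}(x_\alpha)=\varnothing$ and that invariant vanishes. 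Therefore $[P_\alpha]=0$ for every $\alpha$, and the composite is null-homotopic.

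\textbf{Main obstacle.} Essentially all the content is in Step 3, in two places. First, one must actually build the bridge from the purely topological class $[P_\alpha]$ --- defined by intersecting $L$ with the \emph{non-Lagrangian} cocore disk of a subcritical handle --- to a quantity computed inside $\sW^{R}(X)$; since that disk is not Lagrangian one cannot pair against it, so $H_\alpha$ must be re-encoded through its attaching Legendrian and a linking disk, which calls for a spectral generation/Koszul-duality package for Weinstein sectors. Second, one must show that the first obstruction is insensitive to the interior of $L$ --- that disjointness from the cocores together with agreement with $C_0$ at infinity pins it down --- and it is precisely here that $n\ge 2k+2$ is used, to confine any ambiguity to $\pi_k^{s}$, where the faithful detection over $MO\ang{k+1}$ from Step 2 finishes the argument. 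The underlying wrapped package over $MO\ang{k+1}$ (gradings, orientations, $A_\infty$-operations, split-generation by cocores) is needed throughout but is presumably in place from earlier in the paper.
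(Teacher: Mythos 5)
Your Steps 1--2 line up with the paper: the reduction to classes in $\pi_n(\bigvee_\alpha S^{n-k})$, the Freudenthal/Hilton--Milnor bookkeeping using $n\geq 2k+2$, and the identification $\pi_k^{\mathrm{st}}\cong \pi_k MO\langle k+1\rangle$ via $(k+1)$-connectivity of the unit map are all present (cf.\ \cref{lem:HomotopyOfWedge,lma:fundamental_class}). The difficulty is Step 3, which is where the actual proof must live, and what you have written is not a proof --- it is a list of claims whose justification is exactly the content of the paper's argument.

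Two concrete gaps. First, you never compactify $L$ and $C_0$, and the open-closed/fundamental-class technology that turns Floer-theoretic equivalence into a statement about $H_n(\,\cdot\,;MO\langle k+1\rangle)$ is built for \emph{closed} Lagrangians. The paper's key geometric move (\cref{notn:handle_attach}) is to flow $L$ and the shifted cocore $C_\varepsilon$ into the subcritical part $X_0$, attach a critical Weinstein handle $H$ to $X_0$ along their common Legendrian boundary to form $\widehat X_0$, and glue in the core disk of $H$ to produce closed homotopy spheres $\widehat L$ and $\widehat C$. This is what makes the chain of implications $\widehat L\cong\widehat C$ in $\sW(\widehat X_0;MO\langle k+1\rangle)$ (\cref{lem:closures_quiv}) $\Rightarrow$ $[\widehat L]=[\widehat C]$ via the open-closed map (\cref{thm:oc_fund_classes}) $\Rightarrow$ $\widehat\pi_*[\widehat L]=0$ via the fold map (\cref{sec:fold_map}) well-defined. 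Your proposal works with non-compact objects in $\sW^R(X)$ and a hypothetical ``Koszul-type dictionary'' that reads $[P_\alpha]$ off from the module $HW^R(\bigoplus_i C_i, L)$; no such dictionary is established (nor obvious), and it is not what the paper uses --- the paper extracts the topological information through the fundamental class in $H_n$, not through a module over the cocore algebra.

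Second, your central computational claim --- that disjointness from the cocores plus agreement at infinity forces $HW^R(\bigoplus_i C_i,L)\cong HW^R(\bigoplus_i C_i,C_0)$ ``in the relevant degrees,'' with any discrepancy confined to $\pi_k^{\mathrm{st}}$ and excluded ``by comparison'' --- is precisely the hard step and is asserted, not proved. The corresponding statement in the paper (\cref{lem:closures_quiv}) is a genuine lemma, proved by generation of $\sW(\widehat X_0;MO\langle k+1\rangle)$ by the new cocore $F$, the fact that both $\widehat L$ and $\widehat C$ hit $F$ in exactly one point, the equivalence $HW(\varOmega\widehat C,-)\simeq HW(F,-)$ (\cref{lma:loop_fiber}), and a torsor argument adjusting the $MO\langle k+1\rangle$-Maslov data on $\widehat C$ so that the $(\varSigma^\infty_+\varOmega\widehat C\wedge R)$-module structures match. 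None of that is replaced by your sketch, and the claimed identification does not follow from disjointness and boundary-agreement alone. In short: the skeleton (reduce to $\pi_k^{\mathrm{st}}$, use $MO\langle k+1\rangle$) is right, but the load-bearing Floer-theoretic input is missing, and the route you propose to supply it (direct comparison of non-compact objects via a Koszul dictionary) diverges from the paper's handle-attachment/open-closed strategy and is not itself substantiated.
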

\begin{rem}\label{rem:main_remark}
\begin{enumerate}
    \item \Cref{thm:main_intro} is a direct generalization of a result by Ekholm--Smith \cite[Theorem 1.1]{ekholm2018nearby} who proved it for $X = T^\ast \IR^{n}$.
    \item The assumption that a nearby Lagrangian cocore is disjoint from $\boldsymbol{C}$ is necessary, as is shown by performing Lagrangian surgery on the union of a cotangent fiber and the zero section inside $T^\ast Q$; this gives an exact Lagrangian $L$ such that $\pi|_L \colon L/\partial_\infty L \to Q$ has degree $1$.
    \item \Cref{asmpt:main_asmpt_intro} is a technical assumption and guarantees we are able to work with the spectral Donaldson--Fukaya category with coefficients in $MO\langle k+1\rangle$, see \cref{sec:idea_proof_intro} for details. \Cref{asmpt:main_asmpt_intro} holds whenever $X$ is stably polarizable, but it also holds for more general Weinstein sectors. For instance, there is a Weinstein structure on the total space of the vector bundle $(TS^6)^{\oplus 23} \to S^6$ that is not stably polarizable (cf.\@ \cite[Proposition 4.9]{alvarez2025arborealization}), but this Weinstein manifold and any nearby Lagrangian cocore in it satisfies \cref{asmpt:main_asmpt_intro} (see \cref{rem:non_pol_eg,lem:spheres_mg_brane_even_dim} for a general criterion).
\end{enumerate}
\end{rem}

Via the backwards Liouville flow, nearby Lagrangian cocores in $X$ correspond to exact Lagrangian fillings of the standard Legendrian unknot in a Darboux ball in the contact boundary of the subcritical part of $X$ (see \cref{lem:nearby_filling}). The following result gives topological restrictions on exact Lagrangian fillings of Legendrian slice knots (see \cref{defn:slice}) in the contact boundary of subcritical Weinstein sectors, and is a generalization of \cref{thm:main_intro}.
\begin{thm}[{\cref{thm:main_conc}}]\label{thm:main_conc_intro}
    Let $k\geq 1$ and $n\geq 2k+2$. Suppose that $X^{2n}_0$ is a subcritical Weinstein sector with a chosen Weinstein handlebody decomposition satisfying the hypotheses of \cref{thm:main_intro}. Suppose that $\varLambda \subset \partial_\infty X_0$ is a Legendrian knot contained in a Darboux ball that is Legendrian slice inside it. If $L \subset X_0$ is an exact Lagrangian filling of $\varLambda$, the following composition is null-homotopic
    \[
    L/\varLambda \overset{\pi|_L}{\longrightarrow} \Core X_0  \longrightarrow \Core X_0/(\Core X_0)_{n-k-1}.
    \]
\end{thm}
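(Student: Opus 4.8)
The strategy is to run the proof of \cref{thm:main_intro}, replacing the standard unknot (to which a nearby Lagrangian cocore is asymptotic, by \cref{lem:nearby_filling}) with the general slice knot $\Lambda$ and using the slice disk wherever that proof uses the standard Lagrangian disk of the unknot. \emph{Set-up.} Let $\Delta\subset B$ be the exact Lagrangian disk with $\partial_\infty\Delta=\Lambda$ provided by \cref{defn:slice}, and form $X:=X_0\cup_\Lambda H$ by attaching a critical Weinstein $n$-handle $H$ along $\Lambda$. Since $\Lambda$ lies in a Darboux ball its attaching map retracts to a constant, so $\Core X\simeq\Core X_0\vee S^n$, $X$ has subcritical part $X_0$ with maximal subcritical index $\le n-k$, and $X$ satisfies the hypotheses of \cref{thm:main_intro}; moreover $X$ acquires a single new Lagrangian cocore $C$, with $\Core X\smallsetminus\pi(C)\simeq\Core X_0$. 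The core $D^n_{\mathrm{core}}$ of $H$ caps the filling: $\hat L:=\bar L\cup_\Lambda D^n_{\mathrm{core}}$ (with $\bar L\subset L$ the compact part) is a closed exact Lagrangian in $X$, embedded because $(\bar L)^\circ\subset X_0$ and $(D^n_{\mathrm{core}})^\circ\subset H$ are disjoint; likewise $S_0:=\bar\Delta\cup_\Lambda D^n_{\mathrm{core}}$ is the standard Lagrangian $n$-sphere of $H$. The $MO\langle k+1\rangle$-Maslov data on $X_0$ extends over the single handle $H$, and — using $\Delta$ as the model — the relative data on $L$ extends over $D^n_{\mathrm{core}}$ compatibly, so $\hat L$ and $S_0$ are $MO\langle k+1\rangle$-oriented closed Lagrangian branes. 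Collapsing the cap gives an $MO\langle k+1\rangle$-homology equivalence $\hat L\to\hat L/D^n_{\mathrm{core}}\simeq L/\Lambda$ along which the map $\hat L\xrightarrow{\pi|_{\hat L}}\Core X\twoheadrightarrow\Core X_0$ descends to $\pi|_L$. Hence it suffices to show that
\[
g\colon\ \hat L\ \xrightarrow{\ \pi|_{\hat L}\ }\ \Core X\ \twoheadrightarrow\ \Core X_0\ \twoheadrightarrow\ \Core X_0/(\Core X_0)_{n-k-1}=\textstyle\bigvee_j S^{n-k}_j
\]
is null-homotopic, equivalently that $g^\ast\iota_j=0$ in $\widetilde{MO\langle k+1\rangle}^{\,n-k}(L/\Lambda)$ for each fundamental class $\iota_j$.

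\emph{Floer input.} Work in the spectral wrapped Donaldson--Fukaya category $\mathcal W(X;MO\langle k+1\rangle)$, whose Lagrangian cocores generate. By the topological model for this category, $HW^\ast(C,-\,;MO\langle k+1\rangle)$ is a twisted $MO\langle k+1\rangle$-cochain complex of the homotopy pullback of the based path fibration of $\Core X$ at $\pi(C)$ along $\pi|_{(-)}$; in particular $g^\ast\iota_j$ is extracted from the module $HW^\ast(C,\hat L;MO\langle k+1\rangle)$ over $HW^\ast(C,C;MO\langle k+1\rangle)$. Comparing this module with the one of $S_0$ — they differ only in that $\bar L$, respectively $\bar\Delta$, caps $\Lambda$ — reduces the vanishing of $g^\ast\iota_j$ to the fact that $L$, being supported in a Darboux ball away from the handle, is disjoint from $C$ and asymptotically unlinked from the belt sphere of $H$, so that it contributes to the module exactly what the trivial filling $\bar\Delta$ does. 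Hence $g^\ast\iota_j=0$ for all $j$.

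\emph{Homotopy from $MO\langle k+1\rangle$.} Because $n\ge 2k+2$ we have $n\le 2(n-k)-2$, so by the Freudenthal theorem the set $[L/\Lambda,\bigvee_j S^{n-k}_j]$ lies in the stable range. Since $O\langle k+1\rangle$ is the $k$-connected cover of $O$, the fibre of the unit $\mathbb S\to MO\langle k+1\rangle$ is $k$-connected, so the natural map $[L/\Lambda,S^{n-k}]\to\widetilde{MO\langle k+1\rangle}^{\,n-k}(L/\Lambda)$, $h\mapsto h^\ast\iota$, is injective ($L/\Lambda$ being an $n$-complex). With the previous step this forces $g$ — and therefore the composite $L/\Lambda\to\Core X_0\to\Core X_0/(\Core X_0)_{n-k-1}$ — to be null-homotopic.

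The crux is the Floer input: setting up the spectral wrapped Donaldson--Fukaya category with $MO\langle k+1\rangle$-coefficients over the (possibly non-stably-polarizable) sector $X$ — the role of \cref{asmpt:main_asmpt_intro} — and carrying out the comparison of $\hat L$ with the reference sphere $S_0$, through which the elementary vanishing of $L$'s Floer cohomology against the cocores controls the homotopy invariant $g^\ast\iota_j$. The slice hypothesis enters in the set-up: the disk $\Delta$ is what makes $\hat L$ an $MO\langle k+1\rangle$-brane carrying the correct relative Maslov data and what supplies the trivial reference $S_0$ against which $L$ is measured; a filling of a non-slice Legendrian sphere admits no such reduction.
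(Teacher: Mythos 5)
There is a genuine gap in your proposal: you never establish that $\widehat L$ is simply connected, and this is the decisive extra difficulty that makes \cref{thm:main_conc} harder than \cref{thm:main}. In the nearby-cocore case (\cref{lma:nearby_cocore_is_a_disk}) simple connectivity of $L$ is deduced from the fact that the Chekanov--Eliashberg dg-algebra of the standard unknot has a single generator in negative degree, which makes Ekholm's duality argument (\cite[Theorem 70]{ekholm2017duality}) applicable. For a general Legendrian slice knot $\varLambda$, the CE dg-algebra can be arbitrarily complicated and that argument is unavailable. The paper instead shows $\widehat L$ is simply connected by working in Abouzaid's enlarged Fukaya category $\sS(\widehat X_0)$ of Lagrangians with $\IF_2$-local systems, exploiting the facts that the critical cocore $F$ split-generates and that $HW^{-\bullet}(F,F;\IF_2)\cong H_\bullet(\widehat C;\IF_2)$ is supported in non-negative degrees, to prove that every $\IF_2$-local system on $\widehat L$ is trivial. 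Your proposal implicitly assumes $\widehat L$ is a homotopy sphere when you take a tangential $O\langle k+1\rangle$-structure and an $MO\langle k+1\rangle$-fundamental class; without the $\pi_1$ computation this is unjustified, and the Maslov-data lemmas (\cref{lem:spheres_mg_brane_even_dim}, \cref{lem:polariz_mo_maslov}) you would need to invoke require $\widehat L$ to be a homotopy sphere.

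Separately, your ``Floer input'' paragraph is not a valid argument as stated. You appeal to a ``topological model for $HW^\ast(C,-;MO\langle k+1\rangle)$'' as a twisted cochain complex of a based path fibration, which is not something the paper establishes with general ring-spectrum coefficients, and the claim that disjointness of $L$ from the cocores together with asymptotic unlinking forces $\widehat L$ to ``contribute exactly what $\bar\Delta$ does'' does not give a Floer-theoretic isomorphism. What the paper actually does is prove $\widehat L\cong\widehat C$ in $\sW(\widehat X_0;MO\langle k+1\rangle)$ via \cref{lem:closures_quiv} (which rests on the Yoneda/twisting argument and the fact that both Lagrangians meet the cocore $F$ geometrically once), and then uses the spectral open-closed map (\cref{thm:oc_fund_classes}) to transport the fundamental class, after which $\widehat\pi_\ast[\widehat L]=\widehat\pi_\ast[\widehat C]=0$ and \cref{lem:DetectingNullHomotopy_gen} finishes the proof. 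Your Freudenthal-range argument at the end is essentially the content of \cref{lem:DetectingNullHomotopy_gen} and is fine, but it only engages once the Floer comparison and the homotopy-sphere statement are in place; those two steps are exactly what is missing.
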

\begin{rem}
    \begin{enumerate}
        \item Because nearby Lagrangian cocores in $X$ correspond to exact Lagrangian fillings of $\varLambda_0 \subset \partial_\infty X_0$ (see \cref{lem:nearby_filling}), our main theorem \cref{thm:main_intro} is equivalent to \cref{thm:main_conc_intro} in the case $\varLambda$ is the standard (max-tb) Legendrian unknot. The simplest non-trivial Legendrian knot that \cref{thm:main_conc_intro} applies to is the max-tb Legendrian representative of the mirror of $9_{46}$, see e.g.\@ \cite{cornwell2016obstructions}.
        \item Combining \cref{thm:main_conc_intro} with \cref{thm:unknotting_intro} yields that there is a unique smooth isotopy class of exact Lagrangian fillings of Legendrian knots that contained in a Darboux ball and are Legendrian slice in it, in some collection of subcritical Weinstein sectors.
    \end{enumerate}
\end{rem}

\subsection{Idea of the proof of the main technical result}\label{sec:idea_proof_intro}

We explain the idea of the proof of \cref{thm:main_intro}. We denote by $X_0$ the subcritical part of $X$, and denote its contact boundary by $\partial_\infty X_0$. A nearby Lagrangian cocore $L \subset X$ is equal to the shifted Lagrangian cocore $C$ outside of a compact subset of $X$. Via the backwards Liouville flow, $L$ and $C$ become two exact Lagrangian fillings of the unknot in a Darboux ball $\varLambda_0 \subset \partial_\infty X_0$. Using classical Floer theory, this implies that $L$ must be a homotopy disk (and in fact diffeomorphic to a disk if $n\neq 4$), see \cref{lma:nearby_cocore_is_a_disk}. Attach a critical Weinstein handle to $X_0$ along $\varLambda_0$ and denote the resulting Weinstein sector by $\widehat X_0$. Attaching the core disk of the critical Weinstein handle to both $L$ and $C$ produces two exact Lagrangian homotopy spheres that we denote by $\widehat L$ and $\widehat C$, respectively.

From this point on we study objects supported on the exact Lagrangians $\widehat C$ and $\widehat L$ in the spectral wrapped Donaldson--Fukaya category $\sW(\widehat X_0;MO\langle k+1\rangle)$ with coefficients in $MO\langle k+1\rangle$; such spectral lifts of the wrapped Donaldson--Fukaya category were defined and studied in \cite{large2021spectral,asplund2024nearby,porcelli2023bordism,porcelli2024spectral,porcelli2025bordism,porcelli2025open-closed}. We use coefficients in the Thom spectrum $MO\langle k+1\rangle$ associated to the $k$-connected cover $O\langle k+1\rangle \to O$ of the orthogonal group. By the Pontryagin--Thom isomorphism, the homotopy groups of $MO\langle k+1\rangle$ is the bordism group of manifolds whose stable tangent bundles admit a reduction of their structure group to $O\langle k+1\rangle$; for instance $O\langle 1 \rangle = SO$ and $O\langle 2 \rangle = O\langle 3\rangle = Spin$. Namely, the homotopy groups of $MO\langle 1\rangle$ and $MO\langle 2\rangle$ are isomorphic to the oriented and spin bordism groups, respectively.

Now, \cref{asmpt:main_asmpt_intro} is a technical assumption (see \cref{dfn:maslov_data,dfn:relative_brane}) that guarantees that the exact Lagrangians $\widehat L$ support objects in $\sW(\widehat X_0;MO\langle k+1\rangle)$. Our main technical lemma is that they in fact support \emph{isomorphic} objects.

\begin{lem}[{\cref{lem:closures_quiv}}]
    Suppose that $\widehat X_0$ admits $MO\langle k+1\rangle$-Maslov data. Given any choice of $MO\langle k+1\rangle$-Maslov data on $\widehat L$, there exists a choice of $MO\langle k+1\rangle$-Maslov data on $\widehat C$ such that $\widehat L \cong \widehat C$ in $\sW(\widehat X_0;MO\langle k+1\rangle)$.
\end{lem}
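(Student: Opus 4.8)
The plan is to exhibit a Hamiltonian isotopy (after a suitable deformation of branes) that carries $\widehat C$ onto $\widehat L$, or failing that, to produce a length-zero Floer chain between them that is a cycle and a unit. First I would recall that $L$ and $C$ agree outside a compact set of $X$, so that in $\widehat X_0$ the Lagrangians $\widehat L$ and $\widehat C$ both arise by gluing the core disk of the critical handle to exact Lagrangian fillings of $\varLambda_0$; since $\widehat L$, $\widehat C$ are closed exact Lagrangians in $\widehat X_0$ and $\widehat C$ is a cocore-type sphere (hence a generator in the sense of the generation results for Weinstein sectors), the wrapped Floer cohomology $HW^\ast(\widehat C, \widehat L)$ is concentrated appropriately; the key input from classical Floer theory — already invoked in \cref{lma:nearby_cocore_is_a_disk} — is that $L$ is a homotopy disk, so $\widehat L$ is a homotopy sphere and $HW^\ast(\widehat L,\widehat L) \cong HW^\ast(\widehat C,\widehat C)$ as the relevant ground-ring (the sphere cohomology tensored over the coefficient spectrum). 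The strategy is then to show that the open-string module these determine over the spectral wrapped category forces an equivalence.

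The main step is the brane-matching. $\widehat X_0$ admits $MO\langle k+1\rangle$-Maslov data by hypothesis, and $\widehat L$ is given relative $MO\langle k+1\rangle$-Maslov data by \cref{asmpt:main_asmpt_intro}. I would argue that the obstruction to equipping $\widehat C$ with relative $MO\langle k+1\rangle$-Maslov data making it isomorphic to $\widehat L$ lives in a cohomology group of $\widehat C \simeq S^n$ with coefficients built from $O\langle k+1\rangle \to O$ and the Maslov data; since $n \geq 2k+2$ and $\widehat C$ is a sphere, the relevant obstruction/difference class lies in $H^\ast(S^n; \pi_\ast(\text{fiber}))$, and the connectivity of $O\langle k+1\rangle \to O$ together with the dimension bound makes this group vanish or at least makes any two choices related by a self-equivalence of the spectral category. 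Concretely: the space of relative $MO\langle k+1\rangle$-Maslov structures on a given Lagrangian is a torsor over maps into a classifying space whose low homotopy is controlled by $BO\langle k+1\rangle$, and on an $n$-sphere with $n \geq 2k+2$ the comparison between the structure inherited from $\widehat L$'s filling and any a priori structure on $\widehat C$ is unobstructed. This is where the numerical hypothesis $n \geq 2k+2$ does its work, exactly as in Ekholm--Smith.

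Having matched branes, I would produce the isomorphism itself. Both $\widehat L$ and $\widehat C$ restrict (outside the critical handle) to fillings that are Hamiltonian isotopic to the trivial filling in the Darboux ball — $\widehat C$ tautologically, and $\widehat L$ because a nearby Lagrangian filling of the standard Legendrian unknot in a ball, being a homotopy disk with the correct behavior at infinity, is moved by the cobordism-map formalism to the same object; alternatively, and more robustly, I would construct a closed degree-zero element of $CW^\ast(\widehat C, \widehat L;MO\langle k+1\rangle)$ from the single intersection point created by a small Hamiltonian pushoff realizing the handle attachment, check it is a cocycle (the relevant holomorphic strips have expected dimension negative or are cancelled by the brane data), and check that the induced map on $HW^\ast(\widehat C, -)$ is an isomorphism by comparing with the self-Floer cohomology computed above and invoking that $\widehat C$ generates — so an object on which $\widehat C$ acts by an equivalence is equivalent to $\widehat C$.

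The hard part will be the brane-matching step: keeping careful track of the $MO\langle k+1\rangle$-Maslov data along the gluing of the critical-handle core disk, and verifying that the discrepancy between the structure pulled back from $\widehat L$ and the natural structure on the cocore sphere $\widehat C$ is killed by the dimension bound $n \geq 2k+2$ rather than merely being ambiguous. Making this precise requires the obstruction theory for relative Maslov data developed via \cref{dfn:maslov_data,dfn:relative_brane}, and the identification of the relevant coefficient system as one whose homotopy vanishes in the range $\leq n$ on a sphere, using that $O\langle k+1\rangle \to O$ is $k$-connected.
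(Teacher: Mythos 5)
Your proposal shares the paper's broad strategy — match the brane data on $\widehat C$ to that of $\widehat L$, then conclude an isomorphism from Floer theory and generation — but several of the concrete steps you propose would not work as stated.

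The most serious gap is the construction of the isomorphism itself. You propose to build a degree-zero cocycle in $CW^\ast(\widehat C,\widehat L)$ ``from the single intersection point created by a small Hamiltonian pushoff.'' But $\widehat C$ and $\widehat L$ do not intersect in a single point: they both contain the entire core disk of the attached critical handle $H$, and the fillings $L,C \subset X_0$ can intersect arbitrarily in the interior. After a generic perturbation the chain complex $CW^\ast(\widehat C,\widehat L)$ can have many generators, so ``pick the obvious one and check it is a cocycle'' is not available. The object that has a \emph{single} geometric intersection with both $\widehat L$ and $\widehat C$ is the cocore $F$ of the attached handle, and this is where the paper routes the argument: $HW(F,\widehat C)$ and $HW(F,\widehat L)$ are rank-one free over $MO\langle k+1\rangle$ because $|\Ob(\sC\sW(F,\widehat C))| = |\Ob(\sC\sW(F,\widehat L))| = 1$. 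These are then compared as $(\varSigma^\infty_+\varOmega\widehat C \wedge MO\langle k+1\rangle)$-modules via \cref{lma:loop_fiber}, and the isomorphism in $\sW(\widehat X_0;MO\langle k+1\rangle)$ is recovered by fully-faithfulness of the Yoneda functor $\sY_{\varOmega\widehat C}$ (which rests on the cocore-generation result of Ganatra--Pardon--Shende and Chantraine--Dimitroglou Rizell--Ghiggini--Golovko, the Abouzaid generation criterion, and a Whitehead argument). None of this appears in your proposal; the appeal to $\widehat C$ ``generating'' is closer to the right idea but misattributes the generation role (the \emph{cocore} generates, and what one actually needs is that the Yoneda module functor is fully faithful).

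The brane-matching step is also not quite right. You frame it as an obstruction that vanishes because $O\langle k+1\rangle\to O$ is $k$-connected and $n\geq 2k+2$. The paper's argument is the opposite of a vanishing argument: the choices of $MO\langle k+1\rangle$-Maslov data on $\widehat C$ form a torsor over $[\widehat C,\Pic(MO\langle k+1\rangle)]$ (not over something built from $BO\langle k+1\rangle$), and the proof exploits the \emph{nontriviality} of this torsor — one compares the map $(\varOmega\widehat C)_+ \to BGL_1(MO\langle k+1\rangle)$ classifying the module twist of $HW(\varOmega\widehat C,\widehat C)$ with the one for $HW(\varOmega\widehat C,\widehat L)$, and absorbs the difference into a new choice $\widehat C^\#$. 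If the torsor were trivial you could not make the adjustment. Relatedly, the hypothesis $n\geq 2k+2$ is not what makes this lemma work (it enters elsewhere in the paper, e.g.\ to grant existence of Maslov data on a homotopy sphere and in the $\pi_n(S^{n-k}) \cong \pi_k^{\mathrm{st}}$ step); here what matters is only that $\widehat C$ is a sphere, so that $[\widehat C,\Pic(MO\langle k+1\rangle)]$ can be compared with $[\varOmega\widehat C, BGL_1(MO\langle k+1\rangle)]$. Finally, the opening suggestion that $\widehat C$ and $\widehat L$ might be Hamiltonian isotopic is exactly what cannot be assumed; if that were known the whole paper would be unnecessary.
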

\begin{rem}
	\cref{lem:closures_quiv} can also be proved by appealing to \cite[Theorem 1.2]{porcelli2025bordism} which uses obstruction theoretic techniques. Our proof relies instead on geometric arguments from \cite{asplund2024nearby} and \cref{subsec:subcrit_handles}, but is restricted by the geometry of the present situation.
\end{rem}

Note that $\widehat L$ is a homotopy spheres of sufficiently high dimension, so its stable tangent bundles admits a reduction of its structure groups to $O\langle k+1\rangle$, and a choice of such induces a choice of $MO\langle k+1\rangle$-fundamental class which we denote by $[\widehat L]$. Using a spectral analog of the open-closed map in Floer theory (see \cite{asplund2024nearby,porcelli2023bordism,porcelli2024spectral,porcelli2025open-closed}), we obtain the following.

\begin{lem}[{\cref{thm:oc_fund_classes}}]\label{lem:homologous_intro}
    There exists a choice of $MO\langle k+1\rangle$-fundamental class $[\widehat C]$, such that $[\widehat L] = [\widehat C] \in H_n(\widehat X_0;MO\langle k+1\rangle)$.
\end{lem}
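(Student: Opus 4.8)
The plan is to feed the isomorphism $\widehat L \cong \widehat C$ of \cref{lem:closures_quiv} into the spectral open--closed map, exploiting the principle that the open--closed image of the unit of a closed Lagrangian records its ambient fundamental class. Write $R = MO\langle k+1\rangle$. First I would recall from \cite{large2021spectral,asplund2024nearby,porcelli2023bordism,porcelli2024spectral,porcelli2025open-closed} the relevant structure: for each object $K$ of $\sW(\widehat X_0;R)$ there is a natural ``inclusion of an object'' map $\Hom_{\sW(\widehat X_0;R)}(K,K) \to \mathrm{THH}(\sW(\widehat X_0;R))$, together with a spectral open--closed map $\mathrm{OC}$ on $\mathrm{THH}(\sW(\widehat X_0;R))$, which I would package into an assignment carrying the unit $e_K$ to a class $\mathrm{OC}(e_K) \in H_n(\widehat X_0;R)$ depending only on the homotopy class of $e_K$. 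The single geometric input I would import from \emph{loc.\ cit.} is that when $K$ is represented by a \emph{closed} exact Lagrangian equipped with its $R$-brane structure, $\mathrm{OC}(e_K)$ is the pushforward $\iota_*[K] \in H_n(\widehat X_0;R)$ of the brane fundamental class of $K$ along $\iota \colon K \hookrightarrow \widehat X_0$.

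Next I would record the purely algebraic step: $\mathrm{OC}(e_K)$ depends only on the isomorphism class of $K$ in $\Ho\,\sW(\widehat X_0;R)$. Indeed, if $f \colon K \to K'$ and $g \colon K' \to K$ are mutually inverse, then $g \circ f \simeq e_K$ and $f \circ g \simeq e_{K'}$, so $[e_K] = [g \circ f]$ and $[e_{K'}] = [f \circ g]$ in $\mathrm{THH}(\sW(\widehat X_0;R))$, while the trace (cyclic) symmetry of Hochschild homology gives $[g \circ f] = [f \circ g]$; hence $e_K$ and $e_{K'}$ have the same image in $\mathrm{THH}$, and \emph{a fortiori} under $\mathrm{OC}$.

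Then I would conclude. Having fixed the $R$-Maslov datum on $\widehat L$, use \cref{lem:closures_quiv} to choose an $R$-Maslov datum on $\widehat C$ with $\widehat L \cong \widehat C$ in $\sW(\widehat X_0;R)$. Since $\widehat C$ is a homotopy $n$-sphere with $n \geq 2k+2 \geq 4$, it is stably parallelizable, in particular $R$-orientable, and this datum (via \cref{asmpt:main_asmpt_intro}) equips $\widehat C$ with an $R$-brane structure, hence with a brane fundamental class whose image $\iota_*[\widehat C] \in H_n(\widehat X_0;R)$ is the class $[\widehat C]$ of the statement. Combining the previous two paragraphs,
\[
[\widehat L] = \mathrm{OC}(e_{\widehat L}) = \mathrm{OC}(e_{\widehat C}) = [\widehat C].
\]

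The main obstacle is the geometric input quoted in the first paragraph, together with the precise packaging of $\mathrm{OC}$ as an $H_n(\widehat X_0;R)$-valued invariant: one must verify that the open--closed image of the unit of a closed exact Lagrangian brane lies in (the image of) $H_n(\widehat X_0;R)$ rather than in a genuinely higher part of symplectic homology --- this is the contribution of the constant, zero-energy moduli spaces --- and that the $R$-orientation on $K$ read off from the brane structure is exactly the one defining $[K]$, so that no sign or unit ambiguity intervenes. Both points are treated in \cite{asplund2024nearby,porcelli2023bordism,porcelli2024spectral,porcelli2025open-closed}; the only extra remark needed here is the automatic one that the isomorphism $\widehat L \cong \widehat C$ supplied by \cref{lem:closures_quiv} is an isomorphism of $R$-branes, hence compatible with the induced orientations.
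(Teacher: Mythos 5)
Your overall architecture is exactly the paper's: first invoke \cref{lem:closures_quiv} to obtain $\widehat L \cong \widehat C$ in $\sW(\widehat X_0;MO\langle k+1\rangle)$ for a suitable choice of Maslov data on $\widehat C$, then feed that isomorphism into the open--closed map together with the fact that $\sO\sC(\eta_K)$ recovers the $R$-fundamental class of a closed Lagrangian. The paper packages the second step as a citation of \cref{thm:oc_fund_classes} (deferred to \cite[Theorem~9.14]{asplund2024nearby}), whereas you unfold it as a $\mathrm{THH}$ trace argument ($[g\circ f] = [f\circ g]$ under the map $\Hom(K,K) \to \mathrm{THH}$). That is a legitimate and arguably cleaner way to formulate the isomorphism-invariance of the open--closed image of the unit.

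Two cautionary remarks are in order. First, the category $\sW(\widehat X_0;R)$ defined in \cref{dfn:spectral_df_category} is enriched only over $\Ho\mod R$, so $\mathrm{THH}(\sW(\widehat X_0;R))$ is not literally defined at that level of structure; the trace argument must be run at the level of the flow-categorical (or genuinely spectral-categorical) model, where the open--closed bimodule of \cite[Definition~9.9]{asplund2024nearby} actually lives. Your citations cover this, but it is worth being explicit that the Donaldson--Fukaya category as stated in the paper does not by itself support a $\mathrm{THH}$. Second, the phrase ``the $R$-orientation on $K$ read off from the brane structure'' is misleading: the $R$-Maslov data (brane structure) and the $R$-orientation are independent choices, and the content of item~(ii) of \cref{thm:oc_fund_classes} is precisely that one can \emph{find} an $R$-orientation on $\widehat C$ making the fundamental classes agree --- it is not automatic from the brane isomorphism. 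The equality $\sO\sC(\eta_{\widehat C}) = \iota_*[\widehat C]_R$ depends on that choice, so ``no sign or unit ambiguity intervenes'' is exactly the nontrivial assertion being proved, not a free remark. With these clarifications your proof tracks the paper's.
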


Since the attaching locus $\varLambda_0$ of the critical Weinstein handle $\widehat X_0$ belongs to a Darboux ball in $\partial_\infty X_0$, the restriction of the retract $X_0 \to \Core X_0$ to $\varLambda_0$ is null-homotopic, and hence allows us to define a continuous extension to a ``fold map'' $\widehat \pi \colon \widehat X_0 \to \Core X_0$, see \cref{sec:fold_map}. 
\begin{proof}[Sketch of the proof of \cref{thm:main_intro}]
    Because $\widehat \pi(\widehat C) \subset \Core X_0$ is contractible, we have $\widehat \pi_\ast[\widehat C] = 0$, and hence \cref{lem:homologous_intro} implies that 
    \[
    \widehat \pi_\ast[\widehat L] = 0 \in \widetilde H_n(\Core X_0;MO\langle k+1\rangle).
    \]
    The homotopy type of the quotient $\Core X_0/(\Core X_0)_{n-k-1}$ is that of a wedge sum of copies of $S^{n-k}$, and by our assumption that $n\geq 2k+2$ it suffices to show that the composition
    \[
    f_{\widehat{L}} \colon \widehat L \longrightarrow \Core X_0 \longrightarrow \Core X_0/(\Core X_0)_{n-k-1} \simeq \bigvee_i S^{n-k} \overset{q_i}{\longrightarrow} S^{n-k},
    \]
    if null-homotopic for any $i$, see \cref{lem:HomotopyOfWedge}. Finally, by \cref{lma:fundamental_class}, there are isomorphisms
    \[
    \widetilde H_n(S^{n-k};MO\langle k+1\rangle) \cong \pi_k(MO\langle k+1\rangle) \cong \pi_k^{\mathrm{st}} \cong \pi_n(S^{n-k}), \quad n \geq 2k+2,
    \]
    such that the homotopy class of $f_{\widehat{L}}$ in $\pi_n(S^{n-k})$ coincides with the image of $\widehat \pi_\ast [\widehat L]$ under the composition of these isomorphisms, which finishes the proof.
\end{proof}
\begin{rem}\label{rem:wedge_sum_main_thm}
    In many cases relevant to \cref{thm:unknotting_intro}, we have $\Core X \smallsetminus \pi(\boldsymbol C) \simeq \bigvee_i S^{n-k_i}$. The proof of \cref{thm:main_intro} also reveals that every composition
    \[
    L/\partial_\infty L \overset{\pi|_L}{\longrightarrow} \Core X \smallsetminus \pi(\boldsymbol C) \simeq \bigvee_i S^{n_i} \overset{q_i}{\longrightarrow} S^{n-k_i}
    \]
    is null-homotopic.
\end{rem}

The idea to infer the smooth unknotting result \cref{thm:unknotting_intro} is to use \cref{thm:main_intro} and the consequences of its proof described in \cref{rem:wedge_sum_main_thm}, and to apply the $h$-principle \cite[Théorème d'existence]{haefliger1961plongements}.

\subsection{Outline}

In \cref{sec:maslov_data_disks} we give definitions of $R$-Maslov data on $X$ and $L$, and provide the necessary definitions of spaces of abstract disks required to orient relevant Floer theoretic moduli spaces of disks with $R$-orientations. We also identify $H\IZ$-Maslov data with the usual $\IZ$-brane structures in Floer theory. In \cref{sec:spectral_fuk} we review the definition of the wrapped Donaldson--Fukaya category with coefficients in a commutative ring spectrum. In \cref{sec:nearby_cocore} we prove that after capping off $X_0$ by a critical Weinstein handle, the nearby Lagrangian cocore and the shifted Lagrangian cocore support isomorphic objects in the spectral Donaldson--Fukaya category. In \cref{sec:proof} we prove our main results. In \cref{sec:fold_map} we give a definition of the fold map $\widehat \pi \colon \widehat X_0 \to \Core X_0$. In \cref{sec:detecting_null-homotopies} we explain how null-bordisms with tangential $O\langle k+1\rangle$-structures may be upgrade to null-homotopies in the present situation. In \cref{sec:geom_background} we provide some background on Weinstein sectors and handle attachments.

\subsection*{Acknowledgments}
We thank Georgios Dimitroglou Rizell for helpful discussions, and JA thanks Ceyhun Elmacioglu for helpful conversations. We thank Laurent Côté, Georgios Dimitroglou Rizell, Noah Porcelli, and Ivan Smith for comments on a draft of this paper.

Part of this work was completed at a Summer Collaborators Program hosted by the Institute for Advanced Study; we are grateful for their hospitality and financial support. JA was partially supported by the Knut and Alice Wallenberg Foundation and the Swedish Royal Academy of Sciences. YD thanks Max Planck Institute for Mathematics in Bonn and Institute for Advanced Study for their hospitality and financial support. YD was supported by the National Science Foundation
grant DMS-2424441. AP was supported by a National Science Foundation Postdoctoral Research Fellowship through NSF grant DMS-2202941.

\section{Maslov data and spaces of abstract disks}\label{sec:maslov_data_disks}
Throughout this section, we let $R$ be a commutative ring spectrum. In this section we review definitions of $R$-Maslov data for symplectic manifolds and Lagrangian manifolds therein. We also define abstract spaces of disks that are required to obtain $R$-orientations on appropriate index bundles of Cauchy--Riemann operators used in Floer theory.
\subsection{Grading and $R$-Maslov data}\label{sec:maslov}
   Denote by $\Pic(R)$ the maximal $\infty$-groupoid (equivalently the space) of invertible objects in the symmetric monoidal $\infty$-category of $R$-modules. It is an infinite loop space such that there is a fiber sequence
    \begin{equation}\label{eq:fiber_seq_pic}
        BGL_1(R) \longrightarrow \Pic(R) \longrightarrow \pi_0(\Pic(R)).
    \end{equation}
    In particular, $\varOmega\Pic(R) \cong GL_1(R)$. For any map of commutative ring spectra $S \to R$, there is a map of infinite loop spaces
    \[ \Pic(S) \longrightarrow \Pic(R).\]
    In particular, for any commutative ring spectrum $R$, there exists a map $\Pic(\IS) \to  \Pic(R)$ of infinite loop spaces. Recall that $J$-homomorphism is a map of infinite loop spaces
    \[ J \colon BO \longrightarrow \Pic(\IS).\]
    Pre-composing with the forgetful map $BU \to BO$ we obtain the \emph{complex} $J$-homomorphism
    \[ J_\IC \colon BU \longrightarrow \Pic(\IS).\]
    Consider the composition
    \begin{equation}\label{eq:maslov_fiber}
            \begin{tikzcd}[sep=5mm]
                B(U/O) \ar[r,"\simeq","\mathrm{Bott}_\IR"'] & B^3O \times B^2\IZ \rar{B^2J} & B^2\Pic(\IS) \rar & B^2\Pic(R)
            \end{tikzcd}
    \end{equation}
    where the $\mathrm{Bott}_\IR$ is the real Bott periodicity isomorphism defined as the twice delooping of the index map (cf.\@ \cite[Section 5]{atiyah1966ktheory}), and $B^2J$ is the double delooping of the $J$-homomorphism. Denote the homotopy fiber of \eqref{eq:maslov_fiber} by $B(U/O)^\# \to B(U/O)$. 
    \begin{defn}[$R$-Maslov data]\label{dfn:maslov_data}
        Let $X$ be a symplectic manifold. The choice of \emph{$R$-Maslov data} for $X$ is a lift of the composition
        \[
        \Gr(X) \colon X \overset{TX}{\longrightarrow} BU \longrightarrow B(U/O)
        \]
        to a map $\Gr(X)^\# \colon X \to B(U/O)^\#$. Equivalently, it is a choice of a null-homotopy of the following composition
        \begin{equation}\label{eq:maslov_data}
            \begin{tikzcd}[sep=5mm]
                X \rar{TX} & BU \rar & B(U/O) \rar{\simeq}[swap]{\mathrm{Bott}_\IR} & B^3O \times B^2\IZ \rar{B^2J} & B^2\Pic(\IS) \rar & B^2\Pic(R).
            \end{tikzcd}
    \end{equation}
    \end{defn}
    \begin{rem}\label{rem:polariz}
        A stable polarization of $X$ is a lift of $TX \colon X \to BU$ along the complexification map $BO \to BU$ to a map $X \to BO$. The canonical null-homotopy of the composition $BO \to BU \to B(U/O)$ then induces a choice of $R$-Maslov data for $X$.
    \end{rem}
    \begin{asmpt}\label{asmpt:bott_compat}
        The following diagram is commutative as a diagram of infinite loop spaces
        \begin{equation}\label{eq:bott_compat}
            \begin{tikzcd}[sep=scriptsize]
                \varOmega^2 BU \dar \ar[r,"\simeq","\ind_{\IC}"'] & BU \times \IZ \rar{J_{\IC}} \dar &\Pic(\IS) \\
                \varOmega^2B(U/O) \ar[r,"\simeq","\ind_\IR"'] & BO \times \IZ \urar[out=5,in=225][swap]{J}
            \end{tikzcd}.
        \end{equation}
    \end{asmpt}
    \begin{rem}
        It would be sufficient for our purposes if \eqref{eq:bott_compat} is a commutative diagram of $E_2$-spaces. It was proven that \eqref{eq:bott_compat} is commutative on the level of $E_1$-spaces in \cite[Proposition 2.18]{bonciocat2025floer}. A proof of \cref{asmpt:bott_compat} would follow if our Bott maps the same as those appearing in e.g.\@ \cite[Chapter I]{May} (cf.\@ maps appearing in \cite[(5.3.2)]{rognes2008galois} or \cite[p.\@ 161]{rognes2003smooth}).
        
        Under \cref{asmpt:bott_compat}, the map \eqref{eq:maslov_data} is homotopic to the composition
        \[
            \begin{tikzcd}[sep=7mm]
                X \rar{TX} & BU \rar{\simeq}[swap]{\mathrm{Bott}_\IC} & B^3U \times B^2\IZ \ar[r,"B^2J_{\IC}"] & B^2\Pic(\IS) \rar & B^2\Pic(R)
            \end{tikzcd}.
        \]
    \end{rem}
    \begin{rem}\label{rem:twice_chern}
        In case $R$ is any commutative ring spectrum such that $\pi_0(\Pic(R)) = \IZ$, the composition 
        \begin{equation}\label{eq:twice_chern_map}
            \begin{tikzcd}[sep=scriptsize]
                X \rar{TX} & BU \rar & B(U/O) \rar{\text{\eqref{eq:maslov_fiber}}} & B^2\Pic(R) \rar & B^2\IZ
            \end{tikzcd}
        \end{equation}
        is precisely represented by $2c_1(X) \in H^2(X;\IZ)$ (cf.\@ \cite[Remark 5.2]{ganatra2024microlocal}, \cite[Theorem 4.18]{cote2022perverse}). Choosing a null-homotopy for this map is equivalent to choosing a quadratic complex volume form on $TX$, i.e.\@, a trivialization of $(\varLambda_\IC^{\mathrm{top}}TX)^{\otimes 2}$, see \cite[(11j)]{seidel2008fukaya}. In particular, the collection of homotopy classes of quadratic complex volume forms on $TX$ forms a torsor over $H^1(X;\IZ)$.
    \end{rem}

    \begin{lem}\label{lem:complex_ori_maslov}
            Let $R$ be a complex oriented commutative ring spectrum. If $2c_1(X) = 0$, then $X$ admits a canonical choice of $R$-Maslov data.
    \end{lem}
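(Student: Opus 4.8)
To prove \cref{lem:complex_ori_maslov}, the plan is to split the composition \eqref{eq:maslov_data} — a null-homotopy of which is, by \cref{dfn:maslov_data}, precisely an $R$-Maslov datum — into a ``grading'' piece detected by $2c_1(X)$ and a ``brane'' piece detected by the complex $J$-homomorphism. The hypothesis $2c_1(X)=0$ will kill the first piece and the complex orientation of $R$ will kill the second.

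For the grading piece I would argue as follows. The twice-delooped fiber sequence \eqref{eq:fiber_seq_pic} reads $B^2BGL_1(R)\to B^2\Pic(R)\to B^2\pi_0(\Pic(R))$, with $B^2\pi_0(\Pic(R))$ an Eilenberg--MacLane space in degree $2$. Composing the universal map $BU\to B(U/O)\to B^2\Pic(R)$ of \eqref{eq:maslov_fiber} with the projection to $B^2\pi_0(\Pic(R))$ and computing on $\pi_2$ — where $BU\to B(U/O)$ is multiplication by $2$, $\mathrm{Bott}_\IR$ is an isomorphism, and $B^2J$ followed by the coefficient map acts through $\pi_0(\Pic(\IS))=\IZ\to\pi_0(\Pic(R))$ — shows this composite is classified by the image of $2c_1\in H^2(BU;\IZ)$. (This is the computation behind \cref{rem:twice_chern}, phrased via $\pi_2$ so as not to assume $\pi_0(\Pic(R))=\IZ$.) Hence, after precomposing with $TX$ and using $2c_1(X)=0$, a choice of null-homotopy of $2c_1(X)\colon X\to K(\IZ,2)$ — equivalently, by \cref{rem:twice_chern}, a quadratic complex volume form on $TX$ — lifts \eqref{eq:maslov_data} to a map $f\colon X\to B^2BGL_1(R)$, and moreover trivializes the entire contribution of the $B^2\IZ$-summand, so that (via \cref{asmpt:bott_compat} and the complex Bott equivalence) $f$ is the $B^2$-delooping $B^2J_\IC\colon B^3U\to B^2BGL_1(R)$ of the base-changed complex $J$-homomorphism $J_\IC\colon BU\to BGL_1(R)$, evaluated on $TX$.

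For the brane piece, a complex orientation of $R$ trivializes $J_\IC$ after base change: it furnishes, coherently in all complex vector bundles at once, Thom isomorphisms over $R$, that is, a null-homotopy of the infinite loop map $J_\IC\colon BU\to BGL_1(R)$ (the structured form of the Thom-spectrum description of complex orientations due to Ando, Blumberg, Gepner, Hopkins, and Rezk). Applying $B^2$ to this null-homotopy and precomposing with $TX$ null-homotopes $f$, which together with the grading step null-homotopes \eqref{eq:maslov_data}. The only choice entering the construction is the quadratic complex volume form, a torsor over $H^1(X;\IZ)$ by \cref{rem:twice_chern}; modulo this ambiguity the $R$-Maslov datum produced is canonical, and genuinely canonical when $H^1(X;\IZ)=0$.

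The main obstacle is the delooping in the brane step: one needs the complex orientation to trivialize $J_\IC\colon BU\to BGL_1(R)$ compatibly with the infinite loop structures, so that the trivialization survives the double delooping built into \eqref{eq:maslov_fiber}; this is why one uses the complex orientation of the commutative ring spectrum $R$ in the structured ($E_\infty$) sense rather than as a bare orientation class, and it is the only non-formal ingredient. A secondary, purely bookkeeping, point is the interplay of real and complex Bott periodicity and of the $B^2\IZ$-summand in the grading step, which is cleanest after invoking \cref{asmpt:bott_compat} to work with the complex $J$-homomorphism from the outset; otherwise one compares the two by hand via the realification map, which essentially reproves \cref{asmpt:bott_compat} in the case at hand.
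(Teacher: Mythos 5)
Your proposal is correct and its engine is the same as the paper's: trivialize the ``degree'' summand of \eqref{eq:maslov_data} using $2c_1(X)=0$, then trivialize the remaining piece by observing that a complex orientation kills the complex $J$-homomorphism after base change. But the two proofs organize this differently in a way worth noting. The paper first factors the unit $\IS \to R$ through $MU$, reducing the whole problem to $R = MU$; there the required null-homotopy of $B^3U \to B^3O \to B^3GL_1(MU)$ is the canonical Thom class of $MU$, with nothing to choose or structure to verify. The paper then carries out an explicit diagram chase on $\hofib(2c_1) \to B^3U \times B\IZ/2 \to B^3U$ to control the interaction of the $B^3U$, $B^3O$, and $B^2\IZ$ factors, avoiding any explicit appeal to \cref{asmpt:bott_compat} (though the implicit compatibility of $\mathrm{Bott}_\IC$ and $\mathrm{Bott}_\IR$ with the forgetful map $BU \to B(U/O)$ is used). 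You instead invoke \cref{asmpt:bott_compat} up front to pass to the complex Bott picture and then trivialize $J_\IC \colon BU \to BGL_1(R)$ directly via the Ando--Blumberg--Gepner--Hopkins--Rezk characterization of $E_\infty$-orientations. This buys a cleaner conceptual picture but, as you rightly flag, it forces you to treat the complex orientation of $R$ in the structured $E_\infty$ sense from the start; the paper's detour through $MU$ localizes that input to the universal case. One other point: your closing observation that the datum produced is only canonical after fixing a quadratic complex volume form, hence a torsor over $H^1(X;\IZ)$, is a legitimate refinement of what ``canonical'' means here — the paper's lift $X \to \hofib(2c_1)$ carries the same ambiguity, just less visibly. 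Your glossed-over point is the $B\IZ/2$ correction term appearing in $\hofib(2c_1) \simeq (\text{fib of } B^2\IZ \xrightarrow{\cdot 2} B^2\IZ \text{ over } B^3U)$: the claim that the residual map $f$ is exactly $B^2J_\IC(TX)$ needs the argument, given in the paper's first diagram via the lift $g$, that the map $B^3U \times B\IZ/2 \to B^3O$ factors through $B^3U$. Without that, ``trivializes the entire contribution of the $B^2\IZ$-summand'' is an assertion rather than a deduction.
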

    \begin{proof}
        Since $R$ is complex oriented, the unit map $\IS \to R$ factors through $MU$. The composition
        \[
        BU \overunderset{\simeq}{\mathrm{Bott}_\IC}{\longrightarrow} B^3U \times B^2\IZ \longrightarrow B^3O \times B^2\IZ \longrightarrow B^2\IZ
        \]
        is classified by $2c_1$, and consequently we have an induced commutative diagram
        \[
        \begin{tikzcd}[sep=scriptsize]
            && B^3 U \dar \\
            & B^3U \times B\IZ/2 \urar[bend left=20,dashed]{g} \rar \dar & B^3O \dar \\
            \hofib(2c_1) \rar \urar[bend left=20,dashed]{f} & B^3U \times B^2\IZ \drar[bend right=20] \rar & B^3O \times B^2\IZ \dar \\
            && B^2\IZ
        \end{tikzcd}.
        \]
        The map $B^3U \times B^2\IZ \to B^2\IZ$ in the above diagram is given by the projection composed with the twice delooping of $\IZ \xrightarrow{\cdot 2} \IZ$, and the composition $BU \xrightarrow{\simeq} B^3U \times B^2 \IZ \to B^2\IZ$ in this diagram is the classifying map for $2c_1$, which yields the first lift $f$. The second lift $g$ exists, because the map $B^3U \times B^2 \IZ \to B^3O \times B^2\IZ$ is the product of the third delooping of the forgetful map $U \to O$, and the twice delooping of the map $\IZ \xrightarrow{\cdot 2} \IZ$. Consequently, the map $B^3U \times B\IZ/2 \to B^3O$ factors as $B^3U \times B\IZ/2 \to B^3U \to B^3O$. %\sayYmargin{I don't understand the last sentence.}\sayJmargin{what about now?}

        Thus, we end up with the commutative diagram
        \[
        \begin{tikzcd}[sep=scriptsize]
            & B^3 U \dar \\
            & B^3 O \rar \dar & B^3GL_1(MU) \dar \\
            \hofib(2c_1) \ar[uur,bend left=20,dashed] \rar & B^3O \times B^2\IZ \rar & B^2\Pic(MU)
        \end{tikzcd},
        \]
        where the composition $B^3U \to B^3O \to B^3GL_1(MU)$ is canonically null-homotopic by the complex orientation on $MU$. Consequently the composition $\hofib(2c_1) \to B^2\Pic(MU)$ is canonically null-homotopic, and the result follows.

    \end{proof}
    
\subsection{Grading and $R$-Maslov data on a Lagrangian}\label{sec:maslov_data_lagrangian} 
    Let $L \subset X$ be a Lagrangian submanifold inside a symplectic manifold $X$.
    Suppose that $X$ equipped with $R$-Maslov data $\Gr(X)^\# \colon X \to B(U/O)^\#$, see \cref{dfn:maslov_data}. The composition
    \begin{equation}\label{eq:maslov_resL}
        L \overset{TL}{\longrightarrow} BO \longrightarrow BU \longrightarrow B(U/O) \overset{\text{\eqref{eq:maslov_fiber}}}{\longrightarrow} B^2\Pic(R)
   \end{equation} 
   is canonically null-homotopic since the composition $BO \to BU \to B(U/O)$ is. The restriction of the $R$-Maslov data on $X$ to $L$ and the canonically null-homotopic map \eqref{eq:maslov_resL} therefore correspond to two lifts
    \begin{equation}\label{eq:two_null_homotopies}
        \begin{tikzcd}[sep=1.2cm]
            L \ar[r, shift left, "TL^\#"] \ar[r, shift right, "\Gr(X)^\#|_L"'] & B(U/O)^\# 
        \end{tikzcd}.
    \end{equation}
    They determine a lift
    \begin{equation}\label{eq:lift_maslov_data_L}
        (TL^\# - \Gr(X)^\#|_L) \colon L \longrightarrow B\Pic(R).
    \end{equation}
     \begin{defn}[$R$-Maslov data for a Lagrangian]\label{dfn:R_brane_struct}
        A choice of \emph{$R$-Maslov data} for $L$ is a null-homotopy of $TL^\# - \Gr(X)^\#|_L$. Equivalently, it is a homotopy between the two maps $TL^\#$ and $\Gr(X)^\#|_L$.
    \end{defn}
    \begin{defn}[$R$-brane]\label{eq:r-brane}
        An $R$-brane is an exact conical Lagrangian submanifold $L \subset X$ that stays away from the symplectic boundary of $X$, that is furthermore equipped with a choice of $R$-Maslov data.
    \end{defn}
    \begin{rem}
        The notion of $R$-Maslov data in \cref{dfn:R_brane_struct} was called ``secondary Maslov data'' in \cite{nadler2020sheaf}, and ``brane structure'' in \cite{jin2020microlocal} in the case when $X$ is stably polarizable.
    \end{rem}

    \begin{rem}\label{rem:maslov}
       Let $R$ be any commutative ring spectrum such that $\pi_0(\Pic(R)) = \IZ$. The composition \eqref{eq:twice_chern_map} represents $2c_1(X) \in H^2(X;\IZ)$, and an $R$-Maslov data on $X$ gives a choice of a primitive $m\in C^1(X;\IZ)$ of $2c_1(X)$. When pulling back the map $X \to B^2\IZ$ to $L$, we obtain two null-homotopies of $L \to B^2\IZ$ corresponding to primitives $m|_L, m_0 \in C^1(L;\IZ)$ such that $2c_1(X) = d(m|_L) = d(m_0)$, whose difference $m_0-m|_L \in H^1(L;\IZ)$ is classified by the composition
       \[
       L \xrightarrow{TL^\# - \Gr(X)^\#|_L} B\Pic(R) \longrightarrow B\IZ.
       \]
       This difference is the integer valued Maslov class $\mu_L \in H^1(L;\IZ)$, which gives an integer-valued Maslov index in the sense of \cite{robbin1993maslov}. A choice of $R$-Maslov data on $L$ thus in particular implies that the Maslov class of $L$ vanishes.
    \end{rem}
    
    \begin{rem} Existence of Maslov data as an obstruction to defining $R$-linear Fukaya categories was discussed briefly by Lurie \cite[Section 1.3.1]{lurie2015rotation}. $R$-linear microlocal sheaf categories were studied by Jin--Treumann \cite{JinTr} and Nadler--Shende \cite{nadler2020sheaf}. 
    \end{rem}
    \begin{rem}
        Porcelli--Smith \cite{porcelli2023bordism,porcelli2024spectral} defined a spectral Donaldson--Fukaya category associated to a pair of based spaces $(\varTheta,\varPhi)$ fitting into a commutative diagram
        \[
        \begin{tikzcd}[sep=scriptsize]
            \varTheta \rar \dar & BO \dar{-\otimes \IC} \\
            \varPhi \rar& BU
        \end{tikzcd}.
        \]
        Our notion of $R$-Maslov data on $X$ and $R$-Maslov data on $L$ corresponds to choosing
        \[
        \varPhi = \hofib(BU \to B(U/O) \xrightarrow{\text{\eqref{eq:maslov_fiber}}} B^2\Pic(R)) \quad \text{and} \quad \varTheta = BO,
        \]
        and requiring that the following diagram is (homotopy) commutative
        \[
        \begin{tikzcd}[sep=scriptsize]
            L \dar[hook] \rar & \varTheta \ar[r,"="] \dar & BO \dar{-\otimes \IC} \\
            X \rar &\varPhi \rar & BU
        \end{tikzcd},
        \]
        such that the composition of the top row and the bottom row are homotopic to $TL$ and $TX$, respectively.
        This is precisely the definition of $X$ admitting a $\varPhi$-structure, and $L$ admitting a $\varTheta$-brane structure in the sense of \cite[Section 1.2]{porcelli2024spectral}.
    \end{rem}
    \begin{rem}
        The relation between existence of $\IS$-Maslov data on a Weinstein manifold and the existence of an arborealization of the skeleton was recently studied by Álvarez-Gavela--Large--Ward \cite{alvarez2025arborealization}, see also \cite{alvarez2020positive}.
    \end{rem}

    \begin{notn}
	    \begin{enumerate}
	        \item Fix a map of infinite loop spaces $G \to O$. Denote by $G^c$ the homotopy fiber product $U \times^h_O G$.
            \item Denote by $MG$ and $MG^c$ the Thom spectra $\text{Thom}(BG \to BO)$ and $\text{Thom}(BG^c \to BU \to BO)$, respectively.
	    \end{enumerate}
    \end{notn}
    \begin{defn}
	    We say that $X$ admits a \emph{tangential $MG$-Maslov data} if there exists a lift
	    \[
            \begin{tikzcd}
                && X \ar[lld,dashed,bend right=15,swap,"\exists"] \dar\\
                B^3G \rar & B^3 O  \rar& B^3O \times B^2 \IZ 
            \end{tikzcd}
	    \]
    \end{defn}
    \begin{lem}\label{lem:suff_tangential_maslov}
        Let $k\geq 1$ and let $G = O\langle k\rangle$ denote the $(k-1)$-connected cover of $O$. A Liouville sector $X$ admits tangential $MO\langle k\rangle$-Maslov data if for every $1 \leq j \leq k+3$
        \begin{align*}
            H^j(X;\IZ) &= 0 \text{ if } j \equiv 0,2 \pmod 8, \text{ and}\\
            H^j(X;\IZ/2) &= 0 \text{ if } j \equiv 3,4 \pmod 8.
        \end{align*}
    \end{lem}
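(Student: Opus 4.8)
The strategy is standard obstruction theory applied to the fibration $B^3O\langle k\rangle \to B^3O$. I want to lift the map $X \to B^3O$ (the third delooping of $TX$, which factors through $B^3O \subset B^3 O \times B^2\IZ$ by definition of tangential $MG$-Maslov data) along $B^3 O\langle k\rangle \to B^3 O$. Since $O\langle k\rangle \to O$ is the $(k-1)$-connected cover, its homotopy fiber is the Postnikov truncation $\tau_{\leq k-1} O$, whose homotopy groups in degrees $0,\dots,k-1$ are $\pi_i(O)$. Delooping three times, the fiber of $B^3O\langle k\rangle \to B^3O$ is $B^3(\tau_{\leq k-1}O) = \tau_{[3,\, k+2]}(B^3O)$, a space with homotopy concentrated in degrees $3$ through $k+2$, with $\pi_{j}(B^3 \tau_{\leq k-1}O) = \pi_{j-3}(O)$ for $3 \leq j \leq k+2$. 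The obstructions to a lift therefore live in $H^{j}(X; \pi_{j-3}(O))$ for $3 \leq j \leq k+2$ (primary obstructions and successive ones after partial lifts), and the choices of lift are governed by $H^{j-1}(X; \pi_{j-3}(O))$.

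First I would invoke Bott periodicity to name the coefficient groups: $\pi_i(O)$ is $\IZ/2, \IZ/2, 0, \IZ, 0, 0, 0, \IZ$ for $i \equiv 0,1,2,3,4,5,6,7 \pmod 8$. So the possibly-nonzero obstruction groups among $H^j(X;\pi_{j-3}(O))$ for $j$ in the relevant range are: $H^j(X;\IZ/2)$ when $j - 3 \equiv 0,1 \pmod 8$, i.e. $j \equiv 3, 4 \pmod 8$, and $H^j(X;\IZ)$ when $j-3 \equiv 3, 7 \pmod 8$, i.e. $j \equiv 6, 2 \pmod 8$ (here $2 \pmod 8$ arises from $j-3 \equiv 7 \pmod 8$, i.e. $j \equiv 10 \equiv 2$). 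Rewriting $j \equiv 6 \pmod 8$ as $j \equiv -2 \equiv 6$: the $\IZ$-coefficient obstructions occur for $j \equiv 0, 2 \pmod 8$ once we also account for the fact that the obstruction to extending a lift over the $j$-skeleton actually is detected one degree up — I need to be careful here and match the indexing to the hypothesis. The hypothesis asks for vanishing in degrees $1 \leq j \leq k+3$: the extra degree ($k+3$ rather than $k+2$) comes precisely from the shift between "obstruction to a lift over the $j$-cells" lying in $H^{j+1}$, versus "indeterminacy of the lift" lying in $H^j$. So I would carefully track whether I need $H^j$ or $H^{j+1}$ and confirm the stated congruence classes $\{0,2\}$ for $\IZ$ and $\{3,4\}$ for $\IZ/2$ match after this shift.

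Concretely, the steps are: (1) identify the fiber of $B^3 O\langle k\rangle \to B^3 O$ and its homotopy groups via Bott periodicity, as above; (2) run the obstruction-theory spectral sequence / Postnikov tower of this fiber, noting that the $j$-th stage contributes an obstruction class in $H^{j}(X;\pi_{j-3}(O))$ to lifting and that distinct lifts at that stage form a torsor over $H^{j-1}(X;\pi_{j-3}(O))$; (3) observe that a lift exists if \emph{all} these groups vanish in the range $3 \leq j \leq k+2$ (for obstructions) together with the one-higher degree needed to rule out obstructions arising at the top of the tower, giving the bound $j \leq k+3$; (4) translate "$\pi_{j-3}(O)$ nonzero" into the congruence conditions on $j$ and read off exactly the hypothesis as stated. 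The main obstacle — really the only subtlety — is bookkeeping: getting the degree shifts and the mod-$8$ congruences to line up exactly with the asserted conditions, in particular confirming that the $\IZ$-valued obstructions land in degrees $\equiv 0, 2 \pmod 8$ and the $\IZ/2$-valued ones in degrees $\equiv 3, 4 \pmod 8$, and justifying why $k+3$ rather than $k+2$ suffices. One should also note that for $k = 1$ the cover $O\langle 1\rangle = SO$ and the single obstruction is orientability of $TX$, i.e. vanishing of $w_1 \in H^1(X;\IZ/2)$ — matching $j = 1 \equiv 1$; but wait, $1 \not\equiv 3,4 \pmod 8$, so I should double check the low-degree edge case, which likely is handled by the $B^3$ delooping pushing everything up by $3$ so that the genuinely relevant bottom obstruction for $k=1$ sits in degree $4$, with degrees $1$–$3$ vacuous — consistent with the hypothesis imposing conditions only in the listed congruence classes. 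I would state this consistency check explicitly at the end of the proof.
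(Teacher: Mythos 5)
Your approach — obstruction theory for the fibration $B^3O\langle k\rangle\to B^3O$ with fiber $B^2(\tau_{\leq k-1}O)$ — is exactly the paper's, and your Bott-periodicity bookkeeping is correct, but there are two gaps. First, you assert that $X\to B^3O\times B^2\IZ$ ``factors through $B^3O$ by definition of tangential $MG$-Maslov data''; that is backwards — the factorization through $B^3O$ is part of what must be produced — and the paper spends the $j=2$ case of the hypothesis on precisely this: $H^2(X;\IZ)=0$ forces $2c_1(X)=0$, which is the obstruction class in $H^2(X;\IZ)$ to killing the $B^2\IZ$ factor and lifting to $B^3O$. Omitting that step is a genuine gap, and it also explains why the $j\equiv 2\pmod 8$ condition is doing work beginning at $j=2$ rather than at $j=3$.

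Second, you correctly find that the $\IZ$-valued obstruction groups $H^j(X;\pi_{j-3}(O))$ occur for $j\equiv 2,6\pmod 8$ (since $\pi_{j-3}(O)=\IZ$ iff $j-3\equiv 3,7\pmod 8$), and you correctly note that this disagrees with the stated congruence $j\equiv 0,2\pmod 8$. Your attempt to reconcile the two by a uniform degree shift cannot succeed: any shift sending $6\mapsto 0\pmod 8$ also sends $2\mapsto 4$ and displaces the $\IZ/2$ classes $\{3,4\}$, destroying all the other matches simultaneously. In fact the paper's own one-line proof (``the obstruction belongs to $H^j(X;\pi_{j-3}O)$'') produces your congruence, not the printed one, and $\pi_{j-3}(O)=\pi_5(O)=0$ when $j\equiv 0\pmod 8$, so the $j\equiv 0$ condition is vacuous for the obstruction theory while $j\equiv 6$ is the one actually required. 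The ``$0$'' in the lemma appears to be a misprint for ``$6$''; trust your arithmetic rather than massage it into agreeing with a typo. A final small slip in your sanity check: for $k=1$ the fiber of $B^3SO\to B^3O$ is $K(\IZ/2,2)$, so the lone obstruction lies in $H^3(X;\IZ/2)$ (degree $3\equiv 3\pmod 8$, as expected), not in degree $4$.
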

    \begin{proof}
        First, since $H^2(X;\IZ) = 0$, we in particular have $2c_1(X) = 0$, which implies first that $X \to B^3O \times B^2\IZ$ admits a lift to $X \to B^3O$. To further lift this map to $X \to B^3O\langle k\rangle$, the obstruction belongs to $H^j(X;\pi_{j-3}O)$ for $1 \leq j \leq k+3$.
    \end{proof}
    \begin{rem}\label{rem:non_pol_eg}
        There is a Weinstein structure on the total space $X$ of the vector bundle $(TS^6)^{\oplus 23} \to S^6$ such that $X$ is neither arborealizable, nor stably polarizable by \cite[Proposition 4.9]{alvarez2025arborealization}. However, $X$ certainly admits tangential $MO\langle k\rangle$-Maslov data for any $k\geq 1$ by \cref{lem:suff_tangential_maslov}.
    \end{rem}
    \begin{lem}
       Any choice of tangential $MG$-Maslov data on $X$ canonically induces a choice of a $MG$-Maslov data on $X$.
    \end{lem}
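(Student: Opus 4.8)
The plan is to obtain the induced $MG$-Maslov data by pasting the given tangential data onto the tautological $MG$-orientation carried by the Thom spectrum $MG$ itself, in close analogy with the proof of \cref{lem:complex_ori_maslov}. First I would unwind the definitions: a choice of tangential $MG$-Maslov data is a lift $\ell\colon X\to B^3G$ of $\mathrm{Bott}_\IR\circ\Gr(X)\colon X\to B^3O\times B^2\IZ$ along $B^3G\xrightarrow{B^3(G\to O)}B^3O\hookrightarrow B^3O\times B^2\IZ$ together with a homotopy $H$ witnessing it; note that $H$ in particular carries a trivialization of the $B^2\IZ$-coordinate of $\mathrm{Bott}_\IR\circ\Gr(X)$, i.e.\ of $2c_1(X)$ in the sense of \cref{rem:twice_chern}. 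On the other hand a choice of $MG$-Maslov data is, by \cref{dfn:maslov_data} with $R=MG$, a null-homotopy of the composite $X\to B^3O\times B^2\IZ\xrightarrow{B^2J}B^2\Pic(\IS)\to B^2\Pic(MG)$ of $\mathrm{Bott}_\IR\circ\Gr(X)$ with the map induced by the unit $\IS\to MG$.

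The key input I would record is that, since $MG=\mathrm{Thom}(BG\to BO)$ and $G\to O$ is a map of infinite loop spaces, $MG$ is an $E_\infty$-ring carrying a tautological Thom class for $BG$-structured stable bundles; equivalently, the composite $BG\to BO\xrightarrow{J}BGL_1(\IS)\to BGL_1(MG)$ is canonically null-homotopic (this is the evident analogue for $MG$ of the fact about $MU$ used in the proof of \cref{lem:complex_ori_maslov}). Delooping twice, postcomposing with the inclusion of the identity component $B^3GL_1(MG)\hookrightarrow B^2\Pic(MG)$ from \eqref{eq:fiber_seq_pic}, and using that $B^3O$ is $2$-connected so that $B^2J$ carries it canonically into the identity component $B^3GL_1(\IS)\subset B^2\Pic(\IS)$ (the remaining $B^2\IZ=B^2\pi_0\Pic(\IS)$-summand being invisible to $B^3O$), this produces a canonical null-homotopy $N$ of $B^3G\xrightarrow{B^3(G\to O)}B^3O\hookrightarrow B^3O\times B^2\IZ\xrightarrow{B^2J}B^2\Pic(\IS)\to B^2\Pic(MG)$. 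Then, given tangential data $(\ell,H)$, applying $B^2J$ and the unit map to $H$ identifies the composite we wish to null-homotope with $\ell$ followed by the map that $N$ null-homotopes; concatenating $N\circ\ell$ with the image of $H$ yields the desired null-homotopy. As everything is natural in $(\ell,H)$, this defines a map from the space of tangential $MG$-Maslov data to the space of $MG$-Maslov data, which is the asserted canonical induction.

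The step I expect to be the main obstacle is making the tautological $MG$-orientation precise at the level of twice-delooped $J$-homomorphisms: pinning down the $E_\infty$-Thom-spectrum universal property of $MG$ and its canonical Thom class, and checking that the deloopings interact correctly with the splitting $\Pic(\IS)\simeq\pi_0\Pic(\IS)\times BGL_1(\IS)$ and its counterpart for $MG$, so that the $B^2\IZ$-factor is genuinely absorbed into the $2c_1$-trivialization supplied by $H$ rather than left dangling. The remaining bookkeeping with homotopy fibers is parallel to \cref{lem:complex_ori_maslov}.
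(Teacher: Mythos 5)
Your proof is correct and takes essentially the same approach as the paper: the canonical $MG$-orientation on $MG$ makes $B^3G\to B^3O\to B^3GL_1(MG)\to B^2\Pic(MG)$ null-homotopic, and composing with the given lift $X\to B^3G$ yields the desired $MG$-Maslov data. You spell out more carefully how the $B^2\IZ$ factor is absorbed by the trivialization of $2c_1$ implicit in the tangential data, which the paper leaves tacit, but the key input and structure of the argument coincide.
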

    \begin{proof}
       This follows because of the $MG$-orientation on $MG$ yields that the composition
       \[
       B^3 G \longrightarrow B^3 O \longrightarrow B^3GL_1(MG) \longrightarrow B^2 \Pic(MG)
       \]
       is canonically null-homotopic.
    \end{proof}

    Note that a choice of tangential $MG$-Maslov data on $X$ induces a lift of $X \to BU \xrightarrow{\simeq} B^3U \times B^2\IZ$ to a map
    \begin{equation}\label{eq:complex_tangential}
        X \longrightarrow B^3G^c \times B \IZ/2,
    \end{equation}
    since we have a homotopy pullback square
    \[
        \begin{tikzcd}[sep=scriptsize]
            B^3 G^c \times B\IZ/2 \rar \dar \drar[phantom,description,very near end, "\lrcorner"] & B^3 G \dar \\
            B^3U \times B^2\IZ \rar & B^3O \times B^2\IZ
        \end{tikzcd}.
    \]

    \begin{lem}\label{lem:bpic_factors}
    Suppose that $X$ is a Liouville sector equipped with a choice of tangential $MG$-Maslov data and let $L \subset X$ be a Lagrangian. Then the corresponding map $TL^\# - \Gr(X)^\#|_L$ as in \eqref{eq:lift_maslov_data_L} factors as
    \[
    L \longrightarrow B^2 (O/G^c) \times B\IZ \times B\IZ/2 \longrightarrow B\Pic(MG).
    \]
    \end{lem}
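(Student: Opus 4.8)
The plan is to chase through the definitions of the two relevant lifts of $L \to B(U/O)$ and exploit the fact that the tangential $MG$-Maslov data refines the ordinary $MG$-Maslov data. Recall that $TL^\# - \Gr(X)^\#|_L \colon L \to B\Pic(MG)$ was constructed from two null-homotopies of the composition $L \to B(U/O) \to B^2\Pic(MG)$ in \eqref{eq:maslov_resL}: one coming from $\Gr(X)^\#|_L$ (restricting the chosen Maslov data on $X$) and one coming from the canonical null-homotopy of $BO \to BU \to B(U/O)$ via $TL$. The target $B\Pic(MG)$ fits into the fiber sequence $BGL_1(MG) \to B\Pic(MG) \to \pi_0(\Pic(MG)) = \IZ$ (delooping \eqref{eq:fiber_seq_pic} and using that $MG$ is connective with $\pi_0 = \IZ$), so it suffices to produce the factorization compatibly with this fiber sequence; the $B\IZ$ factor records the Maslov class as in \cref{rem:maslov}, and the remaining factor must land in $BGL_1(MG)$.

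First I would observe that the tangential structure \eqref{eq:complex_tangential}, i.e.\ the lift $X \to B^3 G^c \times B\IZ/2$ of $TX \colon X \to BU \xrightarrow{\simeq} B^3U \times B^2\IZ$, together with the tangent bundle $TL \colon L \to BO$ and the Lagrangian condition, furnishes on $L$ a lift of $TL$ against $BG \to BO$ up to the discrepancy measured by the homotopy pullback square displayed just before the statement. Concretely: $L \to BU$ factors through $X \to B^3G^c \times B\IZ/2$, while $L \to BO$ comes from $TL$; chasing the pullback square $B^3G^c \times B\IZ/2 = (B^3U \times B^2\IZ) \times_{B^3O \times B^2\IZ} B^3G$, the failure of $TL$ to lift to $BG^c$ (equivalently, against $BG \to BO$ after one delooping is not the right framing — one delooping down) is classified by a map $L \to B(O/G^c) \times \IZ/2$, and after the further delooping built into the construction of $TL^\# - \Gr(X)^\#|_L$ this becomes the asserted $L \to B^2(O/G^c) \times B\IZ \times B\IZ/2$. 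The $B\IZ$ factor is exactly the Maslov-class contribution identified in \cref{rem:maslov}, and the $B\IZ/2$ comes from the $B^2\IZ$ in the complex Bott splitting being matched against $B^2\IZ$ on the orthogonal side via multiplication by $2$, leaving a $\IZ/2$ after the relevant stage.

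Next, I would show this intermediate map composes to $TL^\# - \Gr(X)^\#|_L$ after applying the map $B^2(O/G^c)\times B\IZ \times B\IZ/2 \to B\Pic(MG)$. This second map is assembled from: (a) the $J$-homomorphism machinery $B^2(O/G^c) \to B^2(U/O)^\# \to \cdots$ feeding into the homotopy fiber $B(U/O)^\#$ and then the difference-of-lifts construction \eqref{eq:lift_maslov_data_L}; (b) the delooped $J$-homomorphism $B\IZ \to B\Pic(MG)$ detecting the Maslov class; and (c) the $\IZ/2$ contribution from $B\IZ/2 \to BGL_1(MG) \to B\Pic(MG)$ coming from the second delooping of $\IZ \xrightarrow{\cdot 2}\IZ$ inside $\mathrm{Bott}_\IR$ paired with $B^2J$. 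The key input making (a) and (c) land where claimed is the $MG$-orientation of $MG$: exactly as in the proof of the preceding lemma (``any tangential $MG$-Maslov data induces $MG$-Maslov data''), the composition $B^3G \to B^3O \to B^3GL_1(MG)$ is canonically null, so all the $B^3G$-part of the obstruction dies and only the $B(O/G^c)$-ish and $\IZ$, $\IZ/2$ pieces survive into $B\Pic(MG)$. I would make this precise by a diagram chase in the square of fiber sequences relating $B(U/O)$, $B(U/O)^\#$, $B^3O \times B^2\IZ$, $B^3G^c \times B\IZ/2$, and $B^2\Pic(MG)$.

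The main obstacle I anticipate is bookkeeping the deloopings correctly — keeping straight which $B$'s come from the two-fold delooped Bott map $\mathrm{Bott}_\IR$, which come from the homotopy-fiber construction of $B(U/O)^\#$, and which come from forming the \emph{difference} of two lifts in \eqref{eq:lift_maslov_data_L} (which costs one loop, turning $B^2\Pic(R)$-valued data into $B\Pic(R)$-valued data) — so that the $B^3G^c$ on $X$ produces precisely $B^2(O/G^c)$ (not $B^3$ or $B$) on $L$, and the two copies of $\IZ$-type data collapse to one $B\IZ$ (Maslov) plus one $B\IZ/2$ (from the $\cdot 2$ map). I would handle this by fixing, once and for all, the fiber sequences $G^c \to U \to O/G^c$ and $O \to O/G^c \to \cdots$ and $G \to O \to O/G$, recording that $O/G^c \simeq O/G$ since $G^c = U\times^h_O G$, and then everything reduces to the canonical null-homotopy of $B^3G \to B^3GL_1(MG)$ already used above. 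Assuming \cref{asmpt:bott_compat} (or at least its $E_1$ version from \cite{bonciocat2025floer}) so that the complex and real Bott maps are compatible, this chase goes through and yields the claimed factorization.
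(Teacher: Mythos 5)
Your proposal identifies the same two key steps as the paper's proof: first, use the tangential $MG$-Maslov data on $X$ together with $TL$ to produce a map $L \to B^2(O/G^c) \times B\IZ \times B\IZ/2$; second, build the map $B^2(O/G^c) \times B\IZ \times B\IZ/2 \to B\Pic(MG)$ as an induced map on fibers in a diagram of fiber sequences, using that $B^3G^c \times B\IZ/2 \to B^3O \times B^2\IZ \to B^2\Pic(MG)$ factors through the canonically null map $B^3G \to B^3O \to B^3GL_1(MG)$ (this is exactly the paper's diagram \eqref{eq:induced_bpic_dia}, and your treatment of this step is sound). The issue is in your sketch of the first step: you propose classifying the failure of $TL$ to lift to $BG^c$ by a map $L \to B(O/G^c) \times \IZ/2$ and then ``delooping'' to reach $B^2(O/G^c) \times B\IZ \times B\IZ/2$, but delooping a given map is not an operation one can perform (one may loop a map, not deloop it), so as written this step does not go through. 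The paper instead runs the real and complex Bott splittings in parallel: $TL \colon L \to BO$ lands in $BO$, which sits over $BU \simeq B^3U \times B^2\IZ$ as $B^2(O/U) \times B\IZ/2$, while the tangential data gives $L \hookrightarrow X \to B^3G^c \times B\IZ/2$ over the same $B^3U \times B^2\IZ$; hence $L$ maps directly into the homotopy fiber product $(B^2(O/U) \times B\IZ/2) \times^h_{B^3U \times B^2\IZ}(B^3G^c \times B\IZ/2) \simeq B^2(O/G^c) \times B\IZ \times B\IZ/2$, with no delooping step needed. Also a small slip worth correcting: the once-delooped Picard fiber sequence reads $B^2GL_1(MG) \to B\Pic(MG) \to B\IZ$, not $BGL_1(MG) \to B\Pic(MG) \to \IZ$.
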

    \begin{proof}
        First, since $BO \to BU \to B(U/O)$ is a fiber sequence, we obtain the following commutative diagram
        \[ 
        \begin{tikzcd}[row sep=scriptsize]
            L \rar{TL} \ar[d,hook] & BO \rar \dar{c} & B^2(O/U) \times B\IZ/2 \dar\\
            X \rar{TX}  &BU \rar{\simeq} & B^3 U \times B^2\IZ
        \end{tikzcd}.
        \]
        Via the choice of tangential $MG$-Maslov data on $X$, the map \eqref{eq:complex_tangential} leads to a commutative diagram
        \[
        \begin{tikzcd}[sep=scriptsize]
            L \rar \dar & B^2(O/U) \times B\IZ/2 \dar \\
            B^3G^c \times B\IZ/2 \rar & B^3U \times B^2\IZ
        \end{tikzcd},
        \]
        and hence an induced map to the homotopy fiber product:
        \[
        L \longrightarrow B^2(O/G^c) \times B\IZ \times \IZ/2 \simeq (B^2(O/U) \times B\IZ/2) \times^h_{B^3U \times B^2\IZ} (B^3G^c \times B\IZ/2).
        \]
        Consider the homotopy commutative diagram
        \begin{equation}\label{eq:induced_bpic_dia}
            \begin{tikzcd}[sep=scriptsize]
                 B^2(O/G^c) \times B\IZ \times B\IZ/2  \rar[dashed] \dar & B\Pic(MG) \dar\\
                 B^3 G^c \times B\IZ/2 \rar \dar & \ast \dar\\
                 B^3O \times B^2\IZ \rar & B^2\Pic(MG)
            \end{tikzcd},
        \end{equation}
        where the vertical compositions are fiber sequences. The middle horizontal arrow is obtained using the fact that $B^3G^c \times B\IZ/2 \to B^3O \times B^2\IZ \to B^2\Pic(MG)$ is homotopic to the composition
        \[
        B^3G^c \times B\IZ/2 \longrightarrow B^3 G \longrightarrow B^3O \longrightarrow B^3GL_1(MG) \longrightarrow B^2\Pic(MG),
        \]
        which is null-homotopic by the $MG$-orientation on $MG$. The induced maps on fibers, given by the top horizontal arrows in \eqref{eq:induced_bpic_dia}, coincides with difference between the two null-homotopies of the composition 
        \[ B^2(O/G^c) \times B\IZ \times B\IZ/2 \longrightarrow B^2 \Pic(MG).\]
        It follows that composition 
        \[ L \longrightarrow B^2(O/G^c) \times B\IZ \times B\IZ/2 \longrightarrow B\Pic(MG).\]
        coincides with the difference $TL^\# - \Gr(X)^\#|_L$.
    \end{proof}
 
    \begin{lem}\label{lem:spheres_mg_brane_even_dim}
        Let $k \geq 1$ and suppose $n \geq k+1$, and that $n$ is even. Suppose $X$ is a Liouville sector, and let $L \subset X$ a homotopy $n$-sphere. Suppose that the infinite loop map $G \to O$ is an isomorphism on $\pi_i$ for $i \geq k-1$, and assume that $X$ admits a choice of tangential $MG$-Maslov data. Then $L$ admits $MG$-Maslov data.
    \end{lem}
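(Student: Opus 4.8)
The plan is to reduce, via \cref{lem:bpic_factors}, to a connectivity estimate on $O/G$, the crucial input being that homotopy spheres are stably parallelizable. Since $L$ is a homotopy $n$-sphere it is homotopy equivalent to $S^n$ and its stable tangent bundle is trivial, so $TL\colon L\to BO$ is null-homotopic; fix such a null-homotopy. Because $X$ carries a choice of tangential $MG$-Maslov data, \cref{lem:bpic_factors} applies, and a choice of $MG$-Maslov data on $L$ is precisely a null-homotopy of $TL^\#-\Gr(X)^\#|_L$, which factors as
\[
L\overset{g}{\longrightarrow} B^2(O/G^c)\times B\IZ\times B\IZ/2 \overset{\psi}{\longrightarrow} B\Pic(MG);
\]
hence it suffices to show that $g$ is null-homotopic.

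Unwinding the construction of $g$ in the proof of \cref{lem:bpic_factors}, it is the map into the homotopy fiber product $(B^2(O/U)\times B\IZ/2)\times^h_{B^3U\times B^2\IZ}(B^3G^c\times B\IZ/2)$ determined by $L\overset{TL}{\longrightarrow}BO\to B^2(O/U)\times B\IZ/2$ and by $L\hookrightarrow X\to B^3G^c\times B\IZ/2$. The first of these maps is null-homotopic, since $TL$ is, so $g$ lifts along the inclusion of the homotopy fiber of the projection $B^2(O/G^c)\times B\IZ\times B\IZ/2\to B^2(O/U)\times B\IZ/2$. Unwinding the pullback square defining $B^3G^c\times B\IZ/2$, this homotopy fiber is $B^2(O/G)\times B\IZ$; and since $L\simeq S^n$ with $n\geq 2$, the resulting lift $L\to B^2(O/G)\times B\IZ$ is classified by an element of $\pi_{n-2}(O/G)$.

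It remains to see that this element vanishes. From the long exact sequence of $G\to O\to O/G$ and the hypothesis that $G\to O$ induces isomorphisms on $\pi_i$ for $i\geq k-1$, one gets $\pi_i(O/G)=0$ for $i\geq k$ and $\pi_{k-1}(O/G)\cong\ker(\pi_{k-2}(G)\to\pi_{k-2}(O))$. When $n\geq k+2$ this already gives $\pi_{n-2}(O/G)=0$. When $n=k+1$ — where $k$ is odd, since $n$ is even — one has $\pi_{n-2}(O/G)=\pi_{k-1}(O/G)$, and for $G$ the $(m-1)$-connected cover of $O$ with $m\leq k-1$ (the case dictated by the hypothesis on the $\pi_i$) the map $\pi_{k-2}(G)\to\pi_{k-2}(O)$ is injective: it is an isomorphism if $m\leq k-2$, and $\pi_{k-2}(G)=0$ if $m=k-1$. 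In every case $g$ is null-homotopic, so $TL^\#-\Gr(X)^\#|_L$ admits a null-homotopy and $L$ admits $MG$-Maslov data.

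I expect the boundary case $n=k+1$ to be the main obstacle, since there the reduction to $\pi_{n-2}(O/G)$ alone is not quite enough for a general $G$. The more robust alternative I would pursue is to show that the composite $B^2(O/G)\hookrightarrow B^2(O/G^c)\times B\IZ\times B\IZ/2\overset{\psi}{\longrightarrow}B\Pic(MG)$ is itself null-homotopic — by identifying it with the canonically null-homotopic composite $B^2(O/G)\to B^3G\to B^3O\to B^3 GL_1(MG)\to B^2\Pic(MG)$ provided by the $MG$-orientation on $MG$ — which makes the obstruction vanish without any information about $\pi_{n-2}(O/G)$.
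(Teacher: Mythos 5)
Your proof takes a genuinely different and more careful route than the paper's, and I believe you have located a gap in the paper's argument. The paper's proof lifts $L\to B^2(O/G^c)$ directly to $L\to B^2O$ by claiming $\pi_n(B^3G^c)\cong 0$. But $\pi_n(B^3G^c)=\pi_{n-3}(G^c)$, and from the long exact sequence of $\varOmega(O/G)\to G^c\to U$, together with $n$ even (so $\pi_{n-2}(U)=0$, $\pi_{n-3}(U)\cong\IZ$) and the hypothesis (so $\pi_i(O/G)=0$ for $i\geq k$), one gets an exact sequence $0\to\pi_{n-2}(O/G)\to\pi_{n-3}(G^c)\to\IZ\to\pi_{n-3}(O/G)$; whenever $\pi_{n-3}(O/G)=0$ this forces $\pi_{n-3}(G^c)\cong\IZ$. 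Concretely, for $G=O\langle 7\rangle=String$, $k=6$, $n=8$ (so the hypotheses hold), one computes $\pi_5(String^c)\cong\IZ$. The observation you make and the paper does not -- that $TL\colon L\to BO$ is null because a homotopy $n$-sphere is stably parallelizable -- is precisely what repairs the argument: it implies that the obstruction $L\to B^3G^c$ factors through $\hofib(B^3G^c\to B^3U)\simeq B^2(O/G)$, so the relevant obstruction group is $\pi_{n-2}(O/G)$, not $\pi_{n-3}(G^c)$, and that group does vanish for $n\geq k+2$. Your reduction to $\pi_{n-2}(O/G)$ via the fiber $B^2(O/G)\times B\IZ$ is correct and cleaner than the paper's chain of lifts through $B^2O$ and $B^2G$. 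It also makes no apparent use of the evenness of $n$ (beyond the boundary analysis), suggesting that hypothesis may be removable from the statement.

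Two caveats. In the boundary case $n=k+1$ your computation of $\pi_{k-1}(O/G)$ assumes $G$ is a connective cover of $O$, which is strictly stronger than the stated hypothesis that $G\to O$ is a $\pi_i$-isomorphism for $i\geq k-1$; this matches the only case the paper uses, but it is a genuine restriction worth flagging. Your "more robust alternative" does not work as written: the restriction of $\psi$ to $B^2(O/G)$ lands in $B\Pic(MG)$, while the composite $B^2(O/G)\to B^3G\to B^3O\to B^3GL_1(MG)\to B^2\Pic(MG)$ lands in $B^2\Pic(MG)$, so they cannot be literally identified. Unwinding the construction of $\psi$ in the proof of \cref{lem:bpic_factors}, the restriction $B^2(O/G)\to B\Pic(MG)$ is the map on fibers induced by the $MG$-orientation's null-homotopy of $B^3G\to B^2\Pic(MG)$ -- in effect the connecting map of that null-homotopy -- and there is no a priori reason for such a map to be null. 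So the part of your proof that closes the lemma is the reduction to $\pi_{n-2}(O/G)$, and the alternative should be dropped.
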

    \begin{proof}
        Similar to the diagram \eqref{eq:induced_bpic_dia}, there is a commutative diagram
        \[
        \begin{tikzcd}[sep=scriptsize]
            B^2(O/G^c) \dar \rar[dashed] & B^2GL_1(MG) \dar & \\
            B^3G^c \rar \dar & \ast \dar \\
            B^3O \rar{B^3J_{\IR}} & B^3GL_1(MG)
        \end{tikzcd}
        \]
        that admits a functor to the diagram \eqref{eq:induced_bpic_dia} via the obvious maps. The assumption that $L$ is a homotopy $n$-sphere yields a lift of $L \to B^2(O/G^c) \times B\IZ \times B\IZ/2$ to give a commutative diagram
        \begin{equation}\label{eq:diagram_lift_bz}
            \begin{tikzcd}[sep=scriptsize]
                & B^2O \rar{B^2J_{\IR}} \dar & B^2GL_1(MG) \dar[sloped,phantom]{\simeq} \\
                & B^2(O/G^c) \rar \dar & B^2GL_1(MG) \dar \\
                L \urar[dashed,bend left=20] \rar & B^2(O/G^c) \times B\IZ \times B\IZ/2 \rar & B\Pic(MG)
            \end{tikzcd}
        \end{equation}
        Next, since $n$ is even, and because $G \to O$ is an isomorphism on $\pi_i$ for $i\geq k-1$, it follows that $\pi_n(B^3G^c) \cong 0$, and consequently that the map $L \to B^2(O/G^c)$ admits a further lift to $L \to B^2O$. This map admits an even further lift $L \to B^2G$, since $L \to B^2O \to B^2(O/G)$ is null-homotopic since $G \to O$ is an isomorphism on $\pi_i$ for $i\geq k-1$. To summarize we have the following commutative diagram:
        \[
        \begin{tikzcd}[sep=scriptsize]
                & B^2G \dar & \\
                & B^2O \rar{B^2J_{\IR}} \dar & B^2GL_1(MG) \dar[sloped,phantom]{\simeq} \\
                & B^2(O/G^c) \rar \dar & B^2GL_1(MG) \dar \\
                L \ar[ruuu,dashed,bend left=20] \rar & B^2(O/G^c) \times B\IZ \times B\IZ/2 \rar & B\Pic(MG)
            \end{tikzcd}
        \]
        By the $MG$-orientation on $MG$, the induced map $L \to B^2 G \xrightarrow{B^2J_{\IR}} B^2GL_1(MG)$ is null-homotopic, showing that $L$ admits $MG$-Maslov data by \eqref{eq:diagram_lift_bz}.
    \end{proof}
    \begin{lem}\label{lem:polariz_mo_maslov}
        Let $k\geq 1$ and suppose $n\geq k+1$. Suppose that the infinite loop map $G \to O$ is an isomorphism on $\pi_i$ for $i \geq k-1$. If $X$ is a stably polarized Liouville sector, and $L \subset X$ is a homotopy $n$-sphere, then $L$ admits $MG$-Maslov data.
    \end{lem}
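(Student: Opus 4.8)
The plan is to produce the required $MG$-Maslov data on $L$, namely a null-homotopy of the class $TL^\#-\Gr(X)^\#|_L\colon L\to B\Pic(MG)$ of \eqref{eq:lift_maslov_data_L}, by exhibiting it as the image of a $KO$-theoretic difference under a map that vanishes for range reasons once $X$ is stably polarized.

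First I would set up the comparison. Since $X$ is stably polarized, \cref{rem:polariz} lets us take its $MG$-Maslov data to be $\Gr(X)^\#\colon X\xrightarrow{P}BO\to B(U/O)^\#$, where $P$ is the polarization and the lift to $B(U/O)^\#$ uses the canonical null-homotopy of $BO\to BU\to B(U/O)$. Thus both $\Gr(X)^\#|_L$ and $TL^\#$ come from this \emph{same} canonical null-homotopy, precomposed respectively with $P|_L\colon L\to BO$ and with $TL\colon L\to BO$; and $P|_L\otimes\IC\simeq TX|_L\simeq TL\otimes\IC$ because $L$ is Lagrangian. Hence $[P|_L]-[TL]\in\widetilde{KO}^0(L)$ lies in the kernel of complexification, so it lifts to a class $\delta\in[L,U/O]$ with $U/O=\hofib(BO\xrightarrow{\otimes\IC}BU)$, and $TL^\#-\Gr(X)^\#|_L=\rho_\ast(\delta)$, where $\rho\colon U/O\to B\Pic(MG)$ is the fibre-component of the composite $U/O\hookrightarrow BO\to B(U/O)^\#$ (the second arrow being the canonical lift just described; this composite covers a null-homotopic map to $B(U/O)$ because $U/O\to BO\to BU$ is null). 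As $L$ is a homotopy $n$-sphere, $L\simeq S^n$, so $[L,U/O]=\pi_n(U/O)$ and $[L,B\Pic(MG)]=\pi_n(B\Pic(MG))$ (both being infinite loop spaces), and it suffices to show $\rho_\ast\colon\pi_n(U/O)\to\pi_n(B\Pic(MG))$ is zero.

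The technical heart is the identification of $\rho_\ast$. Unwinding the definition \eqref{eq:maslov_fiber} and using real Bott periodicity $U/O\simeq B^2O\times B\IZ$ with the identifications $\pi_n(U/O)\cong\pi_{n-2}(O)$ and $\pi_n(B\Pic(MG))\cong\pi_{n-2}(MG)$ (valid for $n\ge 3$; the case $n=2$ forces $k=1$, whence $G\simeq O$, $MG\simeq MO$ and the target group $(\pi_0 MO)^\times$ is trivial, so nothing to prove), the map $\rho_\ast$ becomes the $MG$-twisted real $J$-homomorphism $\pi_{n-2}(O)\xrightarrow{J}\pi_{n-2}^{\mathrm{st}}\to\pi_{n-2}(MG)$ (the real $J$-homomorphism followed by the unit $\IS\to MG$). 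Granting this, one finishes as in the proof of \cref{lem:spheres_mg_brane_even_dim}: the $MG$-orientation on $MG$ makes $B^3G\to B^3O\to B^3GL_1(MG)\to B^2\Pic(MG)$ canonically null, so the composite $\pi_j(G)\to\pi_j(O)\to\pi_j(MG)$ vanishes for all $j\ge 1$; since $n-2\ge k-1$ and $G\to O$ is surjective on $\pi_i$ for $i\ge k-1$, every element of $\pi_{n-2}(O)$ lifts to $\pi_{n-2}(G)$ and is therefore killed, so $\rho_\ast=0$. Thus $TL^\#-\Gr(X)^\#|_L$ is null-homotopic, and such a null-homotopy is the desired $MG$-Maslov data on $L$.

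I expect the main obstacle to be precisely this identification of $\rho_\ast$ with the twisted $J$-homomorphism: it requires tracing \eqref{eq:maslov_fiber} carefully, matching the $B^2J$ appearing there with the $J$-homomorphism on homotopy groups (where \cref{asmpt:bott_compat} and the comparison of the two canonical null-homotopies of $U/O\to BO\to BU\to B(U/O)$ enter, and must be seen to affect $\rho$ only up to a sign), and checking compatibility of the Bott-periodicity identifications on the two sides. The remaining ingredients — the comparison of $\Gr(X)^\#|_L$ with $TL^\#$ via the polarization, and the endgame through the $MG$-orientation — are formal and run parallel to the proof of \cref{lem:spheres_mg_brane_even_dim}.
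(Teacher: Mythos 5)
Your proof is correct and takes essentially the same approach as the paper's: the factorization you derive (through the stable Lagrangian Gauss map $L \to U/O$, then real Bott periodicity to $B^2O\times B\IZ$, then $BJ_\IR$ and the unit to $B\Pic(MG)$) is precisely the one the paper invokes, citing \cite[Section 5.12]{asplund2024nearby} for the identification you flag as the delicate step, and the endgame — lifting through $B^2G$ using $\pi_{n-2}(O/G)=0$ and killing the composite via the $MG$-orientation on $MG$ — is identical.
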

    \begin{proof}
        Under the assumption that $X$ is stably polarized, the induced map $TL^\# - \Gr(X)^\#|_L$ in \eqref{eq:lift_maslov_data_L} factors as
        \[
        \begin{tikzcd}[row sep=scriptsize,column sep=1cm]
            L \rar{\sG_L} & U/O \rar{\varOmega \mathrm{Bott}_\IR} & B^2O \times B\IZ \rar{BJ_\IR} & B\Pic(\IS) \rar & B\Pic(MG),
        \end{tikzcd}
        \]
        where $\sG_L$ is the stable Lagrangian Gauss map, see \cite[Section 5.12]{asplund2024nearby}. Since $L$ is a homotopy $n$-sphere, this map is homotopic to a composition $L \to B^2O \to B^2GL_1(\IS) \to B^2GL_1(MG)$. The composition $L \to B^2O \to B^2(O/G)$ is null-homotopic since $\pi_{n-2} O/G = 0$, showing that there is an induced lift $L \to B^2G$:
        \[
        \begin{tikzcd}[sep=scriptsize]
            & B^2G \dar&& \\
            L \rar \urar[bend left=20,dashed] & B^2O \rar \dar & B^2GL_1(\IS) \rar & B^2GL_1(MG) \\
            & B^2(O/G)&&
        \end{tikzcd}.
        \]
        Since $MG$ is $MG$-oriented, the composition $MG\to B^2O \to B^2GL_1(\IS) \to B^2GL_1(MG)$ is null-homotopic, showing that $L$ admits $MG$-Maslov data.
    \end{proof}

    \subsubsection{Maslov data over the integers}\label{sec:ori_data_integers}
        The purpose of this section is to discuss $H\IZ$-Maslov data and compare it to the classical discussion on relative pin structures and Maslov classes used to define (integer graded) Floer theory with integral coefficients (cf.\@ \cite[Section 11]{seidel2008fukaya}); in the presence of a stable polarization a similar discussion is found in \cite[Section 5.4]{asplund2024nearby}.

        Let $X$ be a Liouville sector and let $L \subset X$ denote an exact conical Lagrangian that stays away from the symplectic boundary of $X$. Fix a background class $b \in H^2(X;\IZ/2)$, and assume that $X$ admits a quadratic complex volume form $\eta^2_X$ that is a trivialization of $(\varLambda_\IC^{\mathrm{top}} TX)^{\otimes 2}$. Recall that the space quadratic complex volume forms on $X$ forms a torsor over $H^1(X;\IZ)$ with any other such form given by $m \cdot \eta^2_X$ for some $m \colon X \to K(\IZ,1) = S^1 \subset \IC$ classifying an element of $H^1(X;\IZ)$, that we by abuse of notation also denote by $m$. 
        \begin{asmpt}\label{asmpt:sw_lag}
            The second Stiefel--Whitney class of $L$ agrees with the pullback of $b$ under the inclusion $i \colon L \hookrightarrow X$.
        \end{asmpt}
        Fix a triangulation on $X$ such that $L$ is a subcomplex. Denote the $3$-skeletons of $X$ and $L$ determined by this triangulation by $X[3]$ and $L[3]$, respectively. Let $E_b$ be an orientable vector bundle on $X[3]$, such that $w_2(E_b) = b|_{X[3]}$; there is a unique such vector bundle up stable isomorphism. We fix a null-homotopy of the classifying map $L[3] \to K(\IZ/2,2)$ of $w_2(E_b|_{L[3]}) - b|_{L[3]}$. Note that the obstruction to the existence of a pin structure is the second Stiefel--Whitney class. It vanishes for the vector bundle $E_b|_{L[3]} \oplus TL|_{L[3]}$ because of the orientability of $E_b$ and by \cref{asmpt:sw_lag}. The following definition is standard, see e.g.\@ \cite{solomon2006intersection,abouzaid2012wrapped} and cf.\@ \cite[Chapter 8]{fukaya2009lagrangian}.
        \begin{defn}[Relative pin structure]\label{dfn:rel_pin}
            A \emph{relative pin structure} on $L \subset X$ with respect to the background class $b\in H^2(X;\IZ/2)$ is a choice of a pin structure on the vector bundle $E_b|_{L[3]} \oplus TL|_{L[3]}$.
        \end{defn}
        The quadratic complex volume form $\eta^2_X \in \varGamma((\wedge^{\mathrm{top}}_\IC T^*X)^{\otimes 2})$ defines a squared phase map on the Lagrangian $L$
        \begin{align}\label{eq:lag_phase}
            \alpha_L \colon L &\longrightarrow S^1\\
            x &\longmapsto \mathrm{arg}(\eta^2_X (v_1 \wedge \dots \wedge v_n)),\nonumber
        \end{align}
        where $(v_1,\dots,v_n)$ is any basis for $T_xL$.
        \begin{defn}\label{dfn:lag_gr}
            A \emph{grading data} on $L \subset X$ with respect to the quadratic complex volume form $\eta^2_X \in  \varGamma((\wedge^{\mathrm{top}}_\IC T^*X)^{\otimes 2})$ is a lift of the squared phase map \eqref{eq:lag_phase} to a map 
            \[\alpha^\# \colon L \longrightarrow \IR\] along the exponential $t \mapsto \exp(2\pi i t)$.
        \end{defn}
        \begin{defn}[$\IZ$-brane structure]
            Let $L \subset X$ be a Lagrangian satisfying \cref{asmpt:sw_lag}. A choice of \emph{$\IZ$-brane structure} on $L$ with respect to the background classes $b \in H^2(X;\IZ/2)$ and quadratic complex volume form $\eta^2_X \in \varGamma ((\wedge^{\mathrm{top}}_\IC T^*X)^{\otimes 2})$ on $X$ is a choice of a relative pin structure with respect to $b\in H^2(X;\IZ/2)$ and a choice of grading data $\alpha^\#$ on $L$ with respect to $\eta^2_X$.
        \end{defn}
        
        \begin{lem}\label{lem:hz_maslov}
            Let $X$ be a Liouville sector that is equipped with a quadratic complex volume form $\eta^2_X \in \varGamma ((\wedge^{\mathrm{top}}_\IC T^*X)^{\otimes 2})$ and let $L \subset X$ be a Lagrangian that stays away from the symplectic boundary of $X$.
            \begin{enumerate}
                \item Homotopy classes of $H\IZ$-Maslov data on $X$ are in one-to-one correspondence with elements of $H^2(X;\IZ/2) \oplus H^1(X;\IZ)$.
                \item Assume that we have chosen $H\IZ$-Maslov data on $X$ that corresponds to a pair $(b,m) \in H^2(X;\IZ/2) \oplus H^1(X;\IZ)$ under the identification in item (i) such that $L$ satisfies \cref{asmpt:sw_lag}. Then a choice of $H\IZ$-Maslov data for $L$ in the sense of \cref{dfn:R_brane_struct}, is equivalent to a choice of $\IZ$-brane structure with respect to the pair $(b, m\cdot \eta^2_X)\in H^2(X;\IZ/2) \oplus \varGamma((\wedge^{\mathrm{top}}_\IC T^*X)^{\otimes 2})$.
            \end{enumerate}
        \end{lem}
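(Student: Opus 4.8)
The plan is to reduce both parts to an explicit computation of $\Pic(H\IZ)$, together with an identification of the resulting obstruction data with the classical notions of grading and of relative pin structure. First I would compute the Picard space: since $\Omega^\infty H\IZ\simeq\IZ$ is discrete, $GL_1(H\IZ)\simeq\{\pm1\}$ is the discrete infinite loop space with homotopy $\IZ/2$ in degree $0$, so $BGL_1(H\IZ)\simeq K(\IZ/2,1)$; moreover the $E_\infty$-map $\IZ\to\Pic(H\IZ)$, $n\mapsto\Sigma^nH\IZ$, splits the fiber sequence \eqref{eq:fiber_seq_pic}, so $\Pic(H\IZ)\simeq\IZ\times K(\IZ/2,1)$ as infinite loop spaces and hence
\[
    B\Pic(H\IZ)\simeq K(\IZ,1)\times K(\IZ/2,2),\qquad B^2\Pic(H\IZ)\simeq K(\IZ,2)\times K(\IZ/2,3).
\]
Under this splitting the target of the composite \eqref{eq:maslov_data} is the product $K(\IZ,2)\times K(\IZ/2,3)$, so that composite has two components. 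By \cref{rem:twice_chern} (applicable since $\pi_0\Pic(H\IZ)=\IZ$) its $K(\IZ,2)$-component represents $2c_1(X)$, while its $K(\IZ/2,3)$-component is natural in the complex bundle $TX$, hence pulled back along $TX\colon X\to BU$ from a universal class in $[BU,K(\IZ/2,3)]=H^3(BU;\IZ/2)=0$, so it is null. Therefore \eqref{eq:maslov_data} is homotopic to $(2c_1(X),0)$ and, by \cref{dfn:maslov_data}, an $H\IZ$-Maslov datum on $X$ is a null-homotopy of $2c_1(X)$ together with a null-homotopy of the constant map $X\to K(\IZ/2,3)$. By \cref{rem:twice_chern} the first is, up to homotopy, a quadratic complex volume form, and relative to the reference $\eta^2_X$ such forms are a torsor over $H^1(X;\IZ)$, recorded by $m$ via $m\cdot\eta^2_X$; the second is the same as a map $X\to\Omega K(\IZ/2,3)=K(\IZ/2,2)$, i.e.\ a background class $b\in H^2(X;\IZ/2)$. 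This is the bijection asserted in (i).

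For (ii) I would decompose the relative Gauss map. By \eqref{eq:lift_maslov_data_L} and \cref{dfn:R_brane_struct}, an $H\IZ$-Maslov datum on $L$ is a null-homotopy of $d_L:=TL^\#-\Gr(X)^\#|_L\colon L\to B\Pic(H\IZ)=K(\IZ,1)\times K(\IZ/2,2)$, which under the splitting is the same as a null-homotopy of the $K(\IZ,1)$-component $d_L^{(1)}\colon L\to S^1$ together with one of the $K(\IZ/2,2)$-component $d_L^{(2)}\colon L\to K(\IZ/2,2)$. I would then show: (a) $d_L^{(1)}$ is homotopic to the squared phase map \eqref{eq:lag_phase} of the volume form $m\cdot\eta^2_X$, so that null-homotopies of $d_L^{(1)}$ are exactly lifts of that phase map along $\IR\to S^1$, i.e.\ grading data on $L$ with respect to $m\cdot\eta^2_X$ in the sense of \cref{dfn:lag_gr}; and (b) $d_L^{(2)}$ classifies the relative class $w_2(TL)+i^*b\in H^2(L;\IZ/2)$, equivalently $w_2(E_b|_L\oplus TL|_L)$, which vanishes on $L[3]$ by \cref{asmpt:sw_lag} and the orientability of $E_b$, so that null-homotopies of $d_L^{(2)}$ correspond, via the reference bundle $E_b$, exactly to pin structures on $E_b|_{L[3]}\oplus TL|_{L[3]}$, i.e.\ relative pin structures with respect to $b$ in the sense of \cref{dfn:rel_pin}. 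Combining (a) and (b), a null-homotopy of $d_L$ is exactly a pair consisting of grading data with respect to $m\cdot\eta^2_X$ and a relative pin structure with respect to $b$ — that is, a $\IZ$-brane structure with respect to $(b,m\cdot\eta^2_X)$ — which proves (ii).

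The main obstacle is establishing (a) and (b): showing that the homotopy-theoretic difference $d_L$ literally recomputes the classical invariants and not merely classes in the same cohomology groups. For (a), this requires unwinding the real Bott periodicity equivalence and the delooped $J$-homomorphism so as to recognize the $K(\IZ,1)$-shadow of $d_L$ as the difference of the two primitives $m_0,m|_L$ of $2c_1(X)|_L$ of \cref{rem:maslov} — one induced by the canonical real structure $TX|_L\cong TL\otimes\IC$ on the Lagrangian, the other by $m\cdot\eta^2_X$ — and then observing that this difference is represented, at the level of maps and not only of cohomology, by the phase map \eqref{eq:lag_phase} of $m\cdot\eta^2_X$. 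For (b), the point is to compare the two null-homotopies of the null map $L\to K(\IZ/2,3)$, namely the one underlying $TL^\#$ (which comes from the canonical null-homotopy of $BO\to BU\to B(U/O)$) and the one restricting the choice $b$ on $X$; their difference is computed by evaluating the relevant universal class in $H^2(BO;\IZ/2)$ on $TL$, which pins down $d_L^{(2)}$ as the asserted relative Stiefel--Whitney class (and, in particular, fixes the pin convention), after which the identification of null-homotopies of $d_L^{(2)}$ with relative pin structures is the standard dictionary between trivializations of a $w_2$-obstruction and pin structures. Everything else — the Picard computation, the vanishing $H^3(BU;\IZ/2)=0$, and the torsor bookkeeping for (i) — is formal.
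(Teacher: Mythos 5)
The key step of your computation is wrong: the map $\IZ\to\Pic(H\IZ)$, $n\mapsto\Sigma^nH\IZ$, is \emph{not} an $E_\infty$-map and does not split the fiber sequence \eqref{eq:fiber_seq_pic} as infinite loop spaces. The symmetry $\Sigma^nH\IZ\otimes\Sigma^mH\IZ\simeq\Sigma^mH\IZ\otimes\Sigma^nH\IZ$ carries the Koszul sign $(-1)^{nm}$, and this obstructs even an $E_2$-structure on that section; equivalently, the first $k$-invariant of the Picard spectrum $\mathfrak{pic}(H\IZ)$ (a stable operation $H\IZ\to\Sigma^2H\IZ/2$, i.e.\ an element of $H^2(H\IZ;\IZ/2)\cong\IZ/2$) is the nontrivial class $Sq^2\circ\rho$. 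Deloopings then behave as follows: $B\Pic(H\IZ)\simeq K(\IZ,1)\times K(\IZ/2,2)$ \emph{as spaces} because the obstruction lives in $H^3(K(\IZ,1);\IZ/2)=0$ --- but \emph{not} as loop spaces, exactly as the paper cautions in its proof --- while $B^2\Pic(H\IZ)$ does \emph{not} split even as a space, because the relevant obstruction now lives in $H^4(K(\IZ,2);\IZ/2)\cong\IZ/2$ and equals $Sq^2\iota=\iota^2\neq0$. So your identification $B^2\Pic(H\IZ)\simeq K(\IZ,2)\times K(\IZ/2,3)$ is false, and the ensuing claim that \eqref{eq:maslov_data} ``has two components'' $(2c_1(X),0)$ with the second one read off via $H^3(BU;\IZ/2)=0$ has no meaning: without a product decomposition, the map $X\to B^2\Pic(H\IZ)$ has no well-defined $K(\IZ/2,3)$-component independent of a choice of null-homotopy of $2c_1$. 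Your argument for (i) therefore has a genuine gap.

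The paper sidesteps this entirely. Existence of an $H\IZ$-Maslov datum is established by \cref{lem:complex_ori_maslov}: $H\IZ$ is complex oriented, so the unit factors through $MU$, and this factoring produces a canonical null-homotopy of \eqref{eq:maslov_data} once one has a null-homotopy of $2c_1(X)$. Given this base point, the set of homotopy classes of $H\IZ$-Maslov data is then a \emph{torsor} over $[X,B\Pic(H\IZ)]$, and $[X,B\Pic(H\IZ)]\cong H^1(X;\IZ)\oplus H^2(X;\IZ/2)$ is computed using only the spatial (non-loop) splitting of $B\Pic(H\IZ)$, which does hold. You arrive at the right bijection, but the reasoning must run through a fixed base point and the torsor structure rather than a factor-by-factor analysis of the map into $B^2\Pic(H\IZ)$.

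For (ii) the spatial splitting of $B\Pic(H\IZ)$ is legitimate, so decomposing the null-homotopy of $d_L$ into a $K(\IZ,1)$-piece and a $K(\IZ/2,2)$-piece is fine; your outline of (a) and (b) is essentially the right content, and you are candid that the identifications need unwinding. The paper does this by factoring through $F\coloneqq\hofib(\hofib(2c_1)\to B(U/O))$, identifying the $B\IZ$-component with the classifying map of the Maslov class (hence null-homotopies of it with grading data), and identifying the remaining component with $w_2(TL)-b|_L$ by appealing to \cite[Lemma~11.7]{seidel2008fukaya} (you should cite this --- it is what ``pins down the pin convention''). Your sketch is compatible with this, but the hard part you flag in (a) --- recognizing the $K(\IZ,1)$-shadow on the nose as the squared phase map rather than merely as the right cohomology class --- is precisely where the paper's use of $\hofib(2c_1)$ and the torsor over quadratic volume forms is doing work; you would need to supply that argument.
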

        \begin{proof}
            \begin{enumerate}
                \item Since $H\IZ$ is complex oriented, \cref{lem:complex_ori_maslov} shows that $X$ admits $H\IZ$-Maslov data. In particular, the following diagram is commutative
                \[
                \begin{tikzcd}[sep=scriptsize]
                    \hofib(2c_1) \dar \rar \ar[rr,bend left=20] & B^3U \times B^2\IZ \dar & \ast \dar \\
                    B(U/O) \rar & B^3O \times B^2\IZ \rar & B^2\Pic(H\IZ)
                \end{tikzcd}.
                \]
                The quadratic complex volume form $\eta^2_X$ provides a null-homotopy of the projection
                \begin{equation}\label{eq:null_htpy_2c1}
                    X \longrightarrow BU \longrightarrow B(U/O) \overset{\eqref{eq:maslov_fiber}}{\longrightarrow} B^2\Pic(H\IZ) \longrightarrow K(\IZ,2),
                \end{equation}
                which classifies $2c_1(X) \in H^2(X;\IZ)$, showing that $X \to BU$ factors as $X \to \hofib(2c_1) \to BU$. Now, any other choice of $H\IZ$-Maslov data, i.e., null-homotopy of the composition
                \[
                X \longrightarrow \hofib(2c_1) \longrightarrow B(U/O) \longrightarrow B^2\Pic(H\IZ)
                \]
                induces a map $X \to F \to B\Pic(H\IZ)$. The map $B\Pic(H\IZ) \to K(\IZ,1)$ admits a section since the relevant obstruction belongs to $H^2(S^1;\IZ/2) \cong 0$. (We caution the reader that the induced splitting $B\Pic(H\IZ) \cong K(\IZ,1) \times K(\IZ/2,2)$ as spaces, is \emph{not} a splitting as loop spaces.) Therefore any map $X \to K(\IZ,1)$ admits a lift to a map $X \to B\Pic(H\IZ)$, and the set of such lifts is in one-to-one correspondence with elements in $H^2(X;\IZ/2)$. We conclude $[X,B\Pic(H\IZ)] \cong H^2(X;\IZ/2) \oplus H^1(X;\IZ)$, thus finishing the proof.
                \item Via item (i) above we consider a choice of $H\IZ$-Maslov data on $X$ corresponding to the pair of classes $(b,m) \in H^2(X;\IZ/2) \oplus H^1(X;\IZ)$.
                
                Letting $F\coloneqq \hofib(\hofib(2c_1)\to B(U/O))$, we obtain the commutative diagram
                \[
                \begin{tikzcd}[sep=scriptsize]
                    F \rar[dashed] \dar & B\Pic(H\IZ) \dar \\
                    \hofib(2c_1) \rar \dar & \ast \dar \\
                    B(U/O) \rar & B^2\Pic(H\IZ)
                \end{tikzcd}.
                \]
                Noticing that the map $BO \to BU$ factors through $\hofib(2c_1)$, we obtain an induced map $BO \to F$. Furthermore by definition the homotopy fiber of $F \to \hofib(2c_1)$ is homotopy equivalent to $U/O$ and since the homotopy cofiber of $F \to \hofib(2c_1)$ is equal to $B(U/O)$, the following diagram is commutative
                \begin{equation}\label{eq:lift_umodo}
                    \begin{tikzcd}[sep=scriptsize]
                        U/O \rar[equals] \dar &U/O \rar[dashed] \dar & B\Pic(H\IZ) \dar[sloped,phantom]{\simeq} \\
                        BO \rar &F \rar[dashed] \dar & B\Pic(H\IZ) \dar \\
                        &\hofib(2c_1) \rar \dar & \ast \dar \\
                        &B(U/O) \rar & B^2\Pic(H\IZ)
                    \end{tikzcd}.
                \end{equation}
                The induced map $F \to B\Pic(H\IZ)$ measures by definition the difference between the null-homotopy induced by the null-homotopy of $\hofib(2c_1) \to B^2\Pic(H\IZ)$, and the canonical null-homotopy of $F \to \hofib(2c_1) \to B(U/O)$.
                
                Next, denote the compositions $BO \to B\Pic(H\IZ) \to B\IZ$ and $U/O \to B^2O \times B\IZ \to B\IZ$ by $\mu_{BO}$, and $\mu$, respectively. Note that the map $\mu$ classifies the universal Maslov class. The upper commutative rectangle in \eqref{eq:lift_umodo} induces a commutative diagram
                \[
                \begin{tikzcd}[sep=scriptsize]
                    \hofib(\mu) \rar \dar & B^2O \rar{B^2J} & B^2GL_1(H\IZ) \dar \\
                    \hofib(\mu_{BO}) \ar[rr] && B^2GL_1(H\IZ)
                \end{tikzcd}.
                \]
                It follows from \cite[Lemma 11.7]{seidel2008fukaya} that the bottom horizontal arrow is the pullback of the universal second Stiefel--Whitney class $w_2 \colon BO \to B^2\IZ$ under the map $\hofib(\mu_{BO}) \to BO$. Recall by definition that our choice of $H\IZ$-Maslov data on $X$ corresponds to the background class $b\in H^2(X;\IZ/2)$. This choice also induces as explained above a composition $L \to BO \to B\Pic(H\IZ)$. Finally, a choice of $H\IZ$-Maslov data for $L$, i.e., a null-homotopy of this composition, implies that $L \to BO \to B\Pic(H\IZ) \to B \IZ$ is null-homotopic. This null-homotopy corresponds to grading data on $L$ in the sense of \cref{dfn:lag_gr}. We therefore obtain that $L \to BO$ factors as $L \to \hofib(\mu_{BO}) \to BO$, and consequently that the composition
                \[
                L \longrightarrow \hofib(\mu_{BO}) \longrightarrow BO \overset{w_2}{\longrightarrow} B^2GL_1(H\IZ)
                \]
                is null-homotopic. By construction, it follows that this composition is the classifying map for $w_2(TL) - b|_L \in H^2(L;\IZ/2)$, a null-homotopy of which corresponds to a relative pin structure on $L$ with respect to $b \in H^2(X;\IZ/2)$.%\sayJmargin{still a little sketchy, but I dont think we need to identify $L \to BO$ as $TL-E_b|_L$}

            \end{enumerate}
        \end{proof}

\subsection{Space of abstract disks}\label{sec:abstract_disks}
	\begin{notn}
        \begin{itemize}
            \item Let $R$ be a commutative ring spectrum.
            \item Let $(X,\lambda)$ be a Liouville sector.
            \item Recall that  $c \colon BO \to BU$ denotes the complexification map, $\Gr \colon BU \to B(U/O)$ denotes the induced quotient map, and $\Gr(X)$ denotes the composition $X \xrightarrow{TX} BU \xrightarrow{\Gr} B(U/O)$.
        \end{itemize}
	\end{notn}
	\begin{defn}
    Define 
        \[	
        \sD \coloneqq
        \hofib \left(BU \longrightarrow \sL BU \longrightarrow \sL B(U/O) \right).
        \]
    \end{defn}
	Note that there is a commutative diagram
    \[
    \begin{tikzcd}[sep=scriptsize]
        \sD \rar \dar & \sL BO \rar \dar{\sL c} & \ast \dar \\
        BU  \rar & \sL BU \rar{\sL \Gr} & \sL B(U/O)
    \end{tikzcd},
    \]
    where the right and outer squares are homotopy pullbacks by definition. We thus have an identification
	\begin{align*}
		\sD &\simeq  \holim \left( \begin{tikzcd} [ampersand replacement=\&] \&  \sL BO \dar{\sL c} \\ BU \rar \& \sL BU \end{tikzcd} \right)\\
		&\simeq  \{ (u,\alpha,\eta) \mid u \in \Map(D^2,BU), \, \alpha \in \sL BO, \, c \circ \alpha \overset{\eta}{\simeq} u|_{\partial D^2} \}. 
	\end{align*}
    Consider the commutative diagram 
    \begin{equation}\label{eq:map_to_u/o}
        \begin{tikzcd}[column sep=1.5cm]
            BU \rar \dar{\Gr} & \sL B(U/O) \dar{\simeq} \\
            B(U/O) \rar{\id_{B(U/O)}\times \ast} & B(U/O) \times U/O
        \end{tikzcd}.
    \end{equation}
    Denote the induced map on the homotopy fibers of the two horizontal maps above by
	\begin{equation}\label{eq:induced_map_loop}
		\rho \colon \sD \longrightarrow \varOmega (U/O)
	\end{equation}
    \begin{rem}\label{rem:index_bundle}
		Every element $(u,\alpha,\eta) \in \sD$ determines a complex vector bundle $u^\ast \sE_\IC$ over the disk and a real sub-bundle of $\alpha^* \sE_\IR$ along the boundary. Let $D_{u,\alpha}$ denote the corresponding Cauchy--Riemann operator acting on the space of sections with boundary conditions in the Lagrangian sub-bundle along $\partial D^2$. This yields the vector bundle
		\[
			\sD \overset{\ind}{\longrightarrow} BO \times \IZ
		\]
		over $\sD$ whose fiber over $(u,\alpha)$ is given by
		\[ \ind(D_{u,\alpha}) = \mathrm{ker}(D_{u,\alpha}) \otimes  (-\mathrm{coker}(D_{u,\alpha})).\]
        The map $\ind$ factors as
        \[ \sD \overset{\rho}{\longrightarrow} \varOmega (U/O) \xrightarrow{\mathrm{Bott}_\IR} BO \times \IZ.
        \]
	\end{rem}
    In analogy with the definition of $B(U/O)^\#$ in \cref{dfn:maslov_data}, define
    \[
    BU^\# \coloneqq \hofib\left(BU \overset{\Gr}{\longrightarrow} B(U/O) \overset{\text{\eqref{eq:maslov_fiber}}}{\longrightarrow} B^2\Pic(R)\right).
    \]
    We denote the induced map $BU^\# \to B(U/O)^\#$ by $\Gr^\#$. 
	\begin{defn}\label{dfn:disk_hash}
        Define
        \[
        \sD^\# \coloneqq \hofib \left(BU^\# \longrightarrow \sL BU^\# \longrightarrow \sL B(U/O)^\#\right).
        \]
	\end{defn}
    There is a diagram similar to \eqref{eq:map_to_u/o} involving  $BU^\#$ and $B(U/O)^\#$ inducing a map
	\[
		\rho^\# \colon \sD^\# \longrightarrow \varOmega (U/O)^\#.
	\]
    \begin{lem}\label{lem:induced_loop_maps_pullback}
        There is a canonical homotopy making the following diagram commutative:\begin{equation}\label{eq:comm_dia_lifts}
		\begin{tikzcd}[sep=scriptsize]
			\sD^\# \dar \rar{\rho^\#} & \varOmega (U/O)^\# \dar \\
			\sD \rar{\rho} & \varOmega (U/O)	
		\end{tikzcd}.
	\end{equation}
    \end{lem}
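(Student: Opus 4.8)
The plan is to recognize that $\rho$ and $\rho^\#$ are produced by literally the same recipe — taking horizontal homotopy fibers of a commutative square — and that the square producing $\rho^\#$ maps to the one producing $\rho$ via the forgetful maps $BU^\# \to BU$ and $B(U/O)^\# \to B(U/O)$; the homotopy filling \eqref{eq:comm_dia_lifts} is then the image of this morphism of squares under the functorial horizontal-homotopy-fiber construction.

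First I would rewrite $\rho$ so that this shape is manifest. Since $BU \to \sL BU$ is the constant-loop inclusion, the map defining $\sD$ equals $BU \xrightarrow{\Gr} B(U/O) \xrightarrow{\mathrm{const}} \sL B(U/O)$, and, using that the evaluation fibration $\sL B(U/O)\to B(U/O)$ is split by $\mathrm{const}$ (which is where the equivalence $\sL B(U/O)\simeq B(U/O)\times U/O$ in \eqref{eq:map_to_u/o} comes from, $B(U/O)$ being an infinite loop space), the square \eqref{eq:map_to_u/o} becomes
\[
\begin{tikzcd}[sep=scriptsize]
BU \rar{\mathrm{const}\circ\Gr} \dar{\Gr} & \sL B(U/O) \dar[equals]\\
B(U/O) \rar{\mathrm{const}} & \sL B(U/O)
\end{tikzcd}.
\]
From $\hofib\bigl(\mathrm{const}\colon B(U/O)\to \sL B(U/O)\bigr)\simeq \varOmega^2 B(U/O) = \varOmega(U/O)$ and $\sD = \hofib(\mathrm{const}\circ\Gr)$ one then sees that the map $\rho$ of \eqref{eq:induced_map_loop} is exactly the map on horizontal homotopy fibers of this square.

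Second, I would run the identical argument one level up. The spaces $BU^\#$ and $B(U/O)^\#$ are again infinite loop spaces, being homotopy fibers of maps of infinite loop spaces (see \cref{dfn:maslov_data,dfn:disk_hash}), so their free loop spaces split in the same way, and the same computations identify $\hofib(BU^\# \to \sL B(U/O)^\#) = \sD^\#$ and $\hofib\bigl(\mathrm{const}\colon B(U/O)^\# \to \sL B(U/O)^\#\bigr) \simeq \varOmega^2 B(U/O)^\# = \varOmega(U/O)^\#$, exhibiting $\rho^\#$ as the map on horizontal homotopy fibers of the $\#$-analogue of the square above. The forgetful maps $BU^\# \to BU$ and $B(U/O)^\# \to B(U/O)$ are infinite loop maps, and $\Gr^\#$ is by construction a lift of $\Gr$ along them; applying $\sL$ and using that the splitting $\sL Y \simeq Y\times\varOmega Y$ is natural for infinite loop maps of $Y$ assembles everything into one homotopy-commutative cube whose front face is the $\#$-square and whose back face is the plain square. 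Functoriality of the horizontal-homotopy-fiber construction, applied to this cube, then produces the canonical homotopy filling \eqref{eq:comm_dia_lifts}.

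I do not expect a genuine obstacle here, the statement being formal; the only step demanding real care is making the identifications consistent — namely fixing the natural splitting $\sL Y\simeq Y\times\varOmega Y$, and the induced equivalence $\hofib(\mathrm{const}\colon Y\to\sL Y)\simeq\varOmega^2 Y$, so that they are compatible along the infinite loop map $B(U/O)^\# \to B(U/O)$. With that bookkeeping in place, every remaining assertion is an instance of the functoriality of homotopy limits.
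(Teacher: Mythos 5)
Your proposal is correct and is essentially the paper's own argument: the paper also assembles the defining squares for $\rho$ and $\rho^\#$ into a single commuting cube (with the natural splitting of $\sL B(U/O)$ and $\sL B(U/O)^\#$ built into the right-hand face) and then invokes functoriality of horizontal homotopy fibers. Your extra step of rewriting $BU \to \sL BU \to \sL B(U/O)$ as $\mathrm{const}\circ\Gr$ and identifying $\hofib(\mathrm{const})\simeq\varOmega^2 B(U/O)$ is just spelling out what the paper leaves implicit; the underlying mechanism is the same.
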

    \begin{proof}
        The lemma follows by noting that there is a commuting cube
        \[
            \begin{tikzcd}[sep=scriptsize]
                & BU^\# \ar[rr] \ar[dl] \ar[dd] & &\sL B(U/O)^\# \ar[dl] \ar[dd,"\simeq"]\\
                BU \ar[rr,crossing over, near end] \ar[dd] & & \sL B(U/O)\\
                & B(U/O)^\# \ar[rr,near start,"\id \times \ast"] \ar[dl] & & B(U/O)^\# \times (U/O)^\# \ar[dl]\\
                B(U/O) \ar[rr,"\id \times \ast"] & & B(U/O) \times U/O \ar[from=uu,crossing over,near start, "\simeq"]
            \end{tikzcd}
        \]
        and considering the induced maps between the homotopy fibers of the horizontal maps.
    \end{proof}
	\begin{cor}\label{cor:index_R-ori}
	The composition
    \[
		\sD^\# \longrightarrow \sD \overset{\rho}{\longrightarrow} \varOmega(U/O) \xrightarrow{\varOmega^2\mathrm{Bott}_\IR} BO \times \IZ \longrightarrow \Pic(R) 
	\]
	is canonically null-homotopic.
	\end{cor}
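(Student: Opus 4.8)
The plan is to deduce the corollary formally from \cref{lem:induced_loop_maps_pullback} together with the definition of $B(U/O)^\#$. First I would invoke the commutative square \eqref{eq:comm_dia_lifts}: it supplies a canonical homotopy identifying $\sD^\# \to \sD \xrightarrow{\rho} \varOmega(U/O)$ with $\sD^\# \xrightarrow{\rho^\#} \varOmega(U/O)^\# \to \varOmega(U/O)$, where the last arrow is the map induced on homotopy fibers (equivalently, $\varOmega^2$ of $B(U/O)^\# \to B(U/O)$). Post-composing with $\varOmega^2\mathrm{Bott}_\IR \colon \varOmega(U/O) \xrightarrow{\simeq} BO\times\IZ$ and with the $J$-homomorphism $BO\times\IZ\to\Pic(R)$, the composite appearing in the statement is therefore canonically homotopic to
\[
\sD^\# \xrightarrow{\rho^\#} \varOmega(U/O)^\# \longrightarrow \varOmega(U/O) \xrightarrow{\varOmega^2\mathrm{Bott}_\IR} BO\times\IZ \longrightarrow \Pic(R),
\]
so it is enough to produce a canonical null-homotopy of the tail beginning at $\varOmega(U/O)^\#$.

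Next I would unwind the definition $B(U/O)^\# = \hofib\bigl(B(U/O) \xrightarrow{\text{\eqref{eq:maslov_fiber}}} B^2\Pic(R)\bigr)$. Since $U/O$ and $\Pic(R)$ are infinite loop spaces, $\varOmega^2B(U/O)\simeq\varOmega(U/O)$ and $\varOmega^2B^2\Pic(R)\simeq\Pic(R)$ (and similarly $\varOmega^2 B(U/O)^\#\simeq\varOmega(U/O)^\#$), and because $\varOmega$ preserves homotopy fibers, applying $\varOmega^2$ to the defining fiber sequence yields a fiber sequence $\varOmega(U/O)^\# \to \varOmega(U/O) \longrightarrow \Pic(R)$ whose last map is $\varOmega^2$ of \eqref{eq:maslov_fiber} and whose composite $\varOmega(U/O)^\# \to \Pic(R)$ carries a canonical null-homotopy. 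It then remains only to identify $\varOmega^2$ of \eqref{eq:maslov_fiber} with the tail $\varOmega(U/O) \xrightarrow{\varOmega^2\mathrm{Bott}_\IR} BO\times\IZ \to \Pic(R)$ of the display above: this is immediate from the definition of \eqref{eq:maslov_fiber} as $B(U/O)\xrightarrow{\mathrm{Bott}_\IR}B^3O\times B^2\IZ \xrightarrow{B^2J} B^2\Pic(\IS)\to B^2\Pic(R)$, whose double loop is $\varOmega(U/O)\xrightarrow{\varOmega^2\mathrm{Bott}_\IR}BO\times\IZ\xrightarrow{J}\Pic(\IS)\to\Pic(R)$, i.e.\ precisely the map $BO\times\IZ\to\Pic(R)$ in the statement. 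Precomposing the canonical null-homotopy of $\varOmega(U/O)^\#\to\Pic(R)$ with $\rho^\#$ and pasting with the homotopy from \cref{lem:induced_loop_maps_pullback} then completes the argument.

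The step I expect to take the most care is this last identification. One must check that the ``$\mathrm{Bott}_\IR$'' of \eqref{eq:maslov_fiber} and the ``$\varOmega^2\mathrm{Bott}_\IR$'' of the statement are genuinely related by $\varOmega^2$, that $\varOmega^2B^2J$ recovers the $J$-homomorphism $BO\times\IZ\to\Pic(\IS)$ as a map of infinite loop spaces (including the $\pi_0$-shift carried by the $B^2\IZ$-summand), and — most importantly — that the null-homotopy so obtained is the canonical one attached to a homotopy fiber rather than an arbitrary choice, so that the output is genuinely canonical as claimed. Everything else is a formal diagram chase, using that $\varOmega$ preserves homotopy fibers, the equivalences $\varOmega^nB^n\simeq\mathrm{id}$ for infinite loop spaces, and the commutativity of \eqref{eq:comm_dia_lifts}.
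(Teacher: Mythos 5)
Your argument is correct and follows the same route as the paper's own proof: invoke \cref{lem:induced_loop_maps_pullback} to reduce to the tail starting at $\varOmega(U/O)^\#$, then appeal to the fact that $\varOmega(U/O)^\# \to \varOmega(U/O) \to \Pic(R)$ is a fiber sequence and hence carries a canonical null-homotopy. The paper simply asserts the last step ``by definition,'' whereas you spell out the identification of $\varOmega^2$ of \eqref{eq:maslov_fiber} with the map $\varOmega(U/O)\to BO\times\IZ\to\Pic(R)$ and the compatibility of the two fiber-taking constructions; this is just a more explicit rendering of the same proof.
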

	\begin{proof}
		This follows since by \cref{lem:induced_loop_maps_pullback} we have the canonically homotopy commutative diagram
        \[
        \begin{tikzcd}[sep=scriptsize]
			\sD^\# \dar \rar{\rho^\#} & \varOmega (U/O)^\# \dar&& \\
			\sD \rar{\rho} & \varOmega (U/O) \rar{\ind} & BO \times \IZ \rar & \Pic(R),
		\end{tikzcd}
        \]
        and the composition 
		\[
        \varOmega (U/O)^\# \longrightarrow \varOmega (U/O) \longrightarrow \Pic(R) 
        \]
		is canonically null-homotopic by definition.
	\end{proof}
	\subsection{Space of abstract strip caps}\label{sec:space_abstract_strip_caps}
    Define
    \[
        (U/O)^\# \coloneqq \hofib(U/O \xrightarrow{\varOmega\text{\eqref{eq:maslov_fiber}}} B\Pic(R)).
    \]
	\begin{notn}
		For the rest of the section fix $x_0,x_1 \in BO$. We denote their images in $BU^\#,BU,B(U/O)^\#$, and $B(U/O)$ by the same symbols.
	\end{notn}
    It will be useful to fix a subdivision of the disk $D^2$.
    \begin{notn}\label{notn:fixed_nghd}
        \begin{itemize}
            \item Fix a neighborhood $U_{-1} \subset D^2$ of $-1 \in D^2$.
            \item Let $\partial_1 D^2 \coloneqq \overline{U_{-1}} \cap \partial D^2$, and let $\partial_2D^2 \coloneqq (D^2 \smallsetminus U_{-1}) \cap \partial D_2$.
        \end{itemize}
    \end{notn}
    \begin{defn}
		Suppose $\gamma \in \sP_{x_0,x_1} BU$. Then define
		\[
			\sD(\gamma) \coloneqq  \holim \left(
            \begin{tikzcd}[sep=scriptsize]
				& \sP_{x_0,x_1} (\sP_\ast B(U/O)) \dar{\ev_1}\\
				\ast \rar{\Gr\circ \gamma} & \sP_{x_0,x_1}B(U/O)
			\end{tikzcd}
            \right).
		\]
	\end{defn}
    Since $\sP_\ast B(U/O)$ is contractible, there are (non-canonical) weak equivalences
    \[
    \sD(\gamma) \simeq \hofib(\ast \xrightarrow{\Gr \circ \gamma} \sP_{x_0,x_1}B(U/O)) \simeq \varOmega_{\Gr \circ \gamma} \sP_{x_0,x_1} B(U/O) \simeq \varOmega(U/O),
    \]
    which we denote by
    \begin{equation}\label{eq:index_D_gamma}
        \rho \colon \sD(\gamma) \longrightarrow \varOmega(U/O).
    \end{equation}
    Using the commutative diagram,
    \begin{equation}\label{eq:gamma_strips}
        \begin{tikzcd}[sep=scriptsize]
            \sD(\gamma) \rar \dar &  \sP_{x_0,x_1} BO \rar \dar & \sP_{x_0,x_1} (\sP_*(B(U/O))\dar\\
            \ast \rar{\gamma} &  \sP_{x_0,x_1} BU \rar & \sP_{x_0,x_1} B(U/O)
        \end{tikzcd},
    \end{equation}
    where both the squares are pullback squares,
	we have the identification
	\begin{align*}
		\sD(\gamma) &\simeq \holim \left(
			\begin{tikzcd}[sep=scriptsize, ampersand replacement=\&]
				\& \sP_{x_0,x_1} BO \dar{}\\
				\ast \rar{\gamma} \& \sP_{x_0,x_1} BU
			\end{tikzcd}
            \right)\\
            &=\{(u,\alpha) \in \Map(D^2,BU) \times \sP_{x_0,x_1}BO \mid \gamma = u|_{\partial_1 D^2}, \; c \circ \alpha = u|_{\partial_2 D^2}\}.
	\end{align*}
    Via the map $B(U/O) \to B^2\Pic(R)$ defined in \eqref{eq:maslov_fiber}, we consider the following commutative diagram
    \[ 
    \begin{tikzcd}
        \sP_{x_0,x_1} (\sP_\ast B(U/O)) \rar \dar & \sP_{x_0,x_1} B(U/O) \dar\\ 
         \sP_{x_0,x_1} (\sP_\ast B^2\Pic(R) )\rar & 
         \sP_{x_0,x_1} B^2\Pic(R)
    \end{tikzcd}.
    \]
    Taking the homotopy fibers of the horizontal arrows at $\gamma$, we obtain an induced map
    \begin{equation}\label{eq:induced_map_D_pic}
        \sD(\gamma) \longrightarrow \sP_{x_0,x_1} B\Pic(R) \simeq \Pic(R),
    \end{equation}
    where the last identification is not canonical, and that we fix once and for all. Under appropriate identifications of the path and based loop spaces, the map \eqref{eq:induced_map_D_pic} factors as
    \[
    \sD(\gamma) \overset{\rho}{\longrightarrow} \varOmega(U/O) \xrightarrow{\varOmega^2\mathrm{Bott}_\IR} BO \times \IZ \longrightarrow \Pic(\IS) \longrightarrow \Pic(R).
    \]

    Recall that $\sP_\ast X \coloneqq \holim(\ast \to X)$, and for any map $f\colon X \to Y$ there is a canonical map $\sP_\ast X \to \hofib(f)$. In particular, we obtain a canonical map $\sP_\ast B(U/O) \to B(U/O)^\#$.
    \begin{defn}
		For any $\gamma^\# \in \sP_{x_0,x_1}BU^\#$ define
		\[
			\sD^\#(\gamma^\#) \coloneqq \holim
            \left(\begin{tikzcd}[sep=scriptsize]
                & \mathcal P_{x_0,x_1} (\sP_\ast (B(U/O)) \dar{}\\
            \ast \rar{\Gr^\# \circ \gamma^\#}& \mathcal P_{x_0,x_1}B(U/O)^\# 				\end{tikzcd}\right).
		\]
	\end{defn}
    There is a commutative diagram similar to \eqref{eq:gamma_strips},
    \begin{equation}\label{eq:gamma_strips_hash}
        \begin{tikzcd}[sep=scriptsize]
            \sD^\#(\gamma^\#) \rar \dar &  \sP_{x_0,x_1} BO \rar \dar & \sP_{x_0,x_1} (\sP_*(B(U/O))\dar\\
            \ast \rar{\gamma^\#} &  \sP_{x_0,x_1} BU^\# \rar & \sP_{x_0,x_1} B(U/O)^\#
        \end{tikzcd},
    \end{equation}
    where both the squares are pullback squares. Using this we have the identification
	\begin{align*}
		\sD^\#(\gamma^\#) &\simeq \holim \left(
			\begin{tikzcd}[sep=scriptsize, ampersand replacement=\&]
				\& \sP_{x_0,x_1} BO \dar{c^\#}\\
				\ast \rar{\gamma^\#} \& \sP_{x_0,x_1} BU^\#
			\end{tikzcd}
            \right)\\
            &\simeq\left\{(u,\alpha,\eta) \;\middle|\; \begin{matrix}
                u \in \Map(D^2, BU^\#), \; \alpha \in \sP_{x_0,x_1}BO \\
                \gamma^\# = u|_{\partial_1 D^2}, \; c^\# \circ \alpha \overset{\eta}{\simeq} u|_{\partial_2 D^2}
            \end{matrix}\right\}.
	\end{align*}
    For any $\gamma \in \sP_{x_0,x_1}BU$, and a lift $\gamma^\# \in \sP_{x_0,x_1}BU^\#$, we have a commutative diagram
    \[
    \begin{tikzcd}[row sep=scriptsize, column sep=1cm]
        \ast \rar{\Gr^\# \circ \gamma^\#} \dar[equals] & \sP_{x_0,x_1}B(U/O)^\# \dar & \sP_{x_0,x_1}(\sP_\ast B(U/O)) \dar[equals] \lar \\
        \ast \rar{\Gr \circ \gamma} & \sP_{x_0,x_1}B(U/O) & \sP_{x_0,x_1}(\sP_\ast B(U/O)) \lar
    \end{tikzcd},
    \]
    which gives an induced map $\sD^\#(\gamma^\#) \to \sD(\gamma)$.
	\begin{lem}\label{lem:induced_loop_maps_pullback_path}
		For any $\gamma \in \sP_{x_0,x_1}BU$ and a lift $\gamma^\# \in \sP_{x_0,x_1} BU^\#$, the composition
        \[
            \sD^\#(\gamma^\#) \longrightarrow \sD(\gamma) \xrightarrow{\text{\eqref{eq:induced_map_D_pic}}} \Pic(R),
        \]
        is a fiber sequence, and hence canonically null-homotopic.
	\end{lem}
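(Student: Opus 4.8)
The plan is to run the same argument as in the proofs of \cref{lem:induced_loop_maps_pullback} and \cref{cor:index_R-ori}, but arranged as a tower of \emph{three} cospans rather than a single cube, since the target $\Pic(R)$ now appears directly. Recall that by definition $B(U/O)^\# = \hofib\!\left(B(U/O) \xrightarrow{\text{\eqref{eq:maslov_fiber}}} B^2\Pic(R)\right)$. First I would assemble the commutative diagram
\[
\begin{tikzcd}[sep=scriptsize]
\ast \rar{\Gr^\#\circ\gamma^\#} \dar[equals] & \sP_{x_0,x_1}B(U/O)^\# \dar & \sP_{x_0,x_1}(\sP_\ast B(U/O)) \lar \dar[equals] \\
\ast \rar{\Gr\circ\gamma} \dar[equals] & \sP_{x_0,x_1}B(U/O) \dar & \sP_{x_0,x_1}(\sP_\ast B(U/O)) \lar \dar \\
\ast \rar & \sP_{x_0,x_1}B^2\Pic(R) & \sP_{x_0,x_1}(\sP_\ast B^2\Pic(R)) \lar
\end{tikzcd},
\]
in which the top two rows form the map of cospans displayed just before the statement, so that passing to homotopy limits of the rows recovers the map $\sD^\#(\gamma^\#) \to \sD(\gamma)$, and the lower map of cospans is obtained by applying $\sP_{x_0,x_1}(-)$ and $\sP_{x_0,x_1}(\sP_\ast(-))$ to \eqref{eq:maslov_fiber}. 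Because $\sP_\ast B^2\Pic(R)$ is contractible, the homotopy limit of the bottom row is $\sP_{x_0,x_1}B\Pic(R) \simeq \Pic(R)$ (the identification fixed above \eqref{eq:induced_map_D_pic}), and by construction the map induced on homotopy limits of the middle and bottom rows is precisely \eqref{eq:induced_map_D_pic}.

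Next I would observe that the top row is the vertexwise homotopy fiber of the map from the middle row to the bottom row, compatibly with the ``fiber inclusion'' maps to the middle row. On the central vertices this is the statement that $\sP_{x_0,x_1}(-)$, being a homotopy limit, commutes with the homotopy fiber defining $B(U/O)^\#$; on the two outer vertices it holds because $\sP_\ast B^2\Pic(R)$ and a point are contractible, so that the homotopy fiber of a map into either of them recovers its source, which is also why the outer vertical maps of the top square are the correct fiber-inclusion maps. Since homotopy limits commute with one another, computing the homotopy limit of the $3\times 3$ diagram in the two orders yields $\sD^\#(\gamma^\#) \simeq \hofib\!\left(\sD(\gamma) \xrightarrow{\text{\eqref{eq:induced_map_D_pic}}} \Pic(R)\right)$ compatibly with $\sD^\#(\gamma^\#) \to \sD(\gamma)$; that is, the composition in the statement is a fiber sequence. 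As the fiber of a fibration, the composite $\sD^\#(\gamma^\#) \to \Pic(R)$ then carries the canonical null-homotopy coming from the contractible space of paths in the definition of the homotopy fiber, which is the asserted canonical null-homotopy.

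The step I expect to be the main obstacle is organizational rather than conceptual: keeping the basepoints and the non-canonical equivalence $\sP_{x_0,x_1}B\Pic(R) \simeq \Pic(R)$ consistent with the choices made above \eqref{eq:induced_map_D_pic}, and checking that the endpoints $x_0,x_1$ and the path $\Gr\circ\gamma$ map to the relevant basepoints of $B^2\Pic(R)$ — which uses that $\gamma^\#$ lifts $\gamma$ and that $BO \to BU \to B(U/O)$ is canonically null after \eqref{eq:maslov_fiber} — so that the lower two rows genuinely form a commuting square of cospans inducing \eqref{eq:induced_map_D_pic} and not a twist of it. Once the diagram is set up compatibly with those earlier choices, the remaining assertions are formal consequences of the stability of homotopy fibers under homotopy limits.
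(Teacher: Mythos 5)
Your proof is correct and is essentially the same argument as the paper's: the paper arranges the data as a $3\times 3$ grid with rows $\sD^\#(\gamma^\#) \to \sP_{x_0,x_1}(\sP_\ast B(U/O)) \to \sP_{x_0,x_1}B(U/O)^\#$, etc.\ (the $\gamma^\#$-fiber sequences corresponding to your holims of cospans), and then invokes the interchange of homotopy fibers on the columns, just as you do on the rows of your unwound $3\times 3$ diagram. The only cosmetic difference is that you display the cospans explicitly and take vertical fibers, while the paper displays the holims of the cospans and reads off the left column as the fiber of the map between the middle and right columns.
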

	\begin{proof}
        Consider the commutative diagram
        \[
            \begin{tikzcd}
                \sD^\#(\gamma^\#) \rar \dar & \sP_{x_0,x_1} (\sP_\ast B(U/O)) \rar \dar[equal] & \sP_{x_0,x_1}B(U/O)^\#\dar \\
                \sD(\gamma) \rar \dar & \sP_{x_0,x_1} (\sP_\ast B(U/O)) \rar \dar & \sP_{x_0,x_1} B(U/O) \dar\\
                \sP_{x_0,x_1}B\Pic(R) \rar \dar[sloped,phantom]{\simeq} & \ast \rar & \sP_{x_0,x_1}B^2\Pic(R) \dar[sloped,phantom]{\simeq}\\
                \Pic(R) & & B\Pic(R)
            \end{tikzcd}.
        \]
        The horizontal rows are $\gamma^\#$-homotopy fiber sequences by definition. The canonical null-homotopy is then given by the fact that the middle and right vertical compositions are fiber sequences, implying that the left vertical composition is a fiber sequence.
	\end{proof}
    The following lemma is straightforward and is used in the construction of orientations on flow categories associated to Lagrangians:
	\begin{lem}\label{lem:gluing_disks}
		For any $\gamma \in \sP_{x_0,x_1}BU$ and a lift $\gamma^\# \in \sP_{x_0,x_1} BU^\#$, there are gluing maps such that the following diagram is commutative
		\[
			\begin{tikzcd}[sep=scriptsize]
				\sD^\#(\gamma^\#) \times \sD^\#(\gamma^\#) \rar \dar & \sD^\# \dar \\
				\sD(\gamma) \times \sD(\gamma) \rar& \sD
			\end{tikzcd}.
		\]\qed
	\end{lem}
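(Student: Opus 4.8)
The plan is to exhibit both horizontal maps as ``doubling along the marked arc'' operations and then to observe that the vertical maps, which merely forget the $\#$-decoration, commute with this doubling by naturality. Concretely, I would first fix once and for all a homeomorphism $\Phi\colon D^2 \cup_{\partial_1 D^2} D^2 \xrightarrow{\;\sim\;} D^2$, where the source is the pushout of two copies of $D^2$ glued along the arc $\partial_1 D^2$ of \cref{notn:fixed_nghd} via the identity map, and where $\Phi$ carries the two copies of the complementary arc $\partial_2 D^2$ onto the two closed halves of $\partial D^2$ meeting at $\pm 1$. Transporting along $\Phi$, a map out of $D^2$ becomes the same as a pair of maps out of $D^2$ that agree on $\partial_1 D^2$, with the boundary circle of the target becoming the concatenation of the two copies of $\partial_2 D^2$, the second run backwards.

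With this fixed, I would define the bottom arrow as follows. In the path-space model recorded above, a point of $\sD(\gamma)$ is a pair $(u,\alpha)$ with $u\colon D^2 \to BU$, $\alpha\in\sP_{x_0,x_1}BO$, $u|_{\partial_1 D^2} = \gamma$ and $u|_{\partial_2 D^2} = c\circ\alpha$. Given $(u_0,\alpha_0)$ and $(u_1,\alpha_1)$, the maps $u_0$ and $u_1$ agree on $\partial_1 D^2$, so they glue (through $\Phi$) to a map $u_0 \# u_1\colon D^2 \to BU$ whose boundary restriction is $c\circ(\alpha_0 \ast \overline{\alpha_1})$, with $\alpha_0 \ast \overline{\alpha_1}\in\sL BO$ the loop at $x_0$ obtained by concatenating $\alpha_0$ with $\alpha_1$ run backwards (a genuine loop since both paths run from $x_0$ to $x_1$). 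Taking $\eta$ constant, the triple $(u_0\#u_1,\; \alpha_0\ast\overline{\alpha_1},\; \eta)$ is a point of $\sD$, and the assignment is continuous. The top arrow is built verbatim with $BU^\#$, $\gamma^\#$ and $c^\#$ in place of $BU$, $\gamma$ and $c$; here each factor additionally carries a homotopy $\eta_i$ over $\partial_2 D^2$, and the two $\eta_i$ assemble along $\Phi$ into a single homotopy over $\partial D^2$, so that the glued datum lands in $\sD^\#$. Commutativity of the square is then immediate: the vertical maps are induced by the forgetful map $BU^\# \to BU$ together with the compatible maps on $B(U/O)^\#$, on $\sL(-)$ and on the relevant path spaces, and these commute with restriction to $\partial_i D^2$, with gluing of maps out of $D^2$ along $\Phi$, and with the assembly of the $\eta_i$; hence the image of a glued disk is the glued disk of the images.

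The one point that needs care — and the reason the statement is only ``straightforward'' rather than immediate — is the bookkeeping at the two gluing corners $\pm 1$: after gluing, the copies of $\gamma$ (resp. $\gamma^\#$) become an interior arc of the new disk, so one must check that the surviving boundary data genuinely exhibits $u_0\#u_1$ in the homotopy fiber defining $\sD$ (resp. $\sD^\#$), i.e. that the homotopy assembled from $\eta_0$ and $\eta_1$ over $\partial D^2$ is continuous where the two halves meet. This holds because at each corner the two sides agree strictly — both restrictions equal the common endpoint value of $\gamma$, resp. $\gamma^\#$ — so there is nothing to interpolate, and the same is true for the lifts through $BU^\#$. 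Once $\Phi$ is fixed (and, if one wants the resulting gluing maps to be strictly associative and unital for iterated gluings, a coherent family of such identifications for the various iterated doubles), all of the constructions above are manifestly natural in the target, which yields the commuting square.
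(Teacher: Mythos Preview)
The paper gives no proof of this lemma (it simply places a \qed\ after the statement, having flagged it as ``straightforward''), so your detailed construction is exactly the content the paper leaves implicit. Your approach --- gluing two disks along the arc $\partial_1 D^2$ where both restrict to $\gamma$ (respectively $\gamma^\#$), reversing one factor to obtain a loop in $BO$, and then observing that the forgetful vertical maps commute with this doubling by naturality in the target --- is the intended one, and the care you take with the corner bookkeeping and the assembly of the homotopies $\eta_i$ correctly addresses the only point requiring attention.
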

\section{Spectral wrapped Donaldson--Fukaya category}\label{sec:spectral_fuk}
    In this section we give a summary of the construction of the wrapped Donaldson--Fukaya category with coefficients in a commutative ring spectrum $R$ as defined in \cite[Section 8]{asplund2024nearby}. The difference between loc.\@ cit.\@ and the present article is that we only assume that $X$ is equipped with a choice of $R$-Maslov data as opposed to a stable polarization. We therefore start with a discussion about orientations.

	We use the Floer theoretic setup as in \cite[Section 6]{asplund2024nearby} based on quadratic Hamiltonians following \cite{sylvan2019on}. We denote by $\sH\sJ(X)$ the space of admissible pairs $(H,J_t)$ of a Hamiltonian $H$ as in \cite[Definition 6.6]{asplund2024nearby} and a domain-dependent almost complex structure $J_t$ as in \cite[Definition 6.4]{asplund2024nearby}. We denote by $\sX(L_0,L_1;H)$ the set of time-$1$ Hamiltonian chords from $L_0$ to $L_1$.
\subsection{Local Cauchy--Riemann operators}
	\begin{notn}
	\begin{enumerate}
		\item Let $R$ be a commutative ring spectrum.
		\item Let $X$ be a Liouville sector equipped with a choice of $R$-Maslov data, i.e., a lift $TX^\# \colon X \to BU^\#$ of the stable tangent bundle of $X$, where $BU^\# = \hofib(BU \to B(U/O) \to B^2\Pic(R))$ (cf.\@ \cref{dfn:maslov_data}).
		\item Let $L_0,L_1 \subset X$ be transversally intersecting Lagrangian $R$-branes (see \cref{eq:r-brane}).
		\item Let $(H_t,J_t) \in \sH\sJ(X)$ such that for each fixed $t$ and each time-$1$ chord $\gamma$ of $H$ from $L_0$ to $L_1$, $J_t$ is constant in a Darboux ball containing $\gamma(t)$.
	\end{enumerate}
\end{notn}

	\begin{defn}
		Given a time-$1$ chord $\gamma$ of $H$ from $L_0$ to $L_1$, the \emph{local operator} at $\gamma$ is the translation invariant operator
		\[ Y_\gamma \in \varOmega^{0,1}_{\IR \times [0,1]} \otimes_J \End(\gamma^\ast TX)\]
		that is determined as follows.  The linearization of the Floer equation at the constant solution $u(s,t) = \gamma(t)$ determines an invertible operator:
		\[ D_\gamma \colon W^{1,2}\left(\IR \times [0,1], (\gamma^\ast TX, L_0|_{\gamma(0)}, L_1|_{\gamma(1)})\right) \longrightarrow L^2\left(\IR \times [0,1], \varOmega^{0,1}_{\IR \times [0,1]} \otimes_J \gamma^\ast TX\right), \]
		where $W^{1,2}\left(\IR \times [0,1], (\gamma^\ast TX, L_0|_{\gamma(0)}, L_1|_{\gamma(1)})\right)$ denotes the space of $W^{1,2}$-sections of $\gamma^\ast TX$ over $\IR \times [0,1]$ which map $\IR \times \{0\}$ to $TL_0|_{\gamma(0)}$ and $\IR \times \{1\}$ to $TL_1|_{\gamma(1)}$.  It is of the form
		\[ D_\gamma(\xi) = \overline{\partial}_{J_t}(\xi) + Y_t(\xi), \]
		where $Y_t = Y_\gamma$.
	\end{defn}
\subsection{Abstract Floer strip caps}\label{sec:floer_caps}
    Let $D_{\pm} \coloneqq D^2 \smallsetminus \left\{\mp 1\right\}\subset \IC$ and fix the standard complex structure on the plane. Recall the fixed neighborhood $U_{-1} \subset D^2$ of $-1$ in \cref{notn:fixed_nghd}. Define
    \[
        \Ends(D_+) \coloneqq \left\{ \varepsilon \colon (-\infty,0] \times [0,1] \hookrightarrow D_+ \mid \varepsilon \text{ is a biholomorphism onto $U_{-1}$.} \right\}.
    \]
    Equip $\Ends(D_+)$ with the subspace topology in the space of smooth maps. Similarly define $\Ends(D_-)$. Notice that $\Ends(D_\pm)$ is contractible.

    Fixing an orientation preserving diffeomorphism $(-\infty,0] \to (-1,0]$ endows $\{-\infty\} \cup (-\infty,0]$ with a smooth topology that it is diffeomorphic to $[-1,0]$. With this, every $\varepsilon \in \Ends(D_+)$ determines a compactification of $D_+$ denoted by $\overline D_+$ and a diffeomorphism
    \begin{equation}\label{eq:diffeo_punc}
        \varphi_\varepsilon \colon \overline D_+ \longrightarrow D^2.
    \end{equation}
    This specifies a decomposition $\partial \overline D_+ = \partial_1 \overline D_+ \cup \partial_2 \overline D_+$, by letting $\partial_i \overline D_+ \coloneqq \varphi_{\varepsilon}^{-1}(\partial_iD^2)$, where $\partial_i D^2$ is defined as in \cref{notn:fixed_nghd}.
        
    Note that given $\varepsilon \in \Ends(D_+)$, the pullback under the diffeomorphism \eqref{eq:diffeo_punc}, and the inclusion $D_+ \hookrightarrow \overline D_+$ specifies a map
    \[
    \Map(D^2,BU) \longrightarrow \Map(D_+,BU).
    \]

    \begin{defn}\label{dfn:abstract_floer_strip_caps}
    Let $\gamma \colon [0,1] \to X$ be a time-$1$ Hamiltonian chord of $H$ from $L_0$ to $L_1$. The set of \emph{positive abstract Floer strip caps} $\sC^\#_+(\gamma)$ associated to $\gamma$ is the set of tuples $((u,\alpha,\eta),\varepsilon,R,J,Y,g)$ where:
    \begin{enumerate}
    	\item $(u,\alpha,\eta) \in \sD^\#(\gamma^\ast TX^\#)$.
        \item $\varepsilon \in \Ends(D_+)$.
    	\item $R \in \IR_{>0}$.
    	\item $J$ is an almost complex structure on $u^\ast TX$ over $D_+$ such that $(\varepsilon|_{(-\infty,-R) \times [0,1]})^\ast J$ agrees with the given almost complex structure $J_t$ along $\gamma$.
    	\item $Y \in \varOmega^{0,1}_{D_+} \otimes_J \End((u^\ast TX)^\ast \sE)$ such that $Y_\gamma = (\varepsilon|_{(-\infty,-R) \times [0,1]})^\ast Y$.
    	\item $g$ is a metric on $D_+$ such that $(\varepsilon|_{(-\infty,-R) \times [0,1]})^\ast g$ agrees with the standard metric on $\IR \times [0,1] \subset \IR^2$.
    
    \end{enumerate}
    We endow $\sC^\#_+(\gamma)$ with the subspace topology of the product topology. We define $\sC_-^\#(\gamma)$ analogously.
    \end{defn}
    \begin{rem}
        If $u \colon D^2\to X$ is a smooth map with Lagrangian boundary conditions, the space $\sC^\#_+(\gamma)$ is non-empty assuming that $X$ and the involved Lagrangian admit $R$-Maslov data. This is important when constructing $R$-orientations on flow categories coming from Floer theory in \cref{sec:category}.
    \end{rem}
    We also define spaces $\sC_{\pm}(\gamma)$ by using $\sD(\gamma^\ast TX)$ instead of $\sD^\#(\gamma^\ast TX^\#)$ in \cref{dfn:abstract_floer_strip_caps}. Via the map $\pi \colon \sD^\#(\gamma^\ast TX^\#) \to \sD(\gamma^\ast TX)$ defined in \cref{lem:induced_loop_maps_pullback_path}, we obtain projection maps
    \begin{align*}
    	p_\pm \colon \sC_{\pm}^\#(\gamma) &\longrightarrow \sC_{\pm}(\gamma) \\
    	((u,\alpha,\eta),\varepsilon,R,J,Y,g) &\longmapsto (\pi(u,\alpha,\eta),\varepsilon,R,J,Y,g).
    \end{align*}
    
    \begin{lem}\label{lma:forgetful_maps_caps}
    There are forgetful maps
    
    \begin{center}
    	\begin{minipage}{0.3\textwidth}
    	\begin{align*}
    		\sC^\#_{\pm}(\gamma) &\longrightarrow \sD^\#(\gamma^\ast TX^\#) \\
    		((u,\alpha,\eta),\varepsilon,R,J,Y,g) &\longmapsto (u,\alpha,\eta)
    	\end{align*}
    \end{minipage}
    \begin{minipage}{0.3\textwidth}
    	\begin{align*}
    		\sC_{\pm}(\gamma) &\longrightarrow \sD(\gamma^\ast TX) \\
    		((u,\alpha,\eta),\varepsilon,R,J,Y,g) &\longmapsto (u,\alpha,\eta)
    	\end{align*}
    \end{minipage}
    
    \end{center}
    that are continuous homotopy equivalences which moreover commute with the projections $p_\pm$. 
    \end{lem}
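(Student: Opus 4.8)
\medskip
\noindent\textbf{Proof proposal.}
The plan is to recognize each forgetful map as a homotopy equivalence because the ``extra data'' $(\varepsilon,R,J,Y,g)$ in a tuple of $\sC^\#_\pm(\gamma)$ (respectively $\sC_\pm(\gamma)$) is a choice from a contractible parameter space fibred over the base $\sD^\#(\gamma^\ast TX^\#)$ (respectively $\sD(\gamma^\ast TX)$). I would carry this out for $\sC^\#_+(\gamma)$ and observe that the cases of $\sC^\#_-(\gamma)$ and of the un-hashed spaces $\sC_\pm(\gamma)$ are identical. Continuity of the four forgetful maps is immediate from the definitions of the product and subspace topologies. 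I would then factor
\[
\sC^\#_+(\gamma)\overset{q}{\longrightarrow}\sD^\#(\gamma^\ast TX^\#)\times\Ends(D_+)\times\IR_{>0}\longrightarrow\sD^\#(\gamma^\ast TX^\#),
\]
where $q$ remembers $\bigl((u,\alpha,\eta),\varepsilon,R\bigr)$ and forgets $(J,Y,g)$, and the second arrow is the projection. The second arrow is a homotopy equivalence since it projects off the contractible factors $\Ends(D_+)$ (contractible by definition) and $\IR_{>0}$, so everything reduces to showing $q$ is a homotopy equivalence.

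For this I would argue that $q$ is a locally trivial fibration with contractible fibres. Its fibre over $\bigl((u,\alpha,\eta),\varepsilon,R\bigr)$ is the space of triples $(J,Y,g)$ satisfying conditions (iv)--(vi) of \cref{dfn:abstract_floer_strip_caps}, i.e.\@ $J$, $Y$, $g$ are required to restrict, on the strip-like-end region $\varepsilon\bigl((-\infty,-R)\times[0,1]\bigr)\subset D_+$, to the fixed data $J_t$, $Y_\gamma$, and the standard strip metric. Local triviality of $q$ over the parameters $(\varepsilon,R)$ I would get by choosing a continuous family of diffeomorphisms of $D_+$ straightening these regions to a fixed one. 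Contractibility of the fibre rests on the single observation that the inclusion $\varepsilon\bigl((-\infty,-R)\times[0,1]\bigr)\hookrightarrow D_+$ is a cofibration and a homotopy equivalence (both spaces being contractible); hence restriction of sections of any fibre bundle over $D_+$ to sections over the end is a homotopy equivalence, and in particular the space of extensions of a prescribed germ over the end is contractible. Applying this to the bundle of metrics (convex fibres), then to the affine bundle of admissible $Y$'s once $J$ is fixed, and then to the bundle of complex structures on $u^\ast TX$ --- whose fibre need not itself be contractible, but that is irrelevant here --- exhibits the fibre of $q$ as a tower with contractible iterated fibres, hence contractible. A fibration with contractible fibres (over a space of CW homotopy type; alternatively one constructs a section and a fibrewise deformation retraction onto it) is a homotopy equivalence, so $q$, and therefore the forgetful map, is one.

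Finally, compatibility with the projections is formal: by construction $p_\pm$ acts as the map $\pi\colon\sD^\#(\gamma^\ast TX^\#)\to\sD(\gamma^\ast TX)$ of \cref{lem:induced_loop_maps_pullback_path} on the $(u,\alpha,\eta)$-coordinate and as the identity on $(\varepsilon,R,J,Y,g)$, while the forgetful maps are projection to the $(u,\alpha,\eta)$-coordinate, so the square they form with $p_\pm$ and $\pi$ commutes on the nose; the intermediate factorisations through the projections of the previous paragraph are likewise $\pi$-equivariant. The step I expect to demand the most care is checking that $q$ is genuinely a fibration --- producing the straightening diffeomorphisms continuously in $(\varepsilon,R)$ and verifying the restriction-of-sections maps are fibrations --- whereas the contractibility of the fibre is elementary once one notes it only uses that the strip-like-end region includes into $D_+$ as a homotopy equivalence. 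I would also record that nonemptiness of all these spaces, where it matters, is precisely what the assumed $R$-Maslov data on $X$ and on the Lagrangian branes provides, through nonemptiness of $\sD^\#(\gamma^\ast TX^\#)$.
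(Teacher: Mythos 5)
Your proof is correct and follows the same basic approach as the paper, which simply asserts that "the set of tuples $(\varepsilon,R,J,Y,g)$ satisfying items (ii)--(vi) in \cref{dfn:abstract_floer_strip_caps} is contractible" and leaves both the fibration property and the contractibility argument implicit. You supply the missing details (the factorization through the projection, the local-triviality argument, and the cofibration/extension argument for contractibility of the fiber); the only minor quibble is that the inclusion $\varepsilon\bigl((-\infty,-R)\times[0,1]\bigr)\hookrightarrow D_+$ is of an open subset, so to invoke a cofibration one should pass to its closure $\varepsilon\bigl((-\infty,-R]\times[0,1]\bigr)$, which changes nothing since the constraints in (iv)--(vi) extend by continuity.
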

    
    \begin{proof}
    Continuity and the condition that they commute with the projections $p_\pm$ are clear.  The statement about homotopy equivalence follows from the fact that the set of tuples $(\varepsilon,R,J,Y,g)$ satisfying items (ii)--(vi) in \cref{dfn:abstract_floer_strip_caps} is contractible.
    \end{proof}
    \subsection{Index bundles of abstract strip caps}\label{sec:index_bundle_abstract_strip_caps}
    Consider an element
    \[
    ((u,\alpha,\eta),\varepsilon,R,J,Y,g) \in \sC_{\pm}^\#(\gamma^\ast TX^\#).
    \]
    The triple $((u,\alpha,\eta),\varepsilon)$ determines the linear operator
    \begin{align*}
        D_v \colon W^{1,2}\left(\overline{D}_\pm, ((u^\ast TX)^\ast \sE, \alpha)\right) &\longrightarrow L^2\left(\overline{D}_\pm, \varOmega^{0,1}_{\overline D_\pm} \otimes_J (u^\ast TX)^\ast \sE \right) \\
        \xi &\longmapsto \overline{\partial}_J(\xi) + Y(\xi).
    \end{align*}
    \begin{defn}\label{dfn:index_bundle_caps}
    	For any Hamiltonian chord $\gamma \colon [0,1] \to X$ with endpoints on $L_0$ and $L_1$, respectively, the \emph{index bundle} of $\sC_{\pm}^\#(\gamma)$ is the vector bundle classified by the composition
    	\[
    		\sV_\pm(\gamma) \colon \sC_\pm^\#(\gamma) \longrightarrow \sD^\#(\gamma^\ast TX^\#) \overset{\rho}{\longrightarrow} \varOmega(U/O) \xrightarrow{\varOmega^2\mathrm{Bott}_\IR} BO \times \IZ,
    	\]
    	i.e., it is the vector bundle $\sV_\pm(\gamma) \to \sC_{\pm}^\#(\gamma)$ with fiber over $((u,\alpha,\eta),\varepsilon,R,J,Y,g)$ given by $\ind(D_v)$ (cf.\@ \cref{rem:index_bundle}).
    \end{defn}
    The following is a direct consequence of \cref{lem:induced_loop_maps_pullback_path}.
    \begin{cor}
    	For any Hamiltonian chord $\gamma \colon [0,1] \to X$ with endpoints on $L_0$ and $L_1$, respectively, the index bundle $\sV_\pm(\gamma)$ is canonically $R$-orientable.
        \qed
    \end{cor}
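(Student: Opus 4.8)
The plan is to unwind the definition of an $R$-orientation and observe that the statement is an immediate consequence of \cref{lem:induced_loop_maps_pullback_path}. Recall that an $R$-orientation of a virtual vector bundle on a space $B$ classified by a map $B \to BO \times \IZ$ is precisely a null-homotopy of the composition $B \to BO \times \IZ \to \Pic(\IS) \to \Pic(R)$, where $BO \times \IZ \to \Pic(\IS)$ is the stable $J$-homomorphism and $\Pic(\IS) \to \Pic(R)$ is induced by the unit of $R$. By \cref{dfn:index_bundle_caps}, the bundle $\sV_\pm(\gamma) \to \sC_\pm^\#(\gamma)$ is classified by the composition of the forgetful map $\sC_\pm^\#(\gamma) \to \sD^\#(\gamma^\ast TX^\#)$ of \cref{lma:forgetful_maps_caps}, the canonical map $\sD^\#(\gamma^\ast TX^\#) \to \sD(\gamma^\ast TX)$, and $\sD(\gamma^\ast TX) \xrightarrow{\rho} \varOmega(U/O) \xrightarrow{\varOmega^2\mathrm{Bott}_\IR} BO \times \IZ$. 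So what is needed is a canonical null-homotopy of this composite followed by $BO \times \IZ \to \Pic(\IS) \to \Pic(R)$.

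First I would use the factorization recorded immediately after \eqref{eq:induced_map_D_pic}, namely that $\sD(\gamma^\ast TX) \xrightarrow{\text{\eqref{eq:induced_map_D_pic}}} \Pic(R)$ is homotopic to $\sD(\gamma^\ast TX) \xrightarrow{\rho} \varOmega(U/O) \xrightarrow{\varOmega^2\mathrm{Bott}_\IR} BO \times \IZ \to \Pic(\IS) \to \Pic(R)$. This identifies the map whose null-homotopy we seek with the composite $\sC_\pm^\#(\gamma) \to \sD^\#(\gamma^\ast TX^\#) \to \sD(\gamma^\ast TX) \xrightarrow{\text{\eqref{eq:induced_map_D_pic}}} \Pic(R)$. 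Then I would apply \cref{lem:induced_loop_maps_pullback_path} to $\gamma^\ast TX$ and its lift $\gamma^\ast TX^\#$: it asserts precisely that $\sD^\#(\gamma^\ast TX^\#) \to \sD(\gamma^\ast TX) \xrightarrow{\text{\eqref{eq:induced_map_D_pic}}} \Pic(R)$ is a fiber sequence, and hence comes with a canonical null-homotopy. Precomposing that null-homotopy with the forgetful map $\sC_\pm^\#(\gamma) \to \sD^\#(\gamma^\ast TX^\#)$ produces the desired canonical $R$-orientation of $\sV_\pm(\gamma)$.

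There is no substantive obstacle here, which is why the statement is flagged as a direct consequence. The only points that warrant a sentence of care are: the identification $\sP_{x_0,x_1}B\Pic(R) \simeq \Pic(R)$ used in defining \eqref{eq:induced_map_D_pic} is non-canonical and fixed once and for all, so that the word ``canonical'' above is understood relative to that choice; and the forgetful map of \cref{lma:forgetful_maps_caps} is, by construction, compatible with the projections to $\sD^\#(\gamma^\ast TX^\#)$ and $\sD(\gamma^\ast TX)$, which is what legitimizes the precomposition. Both follow immediately from the definitions in \cref{sec:floer_caps,sec:space_abstract_strip_caps}.
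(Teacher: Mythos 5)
Your proof is correct and takes the same approach as the paper, which simply records the corollary as a direct consequence of \cref{lem:induced_loop_maps_pullback_path}. You have merely spelled out the unwinding of \cref{dfn:index_bundle_caps}, the factorization after \eqref{eq:induced_map_D_pic}, and the precomposition with the forgetful map of \cref{lma:forgetful_maps_caps}, all of which the paper leaves implicit.
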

\subsection{The spectral wrapped Donaldson--Fukaya category}\label{sec:category}
    We now give a brief description of the construction of the wrapped Donaldson--Fukaya category from \cite[Section 8]{asplund2024nearby}.

\begin{defn}\label{dfn:floer_strips_boudary_asymp}
	Given $a,b \in \sX(L_0,L_1)$, define $\widetilde{\sR}(a,b;H,J_t)$ to be the set of maps 
	\[
		\widetilde{\sR}(a;b;H,J_t) \coloneqq \left\{ u\colon \IR \times [0,1] \to \overline{X} \,\left|\, \begin{matrix}
	\partial_s u + J_{t}\left(\partial_t u - X_{H}\right) = 0 \\
	u(\IR \times \left\{0\right\}) \subset L_0 \\
	u(\IR \times \left\{1\right\}) \subset L_1 \\
	\lim_{s\to -\infty} u(s,t) = b(t) \\
	\lim_{s\to \infty} u(s,t) = a(t) \end{matrix} \right. \right\}.
	\]
\end{defn}
There is a natural $\IR$-action given by translation of the $s$-coordinate in the domain. Define $\sR(a;b;H,J_t) \coloneqq \widetilde{\sR}(a;b;H,J_t)/\IR$ whenever this action is free and declare it to be empty otherwise.
\begin{defn}
	Define $\osr(a;b;H,J_t)$ to be the Gromov compactification of $\sR(a;b;H,J_t)$ (see \cite[(9l)]{seidel2008fukaya}).
\end{defn}
In the following, $\mu$ denotes the Maslov index, see \cite{robbin1993maslov}.
\begin{lem}\label{lem:transversality_for_strips}
There exists a comeager set of $(H,J_t) \in \sH\sJ(\overline{X},F)$ such that the moduli space $\osr(a;b)$ is a smooth manifold with corners of dimension $\mu(b)-\mu(a)-1$.  Moreover, the following conditions are satisfied:
\begin{enumerate}
		\item For $a,b,c \in \sX(L_0,L_1;H)$, there is a map
		\[ \mu_{abc}\colon \osr(a;b) \times \osr(b;c) \longrightarrow \osr(a;c) \]
		that is a diffeomorphism onto a face of $\osr(a;c)$ such that defining
		\[ \partial_i \osr(a;c) \coloneqq \bigsqcup_{\substack{b \\ \mu(b)-\mu(a) - 1 = i}} \osr(a;b) \times \osr(b;c),\]
		endows $\osr(a;c)$ with the structure of a $\ang{\mu(b)-\mu(a) - 1}$-manifold (see \cite[Section 2.2]{asplund2024nearby}).
		\item There exist rank one free $R$-modules $\check{\fo}(a)$ for all $a \in \sX(L_0,L_1;H)$ and isomorphisms:
		\[ \psi_{ab} \colon \check{\fo}(b) \overset{\cong}{\longrightarrow} \check{\fo}(a)  \otimes_RI_R(a;b)\]
		that are compatible with the maps $\mu_{abc}$.
\end{enumerate}
\end{lem}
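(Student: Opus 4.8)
The plan is to split the statement into a geometric part---the manifold-with-corners and $\ang{\,\cdot\,}$-manifold structures on the compactified strip moduli spaces together with the face maps $\mu_{abc}$---and an algebraic part---the construction of the rank-one free $R$-modules $\check{\fo}(a)$ and the coherent isomorphisms $\psi_{ab}$.

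For the geometric part I would invoke the standard transversality package for wrapped Floer theory with quadratic Hamiltonians, set up in \cite[Section 6]{asplund2024nearby} following \cite{sylvan2019on}. Applying the Sard--Smale theorem to the universal moduli space fibered over $\sH\sJ(\overline{X},F)$ produces a comeager subset for which every $\widetilde{\sR}(a;b)$ is regular, hence a smooth manifold of dimension $\mu(b)-\mu(a)$; since the chords are non-constant the translation $\IR$-action is free, so $\sR(a;b)=\widetilde{\sR}(a;b)/\IR$ has dimension $\mu(b)-\mu(a)-1$. The Gromov compactification $\osr(a;b)$ together with the usual strip-breaking gluing theorem makes this a manifold with corners whose codimension-$i$ stratum is $\bigsqcup\osr(a;b')\times\osr(b';c)$ over intermediate chords $b'$ with $\mu(b')-\mu(a)-1=i$, and the maps $\mu_{abc}$ are the corresponding face inclusions. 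Checking that these assemble into the $\ang{\,\cdot\,}$-manifold structure of item (i), in the sense of \cite[Section 2.2]{asplund2024nearby}, reduces---once the single gluing theorem is in place---to a routine verification of the combinatorics of iterated breakings (commutativity of the ``corner'' squares of twofold breakings).

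For item (ii) I would attach to each chord $a$ the rank-one free $R$-module $\check{\fo}(a)$ built from the index bundle of the space of positive abstract Floer strip caps $\sC^\#_+(a)$. By the corollary following \cref{dfn:index_bundle_caps}, the index bundle $\sV_+(a)$ carries a canonical $R$-orientation (via \cref{lem:induced_loop_maps_pullback_path}); this canonical orientation trivializes the associated fiberwise Thom-$R$-module bundle over $\sC^\#_+(a)$, and one defines $\check{\fo}(a)$ to be this constant rank-one free $R$-module, which makes sense up to contractible ambiguity because the auxiliary data in \cref{dfn:abstract_floer_strip_caps} are contractible (cf.\@ \cref{lma:forgetful_maps_caps}). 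The isomorphism $\psi_{ab}$ is then the linear gluing isomorphism: gluing a strip cap representing $\check{\fo}(a)$ to a Floer strip in $\sR(a;b)$ yields a strip cap representing $\check{\fo}(b)$, and reading this off on index bundles gives the identification $\check{\fo}(b)\cong\check{\fo}(a)\otimes_R I_R(a;b)$, where $I_R(a;b)$ is the rank-one free $R$-module associated, via its $R$-orientation, to the index bundle of $\osr(a;b)$.

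The one genuinely delicate point---and the main obstacle---is compatibility of the $\psi_{ab}$ with the composition maps $\mu_{abc}$: gluing a strip cap at $a$ to $\mu_{abc}(v\times w)$ must induce the same index identification as successively gluing to $v$ and then to $w$, up to the standard orientation conventions. This is the associativity of the gluing of abstract disks, which holds topologically by \cref{lem:gluing_disks}---the commuting square relating the gluing $\sD^\#(\gamma^\#)\times\sD^\#(\gamma^\#)\to\sD^\#$ to the underlying gluing on $\sD(\gamma)$---and analytically by the standard associativity of Cauchy--Riemann gluing. I would deduce the coherence by transporting the topological associativity of \cref{lem:gluing_disks} through the forgetful homotopy equivalences $\sC^\#_\pm(\gamma)\simeq\sD^\#(\gamma^\ast TX^\#)$ of \cref{lma:forgetful_maps_caps}, using \cref{cor:index_R-ori} to see that the $R$-orientations appearing throughout are the canonical ones and hence are preserved by every map in sight. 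This is the $R$-linear upgrade of the coherent-orientation construction of \cite[Section 11]{seidel2008fukaya}, and of \cite[Section 8]{asplund2024nearby} in the stably polarized case; I expect no obstruction beyond careful bookkeeping, provided every construction is made functorially in the abstract-disk spaces so that homotopy coherence survives.
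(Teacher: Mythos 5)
Your proposal is correct and follows essentially the same route as the paper: the paper's proof simply cites Large and Fukaya--Oh--Ohta--Ono for the smooth manifold-with-corners/gluing structure in item (i), and points to \cite[Lemmas 6.18 and 7.8]{asplund2024nearby} for item (ii), which is exactly the index-bundle-of-abstract-strip-caps construction with coherent $R$-orientations via \cref{cor:index_R-ori}, \cref{lma:forgetful_maps_caps}, and \cref{lem:gluing_disks} that you describe. Your write-up is, in effect, an unpacking of those citations.
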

\begin{proof}
    \begin{enumerate}
        \item The existence of a smooth structure respecting the corner structure on $\osr(a;b)$ is due to Large \cite[Section 6]{large2021spectral} and Fukaya--Oh--Ohta--Ono \cite{fukaya2016exponential}.
        \item This is completely analogous to the proof of \cite[Lemma 6.18]{asplund2024nearby}, see \cite[Lemma 7.8]{asplund2024nearby}.
    \end{enumerate}
\end{proof}
\begin{defn}[Flow category associated to Lagrangians]\label{dfn:flow_cat_lag}
	Let $L_0, L_1 \subset X$ be two Lagrangian $R$-branes and let $(H,J_t) \in \sH\sJ(X)$ be so that \cref{lem:transversality_for_strips} holds. The \emph{flow category associated to $L_0$, $L_1$, and $(H,J_t) \in \sH\sJ(\overline X,F)$} is denoted by $\sC\sW(L_0,L_1;H,J_t)$ and is defined as follows:
    \begin{itemize}
        \item The set of objects is given by $\Ob(\sC\sW(L_0,L_1;H,J_t)) \coloneqq \mathcal X(L_0,L_1,H)$.
        \item The grading function is given by $-\mu$ (i.e.\@ \emph{negative} of the Maslov index).
        \item The morphisms from $a$ to $b$ are given by $\sC\sW(L_0,L_1;H,J_t)(a;b) \coloneqq \overline{\sR}(a;b;H,J_t)$.
        \item The $R$-orientation is given by $\fo_{(L_0,L_1;H)}$ is given by $\fo(a) \coloneqq (-\sV(a))_R$ and the isomorphism of $R$-line bundles
        \[
            \psi_{ab} \colon \fo(a) \overset{\simeq}{\longrightarrow} I_R(x;y) \otimes_R \fo(b),
        \]
        given by \cref{lem:transversality_for_strips}(ii). Here $\sV(a) = \sV_+(a)$ is as in \cref{dfn:index_bundle_caps}, and $(-\sV(a))_R$ denotes by abuse of notation the generic fiber of the associated $R$-line bundle $(-\sV(a))_R$, 
    \end{itemize}
\end{defn}
\begin{notn}
	\begin{enumerate}
		\item Letting $c \in \sX(L_0,L_1;H)$ we denote the action functional of $c$ associated to Floer's equation by
		\[
				\sA(c) \coloneqq \int c^\ast \lambda - \int H(c(t)) dt - (h_{1}(c(1)) - h_{0}(c(0))),
		\]
		where $h_0 \colon L_0 \to \IR$ and $h_1 \colon L_1 \to \IR$ are smooth functions satisfying $\lambda|_{L_0} = dh_0$ and $\lambda|_{L_1} = dh_1$, respectively.
		\item For any $A \in \IR_{> 0}$, set
		\[
				\sX^{\leq A}(L_0,L_1;H) \coloneqq \left\{c \in \sX(L_0,L_1;H) \mid \sA(c) \leq A\right\}.
		\]
	\end{enumerate}
\end{notn}
\begin{defn}[Action filtered flow category associated to Lagrangians]\label{dfn:action_filtered_flow_cat}
	Let $A \in \IR_{> 0}$ and define $\sC\sW^{\leq A}(L_0,L_1;H,J_t)$ to be the flow category that is defined in the same way as $\sC\sW(L_0,L_1;H,J_t)$ (see \cref{dfn:flow_cat_lag}) except that the set of objects is
	\[
	    \Ob(\sC\sW^{\leq A}(L_0,L_1;H,J_t)) \coloneqq \sX^{\leq A}(L_0,L_1;H).
	\]
	We denote the corresponding $R$-orientation by $\fo^{\leq A_k}_{(L_0,L_1)}$.
\end{defn}
\begin{notn}
    For $A > 0$, let
    \[
        HW^{\leq A}(L_0,_1;H,J_t) \coloneqq |\sC\sW^{\leq A}(L_0,L_1;H,J_t),\fo_{(L_0,L_1)}^{\leq A}|,
    \]
    where the right hand side denotes the CJS realization, see \cite[Definition 3.27]{asplund2024nearby}. 
\end{notn}
\begin{defn}
    We define
	\[
		HW(L_0,L_1;H,J_t) \coloneqq \hocolim_{k \to \infty} HW^{\leq A_k}(L_0,L_1;H,J_t),
	\]
    where $\{A_k\}_{k=1}^\infty \subset \IR$ denote an increasing sequence diverging to $\infty$.	
\end{defn}

\begin{rem}
	\begin{enumerate}
        \item $HW(L_0,L_1;H,J_t)$ is independent of the choice of the subsequence $\{A_k\}_{k=1}^\infty \subset \IR$.
		\item It was shown in \cite[Lemma 8.11]{asplund2024nearby} that there is an equivalence of $R$-modules
		\[
			HW(L_0,L_1;H^1,J_t^1) \simeq HW(L_0,L_1;H^2,J_t^2),
		\]
		for two different choices of regular pairs $(H^1,J_t^1),(H^2,J_t^2) \in \sH\sJ(X)$. Therefore by abuse of notation we write
		\[
			HW(L_0,L_1) \coloneqq HW(L_0,L_1;H,J_t).
		\]
		\item In the special case of $R = H\IZ$, the homotopy groups of the $H\IZ$-module $HW(L_0,L_1;H\IZ)$ are isomorphic to the wrapped Floer cohomology $HW^{-\bullet}(L_0,L_1;\IZ)$ with reversed grading, see \cite[Lemma 8.8]{asplund2024nearby}. The proof in the current setting (without the existence of a stable polarization) is the same, except that the comparison of signs and orientations is proven in \cref{lem:hz_maslov}.
	\end{enumerate}
\end{rem}
The triangle product in classical Floer theory admits a spectral lift that is a homotopy associative product
\begin{equation}\label{eq:triangle_product}
	\mu^2 \colon HW(L_0,L_1) \wedge_R HW(L_1,L_2) \longrightarrow HW(L_0,L_2),
\end{equation}
see \cite[Lemma 8.15]{asplund2024nearby}. There is moreover a (Floer) unit map
\begin{equation}\label{eq:unit_hw}
	\eta_L \colon R \longrightarrow HW(L,L),
\end{equation}
which is a unit in the sense that the following composition is homotopic to the identity:
\[
	HW(L,K) \overset{\simeq}{\longrightarrow} R \wedge_R HW(L,K) \overset{\eta_L \wedge_R \id}{\longrightarrow} HW(L,L) \wedge_R HW(L,K) \overset{\mu^2}{\longrightarrow} HW(L,K),
\]
see \cite[Lemma 8.17]{asplund2024nearby}.

\begin{defn}[Spectral wrapped Donaldson--Fukaya category]\label{dfn:spectral_df_category}
	The \emph{wrapped Donaldson--Fukaya category with coefficients in $R$} of $X$ is the category $\sW(X;R)$ enriched over the homotopy category of $R$-modules whose objects are given by the set of Lagrangian $R$-branes in $X$, the morphisms from $L_0$ to $L_1$ are given by
	\[ \sW(\overline X,F;R)(L_0,L_1) \coloneqq HW(L_0,L_1), \]
	and compositions of morphisms are given by $\mu^2$ in \eqref{eq:triangle_product}.
\end{defn}
\subsection{The open-closed map}
	\begin{lem}[Open-closed map]
		Let $L \subset X$ be an $R$-orientable Lagrangian $R$-brane. There exists a map of $R$-modules
		\[
			\sO\sC \colon HW(L,L) \longrightarrow \varSigma^{\infty-n}_+X \wedge R,
		\]
		with the property that the composition
		\[
			R \overset{\eta_L}{\longrightarrow} HW(L,L) \overset{\sO\sC}{\longrightarrow} \varSigma^{\infty-n}_+ X \wedge R,
		\]
		coincides with the $R$-fundamental class $[L]_R \in H_n(X;R)$.
        \qed
	\end{lem}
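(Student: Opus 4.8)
The plan is to construct $\sO\sC$ as the CJS realization of a map of flow categories, following the spectral open--closed maps of \cite{asplund2024nearby,porcelli2023bordism,porcelli2024spectral,porcelli2025open-closed}; exactly as for the wrapped invariant $HW(L_0,L_1)$ above, the only change from loc.\ cit.\ is that the input is a choice of $R$-Maslov data on $X$ together with an $R$-brane structure on $L$, rather than a stable polarization. Concretely, for a regular pair $(H,J_t)\in\sH\sJ(X)$ and each $a\in\sX(L,L;H)$ I would consider the moduli space
\[
    \sN(a)\coloneqq\bigl\{\,(u,z_0)\mid u\colon(D_+,\partial D_+)\to(X,L),\ \partial_s u+J_t(\partial_t u-X_H)=0,\ \lim u=a,\ z_0\in\mathrm{int}(D^2)\,\bigr\}/\IR
\]
of perturbed holomorphic half-disks asymptotic to the chord $a$ at the puncture, carrying one free interior marked point $z_0$, together with the evaluation $\ev_a\colon\sN(a)\to X$, $(u,z_0)\mapsto u(z_0)$. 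Exactness of $L$ and $X$ rules out all sphere and disk bubbling, and the quadratic-Hamiltonian setup of \cite{sylvan2019on,large2021spectral} provides compactness at infinity, so for a comeager set of perturbation data (as in \cref{lem:transversality_for_strips}) the $\sN(a)$ are compact $\langle\,\cdot\,\rangle$-manifolds whose codimension-one boundary is $\bigsqcup_b\osr(a;b)\times\sN(b)$, compatibly with the composition maps of $\sC\sW(L,L;H,J_t)$. To orient these spaces I would extend the abstract-disk framework of \cref{sec:abstract_disks} and \cref{sec:floer_caps} by allowing one interior marked point; the associated index bundles are then canonically $R$-orientable by the argument of \cref{cor:index_R-ori}, using the $R$-Maslov data on $X$ and the $R$-brane structure on $L$. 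The maps $\ev_a$ together with this $R$-oriented coherent corner structure exhibit $\{\sN(a)\}$ as an $R$-oriented flow module over $\sC\sW(L,L;H,J_t)$ with values in (a manifolds-with-corners model of) $\varSigma^{\infty-n}_+X\wedge R$, the shift by $n$ being bookkept by the evaluation maps; its CJS realization (\cite[Definition 3.27]{asplund2024nearby}) is $\sO\sC$, and independence of $(H,J_t)$ and of the auxiliary choices follows from the usual continuation/homotopy argument, as for $HW$.

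For the second statement, recall that the unit $\eta_L\colon R\to HW(L,L)$ is the CJS realization of the ``unit cap'' moduli spaces of half-disks with a single boundary output and no inputs (see \cite[Lemma 8.17]{asplund2024nearby}). Hence $\sO\sC\circ\eta_L$ is computed by the moduli spaces obtained by concatenating a unit cap with an element of $\sN(a)$ along the chord $a$; a standard gluing argument identifies their union, over all $a$, with the moduli space of honest holomorphic disks $u\colon(D^2,\partial D^2)\to(X,L)$ carrying one free interior marked point and \emph{no} boundary puncture. By exactness, $\int_{D^2}u^\ast d\lambda=\int_{\partial D^2}u^\ast\lambda=0$, so every such $u$ is constant; this moduli space is therefore canonically diffeomorphic to $L$ itself (the constant disk at $p$, for $p\in L$), with interior evaluation equal to the inclusion $i\colon L\hookrightarrow X$. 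One then checks that the $R$-orientation induced on this moduli space by the abstract-disk framework agrees with the $R$-orientation of $L$ determined by its $R$-brane structure together with the hypothesis that $L$ is $R$-orientable --- the same orientation used to define $[L]_R$. Consequently $\sO\sC\circ\eta_L$ is represented by $i_\ast[L]_R$, which by definition is the $R$-fundamental class $[L]_R\in H_n(X;R)$.

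The main obstacle is the orientation bookkeeping rather than the analysis: one must set up the ``abstract disk with an interior marked point'' spaces carefully enough that their index bundles receive canonical $R$-orientations from the $R$-Maslov data on $X$ and the $R$-brane structure on $L$, that these $R$-orientations are compatible with the strip-breaking strata $\osr(a;b)\times\sN(b)$ (so that $\{\sN(a)\}$ is a genuine $R$-oriented flow module and $\sO\sC$ is well defined up to homotopy), and that the orientation they induce on the space of constant disks matches the one on $L$ appearing in $[L]_R$ --- this last compatibility is what makes the unit computation hold on the nose rather than up to a unit of $\pi_0(R)$. A secondary technical point is the precise formulation of the CJS realization of a flow module valued in $\varSigma^{\infty-n}_+X\wedge R$ (equivalently, ``relative to the manifold with corners $X$''), but this is already present in \cite{asplund2024nearby,porcelli2025open-closed} and requires no new idea here, and the compactness and transversality statements are routine given the quadratic-Hamiltonian machinery and the exactness hypotheses.
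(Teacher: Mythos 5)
Your proposal is correct and takes essentially the same approach as the paper: the proof given in the text is a bare citation to \cite[Definition 9.9, Lemma 9.12]{asplund2024nearby}, and your sketch faithfully reconstructs the content of that construction --- the CJS realization of a flow (bi)module over $\sC\sW(L,L;H,J_t)$ built from moduli spaces of once-punctured disks with interior evaluation into $X$, oriented via the abstract-disk framework, with the unit identity obtained by a gluing/cobordism argument that reduces to the moduli space of constant disks parametrized by $L$. One small imprecision worth flagging: the moduli space $\sN(a)$ should not be quotiented by $\IR$; in the quadratic-Hamiltonian setup the Floer equation on the once-punctured disk is not translation-invariant, and fixing the interior marked point (say at the origin) already kills all residual domain automorphisms, which is what makes $\ev_a$ have the degree $n$ needed for the target $\varSigma^{\infty-n}_+X\wedge R$ --- this does not affect the structure of your argument.
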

	\begin{proof}
		The open-closed map $\sO\sC$ is defined as the CJS realization of the flow bimodule constructed in \cite[Definition 9.9]{asplund2024nearby}, and the latter claim is proven in \cite[Lemma 9.12]{asplund2024nearby}.
	\end{proof}
    The following result is analogous to \cite[Theorem 9.14]{asplund2024nearby} in the present setting without the existence of a stable polarization on $X$.
	\begin{thm}\label{thm:oc_fund_classes}
		Let $L$ and $K$ be two closed Lagrangian $R$-branes such that $L \cong K$ in $\sW(X;R)$.
        \begin{enumerate}
            \item If $L$ is $R$-orientable, then $K$ is $R$-orientable.
            \item Given an $R$-orientation on $L$, there exists an $R$-orientation on $K$ such that $[L]_R = [K]_R \in H_n(X;R)$.\qed
        \end{enumerate}
	\end{thm}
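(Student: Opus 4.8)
The plan is to run the proof of \cite[Theorem 9.14]{asplund2024nearby} in the present generality, with the orientation-theoretic input — which in \emph{loc.\ cit.}\ comes from a stable polarization — supplied instead by the abstract disk spaces of \cref{sec:abstract_disks,sec:space_abstract_strip_caps}, in particular by the canonical null-homotopy of \cref{cor:index_R-ori}. First I would unwind the hypothesis: an isomorphism $L \cong K$ in the homotopy category $\sW(X;R)$ is a class $[\phi] \in \pi_0 HW(L,K) = \Hom_{\sW(X;R)}(L,K)$ with a two-sided inverse $[\psi] \in \pi_0 HW(K,L)$ under $\mu^2$, i.e.\ $\mu^2_\ast([\phi] \otimes [\psi]) = [\eta_L]$ and $\mu^2_\ast([\psi] \otimes [\phi]) = [\eta_K]$ for the Floer units of \eqref{eq:unit_hw}. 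Choosing $R$-module maps $\phi \colon R \to HW(L,K)$ and $\psi \colon R \to HW(K,L)$ representing these classes, one gets $\mu^2 \circ (\phi \wedge_R \psi) \simeq \eta_L$ and $\mu^2 \circ (\psi \wedge_R \phi) \simeq \eta_K$ after the identification $R \wedge_R R \simeq R$.

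The key ingredient I would construct is the two-pointed open-closed moduli: moduli spaces of disks carrying two boundary punctures asymptotic to chords in $\sX(L,K)$ and $\sX(K,L)$ together with one interior output, $R$-oriented by an evident two-pointed variant of \cref{sec:floer_caps,sec:index_bundle_abstract_strip_caps} and \cref{cor:index_R-ori}. Their two boundary degenerations are the product $\mu^2 \colon HW(L,K) \wedge_R HW(K,L) \to HW(L,L)$ followed by $\sO\sC_L$, and $\mu^2 \circ \tau \colon HW(L,K) \wedge_R HW(K,L) \to HW(K,K)$ followed by $\sO\sC_K$, where $\tau$ is the symmetry of $\wedge_R$ and $\sO\sC_L$, $\sO\sC_K$ are the open-closed maps of the preceding lemma. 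Consequently, after CJS realization the associated flow bimodule realizes a homotopy $\sO\sC_L \circ \mu^2 \simeq \sO\sC_K \circ \mu^2 \circ \tau$ of maps out of $HW(L,K) \wedge_R HW(K,L)$. Setting this up requires an $R$-orientation on $K$; I would obtain one by transporting a chosen $R$-orientation on $L$ through the invertible morphism $\phi$ (equivalently, through the product and open-closed moduli that witness its invertibility), which is how I would prove (i), and this fixes the fundamental class $[K]_R$ used below.

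To conclude (ii) I would precompose the homotopy $\sO\sC_L \circ \mu^2 \simeq \sO\sC_K \circ \mu^2 \circ \tau$ with $\phi \wedge_R \psi \colon R \simeq R \wedge_R R \to HW(L,K) \wedge_R HW(K,L)$. The left-hand side becomes $\sO\sC_L \circ \mu^2 \circ (\phi \wedge_R \psi) \simeq \sO\sC_L \circ \eta_L \simeq [L]_R$ by the open-closed lemma; on the right-hand side, $\tau_{R,R}$ is homotopic to the identity, so $\tau \circ (\phi \wedge_R \psi) \simeq \psi \wedge_R \phi$ and the right-hand side becomes $\sO\sC_K \circ \mu^2 \circ (\psi \wedge_R \phi) \simeq \sO\sC_K \circ \eta_K \simeq [K]_R$. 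Hence $[L]_R = [K]_R$ in $\pi_n(\varSigma^{\infty-n}_+ X \wedge R) = H_n(X;R)$. I expect the only non-formal step to be the construction of the two-pointed open-closed moduli, its coherent $R$-orientations, and the identification of its two boundary degenerations: this amounts to extending the abstract-disk formalism of \cref{sec:abstract_disks,sec:space_abstract_strip_caps} to disks with two boundary punctures and one interior marked point and verifying the analogue of the gluing compatibility \cref{lem:gluing_disks}. In the stably polarized case this is carried out in \cite[Section 9]{asplund2024nearby}, and the adaptation here is the same substitution of the canonical null-homotopies of \cref{sec:maslov_data_disks} for the polarization that is used throughout \cref{sec:spectral_fuk}.
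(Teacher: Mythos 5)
Your proposal matches the paper's own treatment, which is simply to cite \cite[Theorem 9.14]{asplund2024nearby} and observe that the present (non-polarized) setting requires replacing the orientation data coming from a stable polarization by the canonical null-homotopies of \cref{sec:maslov_data_disks}, in particular \cref{cor:index_R-ori}. Your sketch of the two-pointed open-closed moduli and the formal manipulation with $\phi \wedge_R \psi$ and $\tau_{R,R} \simeq \id$ is a faithful reconstruction of the argument being cited; the one small slip is that $[L]_R$, being the homotopy class of an $R$-module map $R \to \varSigma^{\infty-n}_+ X \wedge R$, lives in $\pi_0(\varSigma^{\infty-n}_+ X \wedge R) \cong H_n(X;R)$ rather than $\pi_n$.
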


\section{Spectral equivalence of nearby Lagrangian cocores}\label{sec:nearby_cocore}
	The goal of this section is to prove that, in the spectral wrapped Donaldson--Fukaya category, a nearby Lagrangian cocore (defined in \cref{dfn:nearby_cocore}) is equivalent to the cocore it is near after attaching a standard Weinstein handle along the boundary of the cocore. 

	\subsection{Invariance under subcritical handle attachment}\label{subsec:subcrit_handles}
		Recall that a Weinstein sector may be viewed as the result of successive attachments of Weinstein handles and Weinstein half-handles, see \cref{sec:geom_background}. Working over a discrete coefficient ring $k$, the wrapped Fukaya category of a Weinstein sector is invariant up to quasi-equivalence when attaching subcritical Weinstein handles or Weinstein half-handles, see \cite[Corollary 1.29]{ganatra2022sectorial}. We now extend this result to the spectral Donaldson--Fukaya category.

		\begin{notn}
            \begin{enumerate}
                \item Let $R$ denote a commutative ring spectrum. 
                \item Let $X$ denote a Weinstein sector admitting $R$-Maslov data.
                \item We use the notation $HW(-,-)_X$ to emphasize that the wrapped Floer homotopy $R$-module is computed in $X$.
            \end{enumerate}
		\end{notn}
        Recall the definition of a (trivial) inclusion of Liouville sectors $i \colon X \hookrightarrow X'$ from \cite[Definition 2.4]{ganatra2020covariantly}. By \cite[Theorem 8.21]{asplund2024nearby} any inclusion of Liouville sectors induces a pushforward functor $\sW(X;R) \to \sW(X';R)$. We now show that the pushforward functor is a weak equivalence in case the inclusion of Liouville sectors is trivial. Recall the definition of the $R$-oriented flow bimodules $\osr_L \colon \sM_\ast \to \sC\sW^{\leq 0}(L,L)$ from \cite[Section 8.3]{asplund2024nearby} and $\osr^\tau \colon \sC\sW^{\leq A_k}(L,L;H^0,J_t^0) \to \sC\sW^{\leq A_k}(L,L;H^1,J_t^1)$ from the proof of \cite[Lemma 8.11]{asplund2024nearby}. Suppose that $(H^r,J_t^r) \subset \sH\sJ(\overline X,F)$ is an admissible $1$-parameter family of Floer data such that $\sC\sW^{\leq A_k}(L,L;H^1,J_t^1) \simeq \sC\sW^{\leq A_k}(L',L;H^0,J_t^0)$, where $L'$ is an appropriate Hamiltonian push-off of $L$ that depends on $H^1$.
        \begin{defn}[Continuation flow bimodule]\label{dfn:cont_flow_bimod}
            Let $L$ and $L'$ be two Hamiltonian isotopic Lagrangian $R$-branes. Define the \emph{continuation flow bimodule} $\overline{\sC}_{L',L}$ as the composition of the following $R$-oriented flow bimodules:
            \[
            \sM_\ast \overset{\osr_L}{\longrightarrow} \sC\sW^{\leq 0}(L,L) \overset{\osr^\tau}{\longrightarrow} \sC\sW^{\leq 0}(L',L).
            \]
        \end{defn}
        \begin{lem}\label{lem:prod_continuation}
            Let $L$, $L'$, and $K$ be Lagrangian $R$-branes, and suppose that $L$ and $L'$ are Hamiltonian isotopic. The following composition is homotopic to $|\osr^\tau|$:
            \begin{align*}
                HW(L,K) \simeq R \wedge_R HW(L,K) &\xrightarrow{|\overline{\sC}_{L',L}| \wedge_{R} \id} HW(L',L) \wedge_R HW(L,K) \\
                &\overset{\mu^2}{\longrightarrow} HW(L',K).
            \end{align*}
        \end{lem}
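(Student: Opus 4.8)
The plan is to deduce this from three facts already in place together with one new compatibility. The available facts are: functoriality of the CJS realization under composition of $R$-oriented flow bimodules; the identification of $|\osr_L|$ with the Floer unit $\eta_L$ of \eqref{eq:unit_hw} (by construction, \cite[Section 8.3]{asplund2024nearby}); and the unit axiom $\mu^2 \circ (\eta_L \wedge_R \id) \simeq \id$ from \cite[Lemma 8.17]{asplund2024nearby}. The one new ingredient needed is that the triangle product commutes, up to homotopy, with the continuation flow bimodule $\osr^\tau$ acting on the first factor.

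Concretely, I would first unwind the left-hand side. By \cref{dfn:cont_flow_bimod} we have $\overline{\sC}_{L',L} = \osr^\tau \circ \osr_L$, hence $|\overline{\sC}_{L',L}| \simeq |\osr^\tau| \circ |\osr_L| \simeq |\osr^\tau| \circ \eta_L$, and therefore the composition in the statement equals
\[
\mu^2 \circ \bigl( (|\osr^\tau| \circ \eta_L) \wedge_R \id \bigr) \;\simeq\; \mu^2 \circ (|\osr^\tau| \wedge_R \id) \circ (\eta_L \wedge_R \id).
\]
Granting that the square
\[
\begin{tikzcd}[sep=scriptsize]
HW(L,L) \wedge_R HW(L,K) \rar{\mu^2} \dar{|\osr^\tau| \wedge_R \id} & HW(L,K) \dar{|\osr^\tau|} \\
HW(L',L) \wedge_R HW(L,K) \rar{\mu^2} & HW(L',K)
\end{tikzcd}
\]
commutes up to homotopy, the middle factor may be pulled across, $\mu^2 \circ (|\osr^\tau| \wedge_R \id) \simeq |\osr^\tau| \circ \mu^2$, and then the unit axiom collapses $\mu^2 \circ (\eta_L \wedge_R \id)$ to the identity, leaving exactly $|\osr^\tau|$, which is the claim.

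The hard part is the homotopy-commutativity of that square, and I would prove it by a parametrized moduli-space interpolation of the same flavour as the proof of homotopy associativity of $\mu^2$ in \cite[Lemma 8.15]{asplund2024nearby}: one considers a $[0,1]$-family of triangle domains carrying a continuation datum that at one endpoint is localized on the incoming $(L,L)$-leg and at the other has been slid onto the outgoing $(L,K)$-leg, so that the two ends of the resulting $R$-oriented flow bimodule realize the two composites around the square; the $R$-orientation on this family and the matching of its boundary restrictions are controlled by the gluing behaviour of abstract strip caps developed in \cref{sec:index_bundle_abstract_strip_caps} (cf.\@ \cref{lem:gluing_disks}), and taking CJS realizations produces the homotopy. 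A more economical variant, which I would probably use instead, bypasses the square and constructs directly an interpolating $R$-oriented flow bimodule $\sC\sW(L,K) \to \sC\sW(L',K)$ which at one end is the $\mu^2$-triangle with one input capped off by $\overline{\sC}_{L',L}$ and which, as the inputless $\osr_L$-cap is shrunk to a boundary point (hence contributes a unit, exactly as in the proof of \cite[Lemma 8.17]{asplund2024nearby}) while the $\osr^\tau$-neck is retained, degenerates to the continuation strip $\osr^\tau$. In either approach the genuine work --- and the only real obstacle --- is setting up this parametrized moduli space with its corner structure and a compatible $R$-orientation and verifying that its two ends are precisely the maps being compared; everything else is formal given the frameworks of \cref{sec:maslov_data_disks} and \cref{sec:spectral_fuk}.
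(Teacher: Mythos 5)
Your preferred ``more economical variant'' is exactly the paper's proof: the composition is realized as the CJS realization of the $R$-oriented flow multimodule $\osr^k \circ_1 \overline{\sC}_{L',L}$, one passes to the associated flow bimodule $\sU^k$ by fixing the inputless slot, and then constructs an $R$-oriented flow bordism $\sU^k \Rightarrow \osr^\tau$ modeled on the one appearing in the unit proof (\cite[Lemma 8.17, Lemma 6.46]{asplund2024nearby}) before taking CJS realizations. You have also correctly identified where the actual work lies, namely setting up the parametrized moduli space with its corner structure and compatible $R$-orientation. The first route you sketch (unwinding $|\overline{\sC}_{L',L}| \simeq |\osr^\tau|\circ\eta_L$ and commuting $|\osr^\tau|$ past $\mu^2$) would also be sound, but note it invokes two things the paper's argument avoids: strict functoriality of the CJS realization under composition of flow bimodules, and a separate homotopy-commutativity square for the triangle product against continuation. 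Both would need their own flow-bordism constructions of comparable difficulty, so the direct interpolation you end up choosing is the more economical one, and it is what the paper does.
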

        \begin{proof}[Proof sketch]
            This is analogous to the proof of \cite[Lemma 8.17]{asplund2024nearby}. The composition is given as the CJS realization of the $R$-oriented flow multimodule
            \[
            \osr^k \circ_1 \overline{\sC}_{L',L} \colon \sM_\ast,\sC\sW^{\leq A_k}(L,K) \longrightarrow \sC\sW^{\leq A_k}(L',K),
            \]
            and its associated $R$-oriented flow bimodule $\sU^k$ defined by $\sU^k \coloneqq (\osr^k \circ_1 \overline{\sC}_{L',L})(p,-)$. We construct an $R$-oriented flow bordism $\sU^k \Rightarrow \osr^\tau$, similar to the one defined by \cite[Lemma 6.46]{asplund2024nearby}, and taking CJS realizations \cite[Proposition 3.32]{asplund2024nearby} finishes the proof.
        \end{proof}
        \begin{lem}\label{lem:trivial_incl_quasi_iso}
            Let $X \hookrightarrow X'$ be a trivial inclusion of Liouville sectors, then the pushforward functors $\sW(X;R) \to \sW(X';R)$ and $\sW^{\leq A}(X;R) \to \sW^{\leq A}(X';R)$ for any $A > 0$ are weak equivalences.
        \end{lem}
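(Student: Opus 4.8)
\textbf{Proof plan for \cref{lem:trivial_incl_quasi_iso}.}
The plan is to reduce the statement to a fact about the underlying flow categories, using the tools assembled in the preceding subsection. First I would recall the content of a trivial inclusion of Liouville sectors $i \colon X \hookrightarrow X'$: by \cite[Definition 2.4]{ganatra2020covariantly} this means (up to deformation) that $X'$ is obtained from $X$ by attaching subcritical Weinstein handles and Weinstein half-handles, and that $i$ admits a left inverse ``Liouville retraction''. The key input, which I would import from \cite[Corollary 1.29]{ganatra2022sectorial}, is that on the chain/homotopy level the Floer data for $X$ can be chosen cofinally among the Floer data for $X'$; concretely, for a suitable choice of quadratic Hamiltonians no new Hamiltonian chords between Lagrangians contained in $X$ are created in $X'$, and no new Floer strips either, because any such strip would have to exit $X$ through the subcritical handles, contradicting the maximum principle / the $J$-convexity of the stop near the attaching region. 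Thus for Lagrangian $R$-branes $L_0, L_1 \subset X$ (regarded inside $X'$ via $i$) one gets, for a cofinal sequence of action windows $A_k$, an identification of the action-filtered flow categories
\[
\sC\sW^{\leq A_k}(L_0,L_1;H,J_t)_X \;\simeq\; \sC\sW^{\leq A_k}(L_0,L_1;i_*H,i_*J_t)_{X'},
\]
as $R$-oriented flow categories, compatibly with the structure maps $\mu_{abc}$ of \cref{lem:transversality_for_strips}(i).

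Second, I would check that this identification is compatible with the triangle products $\mu^2$ of \eqref{eq:triangle_product} and the Floer units $\eta_L$ of \eqref{eq:unit_hw}, so that it upgrades from an equivalence of $R$-modules on morphism spaces to an equivalence of categories enriched over $R$-modules. Here the point is that the moduli spaces of pseudoholomorphic triangles and of the discs defining the unit, all with boundary on Lagrangians inside $X$, again cannot escape $X$ for the cofinal Floer data above; hence the flow (multi)modules computing $\mu^2$ and $\eta_L$ in $X'$ restrict to those computing them in $X$. Taking CJS realizations \cite[Definition 3.27]{asplund2024nearby} and passing to the homotopy colimit over $k$, one obtains that the pushforward functor $\sW(X;R) \to \sW(X';R)$ is fully faithful, and likewise for the action-filtered versions $\sW^{\leq A}(X;R) \to \sW^{\leq A}(X';R)$. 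The role of \cref{lem:prod_continuation} here is to guarantee that the various auxiliary choices (the Hamiltonian push-offs $L'$ appearing in the cofinality argument, the continuation flow bimodules $\overline{\sC}_{L',L}$) do not affect the resulting maps on Floer homotopy types, so that the comparison is canonical.

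Third, for \emph{essential} surjectivity — and hence for the conclusion that the pushforward is a weak equivalence of categories rather than merely fully faithful — I would invoke generation: by \cite[Corollary 1.29]{ganatra2022sectorial} (or its spectral analogue, which follows formally from the fully faithfulness just established together with the fact that the cocores of $X'$ that lie in the subcritical handles are zero objects, being displaceable) every object of $\sW(X';R)$ is isomorphic to the image of an object of $\sW(X;R)$. More precisely, the Lagrangian cocores of the subcritical handles are displaceable in $X'$, so $HW(-,-)$ with one input such a cocore vanishes, and the generating cocores of $X'$ coincide with those of $X$; thus $i_*$ hits a set of objects isomorphic to all of $\Ob\sW(X';R)$. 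Combined with fully faithfulness this gives that $i_*$ is a weak equivalence, as claimed.

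\textbf{Main obstacle.} The genuinely delicate step is the first one: establishing the identification of $R$-oriented flow categories, i.e.\@ making precise that the cofinal choice of quadratic Hamiltonians for $X$ remains admissible in $X'$ and creates no new generators or rigid strips, \emph{and} that the canonical $R$-orientations (built from the spaces of abstract strip caps $\sC^\#_\pm(\gamma)$ of \cref{sec:floer_caps} via the $R$-Maslov data) agree under this identification. The orientation comparison requires knowing that the $R$-Maslov data on $X'$ restricts, along $i$, to the given $R$-Maslov data on $X$ — which holds since $i^* TX' \simeq TX$ as stable bundles with their $BU^\#$-lifts — so that the index bundles $\sV_\pm(\gamma)$ of \cref{dfn:index_bundle_caps} and their canonical $R$-orientations are literally the same whether computed in $X$ or $X'$. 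Granting that, the rest is a matter of transporting the discrete-coefficient arguments of \cite{ganatra2022sectorial} through the CJS-realization formalism of \cite{asplund2024nearby}, which is routine but notationally heavy.
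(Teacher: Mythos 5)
There is a genuine misunderstanding at the outset that derails the entire argument: a \emph{trivial} inclusion of Liouville sectors in the sense of \cite[Definition 2.4]{ganatra2020covariantly} is one that is isotopic, through inclusions of Liouville sectors, to an isomorphism; it is emphatically \emph{not} an inclusion obtained by attaching subcritical Weinstein (half-)handles and admitting a Liouville retraction. Subcritical handle attachment is the content of the next lemma, \cref{lem:invariance_subcrit}, which \emph{uses} \cref{lem:trivial_incl_quasi_iso} as an ingredient (trivial inclusions appear there as the comparisons $X_+(\varepsilon)\hookrightarrow X_+(\varepsilon')$ between different-sized handles). Because of this conflation, your first paragraph (maximum principle/$J$-convexity arguments for strips escaping through subcritical handles, cofinality of Floer data across a handle attachment) and your third paragraph (essential surjectivity via vanishing of $HW$ for displaceable cocores of subcritical handles) are both proving the wrong statement: for a genuinely trivial inclusion there is no issue of generation at all, since the objects on both sides coincide up to Hamiltonian isotopy.

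The paper's actual argument is much closer in spirit to your second paragraph but simpler: following \cite[Lemma 3.33]{ganatra2020covariantly} and \cite[Proposition 3.15(4)]{sylvan2019on}, one reduces to a sequence of small inward shrinkings, and the pushforward on morphisms is identified with the composition appearing in \cref{lem:prod_continuation}; that lemma shows this composition is homotopic to the CJS realization $|\osr^\tau|$ of the continuation flow bimodule, which is a weak equivalence. The action-filtered statement then follows because the continuation flow bimodule $\overline{\sC}_{L',L}$ of \cref{dfn:cont_flow_bimod} is supported in non-positive action. So the heart of the matter is the continuation-map comparison, not the maximum-principle analysis of handle attachment, and the essential-surjectivity step you sketch should be dropped; the latter material belongs in \cref{lem:invariance_subcrit}, and even there it is handled by an action-window/cofinality argument together with this lemma, not by a generation argument.
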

        \begin{proof}[Proof sketch]
            The idea of the proof is the same as the standard proof, see e.g.\@ \cite[Lemma 3.33]{ganatra2020covariantly} and \cite[Proposition 3.15(4)]{sylvan2019on}. Fix Lagrangian $R$-branes $L$ and $K$ in $X$. The geometric parts from the proof of \cite[Lemma 3.33]{ganatra2020covariantly} carries over verbatim. Namely, it suffices to consider sequences of ``small inward shrinkings'' of $X'$, and the argument in the proof of \cite[Lemma 3.33]{ganatra2020covariantly} shows that the induced pushforward functor on morphisms is given by the composition in \cref{lem:prod_continuation}, which is shown to be homotopic to the weak equivalence $|\osr^\tau|$.
            
            The fact that the continuation flow bimodule defined in \cref{dfn:cont_flow_bimod} is supported in non-positive action shows that the action filtered pushforward for any $A > 0$, is a weak equivalence.
        \end{proof}
       
        \begin{lem}\label{lem:invariance_subcrit}
		Let $X^{2n}$ be a Weinstein sector admitting $R$-Maslov data, and let $X_+$ be the result of attaching a subcritical Weinstein (half-)handle to $X$. The pushforward functor $\sW(X;R) \hookrightarrow \sW(X_+;R)$ is a weak equivalence.
    	\end{lem}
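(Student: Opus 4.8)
The plan is to show that the pushforward functor $i_\ast \colon \sW(X;R) \to \sW(X_+;R)$ is essentially surjective and induces a weak equivalence on each morphism $R$-module; by \cref{dfn:spectral_df_category} this is precisely what it means to be a weak equivalence of categories enriched over the homotopy category of $R$-modules. The argument mirrors the classical one behind \cite[Corollary 1.29]{ganatra2022sectorial} (cf.\ the proof of \cite[Lemma 3.33]{ganatra2020covariantly}), and the only genuinely new content is keeping track of the $R$-orientation data from \cref{sec:abstract_disks,sec:floer_caps}.

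For essential surjectivity I would argue as follows. Write $H \subset X_+$ for the attached subcritical Weinstein (half-)handle of index $\ell \leq n-1$, and let $C \subset H$ be its core, an isotropic disk of dimension $\ell$, so that $H$ deformation retracts onto $C$ along the backward Liouville flow. Given a Lagrangian $R$-brane $L' \subset X_+$, a generic $C^\infty$-small Hamiltonian isotopy makes $L'$ disjoint from $C$ since $\dim L' + \dim C = n + \ell < 2n$. Flowing along a Hamiltonian realization of the backward Liouville flow supported near $H$ then carries $L'$, through exact conical Lagrangians, to one contained in $X$; transporting the $R$-Maslov data along this isotopy — unambiguously, since $H$ is contractible onto $C$ and the $R$-Maslov data of $X_+$ restricts to that of $X$ — equips the result with a brane structure exhibiting it as $i_\ast L$ for some $R$-brane $L \subset X$. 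Because Hamiltonian-isotopic $R$-branes are isomorphic in $\sW(X_+;R)$, this gives $L' \cong i_\ast L$.

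For the morphism spectra, fix $R$-branes $L,K \subset X$; I would prove $HW(L,K)_X \simeq HW(L,K)_{X_+}$ compatibly with $i_\ast$. The key geometric input — identical to the discrete case — is that subcriticality forces confinement: one may choose the Liouville form on $X_+$ near $H$ together with a cofinal sequence of quadratic Floer data $(H_k,J_t)$ so that (i) every time-$1$ chord from $L$ to $K$ lies in $X$, and (ii) every Floer strip in $\osr(a;b)$ stays in $X$, by an integrated-maximum-principle / no-escape argument for the subcritical region. Under such choices the $R$-oriented flow categories $\sC\sW^{\leq A_k}(L,K;H_k,J_t)$ of \cref{dfn:flow_cat_lag} computed in $X$ and in $X_+$ are equal on the nose: the moduli spaces $\osr(a;b)$ are the same subsets, and the $R$-orientation data — the index bundles $\sV_\pm(a)$ of abstract Floer strip caps (\cref{dfn:index_bundle_caps}) together with the gluing maps of \cref{lem:gluing_disks} — is constructed locally near the chords and on the branes there, all of which lie in $X$. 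Taking CJS realizations and the homotopy colimit over $A_k$ yields the equivalence, and unwinding the pushforward as in the proof of \cref{lem:trivial_incl_quasi_iso} (using \cref{lem:prod_continuation}) identifies it with this map; alternatively one can first apply \cref{lem:trivial_incl_quasi_iso} to the trivial inclusion into $X_+$ of a regular neighborhood of $X$ on which the Floer data is pulled back from $X$, reducing the comparison to a tautology.

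I expect the main obstacle to be the confinement statement (ii): that no pseudoholomorphic strip between Lagrangians of $X$ escapes into the subcritical handle. Over a discrete ring this is exactly the input that makes \cite[Corollary 1.29]{ganatra2022sectorial} work and requires no new analysis; the remaining work is to verify that once the moduli spaces are identified, so are all of the spectral enrichment data of \cref{sec:abstract_disks,sec:floer_caps}, which should be immediate precisely because those data are built locally at the chords. A secondary point needing care is the essential-surjectivity step — checking that the displacement-and-flush-out isotopy stays within admissible conical Lagrangians and transports $R$-Maslov data canonically; the Liouville sector (half-handle) case is handled identically, using that subcritical half-handles are displaceable in the same manner.
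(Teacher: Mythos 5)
Your proposal has the right overall structure — identify the moduli spaces, argue that all the $R$-orientation data is local and therefore carries over, use \cref{lem:trivial_incl_quasi_iso}, and pass to colimits — and it correctly identifies the key geometric input as confinement in the subcritical region. However, the central technical step differs from the paper's, and as written your version has a gap.

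The paper does \emph{not} attempt to choose Floer data on the fixed sector $X_+$ so that \emph{all} time-$1$ chords from $L$ to $K$ lie in $X$; indeed, after the handle is attached the handle region sits in the interior of $X_+$ rather than near the sectorial boundary, so the integrated-maximum-principle/no-escape arguments of \cite[Section 3.6]{ganatra2020covariantly} confine Floer strips (giving your item (ii)) but do not control where chords of a quadratic Hamiltonian on $X_+$ go. A chord could a priori enter and leave the handle, and ruling this out for all chords at once for a fixed handle size is precisely the difficulty. The paper sidesteps this with a shrinking-handle trick: for each action level $A$, perturb so that the finitely many chords of action $\leq A$ avoid the (subcritical, hence low-dimensional) attaching sphere, then choose $\varepsilon_A$ small enough that the handle $H_{\varepsilon_A}$ misses all of them, giving $\Ob(\sC\sW^{\leq A}(L,K)_X) = \Ob(\sC\sW^{\leq A}(L,K)_{X_+(\varepsilon_A)})$ on the nose. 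Handles of different sizes are then compared via the \emph{trivial} inclusions $X_+(\varepsilon) \hookrightarrow X_+(\varepsilon')$, to which \cref{lem:trivial_incl_quasi_iso} genuinely applies, and the equivalence is assembled by a homotopy colimit over $A_k \to \infty$. Your assertion of chord confinement (i) for a single cofinal Floer sequence on a fixed $X_+$ would need substantially more justification than the sentence you give; this is the one genuine gap.

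Your alternative route — ``apply \cref{lem:trivial_incl_quasi_iso} to the trivial inclusion into $X_+$ of a regular neighborhood of $X$'' — does not work: $X \hookrightarrow X_+$ is not a trivial inclusion (the topology changes), and enlarging $X$ inside $X_+$ by the Liouville flow still leaves a nontrivial inclusion into $X_+$. The trivial-inclusion lemma can only be used, as in the paper, to compare two different sizes of the \emph{attached handle}, not to compare $X$ directly with $X_+$.

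Two things you do that the paper does not, and which are worth noting: you explicitly prove essential surjectivity of the pushforward via general position plus the backward Liouville flow and transport of $R$-Maslov data; the paper's written proof only addresses the morphism $R$-modules. Your observation that the spectral enrichment data (index bundles of abstract strip caps, gluing maps) is built locally near the chords and Lagrangians in $X$ — so that nothing new needs to be checked once the moduli spaces are identified — is exactly the right remark and matches the paper's (implicit) reasoning.
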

	    \begin{proof}
            Fix two Lagrangian $R$-branes $L$ and $K$ in $X$. Fix some Floer datum so that $HW(L,K)_X$ is well-defined. Let $\varepsilon > 0$ and denote by $X_+(\varepsilon)$ the Weinstein sector obtained by attaching a Weinstein (half-)handle $H_\varepsilon$ of size $\varepsilon$ (see \cref{dfn:weinstein_handle,dfn:weinstein_half-handle}). By convention we fix some $\varepsilon_0 > 0$ and let $X_+ \coloneqq X_+(\varepsilon_0)$. The isotropic attaching sphere of $H_\varepsilon$ has dimension less than $n-1$, and therefore we may without loss of generality assume that no Hamiltonian chord of $F$ of action bounded above by $A$ intersects the Legendrian attaching sphere of $H_\varepsilon$ (otherwise take an arbitrarily small compactly supported Hamiltonian perturbation of $F$ to achieve it). Therefore, for any given $A > 0$, we may find some small enough $\varepsilon_A > 0$ (depending on $A$) such that 
	    	\begin{equation}\label{eq:ob_ident}
	    		\Ob(\sC\sW^{\leq A}(L,K)_X) = \Ob(\sC\sW^{\leq A}(L,K)_{X_+(\varepsilon_A)}).
	    	\end{equation}
	    	There is an inclusion of Weinstein sectors $X \hookrightarrow X_+(\varepsilon)$ for any $\varepsilon > 0$. By the discussion in \cite[Section 3.6]{ganatra2020covariantly} (cf.\@ the proof of \cite[Theorem 8.21]{asplund2024nearby}) we get an identification of morphism spaces
	    	\[
	    		\sC\sW(L,K)_X(x;y) \cong \sC\sW(L,K)_{X_+(\varepsilon)}(x;y),
	    	\]
	    	for any $x,y\in \Ob(\sC\sW^{\leq A}(L,K)_X)$. Combining this with \eqref{eq:ob_ident} yields an identification of $R$-oriented flow categories 
	    	\begin{equation}\label{eq:equiv_filtered_flow}
	    		\sC\sW^{\leq A}(L,K)_X \cong \sC\sW^{\leq A}(L,K)_{X_+(\varepsilon_A)}.
	    	\end{equation}
            It follows that the pushforward functor $\sW^{\leq A}(X;R) \hookrightarrow \sW^{\leq A}(X_+(\varepsilon_A);R)$ is a weak equivalence. Now, pick a strictly increasing sequence $\left\{A_k\right\}_{k=1}^\infty$ of real numbers such that $A_k \to \infty$ as $k \to \infty$, and a sequence $\left\{\varepsilon_k\right\}_{k=1}^\infty$ such that \eqref{eq:ob_ident} holds for each pair $(A_k,\varepsilon_k)$. For any $\varepsilon < \varepsilon'$, there is a trivial inclusion of Weinstein sectors $X_+(\varepsilon) \hookrightarrow X_+(\varepsilon')$, which means that the pushforward is a weak equivalence $\sW(X_+(\varepsilon);R) \simeq \sW(X_+(\varepsilon');R)$. Therefore we obtain
            \[
            \sW(X;R) \simeq \hocolim_{k\to \infty} \sW^{\leq A_k}(X_+(\varepsilon_{A_k});R) \simeq \hocolim_{k\to \infty} \sW^{\leq A_k}(X_+;R) \simeq \sW(X_+;R).
            \]
	    \end{proof}
        
	\subsection{Nearby Lagrangian cocores}\label{sec:nearby_lag_cocore}
		Fix a Weinstein handlebody decomposition of the Weinstein sector $X$ (see \cref{dfn:handlebody_decomp_sector}). Let $\left\{C_i\right\}_i$ denote the set of Lagrangian cocore disks (including those of the critical Weinstein half-handles), and $\boldsymbol C \coloneqq \bigcup_{i} C_i$ their union. The result after removing all critical Weinstein handles from $X$ is called the \emph{subcritical part} of $X$, and is denoted by $X_0$.
		\begin{defn}[Shifted Lagrangian cocore]\label{dfn:shift_cocore}
            \hfill
			\begin{itemize}
			    \item In the local model of a critical Weinstein handle (see \cref{dfn:weinstein_handle}), a \emph{shifted Lagrangian cocore} is the subset $(\IR^n \times \{\varepsilon\}) \cap H^\delta_n \subset H^\delta_n$ for some $\varepsilon \in \IR^n \smallsetminus \left\{0\right\}$ close enough to $0$.

                \item In the local model of a critical Weinstein half-handle (see \cref{dfn:weinstein_half-handle}), the symplectic boundary is a Weinstein handle of index $n-1$. Define a \emph{shifted Lagrangian cocore} of a critical Weinstein half-handle to be the spreading of a shifted cocore of its symplectic boundary (see \cite[Definition 2.11]{chantraine2017geometric} for the definition of spreading).
			\end{itemize}
		\end{defn}
        \begin{rem}
            The Lagrangian cocore of a Weinstein half-handle corresponds to a linking disk, in the language of Ganatra--Pardon--Shende \cite{ganatra2020covariantly,ganatra2022sectorial}, see \cref{rem:cocore_linking_disk}.
        \end{rem}
		\begin{defn}[Nearby Lagrangian cocore]\label{dfn:nearby_cocore}
			We say that an exact conical Lagrangian $L \subset X$ is a \emph{nearby Lagrangian cocore} if $\partial_\infty L = \partial_\infty C_\varepsilon$ where $C_\varepsilon$ denotes a shifted Lagrangian cocore of $X$, and if $L$ misses every Lagrangian cocore of $X$.
		\end{defn}
        Nearby Lagrangian cocores are closely related to exact Lagrangian fillings of Legendrian unknots in subcritical Weinstein sectors.
		\begin{lem}\label{lem:nearby_filling}
			Items of the following two kinds can be used to produce one of the other:
			\begin{enumerate}
				\item A Weinstein sector $X$ together with a nearby Lagrangian cocore $L \subset X$.
				\item A subcritical Weinstein sector $X_0$ together with an exact Lagrangian filling of a Legendrian unknot $\varLambda_0$ in a Darboux ball in $\partial_\infty X_0$.
			\end{enumerate}
		\end{lem}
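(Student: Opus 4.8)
The plan is to treat \cref{lem:nearby_filling} as a dictionary realized by two inverse operations --- attaching or deleting a standard critical Weinstein handle --- interpreted through the Liouville flow. First I would fix the local model $H\cong D^n\times D^n$ of a critical handle from \cref{dfn:weinstein_handle}, in which the Lagrangian cocore is $D^n\times\{0\}$, a shifted cocore is $D^n\times\{\varepsilon\}$, and the belt sphere $\partial_\infty(D^n\times\{\varepsilon\})$ is the standard Legendrian unknot sitting in a Darboux ball of the contact boundary and disjoint from everything else. I would also record that the symplectic boundary of a critical half-handle is a Weinstein handle of index $n-1$, so that the half-handle case is absorbed into the handle case via spreading (\cite{chantraine2017geometric} and \cref{dfn:shift_cocore}).

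For the passage from (i) to (ii): given a nearby Lagrangian cocore $L\subset X$, and letting $X_0$ denote the subcritical part of $X$, I would first note that $L$ lies in $X\smallsetminus\boldsymbol C$ because by \cref{dfn:nearby_cocore} it misses every Lagrangian cocore. The crucial geometric input is that $X\smallsetminus\boldsymbol C$ is Liouville-deformation equivalent to $X_0$: in the local model $H\smallsetminus(D^n\times\{0\})=D^n\times(D^n\smallsetminus\{0\})$ deformation retracts onto a copy of the attaching region of $H$, so deleting the cocores cancels the critical handles --- this is the handle-calculus input, in the spirit of \cite{ganatra2022sectorial} and the subcritical invariance statements of \cref{subsec:subcrit_handles}. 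Carrying $L$ along this deformation (equivalently, flowing by the backward Liouville flow after the identification) keeps it exact and conical and places it inside a Weinstein subsector identified with $X_0$. Its conical end --- which agrees with that of the shifted cocore $C_\varepsilon$, hence lies over $\partial_\infty C_\varepsilon$ --- is then a Legendrian unknot $\varLambda_0$ inside a Darboux ball of $\partial_\infty X_0$, exhibiting $L$ as an exact Lagrangian filling of $\varLambda_0$ in $X_0$.

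For the passage from (ii) to (i): given a subcritical Weinstein sector $X_0$ and an exact Lagrangian filling $\Sigma\subset X_0$ of a Legendrian unknot $\varLambda_0$ in a Darboux ball $B\subset\partial_\infty X_0$, I would pick a Legendrian push-off $\varLambda_0'$ of $\varLambda_0$ inside $B$, disjoint from $\varLambda_0$, and attach a standard critical Weinstein $n$-handle $H$ along $\varLambda_0'$, obtaining a Weinstein sector $X=\widehat X_0$ whose subcritical part is again $X_0$ and which carries a single Lagrangian cocore $C\subset H$. Since both $\varLambda_0$ and the belt sphere of $H$ are standard Legendrian unknots in Darboux balls, I would arrange $\varLambda_0'$ so that $\partial_\infty C_\varepsilon$ is Legendrian isotopic to $\varLambda_0$ within $B$ rel the complement of $\varLambda_0'$, and then apply a compactly supported Hamiltonian isotopy of $\Sigma$ near infinity realizing this isotopy, achieving $\partial_\infty\Sigma=\partial_\infty C_\varepsilon$. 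As $C\subset H$ while $\Sigma\subset X_0$, the two are disjoint, so $\Sigma$ is a nearby Lagrangian cocore of $X$.

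The step I expect to be the main obstacle is the precise justification, in the first passage, that $X\smallsetminus\boldsymbol C$ is Liouville-equivalent to $X_0$ and that the resulting isotopy carries $L$ into $X_0$ with the correct conical behaviour --- that is, upgrading the handle-cancellation picture from a statement about smooth manifolds to one about Liouville sectors. The second passage is essentially a construction and should be routine once the handle model and Legendrian-isotopy bookkeeping are set up. As a sanity check I would compare the whole correspondence with the case $X=T^\ast\IR^n$ treated by Ekholm--Smith \cite{ekholm2018nearby}, where $X_0$ is a ball and $\varLambda_0$ is the standard Legendrian unknot in the sphere at infinity.
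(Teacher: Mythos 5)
Your proposal follows essentially the same strategy as the paper: for (i)$\Rightarrow$(ii) use the backwards Liouville flow to carry $L$ into the subcritical part $X_0$, and for (ii)$\Rightarrow$(i) attach a critical Weinstein handle along a Legendrian linking $\varLambda_0$ once (your disjoint Legendrian push-off of $\varLambda_0$ plays the same role as the paper's ``$\varLambda$ linking $\varLambda_0$ exactly once''). The ``main obstacle'' you flag, however, is a needless detour: the paper never asserts that $X\smallsetminus\boldsymbol C$ is Liouville-deformation equivalent to $X_0$, and no Liouville-theoretic handle cancellation is invoked. Instead, the paper works entirely inside the local critical-handle model $H^\delta_n$, where the backward Liouville flow is the explicit map $\varphi^t(x,y)=(e^{-2t}x,e^ty)$. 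Because $L$ misses the cocore $\{y=0\}$, every point of $L\cap H^\delta_n$ has a nonzero $y$-component and therefore exits $H^\delta_n$ through $\partial_-H^\delta_n$ under $\varphi^T$ for $T\gg0$; simultaneously one computes that $\varphi^T(\partial_\infty C_\varepsilon)$ becomes a small sphere linking the attaching sphere near a point of it, hence a Legendrian unknot in a Darboux ball in $\partial_\infty X_0$. This direct computation both moves $\varphi^T(L)$ into $X_0$ and identifies its Legendrian boundary, with no appeal to a global deformation equivalence. So the worry you raise is already dissolved at the level of the local model, and your parenthetical ``(equivalently, flowing by the backward Liouville flow after the identification)'' is really the whole argument.
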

		\begin{proof}
			\begin{description}
				\item[(i) $\Rightarrow$ (ii)] This follows from the observation that the backwards Liouville flow will eventually disjoin $L$ from each critical Weinstein (half-)handle, since $L$ misses every Lagrangian cocore disk. Let $\varLambda \subset \partial_\infty X_0$ denote the Legendrian attaching sphere of the Weinstein handle housing the shifted cocore $C_\varepsilon$. In the local model of a critical Weinstein handle in \cref{dfn:weinstein_handle}, the time-$t$ backwards Liouville flow is given by $\varphi^t(x,y) = (e^{-2t}x,e^ty)$, and the contact boundary of the shifted cocore is given by the subset
                \[
                \partial_\infty C_\varepsilon = \left\{(x,\varepsilon) \in \IR^n \times \IR^n \; \middle| \; \sum_{i=1}^n x_i^2 - \frac 12 \varepsilon_i^2 = \delta\right\} \subset \partial_+ H^\delta_n,
                \]
                for some $\varepsilon \in \IR^n$ close enough to the origin. The projection to the first $n$ coordinates is a sphere of small radius. Let $T > 0$ be such that 
                \[
                \sum_{i=1}^ne^{-4T}x_i^2 - \frac 12 e^{2T}\varepsilon_i^2 = -\delta,
                \]
                i.e., such $\varphi^T(\partial_\infty C_\varepsilon) \subset \partial_- H^\delta_n$; note that for $\sum_{i=1}^n \varepsilon_i^2 \ll \delta$, the projection to the $n$ first coordinates is a sphere of smaller radius than the radius of the shifted cocore sphere. The attaching sphere for the critical Weinstein handle $H^\delta_n$ is given by the subset 
                \[
                A \coloneqq \left\{(0,y) \in \IR^n \times \IR^n \; \middle| \; \sum_{i=1}^n y_i^2 = 2\delta\right\} \subset \partial_- H^\delta_n,
                \]
                and we therefore see that $\varphi^T(\partial_\infty C_\varepsilon)$ is a small sphere linking the attaching sphere near the point $\left(0,\sqrt{\frac{2\delta}{\sum_{i=1}^n \varepsilon_i^2}}\varepsilon\right) \in A$. Therefore $\varphi^T(L)$ for $T > 0$ large enough is completely contained in the subcritical part of $X$ and is by construction an exact Lagrangian filling of its Legendrian boundary $\varphi^T(\partial_\infty L) = \varphi^T(\partial_\infty C_\varepsilon) \subset \partial_\infty X_0$.
				\item[(ii) $\Rightarrow$ (i)] Choose a Legendrian sphere $\varLambda \subset \partial_\infty X_0$ that links with $\varLambda_0$ exactly once. Attach a critical Weinstein handle to $X_0$ along $\varLambda$ and additional critical Weinstein (half-)handles to $X_0$ with attaching locus disjoint from the Darboux ball $D$.
			\end{description}
		\end{proof}
		Since nearby Lagrangian cocores can be used to construct exact Lagrangian fillings of the Legendrian unknot, it follows that there are considerable topological restrictions, cf.\@ \cite[Theorems 1.9 and 1.13]{chantraine2020floer}.
		\begin{lem}\label{lma:nearby_cocore_is_a_disk}
			Assume $2c_1(X) = 0$. Any nearby Lagrangian cocore $L \subset X$ is diffeomorphic to a disk if $n \neq 4$, and homeomorphic to a disk if $n = 4$.
		\end{lem}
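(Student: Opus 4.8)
The plan is to reduce the lemma to the statement that an exact Lagrangian filling of the standard Legendrian unknot is a homotopy disk, and then invoke smooth topology.

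First I would apply \cref{lem:nearby_filling}: flowing $L$ backwards along the Liouville flow for a sufficiently large time $T$ produces an exact Lagrangian filling $\varphi^T(L)\subset X_0$ of the standard Legendrian unknot $\varLambda_0\subset\partial_\infty X_0$, where $\varLambda_0$ lies in a Darboux ball and is, as a smooth manifold, the standard sphere $S^{n-1}$, and where the shifted cocore $C_\varepsilon\cong D^n$ is the model filling. Since $\varphi^T(L)\cong L$ it suffices to treat $\varphi^T(L)$, which I keep denoting by $L$. The hypothesis $2c_1(X)=0$ supplies a quadratic complex volume form on $X$ (\cref{rem:twice_chern}) and hence a genuine $\IZ$-grading; together with the fact that $\varLambda_0$ lies in a Darboux ball, this lets one set up the Chekanov--Eliashberg algebra of $\varLambda_0$ and the Floer-theoretic invariants of its fillings in the standard way --- first with $\IZ/2$-coefficients, and then, once $L$ is found to be orientable and relatively pin, with $\IZ$-coefficients.

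The heart of the argument is the Floer-theoretic input showing $L$ is a homotopy disk. The standard Legendrian unknot has a single Reeb chord and admits, up to DGA-homotopy, a unique augmentation, namely the one induced by the disk filling $C_\varepsilon$; consequently the augmentation $\epsilon_L$ determined by $L$ agrees with it, and the linearized Legendrian contact homology $LH^{\epsilon_L}_\bullet(\varLambda_0)$ is the minimal one forced by $D^n$. Feeding this into the Seidel-type isomorphism between the Floer homology of the filling and linearized Legendrian contact homology, together with the Sabloff--Ekholm duality long exact sequence relating $H_\bullet(L)$, $H^{n-\bullet}(L)$ and $LH^{\epsilon_L}_\bullet(\varLambda_0)$ for an exact filling (see \cite[Theorems 1.9 and 1.13]{chantraine2020floer}, and \cite{ekholm2018nearby} for $X=T^\ast\IR^n$), collapses the sequence and yields $H_\bullet(L;\IF)\cong H_\bullet(\mathrm{pt};\IF)$ for every field $\IF$, hence $H_\bullet(L;\IZ)\cong H_\bullet(\mathrm{pt};\IZ)$; it is important here that the Legendrian invariants of $\varLambda_0$ depend only on a Darboux neighborhood of $\varLambda_0$, so the conclusion does not see the ambient $X_0$. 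To promote a $\IZ$-homology disk to a homotopy disk I would rerun the same filling isomorphism with local coefficients in $\IZ[\pi_1 L]$, in the spirit of the work around the nearby Lagrangian conjecture, to conclude that the universal cover $\widetilde L$ is acyclic, so that $L$ is aspherical; since $L$ is of finite type and is already a $\IZ$-homology point, this forces $\pi_1 L=1$. I expect this last passage --- from the homological statement to the vanishing of $\pi_1$ --- to be the delicate point, and it is where the argument genuinely uses that the filled Legendrian is the unknot rather than an arbitrary Legendrian slice knot.

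Finally I would run the topological endgame. Capping off the conical end of $L$ gives a compact smooth manifold $\overline L$ which, by the previous step, is contractible and has $\partial\overline L$ the standard $S^{n-1}$. For $n\geq 6$, deleting a small open ball from the interior of $\overline L$ exhibits a simply connected h-cobordism between the standard spheres $\partial\overline L$ and $\partial D^n$, which by the h-cobordism theorem is a product; regluing a collar onto a ball gives $\overline L\cong D^n$. For $n=3$ this follows instead from the Poincaré conjecture, for $n=5$ from $\Theta_5=0$ and the smooth Schoenflies theorem in dimension five, and for $n=2$ from the classification of surfaces; in all these cases $L$ is diffeomorphic to a disk. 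For $n=4$ only Freedman's topological $4$-dimensional Poincaré/Schoenflies theorem applies, giving a homeomorphism $\overline L\cong D^4$ but no smooth statement --- exactly the dichotomy in the lemma.
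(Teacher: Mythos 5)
Your reduction via \cref{lem:nearby_filling}, your Floer-theoretic derivation of $H_\bullet(L;\IZ)\cong H_\bullet(\mathrm{pt};\IZ)$ by bootstrapping from $\IZ/2$-coefficients to $\IZ$-coefficients, and your $h$-cobordism endgame all match the paper's argument. (The paper phrases the homological step via vanishing wrapped Floer cohomology, $HW^\bullet(L)\cong HW^\bullet(C_\varepsilon)\cong 0$, rather than via linearized LCH and the Sabloff--Ekholm sequence, but these are essentially interchangeable.) You also correctly flag the one delicate point.

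That delicate point is where there is a genuine gap. You propose to show $\widetilde L$ is acyclic, conclude $L$ is aspherical, and then infer $\pi_1 L = 1$ from ``aspherical $+$ $\IZ$-homology point $+$ finite type.'' This implication fails in general: there exist nontrivial finitely presented acyclic groups of finite cohomological dimension (Higman's group is the standard example), and if $\pi_1 L$ were such a group, $L$ would still be an aspherical $\IZ$-homology ball; so acyclicity of $\widetilde L$ alone does not force $\pi_1 L = 1$, and some further geometric input is needed. The paper instead observes that the CE dg-algebra of $\varLambda_0\subset\partial_\infty X_0$ is quasi-isomorphic to that of the standard Legendrian unknot (by \cite[Lemma 4.1]{asplund2023singular}), hence generated by a single Reeb chord of degree $-(n-1)<0$, and invokes \cite[Theorem 70]{ekholm2017duality}, which proves directly that any exact filling of such a Legendrian is simply connected, after checking that its proof requires only $2c_1(X)=0$ rather than $c_1(X)=0$. (A more detailed version of the local-coefficient argument you gesture at, using Abouzaid's enlarged Fukaya category, appears in the paper's proof of \cref{thm:main_conc}, where the CE algebra argument is unavailable.) A smaller omission: running $\IZ$-graded Floer theory on $L$ requires the Maslov class of $L$ to vanish, which has to be extracted from the ungraded $\IZ/2$-step via $H^1(L;\IZ)=0$; you mention orientability and pin structure but not this point.
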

		\begin{proof}
			By \cref{lem:nearby_filling} both $L$ and $C_\varepsilon$ produce exact Lagrangian fillings of a Legendrian unknot $\varLambda_0$ in a Darboux ball in $\partial_\infty X_0$ that we by abuse of notation denote by $L,C_\varepsilon \subset X_0$.

            First, using (ungraded) wrapped Floer cohomology with $\IZ/2$-coefficients we have isomorphisms $HW^\bullet(L;\IZ/2) \cong HW^\bullet(C_\varepsilon;\IZ/2) \cong 0$, which implies $H^\bullet(L;\IZ/2) \cong H^\bullet(C_\varepsilon;\IZ/2) \cong \IZ/2$ generated by the unit, as ungraded $\IZ/2$-vector spaces, since $C_\varepsilon$ is an $n$-dimensional disk. This in particular implies that $L$ is spin. This allows us to repeat the argument with integral coefficients to conclude $H^\bullet(L;\IZ) \cong H^\bullet(C_\varepsilon;\IZ) \cong \IZ$ generated by the unit as ungraded $\IZ$-modules, and hence $L$ is an integer homology disk. 
            
            Next, the Chekanov--Eliashberg dg-algebra of $\varLambda_0 \subset \partial_\infty X_0$ is quasi-isomorphic to that of the standard Legendrian unknot in $\IR^{2n-1}$ (see \cite[Lemma 4.1]{asplund2023singular}) which is generated by a single Reeb chord $c$ in degree $-(n-1) < 0$. The above observation about the cohomology of $L$ implies that the Maslov class of $L$, vanishes, and inspection of the proof of \cite[Theorem 70]{ekholm2017duality} shows that it is still applicable under our slightly weaker assumption, viz., with \@ $2c_1(X)=0$ instead of $c_1(X)=0$. Therefore we conclude that $L$ is a simply connected homology disk, and hence $L$ is a homotopy disk. For $n \geq 5$, arguments using the $h$-cobordism theorem can then be employed to show that it is diffeomorphic to a disk (see \cite[Section 9]{milnor2015lectures}). For $n = 3$, we can attach a critical Weinstein handle to $X$ along $\partial_\infty L$ to construct a homotopy sphere which therefore must be diffeomorphic to $S^3$. 
		\end{proof}
        \begin{rem}
            We highlight here that \cref{lma:nearby_cocore_is_a_disk} implies that if a nearby Lagrangian $L \subset T^\ast S^n$ has geometric intersection number equal to one with some cotangent fiber, it must be diffeomorphic to $S^n$ if $n\neq 4$ (cf.\@ \cite[Corollary 1.19]{lazarev2020contact}).
            
            Viz., we may assume without loss of generality that $L$ coincides with the zero section over a small closed disk in the zero section. Pick a Weinstein handlebody decomposition of $T^\ast S^n$ with one index $0$ Weinstein handle and one critical Weinstein handle attached to the standard Legendrian unknot in the standard contact $S^{2n-1}$, such that $L$ coincides with the zero section over the core disk of the critical Weinstein handle. Restricting $L$ and the zero section to the index $0$ Weinstein handle $B^{2n}$ yields two exact Lagrangian fillings of the standard Legendrian unknot. Repeating the proof of \cref{lma:nearby_cocore_is_a_disk} with these exact Lagrangian fillings shows that $L|_{B^{2n}}$ must be diffeomorphic to a disk, and hence $L$ must be diffeomorphic to $S^n$, since the gluing diffeomorphism in the Weinstein handle decomposition of $T^\ast S^n$ is isotopic to the identity.
        \end{rem}
		\begin{notn}\label{notn:handle_attach}
			Let $L \subset X$ be a nearby Lagrangian cocore. By abuse of notation, let $L \subset X_0$ denote the exact Lagrangian filling of a Legendrian unknot in a Darboux ball in $\partial_\infty X_0$ obtained from \cref{lem:nearby_filling}.
			\begin{enumerate}
				\item Denote by $\widehat X_0$ the Weinstein sector obtained by attaching a critical Weinstein handle $H$ to $X_0$ along $\partial_\infty L$.
				\item Denote by $\widehat L \subset \widehat X_0$ the exact Lagrangian homotopy sphere obtained by attaching the core disk of $H$ to $L$ along $\partial_\infty L$.
				\item Denote by $\widehat C \subset \widehat X_0$ the exact Lagrangian homotopy sphere obtained by attaching the core disk of $H$ to $C_\varepsilon$ along $\partial_\infty C_\varepsilon = \partial_\infty L$.
				\item Denote by $F \subset \widehat X_0$ the Lagrangian cocore disk of $H$.
			\end{enumerate}
		\end{notn}
        \begin{lem}\label{lem:maslov_data_hat}
            If $X$ is equipped with a choice of $R$-Maslov data, there is an induced choice of $R$-Maslov data on $\widehat X_0$.
        \end{lem}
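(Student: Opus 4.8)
The plan is to reduce the statement to the vanishing of a single obstruction class in $\pi_n(B^2\Pic(R))$ and to kill it using the exact Lagrangian homotopy sphere $\widehat L$. By \cref{dfn:maslov_data}, a choice of $R$-Maslov data on $\widehat X_0$ amounts to a null-homotopy of the composition $g\colon \widehat X_0\xrightarrow{T\widehat X_0}BU\to B(U/O)\to B^2\Pic(R)$, and similarly for $X$ and for $X_0$; write $g_0$ for the composition on $X_0$.

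First I would note that, since $X_0\subset X$ is the subcritical part, $TX|_{X_0}=TX_0$, so the chosen $R$-Maslov data on $X$ restricts to a null-homotopy $h_0$ of $g_0$; this is the data to be extended. Next, the attaching Legendrian sphere $\Lambda_0=\partial_\infty L$ of the critical handle $H$ (see \cref{notn:handle_attach}) lies in a Darboux ball of $\partial_\infty X_0$, so its image under the Liouville retract $X_0\to\Core X_0$ is contractible; hence, up to homotopy, $\widehat X_0$ is obtained from $X_0$ by attaching a single $n$-cell along a null-homotopic map, which gives a homotopy equivalence $\widehat X_0\simeq X_0\vee S^n$ under which $X_0\hookrightarrow\widehat X_0$ is the wedge inclusion. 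Let $\iota\colon S^n\hookrightarrow\widehat X_0$ be the complementary wedge inclusion and $q\colon\widehat X_0\to\widehat X_0/X_0=S^n$ the collapse, so that $q\circ\iota\simeq\mathrm{id}_{S^n}$. Producing $R$-Maslov data on $\widehat X_0$ means extending $h_0$ to a null-homotopy of $g$, and since $B^2\Pic(R)$ is connected this is possible as soon as $g_*[\iota]\in\pi_n(B^2\Pic(R))$ vanishes: the two null-homotopies then patch over the wedge point.

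To evaluate this obstruction I would compare $\iota$ with the inclusion $\widehat L\hookrightarrow\widehat X_0$. Since $\widehat L=L\cup_{\Lambda_0}D^n$ with $L\subset X_0$ a homotopy disk (\cref{lma:nearby_cocore_is_a_disk}), the collapse $q$ restricts on $\widehat L$ to the collapse of the contractible subcomplex $L$, hence is a homotopy equivalence $\widehat L\xrightarrow{\simeq}S^n$. Therefore $[\iota]$ and $[\widehat L\hookrightarrow\widehat X_0]$ have the same image under $q_*$, so their difference lies in $\ker q_*\subset\pi_n(\widehat X_0)$, which by the Hilton--Milnor splitting is generated by the image of $\pi_n(X_0)$ and by Whitehead products. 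Pushing forward along $g$, the Whitehead-product terms die because $B^2\Pic(R)$ is a loop space, and the $\pi_n(X_0)$-part dies because $g|_{X_0}=g_0$ is null-homotopic; hence $g_*[\iota]=g_*[\widehat L\hookrightarrow\widehat X_0]$. Finally, $\widehat L$ is Lagrangian in $\widehat X_0$, so there is a complex bundle isomorphism $T\widehat X_0|_{\widehat L}\cong T\widehat L\otimes_\IR\IC$, and as $\widehat L$ is a homotopy $n$-sphere it is stably parallelizable; hence this bundle is stably trivial, the map $\widehat L\to BU$ is null-homotopic, and therefore so is $g\circ(\widehat L\hookrightarrow\widehat X_0)$. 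This makes the obstruction vanish.

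The step I expect to be the main obstacle is the comparison: making precise that the abstract top-cell class $[\iota]$ of $\widehat X_0$ agrees, after applying $g_*$, with the geometric class of $\widehat L$, and that the Hilton--Milnor correction terms genuinely die. If that bookkeeping proves awkward, the alternative is to argue directly by obstruction theory: the obstruction to extending $h_0$ over the unique $n$-cell of $\widehat X_0$ lies in $\pi_n(B^2\Pic(R))$ and, because $\Lambda_0$ is null-homotopic in $X_0$, equals $g_*$ of the class represented by $\widehat L$, which vanishes by stable parallelizability of $\widehat L$. The remaining ingredients — restricting $R$-Maslov data along $X_0\hookrightarrow X$, the identification $\widehat X_0\simeq X_0\vee S^n$, and the patching of null-homotopies — are routine.
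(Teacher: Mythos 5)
Your argument is correct, but it takes a genuinely different route from the paper's. You reduce the statement to the vanishing of a single obstruction class $g_*[\iota]\in\pi_n(B^2\Pic(R))$ via the wedge splitting $\widehat X_0\simeq X_0\vee S^n$, then kill it by comparing $[\iota]$ to the class of the Lagrangian sphere $\widehat L$ and invoking stable parallelizability of homotopy spheres. The paper instead works directly with the lift $\Gr(X)^\#\colon X\to B(U/O)^\#$ and the \emph{shifted cocore} $C_\varepsilon$: because $C_\varepsilon$ is a disk, both $\Gr(X)^\#|_{C_\varepsilon}$ and the tangential lift $TC_\varepsilon^\#$ are null-homotopic, hence restricted to $\partial_\infty C_\varepsilon$ they are homotopic; identifying the handle $H$ with $\widehat C\smallsetminus\mathring C_\varepsilon$ then lets one extend the lift over $H$ by hand and glue. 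The two approaches hinge on the same two inputs — restriction of Maslov data to $X_0$, and null-homotopy of the attaching sphere $\Lambda_0$ in $X_0$ — but the paper's is more elementary and self-contained, while yours is more obstruction-theoretic and perhaps easier to generalize.

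Two remarks on streamlining your argument. First, you would do better to run the comparison with $\widehat C$ rather than $\widehat L$: $\widehat C$ is a genuine $n$-sphere by construction (disk $C_\varepsilon$ capped by the core disk), so its stable parallelizability is free, whereas using $\widehat L$ requires citing \cref{lma:nearby_cocore_is_a_disk}, which itself assumes $2c_1(X)=0$ — a hypothesis that is not formally part of the lemma being proved and whose derivation from ``$X$ admits $R$-Maslov data'' requires a side argument about $\pi_0\Pic(R)$. Second, the Hilton--Milnor step is heavier than necessary: since $B^2\Pic(R)$ is an infinite loop space (in particular an $H$-space), the map $g$ factors up to homotopy through $X_0\times S^n$, so $g_*$ on $\pi_n(\widehat X_0)$ factors through $\pi_n(X_0)\oplus\pi_n(S^n)$; the first summand dies because $g|_{X_0}$ is null, and $q_*[\iota]=q_*[\widehat C]$ in the second, giving $g_*[\iota]=g_*[\widehat C]$ directly with no discussion of Whitehead products.
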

        \begin{proof}
            Denote by 
            \[ \Gr(X)^\# \colon X \longrightarrow B(U/O)^\#\]
            the $R$-Maslov data on $X$ as well as its restriction to $X_0$. Recall from \eqref{eq:two_null_homotopies} that we have a lift $TC_\varepsilon^\# \colon C_\varepsilon \to B(U/O)^\#$. Since $C_\varepsilon$ is a disk, both $\Gr(X)^\#|_{C_\varepsilon}$ and $TC_\varepsilon^\#$ are null-homotopic, and the same is true when taking the pullback of the inclusion $\partial_\infty C_\varepsilon \hookrightarrow C_\varepsilon$.
            
            Consider the commutative diagram 
            \[
                \begin{tikzcd}[column sep=2cm, row sep=2cm]
                    \partial_\infty C_\varepsilon \ar[r,shift left,"\Gr(X_0)^\#"] \ar[r,shift right,swap, "TC_\varepsilon^\#"] \ar[d,hook] & B(U/O)^\# \ar[d]\\
                    \widehat C \smallsetminus \mathring C_\varepsilon \ar[r,"\Gr(\widehat X_0)|_{\widehat C \smallsetminus \mathring C_\varepsilon}"] \ar[ur,"T(\widehat C \smallsetminus \mathring C_\varepsilon)^\#",sloped] & B(U/O)
                \end{tikzcd}
            \]
            Since the two top horizontal arrows are homotopic, there is an extension of $\Gr(X_0)^\#|_{\partial_\infty C_\varepsilon}$ to 
            \[ \Gr(\widehat X_0)^\#|_{\widehat C \smallsetminus \mathring C_\varepsilon} \colon \widehat C \smallsetminus \mathring C_\varepsilon \longrightarrow B(U/O)^\#,\]
            equipped with a homotopy to $T(\widehat C \smallsetminus \mathring C_\varepsilon)^\#$. Since $\widehat X_0 = X_0 \cup_{\partial_\infty C_\varepsilon} H$, and $H\simeq \widehat C \smallsetminus \mathring C_\varepsilon$ (see \cref{dfn:weinstein_handle}), the result follows.
        \end{proof}
        %\sayY{Explain notation below}\sayJ{I added some stuff. I noticed that the definition of $HW(\varOmega Q,L;R)$ at the very end of p.\@ 103 in ADP1 has a typo. It should include $\hocolim_k$ and the action filtration. I will write it down for a future revision.}
        The notion of a flow category equipped with a local system and the geometric realizations of such a category was defined in \cite[Subsection 2.9]{asplund2024nearby}. Let $L,Q \in \Ob(\sW(X;R))$ be two Lagrangian $R$-branes. In \cite[Definition 11.4]{asplund2024nearby} we defined a local system $\sE_{\varOmega Q}$ of $(\varSigma^\infty_+ \varOmega K \wedge R)$-modules on the $R$-oriented flow category $\sC\sW(Q,L)$. The resulting $R$-oriented flow category with local system $(\sC\sW(Q,L),\sE_{\varOmega Q})$ is denoted by $\sC\sW(\varOmega Q,L)$, and its associated CJS realization cf.\@ \cite[Definition 2.39]{asplund2024nearby} is denoted by $HW(\varOmega Q,L;R)$.
        \begin{lem}\label{lma:loop_fiber}
            Suppose that $L \subset \widehat X_0$ is any Lagrangian $R$-brane. There is an equivalence of $R$-modules
            \[
            HW(\varOmega \widehat C,L;R) \simeq HW(F,L;R).
            \]
        \end{lem}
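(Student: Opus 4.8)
The plan is to prove this as the $R$-linear analogue of the corresponding statement in \cite[Section 11]{asplund2024nearby}, which is established there under the hypothesis that the ambient sector carries a stable polarization; here we have only the $R$-Maslov data produced by \cref{lem:maslov_data_hat}, so the real task is to check that the geometric argument of loc.\@ cit.\@ survives this weakening. Geometrically, $\widehat C$ is the closed-up Lagrangian obtained by capping the disk $C_\varepsilon \subset X_0$ with the core disk of the critical handle $H$, while $F$ is the cocore of $H$; thus $F$ is the linking disk of $\widehat C$ (and of $\widehat L$), and after a small perturbation $F$ and $\widehat C$ intersect transversally in the single point $p = F \cap (\text{core of }H)$. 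First I would record the Floer-theoretic picture near $H$: wrapping $\partial_\infty F$ around $\partial_\infty \widehat X_0$ drags the iterated copies of $F$ along $\widehat C$ through the handle, so that, for a suitable admissible Hamiltonian, the time-one chords from $F$ to $L$ organize into families indexed by pairs consisting of an intersection point of $\widehat C$ with $L$ together with a homotopy class of path in $\widehat C$ from $p$ to that intersection --- exactly the combinatorics carried by the local system $\sE_{\varOmega\widehat C}$ on $\sC\sW(\widehat C,L)$ from \cite[Definition 11.4]{asplund2024nearby}.

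The heart of the argument is then to promote this bijection of generators to an $R$-oriented flow bordism $\sC\sW(F,L) \Rightarrow \sC\sW(\varOmega\widehat C, L)$, by interpolating between the ``wrapped cocore'' picture and the ``capping sphere with loop decoration'' picture: one counts Floer strips for $(F,L)$ constrained to unwrap through the handle, the winding of such a strip around $H$ being recorded as a based loop in $\widehat C$, i.e.\@ as a point of the fiber $\varSigma^\infty_+\varOmega\widehat C\wedge R$ of $\sE_{\varOmega\widehat C}$. Once this bordism of flow categories with local systems is in hand, taking CJS realizations --- \cite[Definition 3.27 and Definition 2.39]{asplund2024nearby} --- yields the claimed equivalence $HW(\varOmega\widehat C, L;R)\simeq HW(F,L;R)$. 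The interaction with the part of $\widehat X_0$ away from $H$ is handled exactly as in \cref{subsec:subcrit_handles}: by \cref{lem:invariance_subcrit} and \cref{lem:trivial_incl_quasi_iso} nothing is lost when one isolates the handle, so the comparison may be carried out in a neighborhood of $H$ together with the image of $L$ under a finite amount of wrapping. (An alternative, if one prefers, is a Yoneda-type argument: $F$ being a cocore, it suffices to match the two sides after pairing with each cocore, which reduces to the case $L = F$ and the single intersection point $\widehat C \cap F$; I would still fall back on the explicit comparison above to identify the resulting module structures.)

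What is genuinely new relative to \cite{asplund2024nearby} is the bookkeeping of $R$-orientations, and this is where the machinery of \cref{sec:maslov_data_disks,sec:spectral_fuk} is used: all the moduli spaces entering the comparison --- those defining $\sC\sW(F,L)$, those defining $\sC\sW(\widehat C,L)$ and its loop-space local system, and those of the interpolating bordism --- have index bundles that are canonically $R$-orientable by \cref{cor:index_R-ori} and \cref{lem:induced_loop_maps_pullback_path}, the relevant $R$-Maslov data on $\widehat X_0$, $\widehat C$, and $F$ being supplied by \cref{lem:maslov_data_hat} together with the chosen $R$-brane structures on $\widehat C$ and $F$. Hence the flow bordism can be taken to be $R$-oriented in the sense of \cref{dfn:flow_cat_lag}, and its CJS realization is valid over $R$. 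I expect the main obstacle to be precisely the construction of this $R$-oriented flow bordism: setting up the ``unwrapping'' moduli space with the correct $\langle\,\cdot\,\rangle$-manifold corner structure and, above all, with orientations compatible both with the local system $\sE_{\varOmega\widehat C}$ and with the $R$-orientations of the index bundles of abstract strip caps, adapting the geometric constructions of \cite[Section 11]{asplund2024nearby} and of \cref{subsec:subcrit_handles} to the present spectral setting.
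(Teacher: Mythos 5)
Your proposal diverges from the paper's proof in a way that leaves a real gap. The paper does not construct an $R$-oriented flow bordism between $\sC\sW(F,L)$ and $\sC\sW(\varOmega\widehat C,L)$; it constructs only a flow \emph{bimodule} $\sN^k$ (with module structure $\fm^k$ and local system $\sE^k$), as in \cite[Proposition 11.10]{asplund2024nearby}, whose CJS realization gives a \emph{map} of $R$-modules $HW^{\leq A_k}(F,L) \to HW(\varOmega\widehat C,L)$. A bimodule gives a map, not an equivalence, so the question of whether this map is an equivalence still has to be answered. The paper's answer is the crux that your proposal is missing: after smashing with $H\pi_0 R$ (i.e.\@ passing to $k = \pi_0 R$ coefficients and Morse chains), this map coincides with the map $\sF^1$ of \cite[Lemma 4.5]{abouzaid2012wrapped}, and \emph{that} map is known to be a quasi-isomorphism because of the chain of quasi-equivalences
\[
\sW(\widehat X_0;k) \simeq \sW(T^\ast\widehat C;k) \simeq \Perf C_{-\bullet}(\varOmega\widehat C;k),
\]
the first being \cref{lem:invariance_subcrit} plus \cite[Corollary 1.29]{ganatra2022sectorial}, the second being Abouzaid's generation result. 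Since $R$ is connective, a map of bounded-below $R$-modules that is a $\pi_0 R$-equivalence is an equivalence, and then one takes $\hocolim$ over $k$. You never invoke Abouzaid's theorem or the reduction to discrete coefficients, which is where the equivalence actually comes from.

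Your alternative of building the comparison geometrically, as a genuine flow bordism, is a strictly harder task than what the paper does, and your write-up acknowledges the construction as "the main obstacle" without resolving it. There is no obvious geometric unwrapping argument that produces such a bordism directly --- and none is needed, because the indirect route via $\pi_0 R$ is available. Your Yoneda-style alternative at the end also does not match the paper and does not obviously close the loop on its own. The $R$-orientation bookkeeping you describe (\cref{cor:index_R-ori}, \cref{lem:induced_loop_maps_pullback_path}, \cref{lem:maslov_data_hat}) is correct and is indeed the part that changes relative to \cite{asplund2024nearby}, but it is the supporting cast, not the argument; the argument you need is the $\pi_0 R$-reduction to Abouzaid's theorem.
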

        \begin{proof}
            This is almost identical to the proof of \cite[Proposition 11.10]{asplund2024nearby}. Using the same notation as there, we pick a strictly increasing sequence $\{A_k\}_{k=1}^\infty \subset \IR$ diverging to $\infty$, and get a map
            \[
            |\sN^k,\fm^k,\sE^k| \colon HW^{\leq A_k}(F,L) \longrightarrow HW(\varOmega \widehat C,L),
            \]
            which coincides with the map $\sF^1$ as defined in \cite[Lemma 4.5]{abouzaid2012wrapped}, after passing to appropriate action filtrations   and Morse chain complexes (with coefficients in $k \coloneqq \pi_0R$). This is a quasi-isomorphism, since there is are quasi-equivalences
            \[
            \sW(\widehat X_0;k) \simeq \sW(T^\ast \widehat C;k) \simeq \Perf C_{-\bullet}(\varOmega \widehat C;k),
            \]
            where the first quasi-equivalence holds since the wrapped Fukaya category is invariant up to quasi-equivalence under subcritical handle attachments, cf.\@ \cref{lem:invariance_subcrit} and \cite[Corollary 1.29]{ganatra2022sectorial}. The result follows after passing to homotopy colimits as $k \to \infty$.
        \end{proof}
		\begin{lem}\label{lem:closures_quiv}
			There is a choice of $MO\langle k+1\rangle$-Maslov data $\widehat C^\#$ on $\widehat C$ such that $\widehat L \cong \widehat C^\#$ in $\sW(\widehat X_0;MO\langle k+1\rangle)$.
		\end{lem}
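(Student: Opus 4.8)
The plan is to realize both $\widehat L$ and $\widehat C$ as zero sections of cotangent bundles sitting inside $\widehat X_0$ in which the cocore $F$ of the critical handle $H$ appears as a cotangent fiber, and then to compare the two using the spectral generation of a cotangent bundle by its fiber. Throughout, $R = MO\langle k+1\rangle$; we equip $\widehat X_0$ with the $R$-Maslov data induced from $X$ by \cref{lem:maslov_data_hat}, and we fix on $\widehat L$ the given $R$-Maslov data, while the $R$-Maslov data $\widehat C^\#$ on $\widehat C$ is to be constructed.

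First I would carry out the geometric reduction. By \cref{lem:nearby_filling}, $\varLambda_0 = \partial_\infty L = \partial_\infty C_\varepsilon$ is the standard Legendrian unknot in a Darboux ball, so $C_\varepsilon \subset X_0$ is Hamiltonian isotopic to the standard linking disk, and by \cref{lma:nearby_cocore_is_a_disk} the filling $L \subset X_0$ is a homotopy disk. A Weinstein neighborhood of the core of $H$ is symplectomorphic to a neighborhood of the zero section of $T^\ast D^n$, with $F$ corresponding to a cotangent fiber; gluing this along $\varLambda_0$ to a Weinstein neighborhood of $C_\varepsilon$ (resp.\ of $L$) produces a Weinstein neighborhood $T^\ast\widehat C$ (resp.\ $T^\ast\widehat L$) of $\widehat C$ (resp.\ $\widehat L$) inside $\widehat X_0$ in which $F$ is a cotangent fiber. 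Since $C_\varepsilon$ and $L$ are $n$-dimensional, after a generic Hamiltonian perturbation they are disjoint from the isotropic skeleton of the subcritical sector $X_0$; hence $\widehat X_0$ differs from $T^\ast\widehat C$ and from $T^\ast\widehat L$ only by attachments of subcritical (half-)handles. Invoking \cref{lem:invariance_subcrit} and \cref{lem:trivial_incl_quasi_iso} then yields weak equivalences $\sW(T^\ast\widehat C;R) \simeq \sW(\widehat X_0;R) \simeq \sW(T^\ast\widehat L;R)$ carrying the respective zero sections to $\widehat C$ and $\widehat L$ and each cotangent fiber to $F$.

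Next I would invoke the spectral generation theorem for cotangent bundles, as used in the proof of \cref{lma:loop_fiber} (see \cite{large2021spectral,asplund2024nearby}): for a homotopy $n$-sphere $Q$ with suitable $R$-Maslov data the cotangent fiber split-generates $\sW(T^\ast Q;R)$, $HW(F,F;R) \simeq C_{-\bullet}(\varOmega Q;R)$, and there is an $R$-linear equivalence $\sW(T^\ast Q;R) \simeq \Perf(C_{-\bullet}(\varOmega Q;R))$ sending the fiber to the free module and the zero section to the augmentation module. Composing over $\widehat X_0$ produces an $R$-linear equivalence $\Perf(C_{-\bullet}(\varOmega\widehat C;R)) \simeq \Perf(C_{-\bullet}(\varOmega\widehat L;R))$ carrying the free module to the free module. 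The key computation is then elementary: $F$ meets $\widehat C$ (and $\widehat L$) transversally once, so $HW(F,\widehat C;R) \simeq R$ is concentrated in a single degree; since the reduced chains of the based loop space of a homotopy $n$-sphere are concentrated in degrees $\le -(n-1)<0$, the augmentation ideal of $C_{-\bullet}(\varOmega Q;R)$ acts by zero on any rank-one module concentrated in one degree, so such a module is a shift of the augmentation module. Because $R = MO\langle k+1\rangle$ is connective and the augmentation ideal lies in negative degrees, the augmentation of $C_{-\bullet}(\varOmega Q;R)$ is unique up to homotopy; hence any equivalence carrying the free module to the free module preserves augmentations and thus carries the shifted augmentation module corresponding to $\widehat C$ to the augmentation module of $C_{-\bullet}(\varOmega\widehat L;R)$ up to an integer degree shift. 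I would absorb this shift by choosing the grading datum of $\widehat C^\#$ so that $HW(F,\widehat C^\#;R)$ and $HW(F,\widehat L;R)$ lie in the same degree (there is no further twist to match, as $H^1(S^n;-) = 0$ for $n \ge 2$). Tracing $\widehat C^\#$ back through the composite equivalence then produces the desired isomorphism $\widehat L \cong \widehat C^\#$ in $\sW(\widehat X_0;R)$.

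The hard part is the geometric reduction in the second paragraph: realizing the cocore $F$ simultaneously as a cotangent fiber of a Weinstein neighborhood of $\widehat C$ and of $\widehat L$ with subcritical complement in each case. This is exactly where the special geometry of the situation is used — it works only because $L$ and $C_\varepsilon$ are homotopy-disk fillings of the \emph{standard} Legendrian unknot, which forces both Weinstein neighborhoods, and the identification of $F$ with a cotangent fiber in each, to be standard; absent this one would instead appeal to the obstruction-theoretic input of \cite{porcelli2025bordism}. Once this reduction is in place, the remaining comparison is formal homological algebra over the loop-space algebra $C_{-\bullet}(\varOmega S^n;R)$, together with the bookkeeping of $R$-Maslov data needed to match internal gradings.
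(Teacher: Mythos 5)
The proposal and the paper take genuinely different routes, and the proposal has a gap that the paper's route is specifically designed to avoid.

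The crux of your argument is the geometric reduction: you want to find a Weinstein neighborhood $T^\ast\widehat L$ of $\widehat L$ inside $\widehat X_0$ in which $F$ is a cotangent fiber and whose complement in $\widehat X_0$ is built only from subcritical handles, so that \cref{lem:invariance_subcrit} gives $\sW(T^\ast\widehat L;R)\simeq\sW(\widehat X_0;R)$. You assert this follows from disjoining $L$ from the isotropic skeleton of $X_0$ by a Hamiltonian perturbation, but that implication does not hold. Disjointness from the skeleton does not let you reorder the Weinstein handle attachments of $\widehat X_0$ so that the $0$-handle and the critical $n$-handle come first with $T^\ast\widehat L$ sitting inside them; one would have to show that, after a Weinstein homotopy, $\widehat L$ becomes part of the skeleton and that the remaining handles of $X_0$ attach in the complement of $T^\ast\widehat L$. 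For $\widehat C$ this is plausible precisely because $C_\varepsilon$ sits in a Darboux ball, and indeed the paper's proof (and the proof of \cref{lma:loop_fiber}) uses exactly that. For $\widehat L$ it is the hard part of the whole problem: a nearby Lagrangian cocore can a priori wander through $X_0$, and to say that $\widehat X_0$ is a subcritical extension of $T^\ast\widehat L$ with $F$ a cotangent fiber is essentially to say $\widehat L$ is ``standard'' inside $\widehat X_0$ --- close to what the theorem is ultimately trying to prove. The paper avoids this by staying inside $\sW(\widehat X_0;R)$, using generation of the integral wrapped Fukaya category by the cocore $F$ together with the Yoneda functor $\sY_{\varOmega\widehat C}$, so that only the (easy) identification $\sW(\widehat X_0;k)\simeq\Perf C_{-\bullet}(\varOmega\widehat C;k)$ over the discrete coefficient ring is needed.

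There is also a secondary mismatch in the algebra. You assert that, since $R$ is connective and the augmentation ideal of $C_{-\bullet}(\varOmega Q;R)$ lies in negative degrees, the augmentation is unique up to homotopy and therefore the rank-one module structure is essentially determined, with only a grading shift to absorb (which you then argue vanishes since $H^1(S^n;\IZ)=0$). This would make the choice of Maslov data $\widehat C^\#$ nearly irrelevant, which is a stronger claim than the paper makes: the paper identifies the set of Maslov data choices on $\widehat C$ as a torsor over $[\widehat C,\Pic(MO\langle k+1\rangle)]$, matches it to the torsor of $(\varSigma^\infty_+\varOmega\widehat C\wedge R)$-module structures on a rank-one free module, and chooses $\widehat C^\#$ to make the two sides agree. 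Your uniqueness argument is plausible for $n$ odd (polynomial loop-space algebra) but requires care for $n$ even and at the $A_\infty$/$E_1$ level, and even if true it would render the paper's torsor-matching step vacuous; since the paper makes no such claim, you should not rely on it without proof. Even granting both the uniqueness and the vanishing of the grading shift, the proposal would still need the geometric decomposition above to get off the ground, so the argument as written does not close.
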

		\begin{proof}
            Let $\sF(\widehat X_0;MO\langle k+1\rangle) \subset \sW(\widehat X_0;MO\langle k+1\rangle)$ denote the full subcategory of compact Lagrangian $R$-branes. We have a commutative diagram
            \[
                \begin{tikzcd}[row sep=scriptsize,column sep=1.5cm]
                    \sF(\widehat X_0;MO\langle k+1\rangle) \dar{\Hw} \rar{\sY_{\varOmega \widehat C}} & \Ho\mod{(\varSigma^\infty_+ \varOmega \widehat C \wedge MO\langle k+1\rangle)} \dar{\Hw} \\
                    \sF(\widehat X_0;H\IZ) \rar{\sY_{\varOmega \widehat C} \wedge_{MO\langle k+1\rangle} H\IZ} & \Ho\mod{(\varSigma^\infty_+ \varOmega \widehat C \wedge H\IZ)}
                \end{tikzcd},
            \]
            where $\Hw$ are the appropriate functors induced by the Hurewicz map $MO \langle k+1\rangle \to H\pi_0MO \langle k+1\rangle\simeq H\IZ$, and where $\sY_{\varOmega \widehat C}$ is the Yoneda functor defined in \cite[Section 11]{asplund2024nearby}. The wrapped Fukaya category of $\widehat X_0$ with integer coefficients is generated by the Lagrangian cocore $F$ by \cite[Theorem 1.13]{ganatra2022sectorial} or \cite[Theorem 1.1]{chantraine2017geometric}, which yields that the Yoneda functor
            \[
            \sY_F \colon \sF(\widehat X_0;\IZ) \longrightarrow \Ho\mod{C_{\bullet}(\varOmega \widehat C;\IZ)}, \quad L \longmapsto HW^{-\bullet}(F,L),
            \]
            is fully faithful; here the $HW(F,F)$-module structure on $HW(F,L)$ is induced by the usual $A_\infty$-module structure obtained from the composition in the integral wrapped Fukaya category. By the construction of $\sY_{\varOmega \widehat C}$ in \cite[Section 11]{asplund2024nearby}, and \cite[Theorem 1.1]{abouzaid2012wrapped}, it follows that the bottom horizontal arrow in the diagram above is fully faithful.
            
            Next, by \cref{lma:loop_fiber} we have 
            \begin{align*}
                HW(\varOmega \widehat C,\widehat C;MO\langle k+1\rangle) &\simeq HW(F,\widehat C;MO\langle k+1\rangle) \simeq \varSigma^\ell MO\langle k+1\rangle \\
                HW(\varOmega \widehat C,\widehat L;MO\langle k+1\rangle) &\simeq HW(F,\widehat L;MO\langle k+1\rangle) \simeq \varSigma^\ell MO\langle k+1\rangle,
            \end{align*}
            for some $\ell \in \IZ$, since both $\widehat C$ and $\widehat L$ intersect $F$ geometrically once, meaning that
            \[
            |\Ob(\sC\sW(F,\widehat C))| = 1 = |\Ob(\sC\sW(F,\widehat L))|.
            \]
            Similar to the proof of \cite[Lemma 12.5]{asplund2024nearby}, choices of $MO\langle k+1\rangle$-Maslov data on $\widehat C$ form a torsor over $[\widehat C,\Pic(MO\langle k+1\rangle)]$. Since $\widehat C$ is a sphere of high enough dimension, we have $[\widehat C,\Pic(MO\langle k+1\rangle)] \simeq [\varOmega \widehat C,BGL_1(MO\langle k+1\rangle)]$. The $(\varSigma^\infty_+ \varOmega \widehat C \wedge MO \langle k+1\rangle)$-module structure on $HW(\varOmega \widehat C,\widehat C;MO\langle k+1\rangle)$ is by adjunction equivalent to a map $(\varOmega \widehat C)_+ \to BGL_1(MO\langle k+1\rangle)$. Therefore we find a choice of $MO\langle k+1\rangle$-Maslov data on $\widehat C$, say $\widehat C^\#$, such that 
            \[
            HW(\varOmega \widehat C,\widehat C^\#;MO\langle k+1\rangle) \simeq HW(\varOmega \widehat C,\widehat L;MO \langle k+1\rangle)
            \]
            as $(\varSigma^\infty_+ \varOmega \widehat C \wedge MO \langle k+1\rangle)$-modules, thus finishing the proof, after applying Whitehead's theorem as in \cite[Lemma C.5]{asplund2024nearby}.
		\end{proof}
        \begin{rem}
            \Cref{lem:closures_quiv} may also be proven by appealing to \cite[Theorem 1.2]{porcelli2025bordism}
        \end{rem}
\section{Proof of main result and applications}\label{sec:proof}
    We denote the Lagrangian core of $X$ by $\Core X$, and denote the retraction $X \to \Core X$ by $\pi$. A choice of Weinstein handlebody decomposition of $X$ yields that $\Core X$ admits the structure of an $n$-dimensional CW complex; we denote its $\ell$-skeleton by $(\Core X)_\ell$.

    Let $\boldsymbol{C}$ denote the union of all Lagrangian cocores (and linking disks) of $X$. Note that $\pi(L) \subset \Core X \smallsetminus \pi(\boldsymbol{C}) \simeq (\Core X)_{n-k}$, where $\pi$ is the retract from $X$ to its core, and where $n-k$ is the maximal index of the Weinstein handles of the subcritical part $X_0$. Furthermore, $\pi(\partial_\infty L) \subset \Core X \smallsetminus \pi(\boldsymbol{C})$ is contractible which means that $\pi|_L$ restricts to a well-defined continuous map $L/\partial_\infty L \to \Core X \smallsetminus \pi(\boldsymbol{C})$ that we by abuse of notation keep denoting by $\pi|_L$.

    \begin{defn}\label{dfn:relative_brane}
        We say that a nearby Lagrangian cocore $L \subset X$ admits \emph{relative $R$-Maslov data} if there is a null-homotopy of the map $TL^\#-\Gr(X)^\#|_L$, defined in \eqref{eq:lift_maslov_data_L}, that is constant when restricted to $\partial_\infty L$.
    \end{defn}
    Recall that $O\langle k+1\rangle \to O$ denotes the $k$-connected cover of the orthogonal group $O$, and its associated Thom spectrum is denoted by $MO\langle k+1\rangle$.
    \begin{asmpt}\label{asmpt:main_asmpt}
        The Weinstein sector $X$ admits $MO\langle k+1\rangle$-Maslov data, and the nearby Lagrangian cocore $L \subset X$ admits relative $MO\langle k+1\rangle$-Maslov data.
    \end{asmpt}

    \begin{thm}\label{thm:main}
        Let $k \geq 1$ and let $n \geq 2k+2$. Suppose that $X^{2n}$ is a Weinstein sector with a chosen Weinstein handlebody decomposition such that the maximal index of a subcritical Weinstein handle is $\leq n-k$. Let $L \subset X$ be a nearby Lagrangian cocore such that \cref{asmpt:main_asmpt} is satisfied. The following composition is null-homotopic
        \[
        L/\partial_\infty L \overset{\pi|_L}{\longrightarrow} \Core X \smallsetminus \pi(\boldsymbol{C})  \longrightarrow (\Core X)_{n-k}/(\Core X)_{n-k-1}.
        \]
    \end{thm}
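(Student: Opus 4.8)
\section*{Proof proposal}

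The plan is to carry out the strategy sketched in \cref{sec:idea_proof_intro}, reducing the statement to a single $MO\langle k+1\rangle$-homological identity which, thanks to the hypothesis $n\geq 2k+2$, detects the sought null-homotopy. First I would pass to the capped-off picture: by \cref{lem:nearby_filling} one regards $L$ and the shifted cocore $C_\varepsilon$ as exact Lagrangian fillings of a Legendrian unknot $\varLambda_0$ contained in a Darboux ball of $\partial_\infty X_0$, and by \cref{lma:nearby_cocore_is_a_disk} the filling $L$ is a homotopy disk. Attaching the core disk of a critical Weinstein handle $H$ along $\varLambda_0$ as in \cref{notn:handle_attach} produces $\widehat X_0$ together with exact Lagrangian homotopy spheres $\widehat L$ and $\widehat C$. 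By \cref{lem:maslov_data_hat} the $MO\langle k+1\rangle$-Maslov data on $X$ descends to $\widehat X_0$, and since the relative $MO\langle k+1\rangle$-Maslov data on $L$ is constant along $\partial_\infty L$ it glues with the canonical data on the (contractible) core disk of $H$ to an $MO\langle k+1\rangle$-brane structure on $\widehat L$; as $\widehat L$ is a smooth homotopy $n$-sphere of large dimension its stable tangent bundle admits a reduction of structure group to $O\langle k+1\rangle$ (the obstructions live in $\pi_j(O/O\langle k+1\rangle)=0$ for $j>k$), and such a reduction fixes an $MO\langle k+1\rangle$-fundamental class $[\widehat L]$. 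Now \cref{lem:closures_quiv} supplies a choice $\widehat C^\#$ of $MO\langle k+1\rangle$-Maslov data with $\widehat L\cong \widehat C^\#$ in $\sW(\widehat X_0;MO\langle k+1\rangle)$, so \cref{thm:oc_fund_classes} provides an $MO\langle k+1\rangle$-orientation on $\widehat C$ with $[\widehat L]=[\widehat C]\in H_n(\widehat X_0;MO\langle k+1\rangle)$.

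Next I would bring in the fold map $\widehat\pi\colon \widehat X_0\to \Core X_0$ of \cref{sec:fold_map}, which extends the retract $X_0\to \Core X_0$ over $H$ using that $\pi|_{\varLambda_0}$ is null-homotopic (because $\varLambda_0$ lies in a Darboux ball). After the backwards Liouville flow, $C_\varepsilon$ sits in a small ball near a point of the attaching sphere (cf.\@ the proof of \cref{lem:nearby_filling}), so $\widehat\pi(\widehat C)$ is contained in the image under $\widehat\pi$ of a Darboux ball and is therefore contractible; hence $\widehat\pi_\ast[\widehat C]=0$, and combining this with the equality of fundamental classes above gives
\[
\widehat\pi_\ast[\widehat L] = 0 \in \widetilde H_n(\Core X_0;MO\langle k+1\rangle).
\]
I would then identify the target of the theorem's composition: since every subcritical handle has index $\leq n-k$, the inclusion $\Core X_0\hookrightarrow \Core X$ identifies $(\Core X)_{n-k}/(\Core X)_{n-k-1}$ with $\Core X_0/(\Core X_0)_{n-k-1}$, which is a wedge $\bigvee_i S^{n-k}$ of top-cell spheres; and using that the core disk of $H$ is contractible (so $L/\partial_\infty L\simeq \widehat L$) and that $\pi$ is unchanged up to homotopy by the Liouville flow, the composition in the statement is homotopic to $\widehat L\xrightarrow{\widehat\pi}\Core X_0\to \bigvee_i S^{n-k}$.

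Finally, since $n\geq 2k+2$ we have $n\leq 2(n-k)-2$, so by \cref{lem:HomotopyOfWedge} the map $\widehat L\to\bigvee_i S^{n-k}$ is null-homotopic once each composite $q_i\colon \widehat L\to S^{n-k}$ with a collapse map is. By \cref{lma:fundamental_class} there are isomorphisms
\[
\widetilde H_n(S^{n-k};MO\langle k+1\rangle)\cong \pi_k(MO\langle k+1\rangle)\cong \pi_k^{\mathrm{st}}\cong \pi_n(S^{n-k}),
\]
valid in this range, under which the homotopy class of $q_i$ corresponds to the image of $\widehat\pi_\ast[\widehat L]$ under $(q_i)_\ast$, hence to $0$; so each $q_i$ is null-homotopic and the proof concludes. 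I expect the main obstacle to be exactly this last translation --- converting $MO\langle k+1\rangle$-homological vanishing into a genuine null-homotopy. It is what forces the coefficient spectrum to be $MO\langle k+1\rangle$ (so that $\pi_k(MO\langle k+1\rangle)\cong\pi_k^{\mathrm{st}}$ and the unit $\pi_k(\IS)\to\pi_k(MO\langle k+1\rangle)$ is an isomorphism) and what requires $n\geq 2k+2$, placing us in the stable range for $\pi_n(S^{n-k})$ and in the metastable range where $\bigvee_i S^{n-k}\to\prod_i S^{n-k}$ is $n$-connected; this step is carried out in \cref{sec:detecting_null-homotopies}. The only other subtle input, namely that the spectral isomorphism $\widehat L\cong \widehat C^\#$ really transports fundamental classes, is already handled by \cref{thm:oc_fund_classes}.
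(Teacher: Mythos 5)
Your proof is correct and follows the paper's argument essentially step for step: pass to $\widehat X_0$ via capping off, establish $\widehat L\cong\widehat C$ in $\sW(\widehat X_0;MO\langle k+1\rangle)$ by \cref{lem:closures_quiv}, transport $MO\langle k+1\rangle$-fundamental classes via \cref{thm:oc_fund_classes}, push forward along the fold map $\widehat\pi$ to obtain $\widehat\pi_\ast[\widehat L]=0$, and convert this $MO\langle k+1\rangle$-homological vanishing into a null-homotopy using $n\geq 2k+2$. The one presentational difference is that the paper wraps the final step as a single citation to \cref{lem:DetectingNullHomotopy_gen} (together with \cref{lem:Folding} for the comparison of $\pi|_L$ with $\widehat\pi|_{\widehat L}$), whereas you unpack that lemma directly in terms of \cref{lem:HomotopyOfWedge} and \cref{lma:fundamental_class}; the underlying argument is the same.
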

    \begin{proof}
        Recall the construction of $\widehat X_0$ and the exact Lagrangian homotopy spheres $\widehat L, \widehat C \subset \widehat X_0$ from \cref{notn:handle_attach}. First by our assumption \cref{asmpt:main_asmpt}, $X$ admits $MO\langle k+1\rangle$-Maslov data, and \cref{lem:maslov_data_hat} implies that so does $\widehat X_0$. By construction, the $MO\langle k+1\rangle$-Maslov data on $X$ is so that $TC_\varepsilon^\# - \Gr(X)^\#|_{C_\varepsilon}$ is the constant map, which is to say that $\widehat C \subset \widehat X_0$ admits $MO \langle k+1\rangle$-Maslov data. Furthermore, the assumption that $L$ admits a choice of relative $MO\langle k+1\rangle$-Maslov data implies that $\widehat L \subset \widehat X_0$ admits a choice of $MO\langle k+1\rangle$-Maslov data.
        
        Next, by \cref{lem:closures_quiv}, we have that $\widehat L \cong \widehat C$ in $\sW(\widehat X_0;MO\langle k+1\rangle)$ for some choice of $MO\langle k+1\rangle$-Maslov data on $\widehat C$. Since $\widehat L$ is a homotopy $n$-spheres, we choose a tangential $O\langle k+1\rangle$-structure which yields a choice of $MO\langle k+1\rangle$-fundamental class $[\widehat L] \in H_n(X_0;MO\langle k+1\rangle)$. By \cref{thm:oc_fund_classes}, there is a choice of $MO\langle k+1\rangle$-fundamental class $[\widehat C]$ for $\widehat C$, such that $[\widehat C] = [\widehat L] \in H_n(X_0;MO\langle k+1\rangle)$. Let $\widehat{\pi} \colon \widehat{X}_0 \to \Core X_0$ denote the fold map as defined in \cref{sec:fold_map} that extends the retraction $\pi \colon X_0 \to \Core X_0$. It moreover has the property that $\widehat{\pi}|_{\widehat{L}}$ is null-homotopic if and only if $\pi|_L$ is null-homotopic, see \cref{lem:Folding}. Since $\pi(C) = \{\mathrm{pt}\} \subset \Core X$, and it follows that after flowing with the backwards Liouville flow as in \cref{lem:nearby_filling} so that $C \subset X_0$, we have that $\pi(C) \subset \Core X_0$ is contractible. Hence we have $\widehat{\pi}_\ast[\widehat{C}] = 0$ in $\widetilde H_n(\Core X_0;MO\langle k+1\rangle)$ and thus $\widehat{\pi}_\ast[\widehat{L}] = 0$ in $\widetilde H_n(\Core X_0;MO\langle k+1\rangle)$. The result now follows from \cref{lem:DetectingNullHomotopy_gen}.
    \end{proof}
    \begin{rem}
        The positive integer $k$ in \cref{thm:main} is the positive integer such that the handle of maximal dimension of $X_0$ is $n-k$. The coefficients of the wrapped Donaldson--Fukaya category we use depend on $k$, and below we list the first few cases. 
        \begin{description}
            \item[$k\in \{1,2\}$] We require $n\geq 4$ or $n\geq 6$ and use coefficients in $MSpin$.
            \item[$k\in \{3,4,5,6\}$] We require $n\geq 2k+2$ and use coefficients in $MString$.
            \item[$k=7$] We require $n\geq 14$ and use coefficients in $MFiveBrane$.
        \end{description}
        For $p > q$, there is a ring map $MO\langle p \rangle \to MO \langle q\rangle$, which means that if $X$ admits $MO\langle p \rangle$-Maslov data, it also admits $MO\langle q \rangle$-Maslov data.
    \end{rem}
    \subsection{Smooth unknotting of nearby Lagrangian cocores}\label{sec:smooth_unknotting}
    \begin{defn}
        We say that a nearby Lagrangian cocore $L \subset X$ is \emph{smoothly unknotted} if it is smoothly isotopic (rel boundary) to a Lagrangian cocore, in the complement of all Lagrangian cocores.
    \end{defn}
    \begin{cor}\label{cor:cp2}
        Any nearby Lagrangian cocore $L \subset T^\ast \CP^2$ is smoothly unknotted.
    \end{cor}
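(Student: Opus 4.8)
The plan is to combine \cref{thm:main}, applied with $k=1$ (so with coefficients in $MO\langle 2\rangle = MSpin$), with the $h$-principle of Haefliger \cite[Théorème d'existence]{haefliger1961plongements}. Here $n=4$: the standard handlebody decomposition of $T^\ast\CP^2$ has a single critical Weinstein handle, whose Lagrangian cocore is the cotangent fibre $C = T^\ast_p\CP^2$, so $\boldsymbol C = C$, $\pi(\boldsymbol C) = \{p\}$, and $X\smallsetminus\boldsymbol C = T^\ast\CP^2|_{\CP^2\smallsetminus\{p\}}$ deformation retracts onto $\Core X\smallsetminus\pi(\boldsymbol C)\simeq\CP^1 = S^2$. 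Fix a shifted cocore $C_\varepsilon$; it is disjoint from $\boldsymbol C$, and by \cref{lma:nearby_cocore_is_a_disk} the Lagrangian $L$ is a homotopy $4$-disc, so $L$ and $C_\varepsilon$ are two properly embedded discs in $X\smallsetminus\boldsymbol C$ sharing the conical end $\partial_\infty L = \partial_\infty C_\varepsilon =: \varLambda_0\cong S^3$.

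First I would verify \cref{asmpt:main_asmpt} for $(T^\ast\CP^2, L)$ with $k=1$. The tangent bundle $T(T^\ast Q)$ is a complexification (isomorphic to the pullback of $TQ\otimes_\IR\IC$ along $T^\ast Q\to Q$), so $T^\ast\CP^2$ is stably polarizable and admits $MSpin$-Maslov data by \cref{rem:polariz}; and the contractible disc $L$ admits relative $MSpin$-Maslov data as in the stably polarizable setting (this is where the choice $k=1$ enters, through the vanishing of low-degree spin bordism such as $\Omega_3^{Spin} = 0$; alternatively, cap $L$ off by the core of the critical handle to the homotopy $4$-sphere $\widehat L$ and apply \cref{lem:spheres_mg_brane_even_dim} or \cref{lem:polariz_mo_maslov}). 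The remaining hypotheses of \cref{thm:main} are immediate: $n = 4\geq 2k+2$, and the subcritical handles (of index $0$ and $2$) have index $\leq n-k = 3$.

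Applying \cref{thm:main} together with the refinement recorded in \cref{rem:wedge_sum_main_thm} (here $\Core X\smallsetminus\pi(\boldsymbol C)\simeq S^2$ is a single sphere, so the projection $q_1$ is the identity), I obtain that
\[
\pi|_L\colon L/\partial_\infty L\longrightarrow \Core X\smallsetminus\pi(\boldsymbol C)\simeq S^2
\]
is null-homotopic. To upgrade this to a smooth isotopy, I glue $L$ and $C_\varepsilon$ along $\varLambda_0$ into a map $g\colon S^4\to X\smallsetminus\boldsymbol C$; since $\pi(C_\varepsilon)$ is a point, the image of $[g]$ under the isomorphism $\pi_4(X\smallsetminus\boldsymbol C)\cong\pi_4(S^2)\cong\IZ/2$ equals the class of $\pi|_L$ on $L/\partial_\infty L$, which vanishes; hence $[g] = 0$, i.e.\ $L$ and $C_\varepsilon$ are homotopic rel $\varLambda_0$ as maps into $X\smallsetminus\boldsymbol C$. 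Since $2\cdot 8 = 16\geq 3(4+1) = 15$, the discs lie (just) within Haefliger's metastable range inside the $8$-manifold $X\smallsetminus\boldsymbol C$, so his theorem turns this homotopy into a smooth isotopy rel boundary from $L$ onto $C_\varepsilon$ inside $X\smallsetminus\boldsymbol C$; that is, $L$ is smoothly unknotted.

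I expect the last step to be the main obstacle. The metastable range is attained only with equality, and \cref{lma:nearby_cocore_is_a_disk} gives merely $L\cong_{\mathrm{TOP}}D^4$ when $n=4$, so running Haefliger's argument smoothly here requires care (for instance, by producing a concordance $D^4\times[0,1]\hookrightarrow (X\smallsetminus\boldsymbol C)\times[0,1]$ from $L$ to $C_\varepsilon$, again on the edge of the range since $2\cdot 9\geq 3\cdot 6$, and invoking concordance-implies-isotopy), and one must also confirm that the resulting isotopy can be arranged rel the conical end and within the complement of $\boldsymbol C$.
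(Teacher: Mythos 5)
Your argument falls apart at the application of \cref{thm:main}. With $k=1$, the theorem's conclusion concerns the composition into $(\Core X)_{n-k}/(\Core X)_{n-k-1} = (\Core X)_3/(\Core X)_2$. But $\Core(T^\ast\CP^2) = \CP^2$ has no cells in dimensions $1$ or $3$, so $(\Core X)_3 = (\Core X)_2 = \CP^1$ and that quotient is a point; the conclusion is vacuous. To get a statement about the map to $S^2 \simeq \Core X \smallsetminus \pi(\boldsymbol C)$ you need $n - k = 2$, i.e.\ $k = 2$, but then the hypothesis $n \geq 2k+2 = 6$ fails since $n = 4$. Your appeal to \cref{rem:wedge_sum_main_thm} does not repair this: the null-homotopy onto a summand $S^{n-k_i}$ is established there by the same Freudenthal-range argument (via \cref{lma:fundamental_class} and \cref{lem:DetectingNullHomotopy_gen}), which requires $n \geq 2k_i + 2$, and here $k_i = 2$ so $4 \geq 6$ is false.

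This is precisely why the paper treats $\CP^2$ as a separate corollary rather than as an instance of \cref{thm:smooth_iso}. The paper's proof does not invoke the statement of \cref{thm:main}; it runs the \emph{proof} with $MSpin$ coefficients to deduce $\widehat\pi_\ast[\widehat L] = 0$ in $\widetilde H_4(S^2; MSpin) \cong \pi_2^{\mathrm{st}}$, and then uses the \emph{early stabilization} coincidence $\pi_2^{\mathrm{st}} \cong \pi_4(S^2)$, which happens to hold even though $n=4$ is outside the metastable range $n \geq 2k+2$ required by Freudenthal suspension. That coincidence is the essential extra ingredient missing from your proposal; without it, the null-homotopy of $\pi|_L$ is not established. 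The rest of your outline --- verifying \cref{asmpt:main_asmpt} via the stable polarization of a cotangent bundle, gluing $L$ and $C_\varepsilon$ to produce an $S^4 \to X \smallsetminus \boldsymbol C$ and reducing to $\pi_4(S^2) \cong \IZ/2$, and then invoking Haefliger in the metastable range $2\cdot 8 \geq 3\cdot 5$ --- matches the paper's strategy. Your caveat about $L$ being known only to be homeomorphic (not diffeomorphic) to $D^4$ when $n=4$ is a fair concern that the paper does not explicitly address, but it is not the gap in your argument.
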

    \begin{proof}
        First, $T^\ast \CP^2$ admits a polarization given by the tangent spaces of the cotangent fibers, and hence $\sW(T^\ast \CP^2;MSpin)$ is well-defined, see \cite[Theorem 1.2]{asplund2024nearby}. We consider the map
        \[
        \pi|_L \colon L/\partial_\infty L \longrightarrow \CP^2 \smallsetminus \{\mathrm{pt}\} = S^2.
        \]
        Like in the proof of \cref{thm:main} we obtain that $\widehat{\pi}_\ast[L] = 0$ in $\widetilde{H}_4(S^2;MSpin) \cong \pi_2^{\mathrm{st}}$. By \cref{lem:DetectingNullHomotopy_gen} together with the fact that we have early stabilization $\pi^{\mathrm{st}}_2 \cong \pi_4(S^2)$, it follows that $\pi|_L$ is null-homotopic. If we denote by $F \subset T^\ast \CP^2$ a cotangent fiber that is disjoint from $F$, we have that $L \to T^\ast \CP^2 \smallsetminus F$ and $C_\varepsilon \to T^\ast \CP^2 \smallsetminus F$ are homotopic rel boundary. The $h$-principle \cite[Théorème d'existence]{haefliger1961plongements} finally shows that they are in fact smoothly isotopic rel boundary.
    \end{proof}
    \begin{thm}\label{thm:smooth_iso}
		Let $k\geq 1$, and $n \geq 2k+2$. Let $N$ be a $k$-dimensional $K(\pi,1)$ manifold. Let $M$ be a smooth $(n-k)$-dimensional manifold satisfying
        \begin{enumerate}
            \item $\pi_2(M) = 0$, and 
            \item $\pi_n(M) = 0$ or $\widetilde M \simeq \bigvee_i S^{n-k_i}$ where $k_i \leq k$.
        \end{enumerate}
        Any nearby Lagrangian cocore $L \subset T^\ast(M^{n-k} \times N^k)$ is smoothly unknotted.
	\end{thm}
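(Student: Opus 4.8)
The plan is to run the argument of \cref{thm:main} for $X=T^\ast(M\times N)$, keeping only the homological conclusion it produces, and then to feed this into \cref{lem:DetectingNullHomotopy_gen} and the Haefliger $h$-principle exactly as in the proof of \cref{cor:cp2}, with the hypotheses on $M$ and $N$ supplying the homotopy-theoretic input.

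First I would verify \cref{asmpt:main_asmpt}. A cotangent bundle is stably polarizable, the vertical subbundle of the cotangent fibration being a polarization, so by \cref{rem:main_remark}(3) the pair $(X,L)$ satisfies \cref{asmpt:main_asmpt}; explicitly, the polarization gives $MO\langle k+1\rangle$-Maslov data on $X$, and since $L$ is a homotopy disk by \cref{lma:nearby_cocore_is_a_disk}, the obstruction map $TL^\#-\Gr(X)^\#|_L$ of \eqref{eq:lift_maslov_data_L} is null on the contractible $L$ and can be taken constant on $\partial_\infty L$ (arrange the polarization so that $L$ coincides near infinity with the cotangent fiber it is asymptotic to), giving relative $MO\langle k+1\rangle$-Maslov data on $L$. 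Then I would fix a Weinstein handlebody decomposition of $X$ with core the zero section $\Core X=M\times N$, form $\widehat X_0,\widehat L,\widehat C$ as in \cref{notn:handle_attach}, and run the proof of \cref{thm:main} verbatim: \cref{lem:maslov_data_hat}, \cref{lem:closures_quiv}, \cref{thm:oc_fund_classes}, the fold map $\widehat\pi\colon\widehat X_0\to\Core X_0$ of \cref{sec:fold_map}, and the contractibility of $\widehat\pi(\widehat C)$ together give $\widehat\pi_\ast[\widehat L]=0$ in $\widetilde H_n(\Core X_0;MO\langle k+1\rangle)$, with $\widehat\pi|_{\widehat L}$ null-homotopic iff $\pi|_L$ is (\cref{lem:Folding}). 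This part uses neither the structure of $M,N$ nor the index bound of \cref{thm:main}, which for a general product $M\times N$ need not be realizable by any handlebody decomposition.

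The heart of the proof is then to promote $\widehat\pi_\ast[\widehat L]=0$ to the null-homotopy of $\pi|_L\colon L/\partial_\infty L\to\Core X\smallsetminus\pi(\boldsymbol C)$ via \cref{lem:DetectingNullHomotopy_gen}, and this is where the hypotheses on $M$ and $N$ enter. One has $\pi_1(\Core X_0)=\pi_1(M)\times\pi_1(N)$, and since $N$ is aspherical its universal cover is contractible, so the universal cover of $\Core X_0$ is homotopy equivalent to that of the subcritical core of $T^\ast M$ with the relevant cocore disks removed. Combining this with $\pi_2(M)=0$ and with the dichotomy $\pi_n(M)=0$ or $\widetilde M\simeq\bigvee_i S^{n-k_i}$ with $k_i\leq k$, one identifies the universal cover of $\Core X_0$, through dimension $n$, as a wedge of spheres whose dimensions lie in $[\,n-k,\,n-1\,]$, which, since $n\geq 2k+2$, is precisely the range in which the $MO\langle k+1\rangle$-Hurewicz map is an isomorphism, the decisive input being the early stabilization $\pi_{k_i}(MO\langle k+1\rangle)\cong\pi_{k_i}^{\mathrm{st}}\cong\pi_n(S^{n-k_i})$ for $k_i\leq k$. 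Therefore $\widehat\pi_\ast[\widehat L]=0$ forces $\widehat\pi|_{\widehat L}$, hence $\pi|_L$, null-homotopic rel $\partial_\infty L$. Finally, exactly as in \cref{cor:cp2}, $L$ and a shifted Lagrangian cocore $C_\varepsilon$ are then homotopic rel boundary in the complement of the remaining Lagrangian cocores, and the $h$-principle \cite[Théorème d'existence]{haefliger1961plongements} upgrades this homotopy to a smooth isotopy rel boundary, so $L$ is smoothly unknotted.

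The step I expect to be the main obstacle is the homotopy-theoretic analysis of $\Core X_0$ (equivalently of $\Core X\smallsetminus\pi(\boldsymbol C)$): one must determine the homotopy types of $\Core X_0$ and of its universal cover precisely enough to verify the hypotheses of \cref{lem:DetectingNullHomotopy_gen}, the delicate point being the interplay between deleting the cocore disks and the — possibly infinite — fundamental group $\pi_1(M)\times\pi_1(N)$ (for instance, when $\pi_1$ is infinite one must rule out spherical classes in $\pi_n(\Core X_0)$ that are $MO\langle k+1\rangle$-homologically trivial but not null-homotopic). The three hypotheses $\pi_2(M)=0$, the control on $\widetilde M$, and the asphericity of $N$ are exactly what make this analysis go through; a minor further point is bookkeeping the finitely many cotangent-fiber cocores of the chosen handlebody decomposition.
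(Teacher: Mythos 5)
Your plan---lift to the universal cover, feed the homological vanishing from the proof of \cref{thm:main} into the homotopy theory of the core, and conclude with the Haefliger $h$-principle---is the right one, and you correctly anticipate that the literal index bound of \cref{thm:main} need not hold for a handlebody decomposition of $T^\ast(M\times N)$. But the homotopy-theoretic step has a genuine gap. After passing to the universal cover, the core with the lifted cocores removed is homotopy equivalent to $\widetilde M \vee \bigvee_{\varPi} S^{n-1}$ with $\varPi = \pi_1(M)\times\pi_1(N)$, and this is \emph{not} a wedge of spheres when $\pi_n(M)=0$ but $\widetilde M$ is not itself such a wedge (e.g.\ $\widetilde M = \OP^2$, one of the paper's own examples). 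Your assertion that ``one identifies the universal cover of $\Core X_0$, through dimension $n$, as a wedge of spheres whose dimensions lie in $[n-k,n-1]$'' is therefore false in the first branch of the hypothesis, where $\widetilde M$ may carry cells in any dimension in $[3,n-k]$. As a result, the step ``$\widehat{\pi}_\ast[\widehat{L}]=0$ forces $\pi|_L$ null-homotopic'' does not follow from the $MO\langle k+1\rangle$-Hurewicz-type isomorphism of \cref{lma:fundamental_class}, which is an identification for a single sphere, not for an arbitrary complex with potentially complicated $\pi_n$.

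The paper's proof supplies two ingredients your proposal omits. First, it shows each \emph{component} map is null-homotopic: the compositions $L/\partial_\infty L \to \widetilde M \vee \bigvee_{\varPi} S^{n-1} \to S^{n-1}$ by the argument of \cref{thm:main}, and the composition to $\widetilde M$ either directly from $\pi_n(M)=0$ (a map $S^n \to \widetilde M$ is a class in $\pi_n(\widetilde M)=\pi_n(M)$), or, in the second branch, by running the same argument against the wedge $\bigvee_i S^{n-k_i}$. Second, one must pass from null-homotopy of the map into the product $\widetilde M \times \prod_{\varPi} S^{n-1}$ to null-homotopy of the map into the wedge; this is the Ganea fiber sequence $\varSigma(\varOmega X \wedge \varOmega Y) \to X \vee Y \to X\times Y$ of \cref{lem:SomeElementaryATFacts}. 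This is where $\pi_2(M)=0$ is actually used: it makes $\widetilde M$ $2$-connected, so together with $\bigvee_{\varPi} S^{n-1}$ being $(n-2)$-connected the fiber $\varSigma(\varOmega\widetilde M \wedge \varOmega\bigvee_{\varPi} S^{n-1})$ is $n$-connected and the wedge-to-product map is an isomorphism on $\pi_n$. Your proposal asserts the conclusion but neither establishes the factor-wise null-homotopies nor provides the mechanism assembling them, and it mislocates the role of $\pi_2(M)=0$.
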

	\begin{proof}
        Nearby Lagrangian cocores in $T^\ast(M^{n-k} \times N^k)$ admit a lift to the cotangent bundle of the universal cover $T^\ast(\widetilde M \times \IR^k)$. Let $p \colon \widetilde M \times \widetilde N \to M \times N$ denote the covering map. Then we have $(M \times \IR^k) \smallsetminus \{p^{-1}(\mathrm{pt})\} \simeq \widetilde M \vee \bigvee_{\varPi} S^{n-1}$, where $\varPi \coloneqq \pi_1(M) \times \pi_1(N)$. We therefore obtain the map
		\[
			\pi|_L \colon L/\partial_\infty L \longrightarrow \widetilde M \vee \bigvee_{\varPi} S^{n-1},
		\]
        which has the property that
		\[
			L/\partial_\infty L \overset{\pi|_L}{\longrightarrow} \widetilde M \vee \bigvee_{\varPi} S^{n-1} \overset{q_i}{\longrightarrow} S^{n-1},
		\]
        is null-homotopic by \cref{thm:main}, for each $i$, where $q_i \colon \bigvee_{\varPi} S^{n-1} \to S^{n-1}$ is the $i$-th projection. We also have $L/\partial_\infty L \xrightarrow{\pi|_L} \widetilde M \vee \bigvee_{\varPi} S^{n-1} \to \widetilde M$ is either null-homotopic by the assumption $\pi_n(M) = 0$, or by the proof of \cref{thm:main} if we assume $\widetilde M \simeq \bigvee_i S^{n-k_i}$. The homotopy class of the map $\pi|_L$ defines an element of $\pi_n\left(\widetilde M \vee \bigvee_{\varPi} S^{n-1}\right)$, and we will now show that the induced map 
		\begin{equation}\label{eq:comp_maps_pi_n}
		    \pi_n\left(\widetilde M \vee \bigvee_{\varPi} S^{n-1}\right) \longrightarrow \pi_n \left(\widetilde M \times \bigvee_{\varPi} S^{n-1}\right) \cong \pi_n(\widetilde M) \oplus \bigoplus_{\varPi} \pi_n(S^{n-1}),
		\end{equation}
		is in fact an isomorphism. The first map is induced by the component maps $\widetilde M \vee \bigvee_{\varPi} S^{n-1} \to \widetilde M$ and $\widetilde M \vee \bigvee_{\varPi} S^{n-1} \to S^{n-1}$. From its cell decomposition it is clear that $\bigvee_{\varPi} S^{n-1}$ is $(n-2)$-connected. Now, since $\widetilde M$ is $2$-connected by assumption, we have that $\varSigma\left(\varOmega \widetilde M \wedge \varOmega \bigvee_{\varPi} S^{n-1}\right)$ is $n$-connected by \cref{lem:SomeElementaryATFacts}(ii). Therefore it follows that \eqref{eq:comp_maps_pi_n} is an isomorphism, and thus $\pi|_L$ is null-homotopic. We finish the proof by applying the $h$-principle \cite[Théorème d'existence]{haefliger1961plongements} as in \cref{cor:cp2}.
	\end{proof}
    \begin{por}
        Any nearby Lagrangian cocore $L \subset T^\ast(S^5 \times S^5 \times \IR)$ or $L \subset T^\ast(S^5 \times S^5 \times S^1)$ is smoothly unknotted.
    \end{por}
    \begin{proof}
        We consider the first case $L \subset T^\ast(S^5 \times S^5 \times \IR)$. We have
        \[
            (S^5 \times S^5 \times \IR) \smallsetminus \{\mathrm{pt}\} \simeq (S^5 \times S^5) \vee S^{10},
        \]
        and by the proof of \cref{thm:smooth_iso}, the map
        \[
        \pi_{11}((S^5 \times S^5) \vee S^{10}) \longrightarrow \pi_{11}(S^5 \times S^5) \oplus \pi_{11}(S^{10})
        \]
        is injective. Noting that by the early stabilization phenomenon, we have $\pi_{11}S^5 \cong \pi_6^{\mathrm{st}}$. The rest of the proof is as in \cref{thm:smooth_iso}.

        For the second case $L \subset T^\ast(S^5 \times S^5 \times S^1)$ we pass to universal covers as in the proof of \cref{thm:smooth_iso}; the rest is completely analogous to the above.
    \end{proof}
    \begin{rem}
		Examples of smooth manifolds $M \times N$ such that any nearby Lagrangian cocore in $T^\ast(M \times N)$ is smoothly unknotted include the following:
		\begin{itemize}
            \item $S^{n-k} \times \IR^k$ and $S^{n-k} \times T^k$.
            \item $(S^{n-k-1} \times S^1)^{\# i} \times \IR^k$ and $(S^{n-k-1} \times S^1)^{\# i} \times T^k$.
            \item $\RP^{n-k} \times \IR^k$ and $\RP^{n-k} \times T^k$.
            \item $(\RP^{n-k-1} \times S^1)^{\# i} \times \IR^k$ and $(\RP^{n-k-1} \times S^1)^{\# i} \times T^k$.
            \item $L(p;q_1,\ldots,q_n) \times \IR^k$ and $L(p;q_1,\ldots,q_n) \times T^k$, where $L(p;q_1,\ldots,q_n)$ denotes any higher dimensional lens space.
            \item $\OP^2 \times \IR^4$ and $\OP^2 \times T^4$ (since $\pi_{20}(\OP^2) \cong 0$ by \cite[Theorem 7.2]{mimura1967homotopy}).
		\end{itemize}

        Notably, we do not know whether a nearby Lagrangian cocore inside $T^\ast(S^{n-k} \times S^k)$ for $k\geq 2$ is smoothly unknotted in the complement of a cotangent fiber; however, we do have smooth unknotting not necessarily in the complement of a fiber in the special cases of $S^2 \times S^2$ and $S^6 \times S^6$, see \cref{por:prod_spheres}.
	\end{rem}
	\begin{thm}\label{thm:conn_sum}
		Let $k\geq 1$, and let $n\geq 2k+2$. Let $M$ and $N$ be smooth $n$-manifolds and denote by $\mathring M$ and $\mathring N$ the complement of a point in $M$ and $N$, respectively. Suppose that $N$ satisfies either $\mathring N \simeq \bigvee_i S^{n-k_i}$ for $k_i \leq k$ or $\mathring N \simeq \ast$. Assume that $M$ satisfies either:
        \begin{enumerate}
            \item $\pi_n(\mathring{\widetilde M}) = 0$ and $\mathring{\widetilde M}$ is $(k+1)$-connected if $\mathring N \simeq \bigvee_i S^{n-k_i}$, or
            \item $\mathring{\widetilde M} \simeq \bigvee_i S^{n-\ell_i}$ for some $\ell_i \leq k$.
        \end{enumerate}
        Then any nearby Lagrangian cocore $L \subset T^\ast(M \# N)$ is smoothly unknotted.
	\end{thm}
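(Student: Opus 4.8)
The plan is to mirror the proof of \cref{thm:smooth_iso}: pass to the universal cover, identify the relevant space as a wedge, invoke \cref{thm:main} and \cref{rem:wedge_sum_main_thm} on the sphere summands, and close the gap with an elementary connectivity estimate followed by the $h$-principle.

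I would first record the Maslov-theoretic set-up: $T^\ast(M\#N)$ is stably polarized by the vertical tangent spaces of the cotangent fibres, so it carries $MO\langle k+1\rangle$-Maslov data, and by \cref{lma:nearby_cocore_is_a_disk} the nearby Lagrangian cocore $L$ is a homotopy $n$-disk, hence simply connected and carrying relative $MO\langle k+1\rangle$-Maslov data exactly as in \cref{cor:cp2,thm:smooth_iso}; thus \cref{asmpt:main_asmpt} holds. Being simply connected, $L$ lifts to a homotopy $n$-disk $\widetilde L\subset T^\ast\widetilde{M\#N}$ which is again a nearby Lagrangian cocore, and $L/\partial_\infty L\to\widetilde L/\partial_\infty\widetilde L\simeq D^n/S^{n-1}\simeq S^n$ is a homeomorphism. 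Since removing a point from an $n$-manifold with $n\ge3$ does not change $\pi_1$, the restriction of the covering $p\colon\widetilde{M\#N}\to M\#N$ over $(M\#N)\smallsetminus\{\mathrm{pt}\}$ is its universal cover; combining this with the standard homotopy equivalence $(M\#N)\smallsetminus\{\mathrm{pt}\}\simeq\mathring M\vee\mathring N$ (from CW structures on $M$ and $N$ with a single top cell) and the description of the universal cover of a wedge as a tree of copies of the universal covers of the factors, I obtain
\[
\Core(T^\ast\widetilde{M\#N})\smallsetminus\pi(\boldsymbol C)\;\simeq\;\widetilde{M\#N}\smallsetminus p^{-1}(\mathrm{pt})\;\simeq\;\Big(\bigvee_\alpha\mathring{\widetilde M}\Big)\vee\Big(\bigvee_\beta\mathring{\widetilde N}\Big),
\]
with one copy of $\mathring{\widetilde M}$, resp.\ $\mathring{\widetilde N}$, for each coset of $\pi_1M$, resp.\ $\pi_1N$, in $\pi_1(M\#N)$. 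Note that $\mathring N\simeq\bigvee_i S^{n-k_i}$ (with all $n-k_i\ge n-k\ge3$) forces $N$ simply connected, so in case~(1) there is exactly one summand $\mathring{\widetilde M}$.

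Next I would analyse $[\pi|_{\widetilde L}]\in\pi_n$ of this wedge. Every wedge summand is either a wedge of spheres of dimension $\ge n-k$ — each copy of $\mathring{\widetilde N}$, and in case~(2) each copy of $\mathring{\widetilde M}$ — or (in case~(1)) the single $(k+1)$-connected space $\mathring{\widetilde M}$ with $\pi_n(\mathring{\widetilde M})=0$, or contractible. For a sphere-wedge summand, \cref{thm:main} applied to $\widetilde L\subset T^\ast\widetilde{M\#N}$ with $MO\langle k+1\rangle$-coefficients, together with \cref{rem:wedge_sum_main_thm}, shows the composite to each sphere $S^{n-k_i}$ is null-homotopic; and because $n\ge2k+2$ forces every Hilton basic product of weight $\ge2$ of such spheres into degree $>n$, we have $\pi_n(\bigvee_i S^{n-k_i})=\bigoplus_i\pi_n(S^{n-k_i})$, so the composite to the whole summand is null-homotopic. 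For the $(k+1)$-connected summand the composite $S^n\to\mathring{\widetilde M}$ lies in $\pi_n(\mathring{\widetilde M})=0$. Hence $[\pi|_{\widetilde L}]$ projects to $0$ in $\pi_n$ of every summand.

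To finish I would show $\pi_n(\bigvee_\gamma A_\gamma)\to\prod_\gamma\pi_n(A_\gamma)$ is injective. By the Ganea/Hilton--Milnor splitting (cf.\ \cref{lem:SomeElementaryATFacts} and the proof of \cref{thm:smooth_iso}), the homotopy fibre of $\bigvee_\gamma A_\gamma\to\prod_\gamma A_\gamma$ is $\min_{\gamma\ne\delta}(c_\gamma+c_\delta)$-connected, $c_\gamma$ the connectivity of $A_\gamma$. The sphere-wedge summands are $(n-k-1)$-connected and there is at most one further summand, $(k+1)$-connected; since $n\ge2k+2$ one has $2(n-k-1)\ge n$ and $(k+1)+(n-k-1)=n$, so this fibre is $n$-connected and the map is an isomorphism on $\pi_n$. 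Thus $\pi|_{\widetilde L}$ — and hence $\pi|_L$, since $\pi_n$ is insensitive to passing to the universal cover and $L/\partial_\infty L\to\widetilde L/\partial_\infty\widetilde L$ is a homeomorphism — is null-homotopic, so $L$ and the shifted cocore $C_\varepsilon$ it is near are homotopic rel boundary in the complement of all Lagrangian cocores; the $h$-principle \cite[Théorème d'existence]{haefliger1961plongements} (valid since $n\ge4$) then makes them smoothly isotopic rel boundary. I expect the main obstacle to be the Weinstein bookkeeping — arranging the handlebody decomposition of $T^\ast(M\#N)$, and its lift to $T^\ast\widetilde{M\#N}$, so that the index hypothesis of \cref{thm:main} holds, and checking that the Maslov and brane data lift and descend compatibly — together with verifying that the stated hypotheses on $\mathring{\widetilde M}$ and $\mathring{\widetilde N}$ are precisely what is needed for both that index bound and the connectivity estimate above to hold at once; the homotopy-theoretic core is then a direct adaptation of \cref{thm:smooth_iso}.
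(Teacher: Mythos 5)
Your proposal follows the paper's proof of \cref{thm:conn_sum} essentially step for step: lift to the universal cover, decompose the subcritical core minus the cocores as a wedge, kill each factor via \cref{thm:main} / \cref{rem:wedge_sum_main_thm} (or the $\pi_n(\mathring{\widetilde M})=0$ hypothesis in case (1)), use the Ganea connectivity estimate to upgrade ``null on each factor'' to ``null on the wedge,'' and finish with Haefliger's $h$-principle; your appeal to Hilton's theorem where the paper uses \cref{lem:HomotopyOfWedge} and \cref{lem:SomeElementaryATFacts} is a cosmetic difference. One caveat you share with the paper: the universal cover of $\mathring M\vee\mathring N$ is $\widetilde{\mathring M}\vee\bigvee_{\pi_1M}\mathring N$, and $\widetilde{\mathring M}$ (the universal cover of $M$ minus a point, i.e.\ $\widetilde M$ minus \emph{all} $|\pi_1M|$ preimages of that point) is $\mathring{\widetilde M}\vee\bigvee_{|\pi_1M|-1}S^{n-1}$, not $\mathring{\widetilde M}$, so when $\pi_1M$ is nontrivial the wedge decomposition you (and the paper) write down is missing some $S^{n-1}$ summands. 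These are harmless — each $S^{n-1}$ is an $S^{n-k_i}$ with $k_i=1\le k$, so the \cref{thm:main} argument still kills the composite to it, and since $S^{n-1}$ is $(n-2)$-connected the wedge-to-product fibre estimate survives for $n\ge2k+2\ge4$ — but they should appear in the decomposition.
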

    \begin{proof}
        Similar to the proof of \cref{thm:smooth_iso}, nearby Lagrangian cocores in $T^\ast(M \# N)$ lift to nearby Lagrangian cocores in $T^\ast(\widetilde{M \# N})$. If we denote the universal covering map by $p$, we have
        \[
        \widetilde{M \# N} \smallsetminus \{p^{-1}(\mathrm{pt})\} \simeq \mathring{\widetilde M} \vee \bigvee_{\pi_1(M)} \bigvee_i S^{n-k_i}.
        \]
        By the Seifert--van Kampen theorem it follows by assumption that $N$ is simply connected. Consequently we have $\mathring N \simeq \bigvee_i S^{n-k_i}$. We see that $\bigvee_{\pi_1(M)}\bigvee_i S^{n-k_i}$ is $(n-k-1)$-connected, and since $\mathring{\widetilde M}$ is at least $(k+1)$-connected, it follows that $\varSigma(\varOmega \mathring{\widetilde M} \wedge \varOmega \bigvee_i S^{n-k_i})$ is $n$-connected, and therefore the induced map
        \[
        \pi_n\left(\mathring{\widetilde M} \vee \bigvee_{\pi_1(M)}\bigvee_i S^{n-k_i}\right) \longrightarrow \pi_n\left(\mathring{\widetilde M}\times \prod_{\pi_1(M)}\prod_i S^{n-k_i}\right) \cong \pi_n(\mathring{\widetilde M}) \oplus \bigoplus_{\pi_1(M)}\bigoplus_i\pi_n(S^{n-k_i})
        \]
        is injective. Now, the composition
        \[
        L/\partial_\infty L \overset{\pi|_L}{\longrightarrow} \mathring{\widetilde M} \times \prod_{\pi_1(M)}\prod_i S^{n-k_i} \longrightarrow \mathring{\widetilde M},
        \]
        is either null-homotopic by the assumption $\pi_n(\mathring{\widetilde M}) = 0$, or by the proof of \cref{thm:main} if we assume $\mathring{\widetilde M} \simeq \bigvee_i S^{n-\ell_i}$ for $\ell_i \leq k$. The compositions
        \[
        L/\partial_\infty L \overset{\pi|_L}{\longrightarrow} \mathring{\widetilde M} \times \prod_{\pi_1(M)}\prod_i S^{n-k_i} \longrightarrow S^{n-k}
        \]
        are null-homotopic by \cref{thm:main}. The rest of the proof follows the proof of \cref{thm:smooth_iso}.
	\end{proof}
    \begin{cor}\label{cor:conn_sum_aspherical}
        Let $k\geq 1$, and let $n \geq 2k+2$. Let $M$ be a $K(\pi,1)$ $n$-manifold, and let $N$ be a smooth $n$-manifold such that $\mathring N \simeq \bigvee_i S^{n-k_i}$ for $k_i \leq k$ or $\mathring N \simeq \ast$. Then any nearby Lagrangian cocore $L \subset T^\ast(M \# N)$ is smoothly unknotted.
    \end{cor}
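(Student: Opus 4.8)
The plan is to deduce this directly from \cref{thm:conn_sum}, by checking that its hypothesis~(ii) on $M$ holds automatically when $M$ is a $K(\pi,1)$. The hypothesis on $N$ here is verbatim the one in \cref{thm:conn_sum}, so the only thing to do is to identify the homotopy type of $\mathring{\widetilde M}$, the universal cover of $\mathring M = M$ with a point removed, and see that it is a wedge of spheres of dimension $\geq n-k$ (in fact of dimension $n-1$).

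First I would note that since $M$ is a $K(\pi,1)$ its universal cover $\widetilde M$ is contractible, and, because $n\geq 2k+2\geq 4$, deleting a point does not change $\pi_1$, so $\mathring{\widetilde M}=\widetilde M\smallsetminus p^{-1}(\mathrm{pt})$ for the covering $p\colon\widetilde M\to M$; here $S\coloneqq p^{-1}(\mathrm{pt})$ is a nonempty closed discrete subset in bijection with $\pi=\pi_1(M)$ (the only feature of $S$ that will matter is that it is nonempty, closed, and discrete). Choosing disjoint open disk neighborhoods of the points of $S$ exhibits $\widetilde M$, up to homotopy, as $\mathring{\widetilde M}$ with one $n$-cell attached for each element of $S$, so the inclusion $\mathring{\widetilde M}\hookrightarrow\widetilde M$ is $(n-1)$-connected; since $\widetilde M$ is contractible this makes $\mathring{\widetilde M}$ be $(n-2)$-connected, hence simply connected as $n\geq 4$. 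For homology, excision gives $H_\ast(\widetilde M,\mathring{\widetilde M})\cong\bigoplus_{s\in S}\widetilde H_{\ast-1}(S^{n-1})$, which is $\bigoplus_S\IZ$ in degree $n$ and zero otherwise; feeding this and $\widetilde H_\ast(\widetilde M)=0$ into the long exact sequence of the pair shows that the reduced homology of $\mathring{\widetilde M}$ is free abelian, concentrated in degree $n-1$, of rank $|S|$. Finally, a simply connected CW complex whose reduced homology is free abelian and concentrated in a single degree $n-1\geq 2$ is homotopy equivalent to a wedge of $(n-1)$-spheres---map in a wedge realizing a basis of $H_{n-1}$ and apply Whitehead's theorem---so $\mathring{\widetilde M}\simeq\bigvee_{\pi}S^{n-1}$.

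With this, $M$ satisfies condition~(ii) of \cref{thm:conn_sum} with every $\ell_i=1\leq k$, and the corollary follows at once. There is no substantial obstacle here; the only step requiring care is the homotopy-type identification of $\mathring{\widetilde M}$ when $\pi$ is infinite, but the cell-attachment description and the excision computation go through verbatim for a countably infinite closed discrete removed set, and the concluding recognition of $\mathring{\widetilde M}$ as a wedge of spheres is insensitive to the rank of $H_{n-1}$, so nothing changes in that case.
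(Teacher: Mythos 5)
Your argument is correct and follows the paper's strategy: reduce to hypothesis (ii) of \cref{thm:conn_sum} by identifying the homotopy type of $\mathring{\widetilde M}$. However, your verification is more careful than the paper's and in fact repairs a slip. The paper's one-line proof asserts that the universal cover of a $K(\pi,1)$ $n$-manifold is $\IR^n$. That is not true in general: for $n\geq 4$, Davis's reflection-group construction produces closed aspherical $n$-manifolds whose universal covers are contractible but not simply connected at infinity, hence not homeomorphic to $\IR^n$. What the $K(\pi,1)$ hypothesis actually provides is only that $\widetilde M$ is contractible, and your excision + long exact sequence + Hurewicz/Whitehead argument derives $\mathring{\widetilde M}\simeq\bigvee_{\pi}S^{n-1}$ from contractibility alone, which is exactly what is needed to feed into \cref{thm:conn_sum}. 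You also correctly treat the universal cover of $\mathring M$ as $\widetilde M$ with $|\pi|$ points removed (rather than one), and you note that the cell-attachment and excision computations are insensitive to whether this discrete set is finite or infinite; the paper's phrase ``complement of a point'' glosses over this. So while the route is the same, your version is the argument one should actually give.
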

    \begin{proof}
        The universal cover of $M$ is $\IR^n$, and the complement of a point is homotopy equivalent to $S^{n-1}$.
    \end{proof}
    \begin{rem}
        Below let $k\geq 1$ and $n\geq 2k+2$. By \cref{thm:conn_sum,cor:conn_sum_aspherical}, if we let $\varSigma^n$ denote any homotopy $n$-sphere, nearby Lagrangian cocores in cotangent bundles of any of the following smooth manifolds are smoothly unknotted:
        \begin{itemize}
            \item $(S^{n-k} \times \IR^k)\# \varSigma^n$
            \item $\IR^n \# \varSigma^{n}$
            \item $\CP^2 \# \varSigma^{4}$
            \item $(S^5 \times S^5 \times \IR) \# \varSigma^{11}$
            \item $T^n \# (S^{n-k} \times \IR^k)$
            \item $(S^{n-k} \times \IR^k) \# (S^{n-k} \times \IR^k)$
        \end{itemize}
        Since $(S^{n-k} \times \IR^k) \# \varSigma^n$ and $(S^{n-k} \times \IR^k) \# (S^{n-k} \times \IR^k)$ are $(n-k-1)$-connected, and we assume $n\geq 2k+2$, repeated applications of \cref{thm:conn_sum} shows that nearby Lagrangian cocores in $T^\ast((S^{n-k} \times \IR^k)^{\# i} \# (\varSigma^n)^{\# j})$ are smoothly unknotted for any $i,j \geq 0$.
    \end{rem}
    \begin{thm}\label{thm:plumbing}
        Let $M$ and $N$ be smooth $n$-manifolds such that 
        \[
        \mathring M \simeq \bigvee_i S^{n-k_i}, \quad \mathring N \simeq \bigvee_i S^{n-\ell_i}, \quad k_i,\ell_i \geq 1.
        \]
        Let $K \coloneqq \max_i \{k_i,\ell_i\}$, and assume $n \geq 2K+2$.
        \begin{enumerate}
            \item Any nearby Lagrangian cocore $L \subset T^\ast M \# T^\ast N$ in the plumbing of the two cotangent bundles along a point, is smoothly unknotted.
            \item Any nearby Lagrangian cocore $L \subset T^\ast M \#_{\mathrm{cyc}} T^\ast N$ in the plumbing of the two cotangent bundles along two points, is smoothly unknotted.
        \end{enumerate}
    \end{thm}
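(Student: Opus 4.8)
The plan is to argue exactly as in \cref{cor:cp2}, \cref{thm:smooth_iso} and \cref{thm:conn_sum}: after possibly passing to a cover, show that the map $\pi|_L\colon L/\partial_\infty L\to\Core X\smallsetminus\pi(\boldsymbol C)$ is null-homotopic, and then feed the resulting homotopy rel boundary between $L$ and a shifted Lagrangian cocore into Haefliger's $h$-principle \cite[Théorème d'existence]{haefliger1961plongements} to obtain a smooth isotopy rel boundary.

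For part (i), equip $T^\ast M$ and $T^\ast N$ with Weinstein handlebody decompositions adapted to (minimal) CW structures of $M$ and $N$; since $\mathring M\simeq\bigvee_i S^{n-k_i}$ and $\mathring N\simeq\bigvee_j S^{n-\ell_j}$, these have subcritical handles of index at most $n-\min_i\{k_i,\ell_i\}$ and one critical handle each, coming from the respective top cell. The single-point plumbing $X$ then inherits a Weinstein handlebody decomposition with the same subcritical bound and exactly two critical handles; their Lagrangian cocores are $\boldsymbol C$, and, since the plumbing glues the cores $M$ and $N$ along a single point — which, $M$ and $N$ being simply connected because $n-k_i,n-\ell_j\geq n-K\geq 3$, contributes no $1$-cell — removing $\pi(\boldsymbol C)$ leaves
\[
\Core X\smallsetminus\pi(\boldsymbol C)\simeq\mathring M\vee\mathring N\simeq\bigvee_i S^{n-k_i}\vee\bigvee_j S^{n-\ell_j}=:W,
\]
a wedge of simply connected spheres of dimension in $[n-K,n-1]$. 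Because the plumbing region is contractible, the vertical polarizations of the $T^\ast M$- and $T^\ast N$-parts glue to a stable polarization of $X$, so $X$ admits $MO\langle K+1\rangle$-Maslov data by \cref{rem:polariz}, and as $n\geq 2K+2$ and $L$ is a disk by \cref{lma:nearby_cocore_is_a_disk}, \cref{asmpt:main_asmpt} holds. By \cref{thm:main} together with \cref{rem:wedge_sum_main_thm}, the composition of $\pi|_L$ with each wedge-projection $W\to S^{n-k_i}$ (resp.\ $W\to S^{n-\ell_j}$) is null-homotopic. As each such sphere is $(n-K-1)$-connected, the homotopy fiber of $W\to\prod(\text{spheres})$ is at least $(2n-2K-2)$-connected by \cref{lem:SomeElementaryATFacts} (as in the proof of \cref{thm:smooth_iso}), hence $n$-connected since $n\geq 2K+2$; therefore $\pi_n(W)\hookrightarrow\prod\pi_n(\text{spheres})$ and $\pi|_L$ is null-homotopic. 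Since $\pi(C_\varepsilon)$ is a point, $L$ and the shifted cocore $C_\varepsilon$ are homotopic rel boundary in $X\smallsetminus\boldsymbol C$, and the $h$-principle then promotes this to a smooth isotopy rel boundary, exactly as at the end of \cref{cor:cp2}. This proves (i).

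For part (ii) the core of $X=T^\ast M\#_{\mathrm{cyc}}T^\ast N$ is obtained by gluing $M$ and $N$ at two points, hence is homotopy equivalent to $M\vee N\vee S^1$, so $\Core X\smallsetminus\pi(\boldsymbol C)\simeq\mathring M\vee\mathring N\vee S^1$ has fundamental group $\IZ$ and \cref{rem:wedge_sum_main_thm} does not apply directly. As in \cref{thm:smooth_iso} the plan is to pass to the universal cover: $\widetilde X$ is the bi-infinite linear plumbing $\cdots\,T^\ast M-T^\ast N-T^\ast M\,\cdots$, whose core with the lifted Lagrangian cocores $\widetilde{\boldsymbol C}$ removed is homotopy equivalent to $\bigvee_{\IZ}(\mathring M\vee\mathring N)$, a simply connected, locally finite wedge of spheres of dimension in $[n-K,n-1]$. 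Since $L$ is simply connected it lifts to a nearby Lagrangian cocore $\widetilde L\subset\widetilde X$; its image under the retraction is compact, hence lies in a finite subwedge, and running the argument of part (i) for $\widetilde L\subset\widetilde X$ (the non-compactness of $\widetilde X$ being harmless, just as in \cref{thm:smooth_iso}) shows $\pi|_{\widetilde L}$ is null-homotopic. Pushing this null-homotopy forward along the covering $\widetilde X\smallsetminus\widetilde{\boldsymbol C}\to X\smallsetminus\boldsymbol C$ shows $L$ is null-homotopic rel boundary in $X\smallsetminus\boldsymbol C$, and Haefliger's theorem concludes as before.

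The part that takes the most care is the bookkeeping of $\Core X\smallsetminus\pi(\boldsymbol C)$ from the Weinstein handlebody: that a single-point plumbing introduces no $1$-cell (so that $W$ is a wedge of simply connected spheres) while a two-point plumbing introduces exactly one, and that in both cases $\boldsymbol C$ consists of precisely the two critical cocores of $T^\ast M$ and $T^\ast N$. The extra $S^1$ appearing in the cyclic core is the only genuinely new feature compared with \cref{thm:smooth_iso} and \cref{thm:conn_sum}, and it disappears upon passing to the universal cover; after that, the connectivity estimate and the $h$-principle step run verbatim.
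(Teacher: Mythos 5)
Your proof takes essentially the same route as the paper's: for (i) identify $\Core X\smallsetminus\pi(\boldsymbol C)\simeq\mathring M\vee\mathring N$ as a wedge of $(n-K-1)$-connected spheres, apply \cref{thm:main} and \cref{rem:wedge_sum_main_thm} to kill each projection, use the connectivity estimate from \cref{lem:SomeElementaryATFacts} to upgrade to a null-homotopy of $\pi|_L$, and finish with Haefliger's $h$-principle; for (ii) pass to the universal cover $\bigvee_\IZ(\mathring M\vee\mathring N)$ and repeat. You add a couple of helpful details that the paper leaves implicit (the gluing of vertical polarizations over the contractible plumbing region to get a stable polarization, and the compactness-of-image remark justifying the reduction to a finite subwedge of the infinite wedge), but the decomposition, the key lemmas invoked, and the structure of the argument all coincide with the paper's proof.
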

    \begin{proof}
        \begin{enumerate}
            \item With an appropriate choice of Weinstein handle decomposition of $X = T^\ast M \# T^\ast N$, there are two Lagrangian cocores corresponding to cotangent fibers in each of the two cotangent bundles. Therefore the core of the subcritical part $X_0$ is
            \[
            \Core X_0 \simeq \mathring M \vee \mathring N \simeq \bigvee_i S^{n-k_i} \vee \bigvee_i S^{n-\ell_i}.
            \]
            By our assumption, it follows that both $\bigvee_i S^{n-k_i}$ and $\bigvee_i S^{n-\ell_i}$ are $(n-K-1)$-connected, so the induced map
            \[
            \pi_n\left(\bigvee_i S^{n-k_i} \vee \bigvee_i S^{n-\ell_i}\right) \longrightarrow \pi_n\left(\prod_i S^{n-k_i} \times \prod_i S^{n-\ell_i}\right),
            \]
            is injective. The rest of the proof is similar to the proof of \cref{thm:conn_sum}.
            \item In this case with $X = T^\ast M \#_{\mathrm{cyc}} T^\ast N$, we use a similar idea to that of \cref{thm:smooth_iso} and pass to the universal cover. By assumption, $\mathring M \vee \mathring N$ is simply connected, and we see
            \[
            \Core (\widetilde X)_0 \simeq \widetilde{\Core X_0} \simeq \widetilde{\mathring M \vee \mathring N \vee S^1} \simeq \bigvee_{\IZ} (\mathring M \vee \mathring N).
            \]
            Again, since $\mathring M \vee \mathring N$ is $(n-K-1)$-connected, it follows by induction that
            \[
            \pi_n \left(\bigvee_{\IZ}(\mathring M \vee \mathring N)\right) \cong \bigoplus_{\IZ} \pi_n(\mathring M \vee \mathring N).
            \]
            The rest of the proof proceeds as in item (i).
        \end{enumerate}
    \end{proof}
    \begin{rem}
        For any $k\geq 1$, we have that $M \coloneqq (S^{n-k} \times \IR^k)^{\# i} \# (\varSigma^n)^{\# j}$ is $(n-k-1)$-connected, where $i,j \geq 0$, and $\varSigma^n$ is a homotopy $n$-sphere. Under the assumption, repeated applications of \cref{thm:plumbing} shows that nearby Lagrangian cocores in any plumbing (along points) of copies of $T^\ast M$ are smoothly unknotted.
    \end{rem}
    The proof of \cref{thm:smooth_iso} also shows that we sometimes obtain smooth unknotting not necessarily in the complement of the missed Lagrangian cocores.
    \begin{por}\label{por:prod_spheres}
        Any nearby Lagrangian cocore in $T^\ast(S^2 \times S^2)$ or $T^\ast(S^6 \times S^6)$ is smoothly isotopic, relative to its boundary, to the cotangent fiber (not necessarily in the complement of the missed cotangent fiber).
    \end{por}
    \begin{proof}
        In the first case we consider the map $\pi|_L \colon L/\partial_\infty L \to S^2 \times S^2$, where we do not remove the missed cotangent fiber. Repeating the proof of \cref{thm:main} for the composition of $\pi|_L$ with the two projection maps to each $S^2$-factor, shows that $\pi|_L$ is null-homotopic by the fact that we have early stabilization $\pi_4(S^2) \cong \pi_2^{\mathrm{st}}$. The case with $S^6 \times S^6$ is similar. In that case we also have early stabilization $\pi_{12}(S^6) \cong \pi_6^{\mathrm{st}}$. Repeating the last part of the proof of \cref{thm:smooth_iso}, finishes the proof.
    \end{proof}
    \begin{rem}\label{rem:unkotted_not_in_complement}
        \Cref{por:prod_spheres} also holds true for nearby Lagrangian cocores inside cotangent bundles of $S^2 \times \RP^2$, $\RP^2 \times \RP^2$, $S^6 \times \RP^6$ and $\RP^6 \times \RP^6$ by passing to the universal covers as in the proof of \cref{thm:smooth_iso}.
    \end{rem}
    
    \subsection{Exact Lagrangian fillings}
        By \cref{lem:nearby_filling} a nearby Lagrangian cocore in $X$ corresponds to an exact Lagrangian filling of a Legendrian unknot in a Darboux ball in the contact boundary of $X_0$. The standard contact form on $\IR^{2n-1}$ is given by $\alpha = dz - \sum_{i=1}^{n-1}y_idx_i$. An exact Lagrangian cobordism from a Legendrian submanifold $\varLambda_- \subset \IR^{2n-1}$ to a Legendrian submanifold $\varLambda_+ \subset \IR^{2n-1}$ is an exact Lagrangian submanifold $L \subset (\IR_t \times \IR^{2n-1},d(e^t\alpha))$ such that there exists some $T > 0$ such that
        \begin{align*}
            L \cap ((-\infty,-T) \times \IR^{2n-1}) &= (-\infty,-T) \times \varLambda_- \\
            L \cap ((T,\infty) \times \IR^{2n-1}) &= (T,\infty) \times \varLambda_+.
        \end{align*}
        An exact Lagrangian concordance is an exact Lagrangian cobordism that is diffeomorphic to $S^{n-1} \times \IR$. If such an exact Lagrangian concordance exists we write $\varLambda_- \prec \varLambda_+$. We let $\varLambda_0 \subset \IR^{2n-1}$ be the standard Legendrian unknot. 
        \begin{defn}\label{defn:slice}
            A Legendrian knot $\varLambda \subset \IR^{2n-1}$ is \emph{Legendrian slice} (inside $\IR^{2n-1}$) if $\varLambda_0 \preceq \varLambda$. 
        \end{defn}
        Recall that \cref{defn:slice} is equivalent to the usual definition of Legendrian sliceness that $\varLambda \subset \IR^{2n-1}$ bounds an exact Lagrangian disk inside $\IR^{2n}$, cf.\@ \cite[Remark 4.5]{chantraine2010lagrangian} or \cite[Section 4]{cornwell2016obstructions}.
    
        Suppose that $\varLambda \subset \partial_\infty X_0$ is a Legendrian knot contained in a Darboux ball that is Legendrian slice inside it. As in \cref{thm:main}, the retraction $\pi\colon X \to \Core X$ restricts to a well-defined continuous map $\pi|_L \colon L/\varLambda \to \Core X_0$, since $\varLambda$ is contained in a Darboux ball inside $\partial_\infty X_0$.
        
        The following result is a generalization of \cref{thm:main}.
        \begin{thm}\label{thm:main_conc}
            Let $k\geq 1$ and $n\geq 2k+2$. Suppose that $X^{2n}_0$ is a subcritical Weinstein sector satisfying \cref{asmpt:main_asmpt}. Suppose that $\varLambda \subset \partial_\infty X_0$ is a Legendrian knot contained in a Darboux ball that is Legendrian slice inside it. If $L \subset X_0$ is an exact Lagrangian filling of $\varLambda$, the following composition is null-homotopic
            \[
            L/\varLambda \overset{\pi|_L}{\longrightarrow} \Core X_0  \longrightarrow \Core X_0/(\Core X_0)_{n-k-1}.
            \]
        \end{thm}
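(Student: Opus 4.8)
The plan is to reduce the statement to \cref{thm:main} using the Legendrian concordance, and then to re-run the argument from the proof of \cref{thm:main}. Since $\varLambda$ is Legendrian slice inside the Darboux ball, fix an exact Lagrangian concordance $\varSigma\cong S^{n-1}\times\IR$ from the standard Legendrian unknot $\varLambda_0$ to $\varLambda$, supported inside a collar of the Darboux ball in $\partial_\infty X_0$; also fix an exact Lagrangian disk $D_0$ filling $\varLambda_0$ inside that ball (e.g.\@ a flat linear disk), so that $D_\varLambda\coloneqq D_0\cup_{\varLambda_0}\varSigma$ is an exact Lagrangian disk filling $\varLambda$. Concatenating $\varSigma$ with the given filling $L$ along $\varLambda$ produces an exact Lagrangian filling $L_0\coloneqq L\cup_\varLambda\varSigma\subset X_0$ of $\varLambda_0$, diffeomorphic to $L$ (we have attached a cylinder onto $\partial L$), identifying $L_0/\varLambda_0$ with $L/\varLambda$. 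Since $\varSigma$ lies in a contractible piece of $X_0$ (a thickening of the Darboux ball) on which the retraction $\pi$ is null-homotopic, the deformation retraction of $\varSigma$ onto $\varLambda$ makes $\pi|_{L_0}$ homotopic to $\pi|_L$ under this identification; moreover the relative $MO\langle k+1\rangle$-Maslov datum on $L$ supplied by \cref{asmpt:main_asmpt} extends over $\varSigma$ — using that $T\varSigma^\#$ and $\Gr(X_0)^\#|_\varSigma$ are both pulled back from $\varLambda$ along that retraction and that $\varSigma$ carries a standard relative Maslov datum inside the Darboux ball — to a relative $MO\langle k+1\rangle$-Maslov datum on $L_0$. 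It therefore suffices to prove the theorem with $L$ replaced by $L_0$ and $\varLambda$ by $\varLambda_0$; that is, \emph{we may assume $\varLambda=\varLambda_0$ is the standard Legendrian unknot in a Darboux ball and $L$ is an exact Lagrangian filling of it}. This reduction is the point of the sliceness hypothesis: it lets us trade the possibly complicated Legendrian $\varLambda$ for the unknot without having to understand the Chekanov--Eliashberg algebra of $\varLambda$ directly.

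With $\varLambda=\varLambda_0$, we are in the situation analyzed in the proof of \cref{thm:main}, with the shifted Lagrangian cocore $C_\varepsilon$ there replaced throughout by the disk $D_0$. Concretely: since $X_0$ admits $MO\langle k+1\rangle$-Maslov data we have $2c_1(X_0)=0$ by \cref{rem:twice_chern}, so by (the proof of) \cref{lma:nearby_cocore_is_a_disk} — which uses only that $L$ is an exact Lagrangian filling of the Legendrian unknot in a Darboux ball in $\partial_\infty X_0$ — the filling $L$ is a homotopy disk. Attach a critical Weinstein handle $H$ to $X_0$ along $\varLambda_0=\partial_\infty L$ to obtain $\widehat X_0$, and cap off $L$ and $D_0$ with the core disk of $H$ to obtain exact Lagrangian homotopy $n$-spheres $\widehat L$ and $\widehat C$ in $\widehat X_0$. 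Since $D_0$ is a disk, \cref{lem:maslov_data_hat} gives an extension of the $MO\langle k+1\rangle$-Maslov data from $X_0$ to $\widehat X_0$ for which $\widehat C$ inherits $MO\langle k+1\rangle$-Maslov data, while $\widehat L$ acquires $MO\langle k+1\rangle$-Maslov data from the relative Maslov datum on $L$. By \cref{lem:closures_quiv} there is then a choice of $MO\langle k+1\rangle$-Maslov data on $\widehat C$ such that $\widehat L\cong\widehat C$ in $\sW(\widehat X_0;MO\langle k+1\rangle)$; this lemma applies because $\widehat X_0=X_0\cup_{\varLambda_0}H$ with $X_0$ subcritical, so that $H$ is its unique critical handle and $\sW(\widehat X_0;\IZ)$ is generated by the cocore of $H$.

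Since $n\geq 2k+2$, the homotopy $n$-spheres $\widehat L$ and $\widehat C$ admit tangential $O\langle k+1\rangle$-structures and hence $MO\langle k+1\rangle$-fundamental classes, which by \cref{thm:oc_fund_classes} may be chosen so that $[\widehat L]=[\widehat C]\in H_n(\widehat X_0;MO\langle k+1\rangle)$. Let $\widehat\pi\colon\widehat X_0\to\Core X_0$ be the fold map of \cref{sec:fold_map}; it exists because $\varLambda_0$ lies in a Darboux ball, and by \cref{lem:Folding} the map $\widehat\pi|_{\widehat L}$ is null-homotopic if and only if $\pi|_L$ is. Because $D_0$ lies in a thickening of the Darboux ball, $\widehat\pi(\widehat C)\subset\Core X_0$ is contractible, so $\widehat\pi_\ast[\widehat C]=0$ and hence $\widehat\pi_\ast[\widehat L]=0$ in $\widetilde H_n(\Core X_0;MO\langle k+1\rangle)$. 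By \cref{lem:DetectingNullHomotopy_gen} the composition $L/\varLambda_0\xrightarrow{\pi|_L}\Core X_0\to\Core X_0/(\Core X_0)_{n-k-1}$ is null-homotopic, and transporting this conclusion back along the identification of the first paragraph finishes the proof. I expect the step requiring the most care to be the reduction in the first paragraph — arranging that the inserted concordance interacts trivially with both the retraction $\pi$ and the Maslov data — since everything after that reduction is the proof of \cref{thm:main} essentially verbatim.
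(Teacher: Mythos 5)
Your proposed reduction in the first paragraph does not work, because exact Lagrangian concordances are directed. The slice hypothesis gives a concordance $\varSigma$ \emph{from} $\varLambda_0$ (at its concave/negative end) \emph{to} $\varLambda$ (at its convex/positive end); this can be stacked on top of a filling of $\varLambda_0$ to manufacture a filling of $\varLambda$, but it cannot be stacked on top of a filling $L$ of $\varLambda$ to produce a filling of $\varLambda_0$. Indeed, the gluing $L \cup_\varLambda \varSigma$ you write down is not geometrically defined: the boundary of $\varSigma$ at the end you would need to glue to $\partial_\infty L = \varLambda$ is $\varLambda_0$, not $\varLambda$. To run your reduction you would need a concordance $\varLambda \prec \varLambda_0$, which the slice hypothesis does not provide and which is in general unavailable (the relation $\prec$ is famously non-symmetric). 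This is a genuine gap, and it propagates: after the reduction fails you cannot invoke \cref{lma:nearby_cocore_is_a_disk} (whose argument for simple connectedness relies on the Chekanov--Eliashberg algebra of the unknot) for a filling of a general Legendrian slice $\varLambda$.

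The paper's proof goes in the opposite direction: it glues the flat disk filling $C_0$ of $\varLambda_0$ with the concordance $C_\varLambda$ to obtain a second disk filling $C$ of $\varLambda$ itself, and then compares $L$ with $C$ inside $\widehat X_0$; this only requires the concordance in the direction supplied by sliceness. Because this comparison takes place for $\varLambda$ rather than the unknot, the paper can extract that $L$ is an integer homology disk from the first part of the proof of \cref{lma:nearby_cocore_is_a_disk}, but it must supply a separate argument for simple connectedness of $\widehat L$: it uses Abouzaid's enlargement $\sS(\widehat X_0)$ of the Fukaya category with $\IF_2$-local systems of arbitrary rank to show every local system on $\widehat L$ is trivial. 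That local-systems step is precisely what your reduction would have allowed you to skip, and it is the essential new content of \cref{thm:main_conc} relative to \cref{thm:main}. Everything after that point — \cref{lem:closures_quiv}, \cref{thm:oc_fund_classes}, the fold map, and \cref{lem:DetectingNullHomotopy_gen} — proceeds as in your write-up and in the proof of \cref{thm:main}.
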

        \begin{proof} 
            By \cref{lem:nearby_filling}, the Legendrian unknot $\varLambda_0 \subset \partial_\infty X_0$ has an exact Lagrangian disk filling $C_0 \subset X_0$. If the Lagrangian concordance between $\varLambda$ and $\varLambda_0$ inside $\IR \times \IR^{2n-1}$ is denoted by $C_\varLambda$, we construct an exact Lagrangian disk filling $C \subset X_0$ of $\varLambda$ by gluing $C_0$ and $C_\varLambda$ together along their common Legendrian boundary $\varLambda_0$. By the first part of the proof of \cref{lma:nearby_cocore_is_a_disk}, it therefore follows that $L \subset X_0$ is an integer homology disk.
            Consider the Weinstein sector $\widehat X_0$, and the two Lagrangians $\widehat L, \widehat C \subset \widehat X_0$ as described in \cref{notn:handle_attach}. By construction $\widehat C$ is an $n$-dimensional sphere, and $\widehat L$ is an integer homology sphere by the Mayer--Vietoris exact sequence.

            Next, we show that $\widehat L$ is simply connected. Consider the enlargement of the wrapped Fukaya category (with $\IF_2$-coefficients) $\sS(\widehat X_0)$ whose objects are exact conical Lagrangians equipped with local systems of $\IF_2$-chain complexes of arbitrary dimension, as defined in \cite{abouzaid2012nearby}. By \cite[Theorem 1.4]{chantraine2017geometric} or \cite[Theorem 1.2, Example 1.3]{ganatra2020covariantly}, the critical Lagrangian cocore $F$ inside $\widehat X_0$ resolves the diagonal in the sense that it satisfies Abouzaid's split-generation criterion. Furthermore $HW^{-\bullet}(F,F;\IF_2) \cong H_{\bullet}(\widehat C;\IF_2)$ is supported in non-negative degrees. With these two observations, the proofs of \cite[Lemmas 3.1 and 3.2]{abouzaid2012nearby} go through, to show that $(\widehat L,E)$ is quasi-isomorphic in $\sS(\widehat X_0)$ to $(\widehat C,E_{\widehat C}')$, where $E_{\widehat C}'$ is a trivial local system. Moreover, it also follows from \cite[Lemma 2.13]{abouzaid2012nearby} that trivial local systems on $\widehat C$ are isomorphic in $\sS(\widehat X_0)$ to trivial local systems on $\widehat L$ of the same rank. Finally, \cite[Lemma B.1]{abouzaid2012nearby} shows that a quasi-isomorphism in $\sS(\widehat X_0)$ implies a quasi-isomorphism in the category of local systems on $\widehat L$ whose morphisms are $H^\bullet(\widehat L;\Hom(E,E'))$, which in degree $0$ is generated by global maps of local systems, so that an isomorphism in this category is an isomorphism of local systems in the usual sense. This shows that any local system on $\widehat L$ is trivial, and consequently that $\widehat L$ is simply connected, and hence a homotopy disk.
    
            The rest of the proof is the same as the proof of \cref{thm:main}. We note that the fact that the exact Lagrangian concordance $C_{\varLambda}$ is contained in the symplectization of the Darboux ball in the contact boundary guarantees that $\widehat{\pi}(\widehat{C})$ is contained in a small ball in $\Core X_0$, and hence that the induced map $\pi|_C \colon C/\varLambda \to \Core X_0$, is null-homotopic.
        \end{proof}
        \begin{rem}\label{rem:slice_remark}
            \begin{enumerate}
                \item In case the Chekanov--Eliashberg dg-algebra of $\varLambda$ is supported in degrees $\leq -1$, \cite[Theorem 70]{ekholm2017duality} (cf.\@ \cite[Theorem 1.11]{chantraine2020floer}) offers an alternative proof that $\widehat L$ is simply connected in the proof of \cref{thm:main_conc}.
                \item Combining \Cref{thm:main_conc} with results in \cref{sec:smooth_unknotting} yields a smooth isotopy (rel boundary) classification of exact Lagrangian fillings of Legendrian slice knots in boundaries of some subcritical Weinstein sectors.
            \end{enumerate}
        \end{rem}

\appendix
\section{Fold map}\label{sec:fold_map}
In this section we give a careful construction of the continuous extension of the deformation retract $\pi \colon X_0 \to \Core X_0$ to a continuous map $\widehat \pi \colon \widehat X_0 \to \Core X_0$.
\begin{notn}
    Let $X_0$ be a subcritical Weinstein manifold and 
    \[ i\colon L \hooklongrightarrow X_0 \qquad \text{and} \qquad j\colon K \hooklongrightarrow X_0\]
    be exact conical Lagrangians such that $\varLambda \coloneqq \partial_\infty L = \partial_\infty K$ and let
    \[ \widehat{i} \colon \widehat{L} \longrightarrow \widehat X_0 \qquad \text{and} \qquad \widehat{j} \colon \widehat{K} \longrightarrow \widehat X_0\]
    denote the associated inclusions, respectively, where $\widehat L$, $\widehat K$ and $\widehat X_0$ are obtained from $L$ and $K$ by attachment along the boundary of the core disk of a critical Weinstein handle, see \cref{notn:handle_attach}.
\end{notn}
Since $X_0$ does not have any critical Weinstein (half)-handle, its core is naturally a CW complex of dimension $\leq n-1$, and we consider the quotient map
\[
q \colon \Core X_0 \longrightarrow \Core X_0/(\Core X_0)_{n-2}.
\]
We now extend the composition
\[
    \begin{tikzcd}[row sep=scriptsize, column sep=scriptsize]
        X_0 \rar{\pi} & \Core X_0 \rar{q} & \Core X_0/(\Core X_0)_{n-2}
    \end{tikzcd}
\]
to a map $\widehat X_0 \to \Core X_0/(\Core X_0)_{n-2}$ by ``folding $\widehat X_0 \smallsetminus X_0$ down to a point.''
\begin{notn}
    Let $X^{\mathrm{in}}$ and $X^{\mathrm{in}}_0$ denote a fixed choice of defining Weinstein domain for $X$ and $X_0$, respectively, such that $L$ and $K$ are conical outside of $X^{\mathrm{in}}_0$. Let
    \begin{align*}
        L^{\mathrm{in}} &\coloneqq L \cap X^{\mathrm{in}}_0 \\
        K^{\mathrm{in}} &\coloneqq K \cap X^{\mathrm{in}}_0.
    \end{align*}
\end{notn}
Let $Y$ be a topological space. Suppose that there is a map $f\colon X_0^{\mathrm{in}} \to Y$ and suppose that there exists a null-homotopy of the composition $f \circ i|_{\varLambda}$ given by
\begin{equation}\label{eq:null_htpy}
    H \colon \varLambda \times [0,1] \longrightarrow Y,
\end{equation}
where $H(\varLambda \times \{0\}) = f \circ i|_{\varLambda}$ and $H(\varLambda \times \{1\}) = y$, where $y\in Y$ is a fixed point. This null-homotopy induces a map
\[ f_K \colon \frac{K^{\mathrm{in}} \cup_{\varLambda \times \left\{0\right\}} (\varLambda \times [0,1])}{\varLambda \times \left\{1\right\}} \longrightarrow Y \]
given by
\begin{equation}\label{eq:Folding}
    f_K(p) = \begin{cases} f \circ j(p), & p \in K^{\mathrm{in}} \\ H(p), & p \in \varLambda \times [0,1]\end{cases}.
\end{equation}
The homotopy class of $f_K$ only depends on the homotopy classes of the maps $j$ and $H$.
\begin{lem}\label{lem:Folding}
There exists a continuous map $\widehat{f} \colon \widehat X_0 \to Y$ that satisfies:
\begin{enumerate}
    \item $\widehat{f}|_{X^{\mathrm{in}}_0} = f$ and
    \item $\widehat{f} \circ \widehat{j}$ is null-homotopic if and only if $f_K$ is null-homotopic.
\end{enumerate}
\end{lem}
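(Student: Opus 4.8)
The plan is to realize $\widehat X_0$, up to homeomorphism, as the defining Weinstein domain $X^{\mathrm{in}}_0$ with the core disk of the critical handle glued on, and then to literally \emph{fold}: first retract $\widehat X_0$ onto $X^{\mathrm{in}}_0\cup D$ (where $D$ is the core disk), and then collapse $D$ onto the point $y$ by means of the given null-homotopy $H$. For the first step I would use the description from \cref{sec:geom_background}: passing from $X_0$ to $\widehat X_0$ is, topologically, the effect of gluing the handle $D^n\times D^n$ to $X^{\mathrm{in}}_0$ along its attaching region $S^{n-1}\times D^n$, the gluing locus being a tubular neighborhood of the Legendrian attaching sphere $\varLambda = \partial L^{\mathrm{in}} = \partial K^{\mathrm{in}}\subset\partial X^{\mathrm{in}}_0$; here $D \coloneqq D^n\times\{0\}$ is the core disk, $\partial D = \varLambda$, and $\widehat L\simeq L^{\mathrm{in}}\cup_\varLambda D$, $\widehat K\simeq K^{\mathrm{in}}\cup_\varLambda D$. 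Writing the second factor of the handle as the cone on $S^{n-1}$ with cone point $0$, i.e.\ $D^n\times D^n\cong (W\times[0,1])/(W\times\{0\})$ with $W = D^n\times S^{n-1}$, the classical deformation retraction of $W\times[0,1]$ onto $(W\times\{0\})\cup(\partial W\times[0,1])$ relative to that subspace descends to a deformation retraction of $D^n\times D^n$ onto the union of the core disk $D^n\times\{0\}$ and the attaching region $S^{n-1}\times D^n$, relative to that union. Combining this with the standard deformation retraction of a Weinstein completion onto its domain — chosen to drag the conical ends of $L$ and $K$ into $L^{\mathrm{in}}$ and $K^{\mathrm{in}}$ — yields a retraction
\[
\rho\colon\widehat X_0\longrightarrow X^{\mathrm{in}}_0\cup D
\]
that is the identity on $X^{\mathrm{in}}_0\cup D$ and whose restriction to $\widehat K$ is a deformation retraction onto $K^{\mathrm{in}}\cup_\varLambda D$, since $\widehat K$ meets the handle exactly in $D$, which $\rho$ fixes.

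Next I would define $\widehat f$. Identify $D$ with the cone $(\varLambda\times[0,1])/(\varLambda\times\{1\})$, with $\varLambda\times\{0\}$ being $\partial D = \varLambda$. Define a map on $X^{\mathrm{in}}_0\cup D$ to be $f$ on $X^{\mathrm{in}}_0$ and $[(\lambda,t)]\mapsto H(\lambda,t)$ on $D$; this is well defined and continuous since $H(\lambda,1) = y$ is constant, and the two prescriptions agree along $\varLambda$ because $H(\lambda,0) = f(i(\lambda))$. Set $\widehat f$ equal to the composition of this map with $\rho$. Since $\rho$ is the identity on $X^{\mathrm{in}}_0$, this gives $\widehat f|_{X^{\mathrm{in}}_0} = f$, which is item (i).

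For item (ii), precompose with $\widehat j$: because $\rho\circ\widehat j$ is a homotopy equivalence $\widehat K\to K^{\mathrm{in}}\cup_\varLambda D$, the map $\widehat f\circ\widehat j$ is null-homotopic if and only if the restriction of $\widehat f$ to $K^{\mathrm{in}}\cup_\varLambda D$ is. Under the identification $D = (\varLambda\times[0,1])/(\varLambda\times\{1\})$, the space $K^{\mathrm{in}}\cup_\varLambda D$ is precisely the domain of $f_K$ in \eqref{eq:Folding}, and the two maps coincide: both equal $f\circ j$ on $K^{\mathrm{in}}$ and both equal $H$ on the cone. Hence $\widehat f|_{K^{\mathrm{in}}\cup_\varLambda D} = f_K$, and (ii) follows.

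The only real content is the topological bookkeeping in the first step — producing the retraction $\rho$ with both stated properties at once (fixing $X^{\mathrm{in}}_0\cup D$ and restricting to a homotopy equivalence on $\widehat K$), which requires being careful about how the product retraction of the handle, the collapse of the cone to the core disk, and the completion-to-domain retraction fit together. I expect that to be where essentially all the work lies; once it is set up, the construction of $\widehat f$ and the verification of (i) and (ii) are formal.
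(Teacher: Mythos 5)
Your construction of $\widehat f$ is essentially the paper's: both exploit the projection/retraction from the handle onto the core disk $D$, with $H$ defining the map on $D$ and $f$ defining it on $X^{\mathrm{in}}_0$. Where you genuinely diverge is in the proof of item (ii). The paper's converse direction (a null-homotopy $G$ of $\widehat f\circ\widehat j$ yields a null-homotopy of $f_K$) is carried out by hand, by reparametrizing $G$ via $F(x,t)\coloneqq G(x,t-\ell(x))$ for an auxiliary function $\ell\colon\widehat K\to[0,1]$ that vanishes on $K^{\mathrm{in}}$ and equals $1$ on $D$, and checking that $F$ descends to the quotient $\widehat K\to K^{\mathrm{in}}\cup_{\varLambda\times\{0\}}(\varLambda\times[0,1])/\varLambda\times\{1\}$. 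You instead observe that the biconditional follows formally once one knows $\rho\circ\widehat j$ is a homotopy equivalence — equivalently, that collapsing the contractible subcomplex $D$ of $\widehat K$ is a homotopy equivalence — which is cleaner and packages the paper's explicit reparametrization as an instance of a standard fact. Both are correct; yours has less to verify once the retraction $\rho$ is in hand, though, as you note, the construction of $\rho$ compatible with the completion, the handle product structure, and $\widehat K$ simultaneously is where the real bookkeeping lives. One small slip in that bookkeeping: the identification $D^n\times D^n\cong (W\times[0,1])/(W\times\{0\})$ with $W=D^n\times S^{n-1}$ is not a homeomorphism, since it collapses all of $W\times\{0\}$ to a single point (giving the cone on $W$). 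You want the fiberwise cone, i.e.\ the quotient of $W\times[0,1]$ by $(x,s,0)\sim(x,s',0)$ for each fixed $x\in D^n$; with that corrected, the descent of the classical retraction of $W\times[0,1]$ onto $(W\times\{0\})\cup(\partial W\times[0,1])$ to the handle is fine, and the conclusion (handle deformation retracts onto core disk union attaching region, rel that union) is of course standard.
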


\begin{proof}
We construct $\widehat{f}$ as follows. Recall from \cref{notn:handle_attach} that $\widehat K$ is obtained by attachment along the boundary of the core disk of a critical Weinstein handle $H$, denoted by $D$. Extend the null-homotopy $H$ in \eqref{eq:null_htpy} to a smooth map $H \colon D \to Y$ such that $H(x) = y$ for all $x$ outside of a collar neighborhood of the boundary that is diffeomorphic to $\varLambda \times [0,1]$. Consider the projection $p\colon H \to D$ from the Weinstein handle to the core disk, and define
\[ \widehat{f} \colon \widehat X_0 \longrightarrow Y,\quad x \longmapsto \begin{cases} H(p(x)), & x \in H \\ f(x), & x \in X^{\mathrm{in}} \\ \end{cases}.\]
Item (i) now follows by definition of $\widehat f$.  For item (ii), notice that we have a commutative diagram
\[ \begin{tikzcd}[row sep=scriptsize] \widehat{K} \arrow[rd,"\widehat{f} \circ \widehat{j}",bend left=15] \arrow[d] & \\ \frac{K^{\mathrm{in}} \cup_{\varLambda \times \{0\}} (\varLambda \times [0,1])}{\varLambda \times \{1\}} \arrow[r, swap,"f_K"]& Y, \\ \end{tikzcd} \]
where the vertical map is defined by writing
\[
\widehat{K} = K^{\mathrm{in}} \cup_{\varLambda \times \{0\}} (\varLambda \times [0,1]) \cup_{\varLambda\times \{1\}} D
\]
and collapsing $D$ to a single point in $\varLambda \times [0,1]$. From this commutativity, it follows that $\widehat{f} \circ \widehat{j}$ is null-homotopic whenever $f_K$ is null-homotopic.  For the converse direction, suppose that $G\colon \widehat{K} \times [0,1] \to Y$ is a null-homotopy of $\widehat{f} \circ \widehat{j}$.  Without loss of generality, assume that $G(x,t) = (\widehat{f} \circ \widehat{j})(x)$ for some $x \in \widehat{K} \smallsetminus K^{\mathrm{in}}$. Pick an identification $\Nbhd_{\widehat K}(\varLambda) \cong \varLambda \times [0,1] \xrightarrow{\pi_{[0,1]}} [0,1]$ using the Liouville flow on $\widehat X$ and let $\ell\colon \widehat K \to [0,1]$ be a constant extension of $\pi_{[0,1]}$ to all of $\widehat K$ defined by $\ell(K^{\mathrm{in}}) = 0$ and $\ell(D) = 1$. Define a homotopy
\begin{align*}
    F\colon \widehat{K} \times [0,1] &\longrightarrow Y \\
    (x,t) &\longmapsto G(x,t-\ell(x)).
\end{align*}
Notice that $F_t$ is constant on $\widehat{K} \smallsetminus K^{\mathrm{in}}$.  Hence it descends to a null-homotopy of $f_K$, as desired.
\end{proof}

\begin{rem}
The homotopy class of $\widehat{f}$ only depends on the homotopy classes of the maps $i$, $H$, and $f$.
\end{rem}

\section{Null-homotopies from bordism of connective covers}\label{sec:detecting_null-homotopies}
In this section we record some basic facts from algebraic topology.

Suppose that $G$ is an infinite loop space that admits a map $G \to O$ of infinite loop spaces. Let $G\langle k\rangle$ denote the $(k-1)$-connected cover of $G$. We let $BG\langle k \rangle \coloneqq B(G\langle k\rangle)$. Examples include $G = O$, $U$, and $Sp$.

The first few connective covers in the case $G = O$ are
\[
SO = O\langle 1\rangle, \quad Spin = O\langle 2 \rangle = O\langle 3\rangle,\quad String = O\langle 4\rangle = \cdots = O\langle 7\rangle, \quad Fivebrane = O \langle 8\rangle.
\]

Note that if $k\geq 1$ and $n \geq 2k+2$, $S^n$ admits a tangential $G\langle k+1\rangle$-structure, i.e., a lift of the classifying map for the tangent bundle to a map $S^n \to BG\langle k+1\rangle$, since $\pi_n BG \cong \pi_n BG\langle k+1\rangle$. A choice of a tangential $G\langle k+1\rangle$-structure induces a choice of $MG\langle k+1\rangle$-fundamental class that we denote by $[S^n]$ below.
\begin{lem}\label{lem:DetectingNullHomotopy_gen}
    Let $k \geq 1$ and let $n \geq 2k+2$. Suppose that $Z^{2n}$ is a smooth manifold obtained from attaching handles of indices $0 \leq i \leq n-k$, such that there exists a retract to an $(n-k)$-dimensional cellular complex denoted by $\Core Z$.
    
    If $f\colon S^n \to \Core Z$ is a smooth map such that $f_\ast[S^n] = 0$ in $\widetilde H_n(\Core Z;MG\langle k+1\rangle)$, then the composition
	\[
		q \circ f \colon S^n \longrightarrow \Core Z/(\Core Z)_{n-k-1}
	\]
	is null-homotopic, where $q\colon \Core Z \to \Core Z/(\Core Z)_{n-k-1}$ denotes the quotient by the $(n-k-1)$-skeleton of $\Core Z$.
\end{lem}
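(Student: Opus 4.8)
The plan is to reduce the assertion to elementary facts about the homotopy and homology of spheres. First I would record that, since $\Core Z$ is an $(n-k)$-dimensional cell complex, collapsing its $(n-k-1)$-skeleton produces a homotopy equivalence $\Core Z/(\Core Z)_{n-k-1} \simeq \bigvee_{i\in I} S^{n-k}$, a wedge of $(n-k)$-spheres indexed by the top cells of $\Core Z$; write $q_i$ for the $i$-th projection onto $S^{n-k}$. Because $n \geq 2k+2$, every iterated Whitehead product in Hilton's splitting of $\pi_n\big(\bigvee_i S^{n-k}\big)$ is carried by a sphere of dimension at least $2(n-k)-1 \geq n+1$ and hence contributes nothing, so $\bigoplus_i (q_i)_* \colon \pi_n\big(\bigvee_i S^{n-k}\big) \to \bigoplus_i \pi_n(S^{n-k})$ is an isomorphism (any map $S^n \to \bigvee_i S^{n-k}$ factors through a finite subwedge by compactness, so the cardinality of $I$ is irrelevant). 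Thus it suffices to show $q_i\circ q\circ f \colon S^n \to S^{n-k}$ is null-homotopic for each $i$.

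Next I would invoke naturality of $MG\langle k+1\rangle$-homology: the hypothesis $f_*[S^n]=0$ in $\widetilde H_n(\Core Z;MG\langle k+1\rangle)$ gives $(q\circ f)_*[S^n]=q_*f_*[S^n]=0$ in $\widetilde H_n\big(\bigvee_i S^{n-k};MG\langle k+1\rangle\big)\cong\bigoplus_i\widetilde H_n(S^{n-k};MG\langle k+1\rangle)$, hence $(q_i\circ q\circ f)_*[S^n]=0$ for every $i$. So the problem collapses to showing that the fundamental-class map
\[
h\colon \pi_n(S^{n-k})\longrightarrow \widetilde H_n(S^{n-k};MG\langle k+1\rangle),\qquad [g]\longmapsto g_*[S^n],
\]
is injective — in fact an isomorphism; granting this, $h([q_i\circ q\circ f])=0$ forces $q_i\circ q\circ f\simeq \ast$, and the first paragraph then yields the conclusion.

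To prove that $h$ is an isomorphism I would factor it as a composite of three maps each of which is an isomorphism in the range at hand. Since $n-k\geq k+2$ and $n\geq 2k+2$, Freudenthal gives $\pi_n(S^{n-k})\xrightarrow{\ \cong\ }\pi_n^{\mathrm{st}}(S^{n-k})\cong\pi_k^{\mathrm{st}}$. Since $G\langle k+1\rangle$ is $k$-connected, $BG\langle k+1\rangle$ is $(k+1)$-connected, so the cofibre of the unit $\IS\to MG\langle k+1\rangle$ is $(k+1)$-connected and the unit induces an isomorphism $\pi_k^{\mathrm{st}}\xrightarrow{\ \cong\ }\pi_k(MG\langle k+1\rangle)$. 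Finally the suspension isomorphism identifies $\widetilde H_n(S^{n-k};MG\langle k+1\rangle)\cong\pi_n(\varSigma^{n-k}MG\langle k+1\rangle)\cong\pi_k(MG\langle k+1\rangle)$. Tracing the $MG\langle k+1\rangle$-fundamental class $[S^n]$ — which, under $\widetilde H_n(S^n;MG\langle k+1\rangle)\cong\pi_0(MG\langle k+1\rangle)=\IZ$, is the generator coming from the Thom class of the stably trivial bundle $TS^n$, i.e.\ the image of the stable identity under the unit — through these identifications shows that $h$ is exactly this composite; this is the content of \cref{lma:fundamental_class}.

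The main obstacle I expect is the bookkeeping in the last step: one must match, compatibly, the suspension isomorphisms on the homotopy side and the homology side with the stable Hurewicz transformation induced by $\IS\to MG\langle k+1\rangle$, and check that under this matching the fundamental class $[S^n]$ attached to a tangential $G\langle k+1\rangle$-structure corresponds to the canonical generator of $\pi_0(MG\langle k+1\rangle)$. Once that is in place the connectivity estimate for $MG\langle k+1\rangle$ and the Freudenthal range are routine, and the Hilton-theorem input is standard given the inequality $2(n-k)-1>n$.
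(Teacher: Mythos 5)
Your proof is correct and follows essentially the same two-step strategy as the paper: reduce to the wedge summands $S^{n-k}$, then show the Hurewicz-type map $\pi_n(S^{n-k}) \to \widetilde{H}_n(S^{n-k};MG\langle k+1\rangle)$ detects null-homotopy by factoring it through the Freudenthal isomorphism $\pi_n(S^{n-k})\cong\pi_k^{\mathrm{st}}$ and the $(k+1)$-connectivity of the unit $\IS\to MG\langle k+1\rangle$. The second half of your argument is essentially a restatement of the content of \cref{lma:fundamental_class}, which the paper invokes directly.

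The one genuinely different ingredient is the wedge-reduction step. You invoke the Hilton--Milnor theorem: the basic products of weight $\geq 2$ in $\pi_n(\bigvee_i S^{n-k})$ live on spheres of dimension $\geq 2(n-k)-1 \geq n+1$, so only the weight-one (projection) terms survive, and the infinite wedge causes no trouble because a map from $S^n$ factors through a finite subwedge. The paper instead proves its own \cref{lem:HomotopyOfWedge}, a statement about maps from a cell complex of dimension $\leq 2n-2$ into a wedge of $n$-spheres, via the fiber sequence $\varSigma(\varOmega X\wedge\varOmega Y)\to X\vee Y\to X\times Y$ and Mather's cube theorem (\cref{lem:SomeElementaryATFacts}). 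Both require exactly the inequality $n\geq 2k+2$. Hilton's theorem is a heavier classical hammer but makes the dimension count very transparent; the paper's fibration argument is more self-contained, avoids the subtlety of Hilton over an infinite wedge, and slots into its appendix toolkit. Either route is fine; your version is the shorter one to state for a reader who already has Hilton in hand.
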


The proof of \cref{lem:DetectingNullHomotopy_gen} requires the following two lemmas.

\begin{lem}\label{lem:HomotopyOfWedge}
Suppose that $M$ is an $m$-dimensional cellular complex with $m \leq 2n-2$. Suppose that there is a continuous map
\[ f\colon M \longrightarrow \bigvee_{i=1}^k S^n. \]
If the composition
\[ M \overset{f}{\longrightarrow}  \bigvee_{i=1}^k S^n \longrightarrow \prod_{i=1}^k S^n \]
is null-homotopic, then $f$ is null-homotopic.
\end{lem}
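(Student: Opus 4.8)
\textbf{Proof plan for \cref{lem:HomotopyOfWedge}.}
The plan is to compare the wedge $\bigvee_{i=1}^k S^n$ with the product $\prod_{i=1}^k S^n$ via the inclusion $\iota\colon \bigvee S^n \hookrightarrow \prod S^n$ and to exploit the fact that this inclusion is highly connected. Recall that $\prod_{i=1}^k S^n$ has a CW structure whose cells below the top of the $n$-skeleton are exactly the cells of $\bigvee_{i=1}^k S^n$; the first cells of $\prod S^n$ not coming from the wedge are the products of pairs of $n$-cells, which are $2n$-dimensional. Hence the pair $\bigl(\prod_{i=1}^k S^n,\ \bigvee_{i=1}^k S^n\bigr)$ is $(2n-1)$-connected, so $\iota$ induces an isomorphism on $\pi_j$ for $j \le 2n-2$ and a surjection on $\pi_{2n-1}$.

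First I would reduce to a statement about homotopy groups: since $M$ is a cellular complex of dimension $m \le 2n-2$, the set $[M, Y]$ of homotopy classes of maps into any space $Y$ depends only on the $(2n-2)$-type of $Y$ (by cellular approximation and obstruction theory, or more concretely by building up over the skeleta of $M$ and noting that the relevant obstruction and indeterminacy groups $H^{j}(M;\pi_{j}(Y))$ and $H^{j}(M;\pi_{j+1}(Y))$ involve $\pi_j(Y)$ only for $j \le 2n-2$). By the connectivity statement above, $\iota$ is a $(2n-1)$-equivalence, hence induces a bijection $\iota_* \colon [M, \bigvee_i S^n] \xrightarrow{\cong} [M, \prod_i S^n]$ whenever $\dim M \le 2n-2$. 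This is precisely what lets us transfer the null-homotopy: if $\iota \circ f$ is null-homotopic, i.e.\ $\iota_*[f] = 0 = \iota_*[\ast]$, then injectivity of $\iota_*$ forces $[f] = 0$, so $f$ itself is null-homotopic.

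A clean way to organize this without invoking a black-box obstruction-theory package is to argue skeleton by skeleton on $M$. Write $M = M^{(0)} \subset M^{(1)} \subset \cdots \subset M^{(m)} = M$ for the skeletal filtration. Using that the homotopy cofiber of $\iota$ is $(2n-1)$-connected, one shows inductively that any map $M^{(j)} \to \bigvee_i S^n$ that becomes null-homotopic after composing with $\iota$ is itself null-homotopic, the inductive step being controlled by $H^{j}(M^{(j)}/M^{(j-1)}; \pi_{j}(\hofib \iota))$-type obstruction groups which vanish for $j \le 2n-2$ because $\hofib(\iota)$ is $(2n-2)$-connected. I expect the main (though still routine) obstacle to be bookkeeping the precise connectivity of the pair $(\prod S^n, \bigvee S^n)$ and confirming the off-by-one in the dimension bound $m \le 2n-2$; this is exactly the place where the hypothesis $m \le 2n-2$ (as opposed to $m \le 2n-1$) is used, since the first obstruction to splitting off the product lives in degree $2n-1$. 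Everything else is a standard application of the relative Hurewicz / Blakers--Massey package together with cellular approximation.
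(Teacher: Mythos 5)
Your proof is correct, but it takes a genuinely different route from the paper's. The paper argues by induction on the number of wedge summands $k$, reducing at each step to the two-factor inclusion $\phi\colon (\bigvee^{k-1}S^n)\vee S^n \to (\bigvee^{k-1}S^n)\times S^n$, and then uses the Ganea fiber sequence $\varSigma(\varOmega X\wedge\varOmega Y)\to X\vee Y\to X\times Y$ (stated as \cref{lem:SomeElementaryATFacts}) together with the $(2n-2)$-connectivity of the fiber to lift the null-homotopy of $\phi\circ f$ back to a null-homotopy of $f$, since $[M,\varSigma(\varOmega X\wedge\varOmega Y)]=0$ by cellular approximation. You instead read off the connectivity of the \emph{cofiber}: $\prod^k S^n$ has a CW structure whose $(2n-1)$-skeleton equals $\bigvee^k S^n$, so $\iota$ is a $(2n-1)$-equivalence, and the standard obstruction-theoretic statement that an $r$-equivalence induces a bijection on $[M,-]$ for $\dim M \le r-1$ immediately gives injectivity of $\iota_*$ on $[M,-]$. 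Your argument handles all $k$ factors at once, sidesteps the induction (whose role in the paper is precisely to avoid having to identify the homotopy fiber of the full $k$-fold inclusion, which is the more complicated Porter/fat-wedge fiber), and leans on the equally standard Whitehead-for-CW-pairs package instead of the Ganea sequence. The two routes are dual in spirit (fiber versus cofiber connectivity) and both exploit exactly the same numerology, with the first obstruction landing in degree $2n-1$, which is why $m\le 2n-2$ is the right bound.
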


\begin{lem}\label{lma:fundamental_class}
    If $f \colon S^n \to S^{n-k}$ is a continuous map, the following diagram is commutative
    \[
    \begin{tikzcd}[sep=scriptsize]
        \widetilde{H}_n(S^n;MG\langle k+1\rangle) \rar{f_\ast} \dar{\cong} & \widetilde{H}_n(S^{n-k};MG\langle k+1\rangle) \dar{\cong} \\
        \pi_0(MG\langle k+1\rangle) \rar{f_\ast} & \pi_k(MG\langle k+1\rangle) \\
        \pi_0^{\mathrm{st}} \uar{\eta_\ast}[swap]{\cong} \rar{f_\ast} & \pi_k^{\mathrm{st}} \uar{\eta_\ast}[swap]{\cong}
    \end{tikzcd},
    \]
    where $\eta \colon \IS \to MG\langle k+1\rangle$ is the unit map.
\end{lem}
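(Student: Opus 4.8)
Throughout write $M \coloneqq MG\langle k+1\rangle$. The plan is to reduce the whole diagram to naturality of the unit map $\eta\colon\IS\to M$, after setting up all the identifications uniformly. For any ring spectrum $E$ and $m\geq 0$, the canonical suspension isomorphism gives $\widetilde H_n(S^m;E)\cong\pi_{n-m}(E)$, naturally in $E$, carrying the $E$-fundamental class $u_m^E\in\widetilde H_m(S^m;E)$ to $1\in\pi_0(E)$ and making $\widetilde H_\ast(S^m;E)$ free of rank one over $\pi_\ast(E)$ on $u_m^E$. Taking $E=M$ gives the two upper vertical isomorphisms, and I would take the middle-row map $f_\ast\colon\pi_0(M)\to\pi_k(M)$ to be, by definition, the transport of $\widetilde H_n(f;M)$ along them, so the top square commutes tautologically. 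Taking $E=\IS$ gives $\widetilde H_n(S^m;\IS)\cong\pi^{\mathrm{st}}_{n-m}$, and since $n\geq 2k+2$ forces $n-k\geq k+2$, the case $m=n-k$ is in the stable range — this is precisely what makes sense of the bottom-right entry $\widetilde H_n(S^{n-k};\IS)\cong\pi^{\mathrm{st}}_k$ and identifies the bottom-row $f_\ast\colon\pi^{\mathrm{st}}_0\to\pi^{\mathrm{st}}_k$ with $\widetilde H_n(f;\IS)$, which sends the canonical generator $[\mathrm{id}_{S^n}]$ to $[f]$. Under these identifications the vertical arrows labeled $\eta_\ast$ are exactly the coefficient-change maps $\widetilde H_n(S^m;\IS)\to\widetilde H_n(S^m;M)$ induced by $\eta$, by naturality of the suspension isomorphism in $E$; and they are isomorphisms in the relevant degrees because $BG\langle k+1\rangle$ is $(k+1)$-connected, so $M$ admits a CW structure with a single $0$-cell and all other cells in dimension $\geq k+2$, whence $\eta\colon\IS\to M$ is $(k+1)$-connected and in particular an isomorphism on $\pi_i$ for $i\in\{0,k\}$.

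With this bookkeeping, the only non-formal point left is commutativity of the bottom square, which I would phrase as the identity $\phi_M=\eta_\ast(\phi_\IS)$, where for a ring spectrum $E$ I write $\phi_E\in\pi_k(E)$ for the unique class with $f_\ast u_n^E=\phi_E\cdot u_{n-k}^E$ (the ``$E$-homological degree'' of $f$), so that $\phi_\IS=[f]\in\pi^{\mathrm{st}}_k$ under the stable-range identification above. To prove it, apply $\widetilde H_n(f;-)$ to the naturality square of $\eta$: since $u_n^M=\eta_\ast(u_n^\IS)$, $u_{n-k}^M=\eta_\ast(u_{n-k}^\IS)$, and $\eta_\ast$ respects the $\pi_\ast(\IS)$-module structures,
\[
\phi_M\cdot u_{n-k}^M \;=\; f_\ast\,\eta_\ast\, u_n^\IS \;=\; \eta_\ast\, f_\ast\, u_n^\IS \;=\; \eta_\ast\!\left(\phi_\IS\cdot u_{n-k}^\IS\right) \;=\; \eta_\ast(\phi_\IS)\cdot u_{n-k}^M ,
\]
and rank-one freeness of $\widetilde H_n(S^{n-k};M)$ on $u_{n-k}^M$ forces $\phi_M=\eta_\ast(\phi_\IS)$.

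I do not anticipate a genuine obstacle: once the fundamental classes and suspension isomorphisms are chosen to be the standard, mutually compatible ones, the whole statement is an instance of naturality of $\eta$ together with rank-one freeness of the $E$-homology of spheres. The only things requiring care are (a) checking that the identification $\widetilde H_\ast(S^m;E)\cong\pi_{\ast-m}(E)$ used for the upper verticals is compatible with the $\pi_\ast(E)$-module structure, so that $u_m^E\leftrightarrow 1$, and (b) tracking the hypothesis $n\geq 2k+2$, which enters exactly to place $\pi_n(S^{n-k})$ in the stable range so that the bottom row of the diagram is literally $\pi^{\mathrm{st}}_0\to\pi^{\mathrm{st}}_k$.
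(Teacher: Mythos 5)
Your proof is correct and takes the same approach as the paper's; the paper's one-line proof only spells out why the lower vertical maps $\eta_\ast$ are isomorphisms (via $(k+1)$-connectivity of the unit $\eta \colon \IS \to MG\langle k+1\rangle$, coming from $BG\langle k+1\rangle$ being $(k+1)$-connected) and treats commutativity of both squares as formal. You carefully fill in exactly those formal steps — naturality of the suspension isomorphism $\widetilde H_n(S^m;E) \cong \pi_{n-m}(E)$ in the coefficient spectrum $E$, rank-one freeness, and the stable-range identification $\pi_n(S^{n-k}) \cong \pi_k^{\mathrm{st}}$ forced by $n \geq 2k+2$ — which is correct and a sound expansion of what the paper leaves implicit.
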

\begin{proof}
    Since the map $B1\to BH\langle k+1\rangle$ is $(k+1)$-connected, it implies that the unit map $\eta \colon \IS \to MG\langle k+1\rangle$ is $(k+1)$-connected, showing that the lower vertical maps $\eta_\ast$ are isomorphisms.
\end{proof}

We postpone the proof of \cref{lem:HomotopyOfWedge}. We now prove \cref{lem:DetectingNullHomotopy_gen}.

\begin{proof}[Proof of \cref{lem:DetectingNullHomotopy_gen}]
Note $\Core Z/(\Core Z)_{n-k-1} \simeq \bigvee_{i=1}^j S^{n-k}$ and let
\[ q_i \colon \bigvee_{i=1}^j S^{n-k} \longrightarrow S^{n-k}\]
denote the $i$-th projection. By \cref{lem:HomotopyOfWedge} it suffices to show that $q_i\circ q\circ f$ is null-homotopic for every $i\in \{1,\ldots,j\}$ since $n\geq2k+2$. By assumption we have $(q_i\circ q\circ f)_\ast[S^n] = 0$ in $\widetilde{H}_n(S^{n-k};MG\langle k+1\rangle)$. Since $[S^n]$ is a generator of $\widetilde{H}_n(S^{n-k};MG\langle k+1\rangle)$, it follows that the pushforward map
\[
(q_i\circ q\circ f)_\ast \colon \widetilde{H}_n(S^n;MG\langle k+1\rangle) \longrightarrow\widetilde{H}_n(S^{n-k};MG\langle k+1\rangle),
\]
is zero. By \cref{lma:fundamental_class}, it follows that the corresponding map $(q_i\circ q\circ f)_\ast \colon \pi_0^{\mathrm{st}} \to \pi_k^{\mathrm{st}}$ is zero. This implies that for $p \gg 0$, the map
\[
\varSigma^{p-n}(q_i\circ q\circ f) \colon S^p \longrightarrow \varSigma^{-k}S^{p}
\]
is null-homotopic. Since $n\geq 2k+2$, we have that $p = n$ is within the stable range, showing that $q_i\circ q\circ f$ is null-homotopic, thus finishing the proof.
\end{proof}

Now we turn our attention to proving \cref{lem:HomotopyOfWedge}. We will need the following elementary facts from algebraic topology.

\begin{lem}\label{lem:SomeElementaryATFacts}
Let $X$ and $Y$ be cellular complexes.
\begin{enumerate}
	\item There is a homotopy fiber sequence
	\[
		\varSigma(\varOmega X \wedge \varOmega Y) \longrightarrow X \vee Y \longrightarrow X \times Y.
	\]
	\item If $X$ is $a$-connected and $Y$ is $b$-connected, then $\varSigma (\varOmega X \wedge \varOmega Y)$ is $(a+b)$-connected.
\end{enumerate}
\end{lem}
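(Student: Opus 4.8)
The plan is to recognize part (i) as Ganea's fibration theorem and to deduce part (ii) from it by a short connectivity count. For part (i) I would write $X \vee Y$ as the homotopy pushout of the span $X \hookleftarrow \ast \hookrightarrow Y$, and map this span to $X \times Y$ by the basepoint inclusion together with the two coordinate inclusions $X \hookrightarrow X \times Y$ and $Y \hookrightarrow X \times Y$. Pulling back the path--loop fibration of $X \times Y$ along the four corners and using that homotopy pushouts of spaces are stable under homotopy pullback (Mather's second cube theorem, equivalently universality of homotopy colimits in spaces), one identifies the homotopy fiber of $X \vee Y \to X \times Y$ with the homotopy pushout of the span
\[
\varOmega Y \longleftarrow \varOmega X \times \varOmega Y \longrightarrow \varOmega X,
\]
where I use the identifications $\hofib(\ast \to X\times Y) \simeq \varOmega(X\times Y) \simeq \varOmega X \times \varOmega Y$, $\hofib(X \hookrightarrow X\times Y)\simeq \varOmega Y$, $\hofib(Y \hookrightarrow X\times Y)\simeq \varOmega X$, and the fact that the two induced maps on fibers are the coordinate projections. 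That homotopy pushout is by definition the join $\varOmega X \ast \varOmega Y$, and for well-pointed spaces -- which CW models of $\varOmega X$ and $\varOmega Y$ are -- there is a natural equivalence $\varOmega X \ast \varOmega Y \simeq \varSigma(\varOmega X \wedge \varOmega Y)$; composing with the fiber inclusion produces the asserted homotopy fiber sequence.

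For part (ii) I would simply track connectivities: $a$-connectedness of $X$ yields that $\varOmega X$ is $(a-1)$-connected, and likewise $\varOmega Y$ is $(b-1)$-connected; since the smash product of an $m$-connected space with an $n$-connected space is $(m+n+1)$-connected, $\varOmega X \wedge \varOmega Y$ is $(a+b-1)$-connected, and the suspension raises connectivity by one, so $\varSigma(\varOmega X \wedge \varOmega Y)$ is $(a+b)$-connected. As a sanity check one may instead obtain (ii) directly, without (i), from the Blakers--Massey theorem applied to the cofiber sequence $X \vee Y \to X \times Y \to X \wedge Y$, which shows the map $X \vee Y \to X \times Y$ is $(a+b+1)$-connected and hence has $(a+b)$-connected homotopy fiber.

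The only step demanding genuine care -- the ``hard part'', such as it is -- is the cube-theorem bookkeeping in part (i): one must check that pulling the contractible path fibration over $X\times Y$ back along the pushout square for $X \vee Y$ yields an honest homotopy pushout square of fibers, and, more delicately, that under the identification $\varOmega(X\times Y) \simeq \varOmega X \times \varOmega Y$ the two comparison maps into $\varOmega Y$ and $\varOmega X$ really are the coordinate projections and not some twisted variants. Being explicit with the homotopy-fiber models (pairs of a point and a path) and with basepoints settles this, after which everything is formal.
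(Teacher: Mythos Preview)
Your proposal is correct and follows essentially the same route as the paper: both arguments realize $X \vee Y$ as a homotopy pushout, invoke Mather's second cube theorem to identify the homotopy fiber of $X \vee Y \to X \times Y$ with the join $\varOmega X \ast \varOmega Y \simeq \varSigma(\varOmega X \wedge \varOmega Y)$, and then run the same connectivity count for part (ii). The only cosmetic difference is that the paper spells out part (ii) via explicit cell-counting after cellular approximation rather than citing the smash-product connectivity fact, and it does not include your Blakers--Massey sanity check.
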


\begin{proof}
Since $X$ and $Y$ are cellular complexes, the inclusions of their basepoints, say $x_0$ and $y_0$ respectively, are cofibrations. Consequently, the diagram
\[
	\begin{tikzcd}[row sep=scriptsize, column sep=scriptsize]
		\ast \rar \dar[swap] & Y \dar{i_Y} \\
		X \rar{i_X} & X \vee Y
	\end{tikzcd}
\]
is a homotopy pushout square. Taking the homotopy pullback of this diagram along the map $\phi \colon X \vee Y \to X \times Y$ yields a commutative diagram (up to homotopy)
\[
	\begin{tikzcd}[row sep=scriptsize, column sep=scriptsize]
		\varOmega X \times \varOmega Y \rar{\mathrm{pr}_{\varOmega Y}} \dar[swap]{\mathrm{pr}_{\varOmega X}} & \varOmega Y \dar \\
		\varOmega X \rar & F
	\end{tikzcd},
\]
where $F$ denotes the homotopy fiber of $\phi$.

By Mather's second cube theorem, see \cite[Theorem 25]{mather1976cubetheorems}, the homotopy pullback of a homotopy pushout square is a homotopy pushout square. Consequently, $F$ is the homotopy pushout of the above diagram, which is well-known to be
\[ \varOmega X \star \varOmega Y \simeq \varSigma (\varOmega X \wedge \varOmega Y),\]
where $\star$ denotes join. This proves that the sequence stated in the lemma is, in fact, a homotopy fiber sequence.

For the second item, we proceed as follows. First, $\varOmega X$ and $\varOmega Y$ are $(a-1)$-connected and $(b-1)$-connected respectively. By cellular approximation, $\varOmega X$ and $\varOmega Y$ are homotopy equivalent to spaces with no cells in dimension $1 \leq d \leq a-1$ and $1\leq d \leq b-1$, respectively. It follows that $\varOmega X \wedge \varOmega Y$ has no cells in dimension $1 \leq d \leq a+b-1$, and hence that $\varSigma(\varOmega X \wedge \varOmega Y) \simeq S^1 \wedge \varOmega X \wedge \varOmega Y$ has no cells in dimension $1\leq d \leq a+b$, finishing the proof.
\end{proof}

\begin{proof}[Proof of \cref{lem:HomotopyOfWedge}]
The proof proceeds by induction on $i$ for $\bigvee_{i=1}^k S^n$. Let
\[ \phi \colon \left(\bigvee_{i=1}^{k-1} S^n \right) \vee S^n \longrightarrow \left(\bigvee_{i=1}^{k-1} S^n \right) \times S^n \]
denote the natural inclusion. By the induction hypothesis, $\phi \circ f$ is null-homotopic. By \cref{lem:SomeElementaryATFacts}(i) and the homotopy lifting property for the homotopy fiber sequence, it suffices to show that
\[ \left[X, \varSigma \left( \varOmega  \left(\bigvee_{i=1}^{k-1} S^n \right) \wedge \varOmega S^n\right)\right]=0\]
in order to obtain that $f$ is null-homotopic. By the Hurewicz theorem, $\bigvee_{i=1}^{k-1} S^n$
and $S^n$ are both $(n-1)$-connected. So by the second item of \cref{lem:SomeElementaryATFacts}, 
\[\varSigma \left( \varOmega  \left(\bigvee_{i=1}^{k-1} S^n \right) \wedge \varOmega S^n\right)\]
is $(2n-2)$-connected. Since $X$ is a cellular complex whose dimension is less than or equal to $2n-2$, the cellular approximation theorem implies that
\[ \left[X, \varSigma \left( \varOmega  \left(\bigvee_{i=1}^{k-1} S^n \right) \wedge \varOmega S^n\right)\right]=0,\]
as desired.
\end{proof}

\section{Handle presentation of Weinstein sectors}\label{sec:geom_background}
     The purpose of this section is to prove that the two definitions of a Weinstein sector in the literature (\cite{ganatra2020covariantly} and \cite[Definition 2.7]{chantraine2017geometric}) are equivalent. Recall from \cite{ganatra2020covariantly} that a Liouville sector is a Liouville manifold-with-boundary $(X, \lambda, Z)$ for which there is a function $I \colon \partial X \to \IR$ such that $I$ is \emph{linear at infinity}, meaning $ZI = I$ outside a compact set, where $Z$ denotes the Liouville vector field, and the Hamiltonian vector field $X_I$ of $I$ is outward pointing along $\partial X$.

    Following \cite{eliashberg2018weinstein,alvarez2025arborealization}, we now define Liouville pairs. A Liouville hypersurface in a Liouville $2n$-manifold $(X^{2n},\lambda)$ is a Liouville $(2n-2)$-manifold $(F,\lambda_F)$ together with a choice of Liouville embedding $(F, \lambda_F) \hookrightarrow (X \smallsetminus \Core X, \lambda)$ such that the induced map $F \to \partial X$ is an embedding. A Liouville pair is a tuple $(X,F)$ of a Liouville manifold and a Liouville hypersurface.

    A Weinstein domain is a Liouville domain $(X,\lambda)$ whose Liouville vector field admits a Lyapunov function that is constant along $\partial X$. Similarly a Weinstein manifold is a Liouville manifold whose Liouville vector field admits a Lyapunov function.

    \begin{defn}[Weinstein pair]
    	A \emph{Weinstein pair} is a Liouville pair $(X,F)$ such that both $X$ and $F$ are Weinstein manifolds, and the associated Liouville embedding $(F,\lambda_F) \hookrightarrow (X \smallsetminus \Core X, \lambda)$ preserves the Lyapunov functions.
    \end{defn}
    \begin{defn}[Weinstein sector {\cite{ganatra2022sectorial}}]\label{dfn:weinstein_sector}
    	A \emph{Weinstein sector} is a Liouville sector such that its convexification (see \cite[Section 2.7]{ganatra2020covariantly}) admits the structure of a Weinstein pair.
    \end{defn}
    A Liouville (Weinstein) cobordism is a Liouville (Weinstein) domain $X$ with $\partial X = \partial_- X \sqcup \partial_+ X$ such that the Liouville vector field points outwards along $\partial_+ X$ and inwards along $\partial_- X$.
    
    Any (finite type) Weinstein manifold can equivalently be defined as the result of successive \emph{Weinstein handle attachments} \cite{weinstein1991contact}. There is an alternative definition of (finite type) Weinstein sectors via Weinstein handles and Weinstein half-handles, see \cite[Definition 2.3]{chantraine2017geometric}. We review these definitions now.
    
    Consider $\IR^{2n}$ with the standard symplectic form $\omega = \sum_{j=1}^n dx_j \wedge dy_j$. For $0\leq k \leq n$, the vector field
    \[
        Z_k = \frac 12\sum_{j=1}^{n-k}(x_j \partial_{x_j} + y_j \partial_{y_j}) + \sum_{j=n-k+1}^n (2x_j \partial_{x_j} - y_j \partial_{y_j})
    \]
    is a Liouville vector field for $\omega$, corresponding to the Liouville one form
    \[
        \lambda_k = \frac 12 \sum_{j=1}^{n-k} (x_j dy_j - y_j dx_j) +\sum_{j=n-k+1}^n (2x_j dy_j + y_j dx_j).
    \]
    The vector field $Z_k$ for $0\leq k \leq n$ is gradient-like for the function
    \[
        f_k(x,y) = \frac 14\sum_{j=1}^{n-k} (x_j^2 + y_j^2) + \sum_{j=n-k+1}^n \left(x_j^2 - \frac 12 y_j^2\right).
    \]
    \begin{defn}[Weinstein handle]\label{dfn:weinstein_handle}
        Let $k\in \{0,\ldots,n\}$. The \emph{Weinstein handle of index $k$ and size $\delta > 0$} is the Weinstein cobordism $(H^\delta_k, Z_k, f_k)$, where
        \[
            H^\delta_k \coloneqq \left\{(x,y) \in \IR^{2n} \mid -\delta \leq f_k(x,y) \leq \delta \right\},
        \]
        see \cref{fig:handle}. A Weinstein handle of index equal to $n$ is called \emph{critical}. A Weinstein handle that is not critical is called \emph{subcritical}.
    \end{defn}
    \begin{figure}[!htb]
        \centering
        \includegraphics{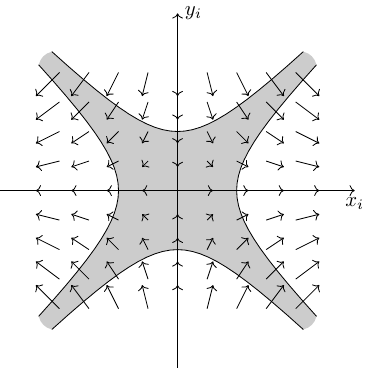}
        \caption{The projection of the Weinstein handle $H^\delta_k$ in the $(x_i,y_i)$-plane together with its Liouville vector field.}\label{fig:handle}
    \end{figure}
    The positive and negative contact boundaries of $H^\delta_k$ are defined as
    \[
        \partial_{\pm} H^\delta_k = \left\{(x,y) \in \IR^{2n} \mid f_k(x,y) = \pm \delta\right\},
    \]
    respectively. Topologically we have $H^\delta_k \cong D^{2n-k} \times D^k$, where the second factor corresponds to the last $k$ coordinates $y_{n-k+1},\ldots,y_n$ of $\IR^{2n}$. The disk $(\left\{0\right\} \times \IR^k) \cap H^\delta_k \subset H^\delta_k$ is the \emph{core disk} of the handle with \emph{attaching sphere} $(\left\{0\right\} \times \IR^k)\cap \partial_{-} H^\delta_k \subset \partial_{-} H^\delta_k$.
    Dually, the disk $(\IR^{2n-k} \times \left\{0\right\}) \cap H^\delta_k\subset H^\delta_k$ is called the \emph{cocore disk} with \emph{cocore sphere} $(\IR^{2n-k} \times \left\{0\right\}) \cap \partial_+ H^\delta_k \subset \partial_+ H^\delta_k$.
    \begin{defn}[Weinstein half-handle]\label{dfn:weinstein_half-handle}
        Let $k\in \{0,\ldots,n\}$. The \emph{Weinstein half-handle of index $k$ and size $\delta > 0$} is defined as the Weinstein sector $(\varPi^\delta_k, Z_k, I_k, f_k)$, where
        \[
            \varPi^\delta_k \coloneqq \left\{(x,y) \in \IR^{2n} \mid -\delta \leq f_k(x,y) \leq \delta, \; y_n \geq 0\right\},
        \]
        see \cref{fig:half-handle}.

        A Weinstein half-handle of index equal to $n$ is called \emph{critical}. A Weinstein half-handle that is not critical is called \emph{subcritical}.
    \end{defn}
    Topologically we have $\varPi^\delta_k \cong D^{2n-k} \times D^k_+$, where the second factor corresponds to the last $k$ coordinates $y_{n-k+1},\ldots, y_n$ of $\IR^{2n}$, with the constraint $y_n \geq 0$. The (horizontal) boundary of the half-handle is
    \[
        \partial \varPi^\delta_k \coloneqq \left\{(x,y) \in \varPi^\delta_k \mid y_n = 0\right\}.
    \]
    The function $I_k$ is defined as $I_k \colon \partial \varPi^\delta_k \to \IR$, $(x,y) \mapsto -\frac{x_n}{2}$. We note that the symplectic boundary of a Weinstein half-handle of index $k$ is in fact a Weinstein handle of index $k-1$. Namely it is given by $(F_k, Z_{F_k}, f_{F_k})$ where $F_k \coloneqq I_k^{-1}(0) \cong H^\delta_{k-1}$, $Z_{F_k} \coloneqq Z_k|_{F_k}$ and $f_{F_k} \coloneqq f_k|_{F_k}$.
    \begin{figure}[!htb]
        \centering
        \includegraphics{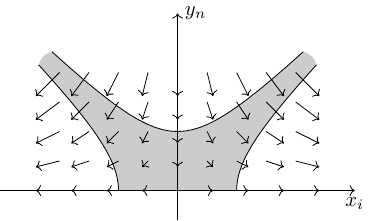}
        \caption{The projection of the Weinstein half-handle $\varPi^\delta_k$ in the $(x_i,y_n)$-plane together with its Liouville vector field.}\label{fig:half-handle}
    \end{figure}
    Similar to the case with standard Weinstein handles above, the half-disk $(\left\{0\right\} \times \IR^k_+) \cap \varPi^\delta_k \subset \varPi^\delta_k$ is called the \emph{core disk} of the half handle. We define the \emph{cocore disk} of $\varPi^\delta_k$ to be the spreading of the cocore disk of its symplectic boundary $H^\delta_{k-1}$ (see \cite[Definition 2.11]{chantraine2017geometric} for the definition of spreading).
    \begin{rem}\label{rem:cocore_linking_disk}
        Under convexification (see \cite[Section 2.7]{ganatra2020covariantly}), the Lagrangian cocore disks of critical Weinstein half-handles become the linking disks, see \cite[Section 5.3]{ganatra2022sectorial}.
    \end{rem}
    \begin{defn}[Sectorial Morse function {\cite[Definition 2.7]{chantraine2017geometric}}]\label{dfn:sectorial_morse_fcn}
        Let $X$ be a Liouville sector. A \emph{sectorial Morse function} is an exhausting Morse function $f \colon X \to \IR$ that has a finite number of critical points, is a Lyapunov function for the Liouville vector field of $X$, and moreover satisfies the following:
        \begin{enumerate}
            \item $df(C) > 0$ on $\left\{I > 0\right\}$ and $df(C) < 0$ on $\left\{I < 0\right\}$, where $C$ is the characteristic foliation of $\partial X$ which is oriented so that $\omega(C,N) > 0$ where $N$ is an outward pointing normal vector field along $\partial X$.
            \item The Hessian of $f$ at a critical point $p\in \partial X$ evaluates negatively on $N$.
            \item There is a constant $c\in \IR$ whose sublevel set satisfies $\left\{f \leq c\right\} \subset X \smallsetminus \partial X$ and contains all interior critical points of $f$.
        \end{enumerate}
    \end{defn}
    We now record the fact that the two definitions of a Weinstein sector in the literature (\cite{ganatra2020covariantly} and \cite[Definition 2.7]{chantraine2017geometric}) are equivalent.
    \begin{lem}\label{lem:equiv_of_defns}
        Let $X$ be a Liouville sector. The existence of a sectorial Morse function on $X$ is equivalent to $X$ being a Weinstein sector in the sense of Ganatra--Pardon--Shende \cite{ganatra2022sectorial}.
    \end{lem}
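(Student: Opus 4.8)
The plan is to prove the equivalence by routing both implications through a presentation of $X$ as an iterated attachment of Weinstein handles (\cref{dfn:weinstein_handle}) and Weinstein half-handles (\cref{dfn:weinstein_half-handle}), and then invoking the essentially definitional behaviour of convexification on these local models, as recorded in \cite[Section 5.3]{ganatra2022sectorial} and \cref{rem:cocore_linking_disk}.

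First, assuming $X$ carries a sectorial Morse function $f$ with Liouville vector field $Z$, I would argue as follows. By condition (iii) of \cref{dfn:sectorial_morse_fcn} the sublevel set $\{f\le c\}$ contains all interior critical points and no critical points on $\partial X$; there $f$ is an honest Lyapunov function, and the standard construction of Weinstein handle decompositions from Lyapunov functions \cite{weinstein1991contact} presents $\{f\le c\}$ as an iterated attachment of Weinstein handles $H^\delta_j$. The remaining critical points lie on $\partial X$, and near such a point $p$ conditions (i) and (ii) pin down the $1$-jet of $f$ relative to the characteristic foliation $C$ and the outward normal $N$: together with the Weinstein Morse lemma this identifies a neighbourhood of $p$ with a standard Weinstein half-handle $\varPi^\delta_j$ --- condition (i) reproduces the sign of $I_j=-x_j/2$ along the characteristic foliation of $\partial\varPi^\delta_j$, while the negative Hessian on $N$ in (ii) forces $p$ onto the horizontal boundary. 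Thus $X$ decomposes into Weinstein handles and half-handles. Convexification then replaces each half-handle $\varPi^\delta_j$ by the genuine Weinstein handle $H^\delta_j$ while remembering the piece $F_j\cong H^\delta_{j-1}$ of its symplectic boundary; the potentials on the handles and half-handles assemble to a Lyapunov function $\phi$ on $\overline X$ whose restriction to $F\coloneqq\bigcup_j F_j$ is the assembled Lyapunov function of the $F_j$. Hence $(\overline X,F)$ is a Weinstein pair and $X$ is a Weinstein sector in the sense of \cref{dfn:weinstein_sector}.

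For the converse, suppose $\overline X$ admits a Weinstein pair structure $((\overline X,\lambda,Z,\phi),F_\partial)$, with $\phi$ Lyapunov for $Z$ and $\phi|_{F_\partial}$ Lyapunov for the induced Liouville structure on $F_\partial$. By the construction of the convexification, a collar neighbourhood of $\partial X$ in $X$ splits as a product of the Liouville manifold $F_\partial$ with a standard Liouville model on a domain in $\IC$, in which $I$ is (a rescaling of) one of the linear coordinates. After a Liouville homotopy supported in this collar I would arrange $\phi=\phi|_{F_\partial}+h$ there, with $h$ a function of the $\IC$-coordinate, and then replace $h$ by a Morse function of the $\IC$-coordinate which is still Lyapunov for the model Liouville field, whose Hamiltonian flow of $I$ is outward pointing along $\partial X$, which satisfies $dh(C)>0$ on $\{I>0\}$ and $dh(C)<0$ on $\{I<0\}$, and whose Hessian at its unique boundary critical point is negative on $N$; such an $h$ exists by inspection of the linear model of \cref{dfn:weinstein_half-handle}. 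Interpolating from $\phi$ to $\phi|_{F_\partial}+h$ across the collar and keeping $\phi$ elsewhere yields an exhausting Morse function $f$ on $X$ with finitely many critical points, Lyapunov for $Z$, whose interior critical points are those of $\phi$ and whose boundary critical points are those of $\phi|_{F_\partial}$; conditions (i)--(iii) of \cref{dfn:sectorial_morse_fcn} then hold (for (iii), push the interpolation region far enough out along the collar), giving the desired sectorial Morse function.

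The hard part is the collar analysis near $\partial X$ in both directions: on one side, pinning down a neighbourhood of a boundary critical point as a standard half-handle using only (i)--(ii) and the Weinstein Morse lemma; on the other, modifying the Weinstein potential $\phi$ inside $F_\partial\times\IC$ to install the boundary sign conditions while preserving the Lyapunov property --- i.e.\ checking that the interpolation is a legitimate Liouville homotopy and creates no spurious critical points away from the model. Everything else is routine bookkeeping of handle attachments and of how convexification acts on handles and half-handles.
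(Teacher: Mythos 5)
Your route is genuinely different from the paper's, and noticeably more involved. The paper proves both implications by an explicit collar attachment, never invoking a handle decomposition or any local normal form near a boundary critical point. For sectorial Morse $\Rightarrow$ Weinstein pair, the paper attaches $(F\times T^\ast[0,1],\,\lambda_F+\tfrac12(y\,dx-x\,dy))$ to $\partial X$, writes the candidate Lyapunov function $\overline f = f|_{\partial X}+\tfrac14x^2$, and verifies the Lyapunov inequality $d\overline f(Z)\ge \delta(|d\overline f|^2+|Z|^2)$ in two lines. For the converse it attaches $(F\times T^\ast[-1,0],\,\lambda_F+2y\,dx+x\,dy,\,f_F+y^2-\tfrac12x^2)$ to $\partial\overline X$ and observes the resulting function is sectorial by inspection. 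Both directions are thus reduced to explicit formulas on a product collar, with no Morse-theoretic normal form required.

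Your plan instead routes through a Weinstein handle/half-handle decomposition of $X$. The forward step leans on a ``Weinstein Morse lemma'' for boundary critical points of a sectorial Morse function, asserting that conditions (i)--(ii) identify a neighbourhood of such a point with a standard half-handle $\varPi^\delta_j$. This is precisely the nontrivial content: it is not a recorded lemma in the sources the paper cites, and in fact the paper treats the assertion ``every Weinstein sector in the GPS sense admits a handle(-and-half-handle) decomposition'' as a \emph{consequence} of \cref{lem:equiv_of_defns}, stated immediately after the proof. So if you invoke the existence of the half-handle decomposition as an input you are at risk of circularity, unless you independently prove the boundary normal form; and that proof would be comparable in substance to (and harder than) the direct collar computation the paper performs. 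Your converse direction has a similar issue: interpolating the Weinstein potential $\phi$ to the split form $\phi|_{F_\partial}+h$ on a collar while preserving the Lyapunov estimate and without creating critical points is exactly the kind of check the paper sidesteps by instead gluing a fresh piece onto $\partial\overline X$ with a readymade function. Your high-level plan is sound, but as written it defers the whole content to an unreferenced boundary normal form; the paper's explicit attachment-plus-inequality argument is both shorter and self-contained.
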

    \begin{proof}
        If $X$ is a Weinstein sector in the sense of \cref{dfn:weinstein_sector}, then we consider its convexification which is the Weinstein pair $(\overline X, F)$. Attach $(F \times T^\ast [-1,0], \lambda_F + 2ydx+xdy, f_F + y^2-\frac 12x^2)$ to $\partial \overline X$ along a small neighborhood of $F \subset \partial \overline X$ (cf.\@ \cite[Section 2.2]{asplund2022chekanov} and \cite[Section 2.1]{asplund2021simplicial}). The result is $X$ (up to deformation) together with a Morse function $f$ which can easily be checked to be sectorial.

        Conversely, if $X$ is a Liouville sector with a sectorial Morse function $f$, we construct the convexification of $X$ by attaching $(F \times T^\ast[0,1], \lambda_F + \frac 12(ydx-xdy))$ to $X$ along $\partial X$. We equip $F \times T^\ast [0,1]$ with the Morse function $\overline f \coloneqq f_0 + \frac 14 x^2$, where $f_0 \coloneqq f|_{\partial X}$. This indeed makes $F \times T^\ast [0,1]$ Weinstein and hence $(\overline X, F)$ a Weinstein pair. The Liouville vector field on $F \times T^\ast[0,1]$ is given by $Z = Z_0 + \frac 12 x \partial_x$ where $Z_0 \coloneqq (Z_X)|_{\partial X}$, and 
        \begin{align*}
            d \overline f(Z) &= df_0(Z_0) + \frac 14 x^2 \geq \delta_0(|df_0|^2 + |Z_0|^2) + \frac 14 x^2 \\
            &\geq \delta \left(|df_0|^2 + \frac 14 x^2 + |Z_0|^2 + \frac 14x^2\right) = \delta(|d \overline f|^2 + |Z|^2),
        \end{align*}
        where $\delta \coloneqq \min \left(\delta_0, \frac 12\right)$.
    \end{proof}
    A consequence of \cref{lem:equiv_of_defns} is that any Weinstein sector in the sense of \cref{dfn:weinstein_sector} is Weinstein isomorphic (up to deformation) to a Weinstein handlebody with boundary, which by definition is the result of consecutive attachments of Weinstein handles and Weinstein half-handles \cite[Section 2.3]{chantraine2017geometric}.
   
    \begin{defn}[Subcritical Weinstein sector]\label{dfn:subcrit_weinstein_sector}
    	We say that a $2n$-dimensional Weinstein sector is \emph{subcritical} if it is the result of successive attachments of Weinstein handles and Weinstein half-handles of index $< n$.
    \end{defn}
    \begin{defn}[Handlebody decomposition of a Weinstein manifold]\label{dfn:handlebody_decomp}
        A \emph{handlebody decomposition} of a Weinstein manifold $X$ is a pair $(X_0,\varLambda)$ consisting of a subcritical Weinstein manifold $X_0$ and a Legendrian submanifold $\varLambda \subset \partial_\infty X_0$ such that the attachment of a critical Weinstein handle to $X_0$ along $\varLambda$ recovers $X$.
    \end{defn}
    \begin{defn}[Handlebody decomposition of a Weinstein pair]
        A \emph{handlebody decomposition} of a Weinstein pair $(X,F)$ is a pair $((X_0,\varLambda_X),(F_0,\varLambda_F))$ of a handlebody decomposition of $X$ and a handlebody decomposition of $F$.
    \end{defn}
    \begin{defn}[Handlebody decomposition of a Weinstein sector]\label{dfn:handlebody_decomp_sector}
        A handlebody decomposition of a Weinstein sector is defined as a handlebody decomposition of its corresponding Weinstein pair via convexification (see \cite[Section 2.7]{ganatra2020covariantly}).
    \end{defn}
    \begin{exmp}
        As a simple example, the (open) Weinstein sector $T^\ast \IR^n$ is equivalent to the Weinstein pair $(B^{2n},N(\varLambda_0))$, where $\varLambda_0 \subset S^{2n-1} = \partial_\infty B^{2n}$ is the standard $(n-1)$-dimensional Legendrian unknot, and $N(\varLambda_0) \cong T^\ast S^{n-1}$ denotes a Weinstein neighborhood of it. Any Weinstein handlebody decomposition of $T^\ast \IR^n$ is equivalent by definition to a Weinstein handlebody decomposition of the Weinstein pair $(B^{2n},N(\varLambda_0))$. The simplest Weinstein handlebody decomposition of this Weinstein pair is the pair $((B^{2n},\varnothing),(B^{2n-2},\varLambda'_0))$, where $\varLambda'_0 \subset S^{2n-3} = \partial_\infty B^{2n-2}$ is the standard $(n-2)$-dimensional Legendrian unknot. The original open Weinstein sector is the completion of the Weinstein sector $T^\ast B^n$, and the Weinstein handlebody decomposition just described corresponds to the choice of a sectorial Morse function on $B^n$ with one index $0$ critical point in the interior, and two critical points at the boundary, of indices $1$ and $n$, respectively (where the index is calculated in the whole ball). In particular we remark that the Morse function with a single interior critical point of index $0$ is not a sectorial Morse function in the sense of \cref{dfn:sectorial_morse_fcn}.
    \end{exmp}

\bibliographystyle{alpha}
\bibliography{references.bib}

\end{document}